\theoremstyle{plain}
\newtheorem{lemma}{Lemma}[section]
\newtheorem{theorem}[lemma]{Theorem}
\newtheorem{proposition}[lemma]{Proposition}
\newtheorem{prop}[lemma]{{Proposition}}
\newtheorem{corollary}[lemma]{Corollary}
\newtheorem{convention}[lemma]{Convention}
\newtheorem*{convention*}{Convention}
\theoremstyle{definition}
\newtheorem{definition}[lemma]{Definition}
\newtheorem{deff}[lemma]{{Definition}}
\newtheorem{example}[lemma]{Example}
\newtheorem{remark}[lemma]{Remark}
\newtheorem*{definition*}{Definition}
\theoremstyle{remark}
\newcommand{\E}{\mathcal{E}}
\newcommand{\lb}{\left[ \cdot\,,\cdot\right] }
\newcommand{\dd}{\mathrm{d}}
\newcommand{\dif}{\text{d}}
\newcommand{\X}{\mathfrak {X} }
\title{Lie-Rinehart algebras $\simeq $ Acyclic Lie $\infty $-algebroids}
\let\oldtocsection=\tocsection
\let\oldtocsubsection=\tocsubsection
\let\oldtocsubsubsection=\tocsubsubsection
\renewcommand{\tocsection}[2]{\hspace{0em}\oldtocsection{#1}{#2}}
\renewcommand{\tocsubsection}[2]{\hspace{1em}\oldtocsubsection{#1}{#2}}
\renewcommand{\tocsubsubsection}[2]{\hspace{2em}\oldtocsubsubsection{#1}{#2}}
\author{Camille Laurent-Gengoux}
\author{Ruben Louis}
\thanks{Both authors acknowledge support of the CNRS Project Miti 80Prime \emph{Granum}.}
\address{Institut \'Elie Cartan de Lorraine \\ 
	UMR 7502 du CNRS \\ 
	Université de Lorraine \\
	Metz, France}
\date{June 2021}
\begin{document}

\maketitle

\begin{abstract}
We show that there is an equivalence of categories between Lie-Rinehart algebras over a commutative algebra $\mathcal O $ and homotopy equivalence classes of negatively graded Lie $\infty $-algebroids over their resolutions (=acyclic Lie $\infty$-algebroids). This extends to a purely algebraic setting the construction of the universal $Q$-manifold of a locally real analytic singular foliation of \cite{LLS,LavauSylvain}. 
In particular, it makes sense for the universal Lie $\infty$-algebroid of every singular foliation, without any additional assumption, and for Androulidakis-Zambon singular Lie algebroids. Also, to any ideal $\mathcal I \subset \mathcal O $ preserved by the anchor map of a Lie-Rinehart algebra $\mathcal A $, we associate a homotopy equivalence class of negatively graded Lie $\infty $-algebroids over complexes computing ${\mathrm{Tor}}_{\mathcal O}(\mathcal A, \mathcal O/\mathcal I) $.
Several explicit examples are given.
\end{abstract}

\tableofcontents

\section*{Introduction}

The recent surge of studies about Lie $\infty $-algebras  or Lie $\infty $-groups, their morphisms and their -oids equivalent (i.e. Lie $\infty $-algebroids \cite{Campos,Voronov,Voronov2} and \textquotedblleft  higher groupoids\textquotedblright\,\cite{Severa}) is usually justified by their use in various fields of theoretical physics and  mathematics. Lie $\infty $-algebras or -oids appear often where, at first look, they do not seem to be part of the story, but end up to be  needed to answer natural questions, in particular questions where no higher-structure concept seems a priori involved.
Among examples of such a situation, let us cite deformation quantization of Poisson manifolds \cite{Kontsevich} and many recent developments of BV operator theory, e.g. \cite{CamposBV}, deformations of coisotropic submanifolds \cite{CattaneoFelder},  integration problems of Lie algebroids by stacky-groupoids \cite{Zhu}, complex submanifolds and Atiyah classes  \cite{Kapranov,ChenStienonXu,LSX}.  The list could continue.

\noindent
For instance, in \cite{LLS}-\cite{LavauSylvain}, it is proven that \textquotedblleft behind\textquotedblright\,most singular foliations $ \mathcal F$ there is a natural homotopy class of Lie $\infty $-algebroids, called \emph{universal Lie $\infty $-algebroid of $\mathcal F $}, and that the latter answers natural basic questions about the existence of \textquotedblleft good\textquotedblright\,generators and relations for a singular foliation.  The present article is mainly an algebraization of \cite{LLS}, algebraization that allows to enlarge widely the classes of examples. More precisely, our Theorems \ref{thm:existence} and \ref{th:universal} are similar to the main theorems Theorem 2.8. and Theorem 2.9 in \cite{LLS}:
\begin{enumerate}
    \item Theorem \ref{thm:existence} equips  any free $\mathcal O $-resolution of a Lie-Rinehart algebra $\mathcal A $ with a Lie $\infty $-algebroid structure (Theorem 2.8. in \cite{LLS} was a statement for geometric resolutions of locally real analytic singular foliation on an open subset with compact closure). This is a sort of homotopy transfer theorem, except that no existing homotopy transfer theorem applies in the context of \underline{generic} $\mathcal O $-modules (for instance, \cite{Campos} deals only with projective $\mathcal O $-modules).
    { The difficulty is that we cannot apply the explicit transfer formulas that appear in the homological perturbation lemma because there is in general no $\mathcal O$-linear section of $\mathcal A $ to its projective resolutions.}
    \item Theorem \ref{th:universal} states that any Lie $\infty $-algebroid structure that terminates in $\mathcal A $ comes equipped with a unique up to homotopy Lie $\infty $-algebroid morphism to any structure as in the first item (Theorem 2.8. in \cite{LLS} was a similar statement for Lie $\infty $-algebroids whose anchor takes values in a given singular foliation).
\end{enumerate}
As in \cite{LLS}, an immediate corollary of the result is that any two Lie $\infty $-algebroids as in the first item are homotopy equivalent, defining therefore a class canonically associated to the Lie-Rinehart algebra, that deserve in view of the second item to be called \textquotedblleft universal\textquotedblright.

However:
\begin{enumerate}
 \item  While \cite{LLS} dealt with Lie $\infty $-algebroids over projective resolutions of finite length and finite dimension, we work here with  Lie $\infty $-algebroids over any \emph{free} resolution -even those of infinite length and of infinite dimension in every degree.
    \item In particular, since we will work in a context where taking twice the dual does not bring back the initial space, we can not work with $Q$-manifolds (those being the \textquotedblleft dual\textquotedblright\,of Lie $\infty $-algebroids): it is much complicated to deal with morphisms and homotopies.  
\end{enumerate}
By doing so, several limitations of \cite{LLS} are overcome. While \cite{LLS} only applied to singular foliations which were algebraic or locally real analytic on a relatively compact open subset, the present article associates a natural homotopy class of Lie $ \infty$-algebroids to any Lie-Rinehart algebra, and in particular
\begin{enumerate}
    \item[a)]  to any singular foliation on a smooth manifold, (finitely generated or not). This construction still works with singular foliations in the sense of Stefan-Sussmann for instance,
    \item[b)] to any affine variety, to which we associate its Lie-Rinehart algebra of vector fields),
     and more generally to derivations of any commutative algebra,
    \item[c)] to singular Lie algebroids in the sense of Androulidakis and Zambon  \cite{ZambonMarco2},
    \item[d)] to unexpected various contexts, e.g. Poisson vector fields of a Poisson manifold, seen as a Lie-Rinehart algebra over Casimir functions, or symmetries of a singular foliation, seen as a Lie-Rinehart algebra over functions constant on the leaves. 
\end{enumerate}
These Lie $\infty $-algebroids are constructed on $\mathcal O$-free resolutions of the initial Lie-Rinehart algebra
over $\mathcal O $. They are acyclic universal in some sense, and they also are in particular unique up to homotopy equivalence. Hence the title. 

A similar algebraization of the main results of \cite{LLS}, using semi-models category, appeared recently in Ya\"el Fr\'egier and Rigel A. Juarez-Ojeda \cite{Fregier}.
There are strong similarities between our results and theirs, but morphisms and homotopies in \cite{Fregier} do not match ours. It is highly possible, however, that  Theorem \ref{thm:existence} could be recovered using their results. { Luca Vitagliano \cite{VitaglianoLuca} also constructed Lie $\infty$-algebra  structures out of regular foliations, which are of course a particular case of Lie-Rinehart algebra. These constructions do not have the same purposes. For regular foliations, our Lie $\infty$-algebroid structure is trivial in the sense that it is a Lie algebroid, while his structures become trivial when a good transverse submanifold exists. Lars Kjeseth \cite{LARSKJESETH,LARSKJESETHBRST} also has a notion of resolutions of Lie-Rinehart algebras. But his construction is more in the spirit of Koszul-Tate resolution: Definition 1. in \cite{LARSKJESETH} defines Lie-Rinehart algebras resolutions as resolutions of their Chevalley-Eilenberg coalgebra, not of the Lie-Rinehart algebra itself as a module. It answers a different category of questions, related to BRST and the search of cohomological model for Lie-Rinehart algebra cohomology. For instance, the construction in \cite{LARSKJESETHBRST} for an affine variety and its normal bundle does not match the constructions of Section 3.3 and, in our opinion, are of independent interest.}

Our construction admits an important consequence. For any ideal $\mathcal I \subset \mathcal O$, and any $\mathcal O $-free resolution $\mathcal E_\bullet$ of a Lie-Rinehart algebra $\mathcal A $ over $\mathcal O $, the tensor product $ \mathcal E_\bullet \otimes_\mathcal O \mathcal O/\mathcal I $ computes ${\mathrm{Tor}}_{\mathcal O}(\mathcal A, \mathcal O / \mathcal I ) $. It is easy to check that if $\mathcal I $ is a Lie-Rinehart ideal (i.e. if $ \rho (\mathcal A)[\mathcal I] \subset \mathcal I$), then the universal Lie $\infty $-algebroid structure that we constructed goes to the quotient to $ \mathcal E_\bullet \otimes_\mathcal O \mathcal O/\mathcal I $ (which is a complex computing ${\mathrm{Tor}}_{\mathcal O}(\mathcal A, \mathcal O / \mathcal I )$).
Also, two different universal Lie $\infty$-algebroid structures on two different resolutions being homotopy equivalent, they lead to homotopy equivalent Lie $\infty $-algebroid structures on two complexes computing ${\mathrm{Tor}}_{\mathcal O}(\mathcal A, \mathcal O / \mathcal I ) $. 
To a Lie-Rinehart ideal $\mathcal I $ is therefore associated a homotopy equivalence class of Lie $\infty $-algebroids on such complexes.

This article is organized as follows:
In Section \ref{sec-prerequisite}, we fix notations and review definitions, examples, and give main properties of Lie-Rinehart algebras. 
Afterwards, we present the concept of Lie $\infty$-algebroids, their morphisms, and homotopies of those.
 In Section \ref{sec:main}, we state and prove the main results of this paper, i.e. the equivalence of categories between Lie-Rinehart algebras and homotopy classes of free acyclic Lie $\infty$-algebroids, which justifies the name \emph{universal Lie $\infty$-algebroid} of a Lie-Rinehart algebra.  Section \ref{sec:examples} is devoted to a precise description of the universal Lie $\infty$-algebroids of several Lie-Rinehart algebras.
 The complexity reached by the higher brackets in these examples should convince the reader that it is not a trivial structure, 
 even for relatively simple Lie-Rinehart algebras. 

\section*{Acknowledgements}
{

We would like to thank the CNRS MITI 80Prime project GRANUM, and the Institut Henri Poincaré for hosting us in november 2021. We thank the referee for pointing relevant references and a careful reading.  We acknowledge discussion with S. Lavau at early stages of the project. We would like to thank C. Ospel, P. Vanhaecke and V. Salnikov for giving the possibility to present our results at the \textquotedblleft Rencontre Poisson à La Rochelle, 21-22 October 2021\textquotedblright. Last, R. Louis would like to express sincere gratitude to Université d'État d'Haïti and more precisely the department of mathematics of \'Ecole Normale Supérieure (ENS), for giving a golden opportunity to meet mathematics. He also would like to acknowledge the full financial support for this Phd from R\'egion Grand Est.

}

\vspace{0.5cm}
\noindent
\begin{convention*}
Throughout this article, $ \mathcal O$ is a commutative unital algebra over a field $ \mathbb K$ of characteristic zero, and ${\mathrm{Der}}(\mathcal O) $ stands for its Lie algebra of $\mathbb{K}$-linear derivations. Also, $ \delta [f]$ stands for the derivation $\delta \in  {\mathrm{Der}}(\mathcal O) $ applied to $f\in \mathcal O$.
\end{convention*}

\noindent
An other important convention is that for $\mathcal O $ an algebra over $\mathbb K $, and $\E $ an $\mathcal O$-module, we will denote by $S_\mathbb K \mathcal E $ the symmetric powers over the domain $\mathbb K $ and $\bigodot \E$ its symmetric powers over $\mathcal O$. 

\section{Lie-Rinehart algebras and Lie $\infty$-algebroids}

Except for Remark \ref{loc:res}, this section is essentially a review of the literature on the subject, see, e.g. \cite{MR1625610,MR2075590}. 

\label{sec-prerequisite}

\subsection{Lie-Rinehart algebras and their morphisms}
\label{LR-Mor}
\subsubsection{Definition of Lie-Rinehart algebras}

\begin{definition}
A \emph{Lie-Rinehart algebra} over $ \mathcal O$ is a a triple $(\mathcal A , [\cdot, \cdot]_\mathcal A , \rho_{\mathcal A})$ with $ \mathcal A$ an $ \mathcal O$-module, $[\cdot, \cdot]_\mathcal A  $ a Lie $\mathbb{K}$-algebra bracket on  $\mathcal A $, and $ \rho_\mathcal A \colon \mathcal A \longrightarrow  {\mathrm{Der}}(\mathcal O)$ a $ \mathcal O$-linear Lie algebra morphism called \emph{anchor map}, satisfying the the so-called \emph{Leibniz identity}:
 $$   [  a,  f b ]_\mathcal A  = \rho_\mathcal A (a ) [f] \, b + f [a,b]_\mathcal A  \hbox{ for all $ a,b \in \mathcal A, f \in \mathcal O$}. $$
\end{definition}

\noindent
A Lie-Rinehart algebra is said to be a \emph{Lie algebroid} if $ \mathcal A$ is a projective $ \mathcal O$-module.
\begin{definition}
Given Lie-Rinehart algebras
$(\mathcal A , [\cdot, \cdot]_\mathcal A , \rho_{\mathcal A})$ and $(\mathcal A' , [\cdot, \cdot]_\mathcal A' , \rho_{\mathcal A'})$ over algebras $\mathcal O $  and $\mathcal O' $ respectively.
A \emph{Lie-Rinehart algebra morphism over an algebra morphism $\eta\colon\mathcal O\longrightarrow\mathcal{O}'$} is a Lie algebra morphism $\Phi\colon\mathcal A\longrightarrow\mathcal{A}'$  such that for every $a\in\mathcal A$ and $f\in\mathcal O$:\begin{enumerate}
    \item $\Phi(fa)=\eta(f)\Phi(a)$
    \item $\eta(\rho_\mathcal A(a)[f])=\rho_{\mathcal A'}(\Phi(a)[\eta(f)]$.
\end{enumerate}When $\mathcal O =\mathcal O'$ and $\eta=\text{id}$, we say that $\Phi$ is a \emph{Lie-Rinehart algebra morphism} \emph{over $\mathcal{O}$}.
\end{definition}
    
\begin{remark}\label{loc:res} Let us recall some basic constructions for Lie-Rinehart algebras.
\begin{enumerate}
    \item \emph{Restriction.}
Consider a Lie-Rinehart algebra $(\mathcal A , [\cdot, \cdot]_\mathcal A , \rho_{\mathcal A})$  over $\mathcal O $.
For every \emph{Lie-Rinehart ideal} $\mathcal I \subset \mathcal O$, i.e. any ideal
such that
$$ \rho_\mathcal A (\mathcal A) [\mathcal I] \subset \mathcal I$$
the quotient space $\mathcal A / \mathcal I \mathcal A $ inherits a natural Lie-Rinehart algebra structure over $\mathcal O / \mathcal I$. We call this Lie-Rinehart algebra the \emph{restriction w.r.t the Lie-Rinehart ideal $ \mathcal I$}. In the context of affine varieties, when $\mathcal I$ is the ideal of functions vanishing on an affine subvariety $W$, we shall denote $\frac{\mathcal A}{\mathcal{I}\mathcal A}$ by $\mathfrak i_W^* \mathcal A $. 
    \item \emph{Localization.} Assume that $\mathcal O $ is unital. Let $ S\subset\mathcal O$ be a multiplicative  subset with no zero divisor which contains the unit element. The localization module $S^{-1}\mathcal A=\mathcal A\otimes_\mathcal O S^{-1}\mathcal O$ comes equipped with a natural structure of Lie-Rinehart algebra over the localization algebra $S^{-1}\mathcal O$. The localization map $\mathcal A\hookrightarrow S^{-1}\mathcal A$ is a Lie-Rinehart algebra morphism over the localization map $\mathcal O\hookrightarrow S^{-1}\mathcal O$.
    Since localization exists, the notion of sheaf of Lie-Rinehart algebras \cite{VillatoroJoel} over a projective variety, or a scheme, makes sense.
      \item \emph{Algebra extension.} Assume that the algebra $\mathcal O $ is unital and has no zero divisor, and let $\mathbb O $ be its field of fraction. For any subalgebra $\tilde{\mathcal O}$ with $\mathcal O \subset \tilde{\mathcal O} \subset \mathbb O$ such that $\rho(a) $ is for any $a \in \mathcal A$ valued in derivations of $ {\mathbb O}$ that preserves $ \tilde{\mathcal O} $, there is natural Lie-Rinehart algebra structure  over  $\tilde{\mathcal O} $ on the space $ \tilde{\mathcal O}\otimes_{\mathcal O}\mathcal A$. 
\item {\emph{Blow-up at the origin.}} Let us consider a particular case of the previous construction, when $\mathcal O $ is the algebra $\mathbb C [x_1, \dots, x_N] $. If  the anchor map of a Lie-Rinehart algebra $\mathcal A $ over $\mathcal O $ takes values in vector fields on $\mathbb C^N $ vanishing at the origin, then for all $i=1, \dots, N$,   the polynomial algebra ${\mathcal O}_{U_i} $ generated by $ \frac{x_1}{x_i} , \dots,\frac{x_{i-1}}{x_i},$ $  x_i, \frac{x_{i+1}}{x_i},\dots, \frac{x_N}{x_i} $ satisfies the previous condition, and  ${\mathcal O}_{U_i} \otimes_\mathcal O \mathcal A$ comes equipped with a Lie-Rinehart algebra. Geometrically, this operation corresponds to taking the blow-up of $\mathbb C^N $ at the origin, then looking at the $i$-th natural chart $U_i $ on this blow-up: $\mathcal O_{U_i} $ are the polynomial functions on $U_i $. The family $ {\mathcal O}_{U_i} \otimes_{\mathcal O}\mathcal A$ (for $i=1, \dots,N$) is therefore an atlas for a sheaf of Lie-Rinehart algebras (in the sense of \cite{VillatoroJoel}) on the blow-up of $\mathbb C^N $ at the origin, referred to as the \emph{blow-up of $\mathcal A $ at the origin}.
\end{enumerate}
\end{remark}

\subsubsection{Geometric and Algebraic Examples}

Below is an ordered list of examples to which we intend to apply our results: Vector fields vanishing on subsets of a vector space (Example \ref{ex:vanishing}), Lie algebroids (Example \ref{ex:LieAlg}),  cohomology in degree of $-1$ a Lie $\infty $-algebroid (Example \ref{ex:LieInftyInduced}), singular foliations, or non-finitely generated generalizations of those (Example \ref{ex:singFol}) are  Lie-Rinehart algebras over the algebra of functions on a manifold $M $. We also give an example of a Lie-Rinehart algebra over Casimir functions (Example \ref{ex:OverCasimir}).

\begin{example}
For every commutative $\mathbb K $-algebra $ \mathcal O$, the Lie algebra $\mathcal A= {\mathrm Der}({\mathcal O}) $ of derivations of a commutative algebra $ \mathcal O$ is a Lie-Rinehart algebra over $ \mathcal O$, with the identity as an anchor map. Vector fields on a smooth  or Stein manifold or an affine variety are therefore instances of Lie-Rinehart algebras over their respective natural algebras of functions.
\end{example}
\begin{example}\label{ex:vanishing}
Let $(\mathcal A,\lb_\mathcal A,\rho_\mathcal A)$ a Lie-Rinehart algebra over an algebra $\mathcal O$ and $\mathcal I\subset\mathcal O$ be an ideal. The sub-module $\mathcal I\mathcal A$ is a sub-Lie-Rinehart algebra of $\mathcal A$ and its anchor is given by the restriction of $\rho_\mathcal A$ over $\mathcal I \mathcal A\subset\mathcal A$. It follows easily from 
 $$ [ f a, g b]_\mathcal A = fg [a,b]_\mathcal A + f \rho_\mathcal A (a) [g] \, b - g \rho_\mathcal A (b) [f] \, a  \hbox{ for all $a,b \in \mathcal A, f,g \in \mathcal I$}.$$ 
\end{example}
\begin{example} \label{ex:LieAlg}
Let $M$ be a smooth manifold. By Serre-Swan Theorem, Lie algebroids (in the sense of \cite{Mackenzie}) over $M$ are precisely Lie-Rinehart algebras over $C^\infty(M)$
of the form $(\Gamma(A), [\cdot, \cdot], \rho) $
	where $A$ is a vector bundle over $M$ and  $\rho : A\rightarrow T M$ is a vector bundle morphism.
	
\end{example}
\begin{example}
\label{ex:LieInftyInduced}(see section \ref{sec:infty-alg-oid})	For every Lie $\infty$-algebroid $(\E,(\ell_k)_{k\geq 1}, \rho_\mathcal E)$ over an algebra $\mathcal O$, the quotient space $\frac{\E_{-1}}{\ell_1(\E_{-2})}$ comes equipped with a natural Lie-Rinehart algebra over $\mathcal O$, that we call the \emph{basic Lie-Rinehart algebra of $ (\E,(\ell_k)_{k\geq 1}, \rho_\E) $}.
\end{example}
\begin{example} \label{ex:singFol}
There are several manner to define singular foliations on a manifold $M$.
All these definitions have in common to define sub-Lie-Rinehart algebras $\mathcal F $ of the Lie-Rinehart algebra $\X (M) $ of vector fields on $M$ (or $\X_c (M) $, i.e. compactly supported vector fields on $M$).
With this generality, unfortunately, there are no good definition of leaves: as a consequence, several assumptions are generally made on $\mathcal F $, and
singular foliations are usually defined as sub-Lie-Rinehart algebras $\mathcal F $ of $\X (M) $ (or $\X_c (M) $) satisfying one of the conditions below:
\begin{enumerate}
    \item \emph{\textquotedblleft singular foliation admitting leaves\textquotedblright}: there exists a partition of $M$ into submanifolds called leaves such that for all $m\in M$, the image of the evaluation map $\mathcal F \to T_m M  $  is the tangent space of the leaf through $m$ (when $\mathcal F $ coincides with the space of vector fields tangent to all leaves at all points, we shall speak of a \emph{\textquotedblleft Stefan-Sussman singular foliation\textquotedblright})
     \item \emph{\textquotedblleft self-preserving singular foliations\textquotedblright}: the flow of vector fields in $\mathcal F $, whenever defined, preserves $\mathcal F $,
    \item \emph{\textquotedblleft locally finitely generated singular foliations\textquotedblright}: $\mathcal{F} \subset \X_c(M)$ is locally finitely generated over $C^\infty(M)$ and closed under Lie bracket, see e.g. \cite{Cerveau,Dazord,Debord,AZ}),
    \item \emph{\textquotedblleft finitely generated singular foliations\textquotedblright}: when $\mathcal{F} \subset \X(M)$ is finitely generated over $C^\infty(M)$ and closed under Lie bracket.
\end{enumerate}
It is known that Condition $n+1$ above implies Condition $n$, for $n=1,2,3 $. The converse implications are not true in general. See \cite{LavauSylvain} for an overview of the matter.
\end{example}


	\begin{example}
	\emph{Singular subalgebroids} of a Lie algebroid $ (A, [\cdot, \cdot], \rho)$, i.e. submodules of $\Gamma(A)$ stable under Lie bracket, are examples of Lie-Rinehart algebras: their anchors and brackets are the restrictions of the anchors and brackets of $\Gamma(A)$.
	Locally finitely generated ones are studied in \cite{ZambonMarco,ZambonMarco2,ZambonMarco3}.
	In particular, sections of a Lie algebroid valued in the kernel of the anchor map form a Lie-Rinehart algebra ${\mathrm{Ker}}(\rho) $ for which the anchor map is zero.
\end{example}

\begin{example}
	For a singular foliation $\mathcal{F}$ (in any one of the four senses explained Example \ref{ex:singFol}) on a manifold $M$, consider $\mathcal{S}:=\left\lbrace X\in\mathfrak{X}(M)\mid [X,\mathcal{F}]\subseteq\mathcal{F} \right\rbrace $ (i.e. infinitesimal symmetries of $\mathcal F$) and $$\mathcal{C}:=\left\lbrace f\in\mathcal{C}^\infty(M)\mid Y[f]=0, \;\text{for all}\;Y\in\mathcal{F}\right\rbrace $$ (that can be thought of as functions constant along the leaves of $\mathcal F$). The quotient $\frac{\mathcal{S}}{\mathcal{F}}$ is Lie-Rinehart algebra over  $\mathcal{C}$.
\end{example}

\begin{example} \label{ex:OverCasimir}
	Let $(M,\pi)$ be a Poisson manifold. We define $\mathcal{A}:=\text{H}_\pi^1(M)$ to be the first Poisson cohomology of $\pi$ and $\text{H}_\pi^0(M)=\text{Cas}(\pi)$ to be the algebra of Casimir functions. The bracket of vector fields makes $\mathcal A $ a Lie-Rinehart algebra over $\text{Cas}(\pi)$.
\end{example}

\subsection{Lie $\infty$-algebroids and their morphisms}

Lie $\infty$-algebras are well-known  to be coderivations of degree $-1$ squaring to $0$ of the graded symmetric algebra $S(E)$. 
For Lie $\infty $-algebroids, the situation is more involved, because the $2$-ary bracket is not $\mathcal O $-linear. In the finite dimensional case \cite{Voronov}, rather than seeing it as a coderivation of the symmetric algebra, it is usual to see it as a derivation of the symmetric algebra of the dual, i.e. as a $Q$-manifold.
The duality \textquotedblleft finite rank Lie $\infty $-algebroids\textquotedblright\,$\simeq$ \textquotedblleft Q-manifolds\textquotedblright\,is especially efficient to deal with morphisms. In the lines above, we present a co-derivation version of Lie $\infty $-algebroids, which is subtle, but gives a decent description of morphisms and their homotopies as co-algebra morphisms.

\subsubsection{Graded symmetric algebras}

Let us fix the sign conventions and recall the definition of (negatively-graded) Lie $\infty $-algebroids. For $\mathcal{E}$ is a graded-$ \mathcal O$ module, we denote by $\lvert x\rvert \in \mathbb{Z}$ the degree of a homogeneous element $x\in \mathcal{E}$. 

\begin{enumerate}
    \item We denote by $\bigodot^\bullet\E  $ and call \emph{graded symmetric algebra of $\E $ over $\mathcal O$} the quotient of the tensor algebra over $\mathcal O $ of $\E$, i.e.
$$ T^\bullet_\mathcal O \E:= \oplus_{k =1}^\infty
\underbrace{ \E \otimes_\mathcal O \cdots \otimes_\mathcal O \E}_{\hbox{\small{$k$ times}}}
$$
 by the ideal generated by $x\otimes_{\mathcal O} y-(-1)^{\lvert x\rvert\lvert y\rvert}y\otimes_{\mathcal O} x$, with $x,y$ arbitrary homogeneous elements of $\E $. We denote by $\odot$ the product in $\bigodot^\bullet \E$.
\item Similarly, we denote by $ S^\bullet_\mathbb K( \E ) $ and call \emph{graded symmetric algebra} of $\E $ over \emph{the field $\mathbb K$} the quotient of the tensor algebra (over $\mathbb K $) of $\E$, i.e.
$$ T^\bullet_\mathbb K \E:= \oplus_{k =1}^\infty
\underbrace{ \E \otimes_\mathbb{K} \cdots \otimes_\mathbb{K} \E}_{\hbox{\small{$k$ times}}}$$  by the ideal generated by $x\otimes_\mathbb{K}  y-(-1)^{\lvert x\rvert\lvert y\rvert}y\otimes_\mathbb{K}  x$, with $x,y$ arbitrary homogeneous elements of $\E $. We denote by $\otimes_\mathbb{K}$ or $\cdot$ the product in $S^\bullet_\mathbb K (\E)$.

\end{enumerate}

The algebras $\bigodot^\bullet\E$ and $ S^\bullet_\mathbb{K} (\E)$ come equipped with two different \textquotedblleft degrees\textquotedblright\, that must not be confused. 
\begin{enumerate}
    \item  We define the \emph{degree} of $x= x_1 \odot \cdots  \odot x_n\in \bigodot^n\E$ or $x=x_1 \cdot \cdots  \cdot x_n\in S^n_\mathbb{K}(\E)$ by 
$$| x_1 \cdot \cdots  \cdot x_n | =  | x_1 \odot \cdots  \odot x_n | = |x_1|+ \cdots  + |x_n|$$
for any homogeneous $ x_1, \dots, x_n\in\E$. With respect to this degree, $\bigodot^\bullet\E $ and $S^\bullet_\mathbb{K}(\E)$ are graded commutative algebras. 
    \item The \emph{arity} of $ x_1 \odot \cdots  \odot x_n\in \bigodot^n\E$ or $ x_1 \cdot \cdots  \cdot x_n\in S^n_\mathbb{K}(\E)$ is defined to be $n$. 
    We have
$ \bigodot^\bullet \E = \oplus_{k \geq 1} \bigodot^k \E $ and $ S^\bullet_\mathbb{K} (\E) = \oplus_{k \geq 1} S^k_\mathbb{K} (\E) $
where $ \bigodot^k\E$ and $ S^k_\mathbb{K}(\E)$ stand for the $\mathcal O$-module of elements of arity~$k$ {and the $\mathbb{K}$-vector space of elements of arity~$k$, respectively}.
\end{enumerate}
\begin{convention}
For $ \E$ a graded $ \mathcal O$-module,
elements of arity $k$ and degree $d$ in $ \bigodot^\bullet\E$ (resp. $S^k_\mathbb{K}(\E)$) shall be denoted by $\bigodot^k \mathcal E_{|_d}$ (resp. $S^k_\mathbb{K}(\E)_{|_d}$).
\end{convention}
\noindent
For any homogeneous elements $x_1 , \ldots , x_k \in \mathcal{E}$ and $\sigma\in\mathfrak{S}_k$ a permutation of $\{1, \ldots, k\}$, the \emph{Koszul sign} $\epsilon(\sigma; x_1 , \ldots , x_k )$ is defined by:
$$  x_{\sigma(1)} \odot \cdots \odot x_{\sigma(k)}= \epsilon(\sigma; x_1 , \ldots , x_k ) \, x_1 \odot \cdots \odot x_k.$$   We often write $\epsilon(\sigma )$ for $ \epsilon(\sigma; x_1 , \ldots , x_k )$.\\
 
\noindent
 For $i, j \in\mathbb{N}$, a \emph{$(i, j)$-shuffle} is a permutation
$\sigma \in \mathfrak{S}_{i+j}$ such that $\sigma(1) <\ldots < \sigma(i)$ and $\sigma(i + 1) < \ldots < \sigma(i + j)$, and the set of
all $(i, j)$-shuffles is denoted by $\mathfrak{S}_{i,j}$. Moreover, for $\Phi \colon \E \rightarrow \E'$ and
$\Psi \colon \E'' \rightarrow \E'''$  two homogeneous morphisms of $\mathbb{Z}$-graded $\mathcal{O}$-modules, then $\Phi\otimes\Psi : \E \otimes\E''\rightarrow \E' \otimes\E'''$ stands for the following morphism:
\begin{equation*}
    (\Phi\otimes\Psi)(x\otimes y) = (-1)^{\lvert \Psi\rvert\lvert x\rvert}\Phi(x)\otimes\Psi(y),\; \text{for all homogeneous}\; x \in \E , y \in \E''.\end{equation*}

\begin{lemma}
Both $\bigodot^\bullet\E$ and $S^\bullet _\mathbb K (\mathcal E) $ admit natural co-commutative co-unital co-algebra structures with respect to the deconcatenation $\Delta$ defined by:
$${\Delta(x_1\odot\cdots\odot x_n)= 
\sum_{i=1}^{n-1}\epsilon(\sigma)\sum_{\sigma\in\mathfrak{S}_{i,n-i}}x_{\sigma(1)}\odot\cdots \odot x_{\sigma(i)}\otimes x_{\sigma(i+1)}\odot\cdots\odot x_{\sigma(n)}
}$$for every $x_1,\ldots, x_n\in \E$.  
\end{lemma}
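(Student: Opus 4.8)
The plan is to verify directly the coalgebra axioms on the generators $x_1\odot\cdots\odot x_n$, treating the two cases $\bigodot^\bullet\E$ and $S^\bullet_\mathbb{K}(\E)$ simultaneously: the arguments are formally identical, the only difference being the ground ring ($\mathcal{O}$ or $\mathbb{K}$) over which the symmetric products and the outer tensor product are formed, and no step below uses any property of that ring beyond $\mathbb{K}\subseteq\mathcal{O}$ (so that the Koszul signs $\epsilon(\sigma)=\pm1$ make sense in it). I will therefore write $\odot$ throughout and comment on the ground ring only where it matters. Since the displayed $\Delta$ ranges over $i=1,\dots,n-1$, it is the reduced coproduct landing in $\bigodot^{\geq1}\E\otimes\bigodot^{\geq1}\E$; the co-unital structure is obtained at the end by adjoining the arity-$0$ summand.

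First I would establish well-definedness, which is the only place where the passage to the quotient intervenes. Writing $F(x_1,\dots,x_n)$ for the right-hand side read as a genuine map $\E^{\otimes n}\to\bigodot\E\otimes\bigodot\E$, the formula descends to $\bigodot^n\E$ precisely when $F$ is graded $\mathfrak{S}_n$-equivariant, i.e. $F(x_{\tau(1)},\dots,x_{\tau(n)})=\epsilon(\tau)\,F(x_1,\dots,x_n)$ for every $\tau\in\mathfrak{S}_n$; because $\mathfrak{S}_n$ is generated by adjacent transpositions it suffices to treat $\tau=(a,a+1)$. For such a $\tau$ the substitution $\sigma\mapsto\sigma\tau$ is a bijection of the set of $(i,n-i)$-shuffles onto itself, and tracking the Koszul sign through this bijection—using that each factor $x_{\sigma(1)}\odot\cdots\odot x_{\sigma(i)}$ is itself graded-symmetric—produces exactly the global sign $\epsilon(\tau)$. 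Hence $\Delta$ is well defined; $\mathcal{O}$-linearity in the case of $\bigodot^\bullet\E$ is immediate since scalars pass through $\otimes_\mathcal{O}$.

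For coassociativity I would compute both $(\Delta\otimes\mathrm{id})\circ\Delta$ and $(\mathrm{id}\otimes\Delta)\circ\Delta$ on $x_1\odot\cdots\odot x_n$ and check that each equals the common triple coproduct
\[
\sum_{\substack{i+j+k=n\\ i,j,k\geq1}}\ \sum_{\sigma\in\mathfrak{S}_{i,j,k}}\epsilon(\sigma)\,
\big(x_{\sigma(1)}\odot\cdots\odot x_{\sigma(i)}\big)\otimes
\big(x_{\sigma(i+1)}\odot\cdots\odot x_{\sigma(i+j)}\big)\otimes
\big(x_{\sigma(i+j+1)}\odot\cdots\odot x_{\sigma(n)}\big),
\]
where $\mathfrak{S}_{i,j,k}$ denotes the set of $(i,j,k)$-shuffles. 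The key combinatorial fact is that composing an $(i+j,k)$-shuffle with an $(i,j)$-shuffle acting on the first block yields each $(i,j,k)$-shuffle exactly once, and symmetrically an $(i,j+k)$-shuffle composed with a $(j,k)$-shuffle on the last block does the same; since the Koszul sign is multiplicative under these compositions, the two iterated coproducts coincide. Co-commutativity follows from the involution $\mathfrak{S}_{i,n-i}\to\mathfrak{S}_{n-i,i}$ exchanging the two blocks: applying the graded flip $a\otimes b\mapsto(-1)^{|a||b|}b\otimes a$ to $\Delta$ permutes the summands according to this involution, and a sign check returns $\Delta$.

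Finally, for co-unitality one replaces $\bigodot^{\geq1}\E$ by the unital algebra $\mathcal{O}\oplus\bigodot^{\geq1}\E$ (resp. $\mathbb{K}\oplus S^{\geq1}_\mathbb{K}\E$), with unit $\mathbf 1$ of arity $0$, extends $\Delta$ by the boundary terms $i=0$ and $i=n$—namely $\mathbf 1\otimes(x_1\odot\cdots\odot x_n)+(x_1\odot\cdots\odot x_n)\otimes\mathbf 1$—and takes as counit $\varepsilon$ the projection onto the arity-$0$ summand; the counit identities $(\varepsilon\otimes\mathrm{id})\circ\Delta=\mathrm{id}=(\mathrm{id}\otimes\varepsilon)\circ\Delta$ are then exactly these two boundary terms. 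The one genuine obstacle throughout is the bookkeeping of Koszul signs: every step reduces to a bijection of shuffles, and the real content is checking that the signs transform multiplicatively under these bijections. I would note that in characteristic zero one can alternatively identify $\bigodot^n\E$ with the graded $\mathfrak{S}_n$-invariants of $\E^{\otimes n}$ via the symmetrizer $\tfrac1{n!}\sum_\sigma\epsilon(\sigma)\sigma$ and deduce well-definedness from the equivariance of plain deconcatenation, but the combinatorial route above has the advantage of working uniformly over $\mathcal{O}$ without any projectivity or flatness assumption.
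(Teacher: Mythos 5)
The paper gives no proof of this lemma at all: it is stated as a standard fact (deconcatenation on a graded symmetric coalgebra), so there is no argument of the authors' to compare yours against. Your verification is the standard one and is essentially correct: reduce well-definedness to adjacent transpositions, prove coassociativity by identifying both iterated coproducts with the triple-shuffle coproduct via the factorization of $(i,j,k)$-shuffles, get cocommutativity from the block-exchange involution on $\mathfrak{S}_{i,n-i}$, and handle the counit separately. Two small remarks. First, in the well-definedness step your assertion that $\sigma\mapsto\sigma\tau$ is a bijection of $\mathfrak{S}_{i,n-i}$ onto itself is not literally true: for $\tau=(a,a+1)$ with both $a,a+1$ in the same block, $\sigma\tau$ fails the monotonicity condition and is not a shuffle. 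What is true --- and what your parenthetical about graded symmetry of each factor is implicitly doing --- is that $\sigma\tau$ factors uniquely as a shuffle composed with within-block permutations, the latter being absorbed by the symmetry of $\odot$ at the cost of exactly the Koszul sign needed to produce the global factor $\epsilon(\tau)$; you should state it that way. Second, your observation that the displayed $\Delta$ is only the reduced coproduct is a genuine point the paper glosses over: since $\bigodot^\bullet\E=\oplus_{k\geq 1}\bigodot^k\E$ has no arity-$0$ summand, the structure as literally defined is non-counital, and adjoining $\mathcal O$ (resp.\ $\mathbb K$) in arity $0$ with the boundary terms $\mathbf 1\otimes v+v\otimes\mathbf 1$ is exactly what is needed to make the word \textquotedblleft co-unital\textquotedblright\,in the statement accurate.
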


\subsubsection{Lie $\infty$-algebroids as co-derivations of graded symmetric algebras}\label{sec:infty-alg-oid}

{
Lie $\infty $-algebroids over manifolds were introduced (explicitly or implicitly) by various authors, e.g. \cite{MR2966944}, \cite{Voronov2}, and \cite{SeveraTitle}. We refer to Giuseppe Bonavolont\`a and Norbert Poncin for a complete overview of the matter \cite{Poncin}. Also, \cite{LARSKJESETH,VitaglianoLuca_R} extend theses notions to the Lie-Rinehart algebra setting. 
}

\begin{definition}\label{def:Linfty}
 A negatively graded Lie $ \infty$-algebroid over $ \mathcal O$ is a collection $\E=(\mathcal E_i)_{i \leq -1} $ of projective $ \mathcal O$-modules, equipped with:
\begin{enumerate}
    \item a collection of $\mathbb K $-linear maps $\ell_i:\bigodot^i \E \longrightarrow \E$ of degree $ +1$ called $i$-ary \emph{brackets}
    \item a $\mathcal O$-linear map $\E_{-1}  \longrightarrow {\mathrm{Der}}(\mathcal O) $ called \emph{anchor map}
\end{enumerate}
satisfying the following axioms~:
\begin{enumerate}\label{def:Jacobi}
    \item[$(i)$] the \emph{higher Jacobi identity}:
    \begin{equation} \sum_{i=1}^{n}\sum_{\sigma\in \mathfrak{S}_{i,n-i}}{{\epsilon(\sigma)}} \, \, \ell_{n-i+1}(\ell_i( x_{\sigma(1)},\ldots,x_{\sigma(i)}),x_{\sigma(i+1)},\ldots,x_{\sigma(n)})=0, \end{equation} for all $n\geq1$ and homogeneous elements $x_1,\ldots,x_n\in\E$;
    \item[$(ii)$] for $ i \neq 2$, the bracket $\ell_i $ is $\mathcal O $-linear, while for $ i=2$,
     $$   \ell_2 ( x,  f y )  = \rho_\mathcal E (x ) [f] \, y + f \ell_2(x,y)  \hbox{ for all $ x,y \in \mathcal E , f \in \mathcal O$ } ,$$
     where, by convention, $  \rho_{\mathcal E}$ is extended by zero on $ \mathcal E_{-i}$ for all $i \geq 2$.
     \item[$(iv)$]  $\rho_\E\circ\ell_1=0$ on $\E_{-2}$.
     \item[$(v)$] $\rho_\E$ is a morphism of brackets, i.e., $\rho_\E(\ell_2(x, y)) = [\rho_\E(x), \rho_\E(y)]$ for all $x,y\in\E_{-1}$.
\end{enumerate}
\end{definition}
\begin{remark}
The third and the fourth axiom are consequences of item $(i)$ and $(ii)$ if $\E_{-1}$ has no zero divisors.
\end{remark}
\begin{convention}
From now on, we will simply say \textquotedblleft Lie $ \infty$-algebroid\textquotedblright\,for \textquotedblleft negatively graded Lie $ \infty$-algebroid\textquotedblright.
\end{convention}

It follows from Definition \ref{def:Linfty} that
  $$   \cdots\stackrel{\ell_1}{\longrightarrow} \mathcal E_{ -3}\stackrel{\ell_1}{\longrightarrow} \mathcal E_{-2}  \stackrel{\ell_1}{\longrightarrow}  \mathcal E_{ -1} $$
  is a complex of projective $\mathcal O$-modules. 
A Lie $\infty $-algebroid is said to be \emph{acyclic} if this complex has no cohomology in degree $\leq -2 $.
 
 There is an  equivalent way to define Lie $ \infty$-algebroids in term of co-derivations. We will use such a definition to deal with  morphisms of Lie $ \infty$-algebroids.

 \begin{remark}
 \cite{Kassel} Recall that a co-derivation $Q_\E$ of the symmetric algebra $S^\bullet_\mathbb{K}(\E)$ is entirely determined by the collection indexed by $k \in \mathbb{N}_0$ of maps called its \emph{$k$-th Taylor coefficients}:
\begin{equation}
\label{eq:Taylor}
    Q_\E^{(k)} \colon S^{k+1}_\mathbb{K}(\E)\stackrel{ Q_\E}{\longrightarrow} S^\bullet_\mathbb{K}(\E)\stackrel{\text{pr}}{\longrightarrow} \E,
\end{equation}
with $\text{pr}$ being the projection onto the term of arity $1$, i.e.
$\text{pr}\colon S^\bullet_\mathbb{K} (\mathcal E) \rightarrow S^{1}_\mathbb K (\mathcal E) \simeq \E$. 
 \end{remark}
 \begin{definition}
A co-algebra morphism $\Phi\colon S^\bullet_\mathbb{K} (\E')\longrightarrow S^\bullet_\mathbb{K}(\E)$ or a co-derivation $Q_\E$ of the symmetric algebra $S^\bullet_\mathbb{K}(\E)$ are said to be of \emph{arity $k\in \mathbb{N}_0$}, if  $\Phi^{(n)}=0$, for $n\neq k$ or $Q_\E^{(n)}=0$, for $n\neq k$.
\end{definition}
 
\subsubsection{Lie $\infty $-algebroids and Richardon-Nijenhuis brackets} 
 
 The space of $\mathbb K $-multilinear maps from $ \E$ to $\E$ admits two gradings, the degree and the arity. Elements of arity $k$ and degree $j$ shall be, by definition, the space
  $$ \text{Hom}_{\mathbb K}^{j} \left( S^{k+1}_\mathbb K  \E \,  , \E \right) := \oplus_{m \in \mathbb Z}\text{Hom}_{\mathbb K} \left( S^{k+1}_\mathbb K  \E \, _{|_{m-j}} , \E_{m}  \right)$$
 The \emph{Richardon-Nihenhuis bracket} {\cite{MR1202431}}-\cite{YKS}
$$ \left[ 
\text{Hom}_{\mathbb K}^i \left(  S^{k+1}_\mathbb K   \E 
, \E  \right) , 
\text{Hom}_{\mathbb K}^j \left(  S^{l+1}_\mathbb K  \E ,\E \right) \right]_{\hbox{\tiny{RN}}} \subset  \text{Hom}_{\mathbb K}^{i+j} \left(  S^{l+k+1}_\mathbb K  \E  , \E\right)$$
is defined on homogeneous elements $A,B\in \text{Hom}_{\mathbb K}^{\bullet} \left( S^{\bullet}_\mathbb K  \E \,  , \E \right) $ by \begin{equation}[A,B]_{\hbox{\tiny{RN}}}=\iota_AB-(-1)^{|A||B|}\iota_BA.\end{equation} Here $\iota_AB$ (also denoted by $A\circ B$) is the interior product defined by
 \begin{equation}( \iota_A B ) (x_1,\ldots,x_{p+q} )=\sum_{\sigma \in \mathfrak{S}_{p,q}} \epsilon( \sigma )A( B ( x_{\sigma ( 1)} ,\ldots , x_{\sigma ( p)} ) , x_{\sigma ( p+1 )} ,\ldots, x_{\sigma ( p+q )} ). \end{equation} The bracket is extended by bilinearity.
It is classical that this bracket is a graded Lie algebra bracket.

Let us relate the bracket with co-derivations. For a given $i \in \mathbb Z $, and a given $A = \sum_{k \geq 0} A^{(k)}$ with $A^{(k)}\in {\mathrm{Hom}}^i_\mathbb K  \left( S^{k+1}_\mathbb K (\E ), \E \right) $, we denote by $\delta_A $ the unique co-derivation with Taylor coefficients $(A^{(k)} )_{k \in\mathbb{N}_0} $. This co-derivation has degree $i$, and the following Lemma is easily checked:

\begin{lemma}\cite{Kassel} \label{lem:RN}
For every $A,B$ of degrees $i,j$ as above, we have
 $$ \delta_A \circ \delta_B - (-1)^{ij}\delta_B  \circ \delta_A = \delta_{[A,B]_{\hbox{\tiny{RN}}}} .$$
\end{lemma}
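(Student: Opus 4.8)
The plan is to exploit the fact, recalled in the remark preceding the statement, that a co-derivation of $S^\bullet_\mathbb K(\E)$ is \emph{entirely determined by its Taylor coefficients}, i.e. by the maps $\text{pr}\circ \delta|_{S^{k+1}_\mathbb K(\E)}\colon S^{k+1}_\mathbb K(\E)\to \E$. First I would observe that both sides of the claimed identity are co-derivations: the right-hand side is one by construction, while the left-hand side is the graded commutator of the two co-derivations $\delta_A$ and $\delta_B$, and the graded commutator of two co-derivations of a co-algebra is again a co-derivation (a standard verification using the defining relation $\Delta\circ \delta = (\delta\otimes \text{id} + \text{id}\otimes\delta)\circ\Delta$ together with the Koszul sign rule for $\otimes$ fixed above). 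Consequently it suffices to check that both sides carry the same Taylor coefficients, i.e. that $\text{pr}\circ(\delta_A\circ\delta_B - (-1)^{ij}\delta_B\circ\delta_A) = [A,B]_{\hbox{\tiny{RN}}}$ as elements of $\text{Hom}^{i+j}_\mathbb K(S^\bullet_\mathbb K(\E),\E)$.

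By bilinearity of both the composition and the Richardson-Nijenhuis bracket, and since the assignment $A\mapsto\delta_A$ is linear so that $\delta_{A}=\sum_k \delta_{A^{(k)}}$, I would reduce to the case where $A=A^{(k)}$ and $B=B^{(l)}$ are of pure arity $k$ and $l$. The heart of the matter is then the identity $\text{pr}\circ\delta_{A}\circ\delta_{B}=\iota_{A}B$. To establish it, I would apply the explicit extension formula of a co-derivation from its Taylor coefficient: $\delta_{B}(x_1\odot\cdots\odot x_{n+1})$ is the sum over $(l+1,n-l)$-shuffles of $\epsilon(\sigma)\,B(x_{\sigma(1)},\dots,x_{\sigma(l+1)})\odot x_{\sigma(l+2)}\odot\cdots\odot x_{\sigma(n+1)}$, which has arity $n-l+1$. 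Applying $\delta_{A}$ lowers arity by $k$, so $\delta_A\circ\delta_B$ has an arity-$1$ component only on $S^{k+l+1}_\mathbb K(\E)$, confirming $\iota_A B\in\text{Hom}(S^{k+l+1}_\mathbb K\E,\E)$. On that component each summand of $\delta_B$ has exactly $k+1$ tensor factors, so the only way $\delta_A$ can produce an arity-$1$ term is to contract all of them; the extension formula then forces $A$ to swallow the whole expression, yielding precisely $\sum_{\sigma\in\mathfrak{S}_{l+1,k}}\epsilon(\sigma)\,A(B(x_{\sigma(1)},\dots,x_{\sigma(l+1)}),x_{\sigma(l+2)},\dots,x_{\sigma(k+l+1)})$, which is the definition of $\iota_A B$.

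Symmetrically $\text{pr}\circ\delta_B\circ\delta_A=\iota_B A$, so the Taylor coefficient of the left-hand side is $\iota_A B-(-1)^{ij}\iota_B A$, which is exactly $[A,B]_{\hbox{\tiny{RN}}}$ by definition. Combined with uniqueness of Taylor coefficients, this proves the Lemma. The step I expect to be the genuine obstacle is the \emph{sign bookkeeping}: one must verify that no Koszul sign beyond the shuffle sign $\epsilon(\sigma)$ survives when $\delta_A$ contracts the element $B(x_{\sigma(1)},\dots,x_{\sigma(l+1)})$ together with the remaining factors, in particular that the degree $|B|$ contributes no extra sign in this pure-contraction term, and that the sign $(-1)^{ij}$ coming from the graded commutator matches the sign $(-1)^{|A||B|}$ in the definition of the Richardson-Nijenhuis bracket. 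These signs are exactly what the conventions for $\Phi\otimes\Psi$ and for the deconcatenation $\Delta$ fixed above are designed to reconcile, so the verification is a careful but routine tracking of Koszul signs through the extension formula.
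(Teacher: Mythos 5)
Your proof is correct; the paper itself gives no argument for this lemma (it is quoted from the reference to Kassel), and your strategy --- both sides are co-derivations, so it suffices to match Taylor coefficients, and the arity count forces $\mathrm{pr}\circ\delta_A\circ\delta_B=\iota_A B$ on $S^{k+l+1}_\mathbb{K}(\E)$ --- is exactly the standard proof. The sign checks you flag do work out with the paper's conventions: the cross terms $\delta_A\otimes\delta_B$ and $(-1)^{ij}\delta_B\otimes\delta_A$ cancel in the commutator, and the full contraction by $A$ carries only the shuffle sign $\epsilon(\sigma)$, matching the definition of $\iota_A B$.
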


We can now give an alternative description of Lie $ \infty$-algebroids in terms of co-derivation (extending the usual \cite{Voronov} correspondence between Lie $ \infty$-algebroids and $Q$-manifolds in the finite rank case).
\noindent
For a good understanding of the next Proposition, see notations of Taylor coefficients in Equation \eqref{eq:Taylor}.

 \begin{proposition}\label{co-diff}
  For a collection $\E=(\mathcal E_k)_{k \leq -1} $ of projective $ \mathcal O$-modules, there is a one-to-one correspondence between Lie $\infty$-algebroid structures on $\E$ and pairs made of co-derivations $Q_\E\colon S^\bullet_\mathbb{K} (\mathcal E) \rightarrow S^\bullet_\mathbb{K} (\mathcal E) $ of degree $+1$ which satisfies $Q_\E^2 = 0$, and a $\mathcal O$-linear morphism, $\rho_\E\colon\E_{-1}\rightarrow \emph{Der}(\mathcal O)$ called the \emph{anchor}, such that 
 \begin{enumerate}
     \item for $k\neq 1$ the $k$-th Taylor coefficient {$Q_\E^{(k)}\colon S_\mathbb{K}^{k+1}(\E')\longrightarrow \E$} of $Q_\E$ is $\mathcal O$-multilinear,
     \item for all $x,y \in \mathcal E$ and $ f \in \mathcal O$, we have, $ Q_\mathcal E^{(1)} (x\cdot  fy) = f   Q_\mathcal E^{(1)} (x\cdot y)  + \rho_\E(x)[f] \, y $,\; 
     \item $\rho_\E \circ Q_\mathcal E^{(0)} =0 $ on $\mathcal E_{-2} $,
     \item {$\rho_\E\circ Q^{(1)}_\E(x.y)=[\rho_\E(x),\rho_\E(y)]$, for all $x,y \in \mathcal E_{-1}$}.
 \end{enumerate}
 The correspondence consists in assigning to a Lie $\infty $-algebroid $(\E, (\ell_k)_{k\geq 1}, \rho_\E) $ the co-derivation $Q_\E$ whose  $k$-th Taylor coefficient  is the $k$-ary bracket $\ell_k $ for all $k \in \mathbb{N}_0$.
 \end{proposition}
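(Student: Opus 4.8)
The plan is to establish the bijection first at the level of the underlying data, then to match each axiom of Definition~\ref{def:Linfty} with the corresponding condition of the Proposition, isolating the higher Jacobi identity as the only genuinely computational point. For the data, I would invoke the Remark preceding the statement (following \cite{Kassel}): a degree $+1$ co-derivation $Q_\E$ of $S^\bullet_\mathbb{K}(\E)$ is the same datum as its sequence of Taylor coefficients $Q_\E^{(k)}\colon S^{k+1}_\mathbb{K}(\E)\to\E$ ($k\geq 0$), which are arbitrary $\mathbb{K}$-linear, graded-symmetric, degree $+1$ maps. Setting $\ell_{k+1}:=Q_\E^{(k)}$ identifies such co-derivations with collections $(\ell_k)_{k\geq 1}$ of $\mathbb{K}$-linear brackets of degree $+1$, the anchor $\rho_\E$ being common to both descriptions. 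The two assignments (brackets $\mapsto$ co-derivation, co-derivation $\mapsto$ Taylor coefficients) are mutually inverse by construction, so the whole content of the Proposition reduces to checking that the axioms $(i)$--$(v)$ are equivalent, under this dictionary, to $Q_\E^2=0$ together with conditions $(1)$--$(4)$.

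Next I would dispatch the $\mathcal O$-module conditions, which translate termwise and require no computation. Condition $(1)$ is exactly the $\mathcal O$-multilinearity of $\ell_k=Q_\E^{(k-1)}$ for $k\neq 2$, i.e. the $i\neq 2$ half of axiom $(ii)$; condition $(2)$ is literally the Leibniz rule for $\ell_2=Q_\E^{(1)}$, i.e. the $i=2$ half of axiom $(ii)$; and, since $Q_\E^{(0)}=\ell_1$ and $Q_\E^{(1)}=\ell_2$, conditions $(3)$ and $(4)$ are verbatim axioms $(iv)$ and $(v)$.

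The heart of the argument is the equivalence $Q_\E^2=0\Leftrightarrow(i)$. Here I would set $\ell:=\sum_{k\geq 1}\ell_k\in\mathrm{Hom}^1_\mathbb{K}(S^\bullet_\mathbb{K}\E,\E)$, so that $Q_\E=\delta_\ell$, and apply Lemma~\ref{lem:RN} with $A=B=\ell$ of degree $1$, which yields $2\,Q_\E^2=\delta_{[\ell,\ell]_{\hbox{\tiny{RN}}}}$. Since a co-derivation vanishes if and only if all its Taylor coefficients vanish and $\mathrm{char}\,\mathbb{K}=0$, we obtain $Q_\E^2=0$ iff $[\ell,\ell]_{\hbox{\tiny{RN}}}=2\,\iota_\ell\ell=0$. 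It then remains to unwind the interior product: grouping $(\iota_\ell\ell)(x_1,\dots,x_n)$ according to the arity $i$ of the inner bracket shows that its value equals
\[
\sum_{i=1}^{n}\sum_{\sigma\in\mathfrak{S}_{i,n-i}}\epsilon(\sigma)\,\ell_{n-i+1}\bigl(\ell_i(x_{\sigma(1)},\dots,x_{\sigma(i)}),x_{\sigma(i+1)},\dots,x_{\sigma(n)}\bigr),
\]
which is precisely the left-hand side of the higher Jacobi identity. Hence $\iota_\ell\ell=0$ if and only if $(i)$ holds for every $n\geq 1$, completing the equivalence and, with it, the correspondence.

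I expect the main obstacle to be the sign and shuffle bookkeeping in this last step, namely verifying that the Koszul signs $\epsilon(\sigma)$ in the stated formula for $\iota_\ell\ell$ coincide with those of axiom $(i)$ with no residual degree-dependent factors. This is where conventions must be handled carefully, but it goes through cleanly precisely because we work throughout with the \emph{symmetric} algebra and with brackets of homogeneous degree $+1$, so that no décalage sign is introduced between the $\ell_k$ and $Q_\E$; everything else is a direct translation through the Taylor-coefficient dictionary.
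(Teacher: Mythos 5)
Your proposal is correct and follows essentially the same route as the paper: the paper's proof likewise observes that the higher Jacobi identity is equivalent to $\sum_{i=1}^{n}[\ell_i,\ell_{n+1-i}]_{\hbox{\tiny{RN}}}=0$ for all $n$ and then invokes Lemma \ref{lem:RN}, which is exactly your computation $2Q_\E^2=\delta_{[\ell,\ell]_{\hbox{\tiny{RN}}}}$ decomposed by arity. You merely spell out more explicitly the Taylor-coefficient dictionary and the unwinding of $\iota_\ell\ell$, which the paper leaves implicit.
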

 \begin{proof}
 The higher Jacobi identity is equivalent to
 $$ \sum_{i=1, \dots,i=n} [\ell_i, \ell_{n+1-i}]_{\hbox{\tiny{RN}}} =0 .$$
For all positive integer $n$. The statement is then an immediate consequence of Lemma \ref{lem:RN}.
\end{proof}
\begin{convention}
 From now, when relevant, we will sometime denote an underlying structure of Lie $\infty$-algebroid $(\E,(\ell_k)_{k\geq 1},\rho_\E)$ on $\E$ by  $(\E,Q_\E)$ instead. 
\end{convention}

 \begin{remark}
 Notice that $Q_\E\colon S^\bullet_\mathbb{K}(\E)\longrightarrow S^\bullet_\mathbb{K}(\E)$ does not induce a co-derivation on $\bigodot^\bullet\E$ unless $\rho_\E=0$.
 \end{remark}

\subsubsection{Universal Lie $\infty$-algebroids of Lie-Rinehart algebras}
  
\begin{definition}
We say that a Lie $\infty $-algebroid $(\E, (\ell_k)_{k\geq 1},\rho_\E)$ \emph{terminates} at a Lie-Rinehart algebra $(\mathcal A, [\cdot, \cdot]_\mathcal A, \rho_\mathcal A)  $ when it is equipped with a $\mathcal O $-linear map $\pi: \mathcal E_{-1} \longrightarrow \mathcal A  $, called \emph{hook},  such that
 $$ \rho_A  \circ \pi = \rho_{\mathcal E}  \hbox{ and }  [\pi(x),\pi(y)]_\mathcal A = \pi (\ell_2 (x,y)),\; \text{for all}\;x,y\in\E_{-1}.$$
\end{definition}
 
\begin{example}
\label{ex:basic}
Every Lie $\infty $-algebroid $(\E, (\ell_k)_{k\geq 1},\rho_\E)$ terminates at the basic Lie-Rinehart algebra $\mathcal E_{-1} / \ell_1 (\mathcal E_{-2} ) $ of Example \ref{ex:LieInftyInduced}, the projection $\pi \colon \E_{-1} \longrightarrow \mathcal E_{-1} / \ell_1 (\mathcal E_{-2} ) $ being the hook.
\end{example}


\begin{definition}
Let  $(\mathcal A, [\cdot, \cdot], \rho_\mathcal A) $ be a Lie-Rinehart algebra. A Lie $\infty $-algebroid $(\E, (\ell_k)_{k\geq 1}, \rho_\E)$ that terminates at $\mathcal A $ through an hook $\pi $ is said to be \emph{universal for $\mathcal A $} if 
    $$   \cdots\stackrel{\ell_1}{\longrightarrow} \mathcal E_{ -2}\stackrel{\ell_1}{\longrightarrow} \mathcal E_{-2}  \stackrel{\ell_1}{\longrightarrow}  \mathcal E_{ -1} \stackrel{\pi}{\longrightarrow}    \mathcal A \longrightarrow 0. $$
 is a projective resolution of $\mathcal A $ in the category of $\mathcal O$-modules.
\end{definition}

In other words, a universal Lie $\infty $-algebroid of a Lie-Rinehart algebra $(\mathcal A, [\cdot, \cdot], \rho) $ is a Lie $\infty $-algebroid built on a projective resolution of $\mathcal A$ as an $\mathcal O $-module, whose  Lie-Rinehart algebra is $(\mathcal A, [\cdot, \cdot], \rho) $.

\subsubsection{Morphisms of Lie  $\infty$-algebroids and their homotopies}
This section extends Section 3.4 of \cite{LLS} to the infinite dimensional setting.
Let $\E$ and $ \E'$ be graded $ \mathcal O$-modules. A co-algebra morphism $\Phi\colon S^\bullet_\mathbb{K}(\E')\longrightarrow S^\bullet_\mathbb{K}(\E)$ is completely determined by the collection indexed by $k \in \mathbb{N}_0 $ of maps called its \emph{$k$-th Taylor coefficients}:
$$\Phi^{(k)} \colon S^{k+1}_\mathbb{K}(\E')\stackrel{ \Phi}{\longrightarrow} S^\bullet_\mathbb{K}(\E)\stackrel{\text{pr}}{\longrightarrow} \E,$$
with $\text{pr}$ being the projection onto the term of arity $1$, i.e.
$\text{pr} \colon S^\bullet_\mathbb{K} (\mathcal E) \rightarrow S^{1}_\mathbb K (\mathcal E)\simeq \E$. 

The following Lemma is straightforward:

\begin{lemma}\label{Tay-C}
Let $ \Phi \colon S^\bullet_\mathbb{K} (\mathcal E') \longrightarrow S^\bullet_\mathbb{K} (\mathcal E)$ be a co-algebra morphism. The following conditions are equivalent:
\begin{enumerate}
    \item[(i)] For every $n \geq 0$, the $n$-th Taylor coefficient $ \Phi^{(n)} \colon S^{n+1}_\mathbb K (\mathcal E' ) \longrightarrow \mathcal E$ of  $ \Phi$ is $\mathcal O$-multilinear
    \item[(ii)] There exists an induced co-algebra morphism $\Phi^\mathcal O\colon \bigodot^\bullet\mathcal E' \longrightarrow \bigodot^\bullet\mathcal E $ making the following diagram commutative~:
    $$ \xymatrix{  \ar[d] S^\bullet_\mathbb{K} (\mathcal E') \ar[r]^{\Phi} &   \ar[d]  S^\bullet_\mathbb{K} (\mathcal E) \\ \bigodot^\bullet\mathcal E'  \ar[r]^{\Phi^\mathcal O} & \bigodot^\bullet\mathcal E}  
    $$
\end{enumerate}
\end{lemma}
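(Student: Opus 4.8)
The plan is to establish the equivalence by unwinding what it means for the co-algebra morphism $\Phi\colon S^\bullet_\mathbb{K}(\mathcal E')\to S^\bullet_\mathbb{K}(\mathcal E)$ to descend to the quotient $\bigodot^\bullet\mathcal E$. Recall that $\bigodot^\bullet\mathcal E$ is the quotient of $S^\bullet_\mathbb{K}(\mathcal E)$ by the sub-$\mathbb K$-module generated by elements of the form $fx\cdot y - x\cdot fy$ (for homogeneous $x,y$ and $f\in\mathcal O$), together with their products with arbitrary symmetric words; let $\mathcal J_{\mathcal E}\subset S^\bullet_\mathbb{K}(\mathcal E)$ denote this sub-coalgebra-ideal, and similarly $\mathcal J_{\mathcal E'}$. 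The key structural observation is that $\mathcal J_{\mathcal E}$ is precisely the kernel of the canonical projection $p_{\mathcal E}\colon S^\bullet_\mathbb{K}(\mathcal E)\twoheadrightarrow\bigodot^\bullet\mathcal E$, and that this projection is a morphism of co-algebras (deconcatenation is compatible with the relation, which one checks directly on the formula for $\Delta$ since the Koszul signs agree over $\mathbb K$ and over $\mathcal O$). Thus the existence of $\Phi^{\mathcal O}$ making the square commute is equivalent to the inclusion $\Phi(\mathcal J_{\mathcal E'})\subset\mathcal J_{\mathcal E}$, i.e. to $p_{\mathcal E}\circ\Phi$ factoring through $p_{\mathcal E'}$.

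For the implication (i) $\Rightarrow$ (ii), I would argue as follows. A co-algebra morphism into a co-free-type co-algebra is reconstructed from its Taylor coefficients by the standard formula
\begin{equation*}
\Phi(x_1\cdots x_n)=\sum_{p\geq 1}\ \frac{1}{p!}\sum_{\substack{I_1\sqcup\cdots\sqcup I_p=\{1,\dots,n\}}}\pm\,\Phi^{(|I_1|-1)}(x_{I_1})\cdots\Phi^{(|I_p|-1)}(x_{I_p}),
\end{equation*}
where the signs are the appropriate Koszul signs and $x_{I_j}$ denotes the product of the $x_i$ with $i\in I_j$. If every $\Phi^{(k)}$ is $\mathcal O$-multilinear, then each summand manifestly respects the defining relations of $\bigodot^\bullet$: moving a scalar $f\in\mathcal O$ across a $\cdot$ inside some block $x_{I_j}$ is absorbed by $\mathcal O$-multilinearity of $\Phi^{(|I_j|-1)}$, and moving $f$ between blocks is harmless because the output product is taken in $S^\bullet_\mathbb{K}(\mathcal E)$, whose image in $\bigodot^\bullet\mathcal E$ identifies these. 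Hence $p_{\mathcal E}\circ\Phi$ kills $\mathcal J_{\mathcal E'}$, giving the factorization $\Phi^{\mathcal O}$; one then checks $\Phi^{\mathcal O}$ is a co-algebra morphism using that $p_{\mathcal E},p_{\mathcal E'}$ are co-algebra morphisms and $p_{\mathcal E'}$ is epi.

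For the converse (ii) $\Rightarrow$ (i), suppose $\Phi^{\mathcal O}$ exists. Composing the commuting square with the arity-one projection $\mathrm{pr}$, and noting that $\mathrm{pr}$ is compatible with $p_{\mathcal E}$ (both quotients agree in arity one, where $S^1_\mathbb{K}(\mathcal E)\simeq\mathcal E\simeq\bigodot^1\mathcal E$ as $\mathcal O$-modules), I read off that $\Phi^{(n)}=\mathrm{pr}\circ\Phi$ factors through the $\mathcal O$-module $\bigodot^{n+1}\mathcal E'$. Factoring through $\bigodot^{n+1}\mathcal E'$ is exactly $\mathcal O$-multilinearity of $\Phi^{(n)}$, which yields (i). I expect the main obstacle to be bookkeeping rather than conceptual: one must verify carefully that the deconcatenation coproduct is well-defined on the $\mathcal O$-symmetric quotient and that $p_{\mathcal E}$ is genuinely a co-algebra morphism, since the tensor products in the two co-algebras are taken over different rings ($\mathbb K$ versus $\mathcal O$) — the coproduct on $\bigodot^\bullet\mathcal E$ still lands in $\bigodot^\bullet\mathcal E\otimes_\mathbb{K}\bigodot^\bullet\mathcal E$, not over $\mathcal O$, and keeping this distinction straight is the delicate point. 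Given the lemma is asserted to be straightforward, I would present these verifications briefly and emphasize the clean reformulation via the ideals $\mathcal J_{\mathcal E},\mathcal J_{\mathcal E'}$.
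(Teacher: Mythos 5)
The paper offers no proof of this lemma (it is simply declared straightforward), so there is nothing to compare your route against; your reduction to the factorization criterion $\Phi(\mathcal J_{\mathcal E'})\subset\mathcal J_{\mathcal E}$, combined with the reconstruction of $\Phi$ from its Taylor coefficients, is the natural argument and is essentially sound. There is, however, one concrete error in your setup: the deconcatenation on $\bigodot^\bullet\mathcal E$ does \emph{not} land in $\bigodot^\bullet\mathcal E\otimes_\mathbb{K}\bigodot^\bullet\mathcal E$, and $\mathcal J_\mathcal E$ is \emph{not} a coideal for the $\mathbb K$-linear coproduct. Already in arity $2$, for $g=fx\cdot y-x\cdot fy\in\mathcal J_\mathcal E$ one has
$$\Delta(g)=fx\otimes y-x\otimes fy+(-1)^{|x||y|}\bigl(y\otimes fx-fy\otimes x\bigr),$$
which lies entirely in arity $(1,1)$; since $\mathcal J_\mathcal E$ has no arity-one component, $\Delta(g)$ could belong to $\mathcal J_\mathcal E\otimes S+S\otimes\mathcal J_\mathcal E$ only if it vanished, and $fx\otimes_\mathbb{K}y\neq x\otimes_\mathbb{K}fy$ in general. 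The correct statement --- and the one the paper needs for its coalgebra structure on $\bigodot^\bullet\mathcal E$ --- is that the deconcatenation on the quotient is valued in $\bigodot^\bullet\mathcal E\otimes_{\mathcal O}\bigodot^\bullet\mathcal E$, where the offending terms die, and that $p_\mathcal E$ intertwines $\Delta_{\mathbb K}$ with $\Delta_{\mathcal O}$ only after composing with the canonical surjection $\otimes_\mathbb{K}\to\otimes_{\mathcal O}$. With this correction your two diagram chases go through essentially unchanged.

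A second, smaller point concerns (ii) $\Rightarrow$ (i): factoring a $\mathbb K$-linear map through the quotient $S^{n+1}_\mathbb{K}(\mathcal E')\twoheadrightarrow\bigodot^{n+1}\mathcal E'$ only yields that $\Phi^{(n)}$ is \emph{balanced} (one may move $f$ from one argument to another), not that $f$ can be pulled out in front; for $n=0$ the quotient map is the identity and balancedness gives nothing at all. To conclude genuine $\mathcal O$-multilinearity you must additionally use that $\Phi^{\mathcal O}$ is $\mathcal O$-linear, which is what ``co-algebra morphism'' between the $\mathcal O$-coalgebras $\bigodot^\bullet\mathcal E'$ and $\bigodot^\bullet\mathcal E$ should be taken to mean; this hypothesis is worth making explicit in your write-up.
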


\noindent
We say that a co-algebra morphism  $ \Phi \colon S^\bullet_\mathbb{K} (\mathcal E') \longrightarrow S^\bullet_\mathbb{K} (\mathcal E)$ is \emph{$\mathcal O $-multilinear} when one of the equivalent conditions above is satisfied.

Now, let $(\E, \rho_\E, (\ell_k)_{k\geq 1})$ 
and $(\E', \rho_\E', (\ell_k')_{k\geq 1})$ be Lie $\infty$-algebroids. Let $Q_\E$ and $Q_{\E'}$ be their square-zero co-derivations of $S^\bullet_\mathbb{K} (\mathcal E) $ and  $S^\bullet_\mathbb{K} (\mathcal E') $ respectively as in Proposition \ref{co-diff}. Recall from \cite{Stasheff} that Lie $ \infty$-algebra morphisms from $(S^\bullet_\mathbb{K}(\E), Q_\E)$ 
to $(S^\bullet_\mathbb{K}(\E'), Q_{\E'})$ are defined to be co-algebra morphisms $ \Phi\colon S^\bullet_\mathbb{K} (\mathcal E') \longrightarrow S^\bullet_\mathbb{K} (\mathcal E)$ such that
	\begin{equation}\label{def:LM}
	\Phi\circ Q_{\E'}=Q_{\E}\circ\Phi.
	\end{equation}
We will need two additional assumptions to turn a Lie $\infty$-algebr\underline{a} morphism into a Lie $\infty$-algebr\underline{oid} morphism:

\begin{deff}	
\label{def:morph}
	A Lie $\infty$-algebroid morphism from a Lie $\infty$-algebroid $(S^\bullet_\mathbb{K}(\E'), Q_{\E'})$ to a Lie $\infty$-algebroid $(S^\bullet_\mathbb{K}(\E), Q_\E)$, is a  Lie $\infty$-morphism $ \Phi\colon S^\bullet_\mathbb{K} (\mathcal E') \longrightarrow S^\bullet_\mathbb{K} (\mathcal E)$ which is 
	\begin{enumerate}
	    \item  $\mathcal O $-multilinear,
	    \item and satisfies  $\rho_{\E}\circ \Phi^{(0)}=\rho_{\E'}$ on $\E'_{-1}$.
	\end{enumerate}
	Above, $\Phi^{(0)}:(\E',\ell'_1)\longrightarrow(\E,\ell_1)$ is the chain map induced by $ \Phi$ (i.e. the restriction of  $ \Phi\colon S^\bullet_\mathbb{K} (\mathcal E') \longrightarrow S^\bullet_\mathbb{K} (\mathcal E)$ to $\mathcal E'\to\E$).
\end{deff}	

\noindent
When the Lie $\infty$-algebroids $(S^\bullet_\mathbb{K}(\E'), Q_{\E'})$ and $(S^\bullet_\mathbb{K}(\E), Q_\E)$ terminate at a given Lie-Rinehart algebra $(\mathcal{A},[\cdot, \cdot]_\mathcal A , \rho_\mathcal A)$, we define \emph{ morphisms  of Lie $\infty$-algebroids that terminate at $\mathcal A$} as being Lie $\infty$-algebroid morphisms that satisfy  $ \pi \circ \Phi^{(0)} = \pi' $, where $ \pi,\pi'$  are their respective hooks. This property implies the second condition in Definition \ref{def:morph}.

\begin{example}
An $\mathcal O$-linear Lie algebroid morphism $\Phi\colon \mathcal A\longrightarrow \mathcal B$ \cite{Mackenzie} is a Lie $\infty$-algebroid morphism: the corresponding co-algebra morphsism is $a_1\cdot\cdots\cdot a_n \mapsto \Phi(a_1)\cdot\cdots\cdot\Phi(a_n)$,\; for all $a_1,\ldots, a_n \in \mathcal A$.
\end{example}

Let us now define homotopy of Lie $\infty $-algebroid morphisms. 
We start with a technical but important object:

\begin{deff}\label{coder}Let $\Phi:S^\bullet_\mathbb{K} (\E')\mapsto S^\bullet_\mathbb{K} (\E)$ be a graded co-algebra morphism. A $\Phi$-co-derivation of degree $k$ on $S^\bullet(\E')$ is a degree $k$ multilinear map $\mathcal H : S^\bullet_\mathbb{K} (\E')\mapsto S^\bullet_\mathbb{K} (\E)$ which satisfies the (co)Leibniz identity:
\begin{equation} \label{eq:Phicoder} 
	\Delta \circ \mathcal H(v) = (\mathcal H\otimes \Phi)\circ \Delta'(v) + (\Phi \otimes\mathcal H) \circ \Delta'(v)\quad \text{for every}\;\,v\in S^\bullet_\mathbb K (\E').
\end{equation}
\end{deff}

\noindent
For a given co-algebra morphism $\Phi$, $\mathcal H$ is entirely determined by the Taylor coefficient $$ \mathcal H^{(n)} \colon S^{n+1}_\mathbb K (\E') \longrightarrow \E,$$ which are called its \emph{$n$-th Taylor coefficients}:
\begin{equation}\label{TC-expension}
\mathcal H(x_1, \dots,x_n) = \sum_{k=0}^{n-1} \sum_{ \tiny{\begin{array}{c}I \coprod J_1 \coprod \dots \coprod J_k  \\= \{1...n\} \end{array}}} \epsilon(x_I, x_{J_1} , \dots, x_{J_k})  \, \mathcal H^{(|I|)} (x_{I} ) \cdot \Phi^{(|J_1|)}( x_{J_1}) \cdot  \ldots \cdot\Phi^{(|J_k|)} (x_{J_k}) \end{equation}
where for every subset $J  = \{j_1 < ... < j_r\} \subset \{1, \cdots, n\} $, $x_J$ stands for the list $x_{j_1}, \dots, x_{j_r} $ and $|J|=r $ stands for the length of the list.
This formula shows that the Taylor coefficients of $\mathcal H $ are $ \mathcal O$-multilinear if and only if $\mathcal H$ induces a $ {\Phi^\mathcal O}$-co-derivation\footnote{defined as in \eqref{eq:Phicoder} with ${\Phi^\mathcal O} $ instead of $ \Phi$} ${\mathcal H}^\mathcal O \colon \bigodot^\bullet\E'\longrightarrow \bigodot^\bullet\E$.

\begin{proposition}\label{linearity}
Let $ \Phi\colon S^\bullet_\mathbb{K} (\E')\rightarrow S^\bullet_\mathbb{K} (\E)$ be a  Lie $\infty $-algebroid morphism.
For $\mathcal H\colon S^\bullet_\mathbb{K} (\E')\rightarrow S^\bullet_\mathbb{K} (\E)$ a $\mathcal O $-multilinear $ \Phi$-co-derivation of degree $k\leq -1$, 
$\mathcal H \circ Q_{\mathcal E'} -(-1)^k  Q_{\mathcal E} \circ \mathcal H $ is a $\mathcal O $-multilinear $ \Phi$-co-derivation of degree $ k+1$.
\end{proposition}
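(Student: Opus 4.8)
The plan is to verify the three defining features of the claimed object in turn, writing $\mathcal G:=\mathcal H\circ Q_{\mathcal E'}-(-1)^kQ_\E\circ\mathcal H$: that $\mathcal G$ has degree $k+1$, that it is a $\Phi$-co-derivation, and that it is $\mathcal O$-multilinear. The degree is immediate: $Q_\E$ and $Q_{\mathcal E'}$ have degree $+1$ and $\mathcal H$ has degree $k$, so both $\mathcal H\circ Q_{\mathcal E'}$ and $Q_\E\circ\mathcal H$ have degree $k+1$, hence so does $\mathcal G$. Note that $k\leq -1$ plays no role here beyond placing $\mathcal G$ in degree $\leq 0$.

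For the $\Phi$-co-derivation property, I would compute $\Delta\circ\mathcal G$ and compare it with $(\mathcal G\otimes\Phi+\Phi\otimes\mathcal G)\circ\Delta'$. The ingredients are the co-Leibniz identity \eqref{eq:Phicoder} for $\mathcal H$, namely $\Delta\circ\mathcal H=(\mathcal H\otimes\Phi+\Phi\otimes\mathcal H)\circ\Delta'$; the co-derivation identities $\Delta'\circ Q_{\mathcal E'}=(Q_{\mathcal E'}\otimes\mathrm{id}+\mathrm{id}\otimes Q_{\mathcal E'})\circ\Delta'$ and $\Delta\circ Q_\E=(Q_\E\otimes\mathrm{id}+\mathrm{id}\otimes Q_\E)\circ\Delta$; and the fact that $\Phi$ is a co-algebra morphism, $\Delta\circ\Phi=(\Phi\otimes\Phi)\circ\Delta'$. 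Using the Koszul sign rule for composites of tensor products, $(f\otimes g)\circ(h\otimes h')=(-1)^{|g||h|}(f\circ h)\otimes(g\circ h')$, I would expand $\Delta\circ\mathcal H\circ Q_{\mathcal E'}$ and $\Delta\circ Q_\E\circ\mathcal H$ into four terms each. Collecting the eight resulting terms, the four of shape $(-)\otimes\Phi$ and $\Phi\otimes(-)$ assemble into exactly $(\mathcal G\otimes\Phi+\Phi\otimes\mathcal G)\circ\Delta'$, while the remaining four combine into $\mathcal H\otimes(\Phi\circ Q_{\mathcal E'}-Q_\E\circ\Phi)\circ\Delta'$ and $(-1)^k(\Phi\circ Q_{\mathcal E'}-Q_\E\circ\Phi)\otimes\mathcal H\circ\Delta'$. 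These vanish because $\Phi$ is a Lie $\infty$-morphism, so $\Phi\circ Q_{\mathcal E'}=Q_\E\circ\Phi$ by \eqref{def:LM}. The chosen coefficient $(-1)^k$ is exactly what makes the two residual terms carry the common factor $\Phi\circ Q_{\mathcal E'}-Q_\E\circ\Phi$ and the $\mathcal G\otimes\Phi,\ \Phi\otimes\mathcal G$ terms close up; this is the one place where the signs must be tracked with care.

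It remains to prove $\mathcal O$-multilinearity, which I expect to be the crux. Having shown $\mathcal G$ is a $\Phi$-co-derivation, the descent criterion recorded in the footnote following \eqref{TC-expension}, together with Lemma \ref{Tay-C} applied to $\Phi$, reduces the claim to showing that $p\circ\mathcal G$ factors through $p'$, where $p\colon S^\bullet_\mathbb{K}(\E)\to\bigodot^\bullet\E$ and $p'\colon S^\bullet_\mathbb{K}(\mathcal E')\to\bigodot^\bullet\mathcal E'$ are the canonical projections; equivalently, that $p\circ\mathcal G$ vanishes on $\ker p'$. The subtlety is that neither $Q_\E$ nor $Q_{\mathcal E'}$ descends to $\bigodot^\bullet$ (by Proposition \ref{co-diff}, only the binary brackets $\ell_2$, $\ell_2'$ fail $\mathcal O$-linearity, precisely through the anchors $\rho_\E$, $\rho_{\mathcal E'}$), so the cancellation must occur between the two terms of $\mathcal G$. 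Since $\Phi$ and $\mathcal H$ are $\mathcal O$-multilinear we have $p\circ\mathcal H=\mathcal H^\mathcal O\circ p'$; hence for $w\in\ker p'$ the term $p\circ\mathcal H\circ Q_{\mathcal E'}(w)$ equals $\mathcal H^\mathcal O$ applied to the anchor defect $p'\circ Q_{\mathcal E'}(w)$, while $\mathcal H(w)\in\ker p$ makes $p\circ Q_\E\circ\mathcal H(w)$ equal to the anchor defect of $Q_\E$ evaluated on $\mathcal H(w)$.

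I would then compute both anchor defects explicitly: each is a sum of terms of the form $\rho(\cdots)[f]\,(\cdots)$ produced by $\ell_2'$, respectively $\ell_2$, and check that they cancel using the anchor-compatibility $\rho_\E\circ\Phi^{(0)}=\rho_{\mathcal E'}$ of Definition \ref{def:morph}. The main obstacle is exactly this final bookkeeping: matching the anchor term created inside $Q_{\mathcal E'}$ (carrying $\rho_{\mathcal E'}$ and then transported by $\mathcal H^\mathcal O$) against the anchor term created by $Q_\E$ after $\mathcal H$ (carrying $\rho_\E\circ\Phi^{(0)}$), where the identity $\rho_{\mathcal E'}=\rho_\E\circ\Phi^{(0)}$ is precisely what closes the gap and the sign $(-1)^k$ must once more line up. Once $p\circ\mathcal G$ is seen to vanish on $\ker p'$, the induced map $\mathcal G^\mathcal O\colon\bigodot^\bullet\mathcal E'\to\bigodot^\bullet\E$ is automatically a $\Phi^\mathcal O$-co-derivation (the co-Leibniz identity from the second step descends), so by the cited criterion the Taylor coefficients of $\mathcal G$ are $\mathcal O$-multilinear, completing the proof.
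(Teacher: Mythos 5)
Your proposal is correct and follows essentially the same route as the paper: the $\Phi$-co-derivation property is obtained by the same expansion of $\Delta\circ(\mathcal H\circ Q_{\E'}-(-1)^kQ_\E\circ\mathcal H)$ using the co-Leibniz rule, the co-derivation identities, and $\Phi\circ Q_{\E'}=Q_\E\circ\Phi$, and the $\mathcal O$-multilinearity is reduced to the cancellation of the two anchor terms produced by $\ell_2'$ and $\ell_2$ on degree $-1$ arguments, which close up via $\rho_\E\circ\Phi^{(0)}=\rho_{\E'}$. The paper phrases the last step directly on Taylor coefficients rather than via vanishing on $\ker p'$, but the content is identical.
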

\begin{proof}
We first check that    $\mathcal H \circ Q_{\mathcal E} - (-1)^k  Q_{\mathcal E'} \circ \mathcal H$ is  a $ \Phi$-co-derivation: \begin{align*}
   \Delta\circ\mathcal H \circ Q_{\E'}&=(\mathcal H\otimes\Phi+\Phi\otimes\mathcal H)\circ\Delta'\circ Q_\E',\;\text{by definition of $\mathcal H$}\\&=(\mathcal H\otimes\Phi+\Phi\otimes\mathcal H)\circ(Q_{\E'}\otimes \text{id}+\text{id}\otimes Q_{\E'})\circ\Delta',\;\text{by definition of $Q_\E'$}\\&=(\mathcal H\circ Q_{\E'}\otimes\Phi+(-1)^k\Phi\circ Q_{\E'}\otimes\mathcal H+\mathcal H\otimes\Phi\circ Q_{\E'}+\Phi\otimes\mathcal H\circ Q_{\E'})\circ\Delta'
\end{align*}
Subtracting a similar equation for $(-1)^k\Delta\circ Q_{\E}\circ\mathcal H$ and using \eqref{def:LM}, one obtains the $\Phi$-co-derivation
property for $\mathcal H \circ Q_{\mathcal E'} - (-1)^k  Q_{\mathcal E'} \circ \mathcal H$.
We now check that $\mathcal H \circ Q_{\mathcal E'} -(-1)^k  Q_{\mathcal E} \circ \mathcal H$ is $ \mathcal O$-multilinear, for which it suffices to check that its Taylor coefficients are $\mathcal{O}$-multilinear by Lemma \ref{Tay-C}. Let $x_1,\ldots,x_n\in\E'$ be homogeneous elements. Assume $x_i\in\E_{-1}'$ 
(if we have more elements of degree $-1$ the same reasoning holds). To verify $\mathcal{O}$-multilinearity it suffices to check that for all $f\in \mathcal O $:
\begin{align}\text{pr}\circ(\mathcal H \circ Q_{\mathcal E'} -(-1)^k  Q_{\mathcal E} \circ \mathcal H) (x_1,\ldots,&x_i, \ldots, f x_j \ldots,x_n)=\\&  f \text{pr}\circ(\mathcal H \circ Q_{\mathcal E'} -(-1)^k  Q_{\mathcal E} \circ \mathcal H) (x_1,\ldots,x_i, \ldots,  x_j \ldots,x_n).\end{align}
Only the terms where the $2$-ary bracket with a degree $-1 $ element on one-side and  $f $ on the other side may forbid $f$ to go in front. There are two such terms:
$$  \epsilon(x_i, x_j, x_{I^{ij}})   \mathcal H^{(n-1)} (\ell_2'(x_i, f x_j), x_{I^{ij}}  ) \hbox{ and } 
-(-1)^k  \epsilon(x_i,  x_{I^{i}})\ell_2(\Phi^{(0)}(x_i), f\mathcal H^{(n-1)} ( x_{I^{i}}  ))
$$ 
where $x_{I^i}$ and $ x_{I^{ij}}$ stand for the list $ x_1, \dots, x_n$ where $x_i$ and $x_i,x_j $ are missing respectively, and $ \mathcal H^{(n-1)}$ is the $(n-1)$-th Taylor coefficient of $\mathcal H$. Since  $\rho_\E\circ\Phi^{(0)}=\rho_{\E'}$, in both terms $\rho_\E (e_i)[f] $ appears, and these two terms add up to zero.
\end{proof}
\begin{remark}
If the degree of $\mathcal H$ is non-negative, then $\mathcal H \circ Q_{\mathcal E'} -(-1)^k  Q_{\mathcal E} \circ \mathcal H $ may not be $\mathcal O$-multilinear any more, since there may exist extra terms where the anchor map appears, e.g. terms of the form $\ell_2(\mathcal H( x_{I^j} ) , f \Phi^{(0)} (x_j) ) $.
\end{remark}
We can now define homotopies between Lie $ \infty$-algebroid morphisms, extending \cite{LLS} from finite dimensional $Q$-manifolds to arbitrary Lie $\infty $-algebroids. 

Let $V$ be a vector space. Unless a topology on $V$ is chosen, the notion of $V$-valued continuous or differentiable or smooth function on an interval  $ I =[a,b] \subset \mathbb R$ does not make sense. However, we can always define the notion of a \emph{piecewise rational function $f \subset I \longrightarrow V$} as follows: we choose a finite increasing sequence $  a=t_0 \leq \dots \leq t_n=b   $ of \emph{gluing points}, and we require that for all $i=0, \dots, n-1$  the restriction $f^i$ of $f$ to $[t_i,t_{i+1}] $ is a finite sum of functions of the form $ g(t) v$ with $v \in V$ and $ g(t)$ a real rational function on $  [t_i,t_{i+1}]$ which has no pole on $[t_i,t_{i+1}]$. If $ f_i$  and $ f_{i+1}$ coincide at the gluing point $ t_{i+1}$, we say that $f$ is \emph{continuous}. When $V$ is a space of linear maps between the vector spaces $S $ and $T$,  we shall say that a $V$-valued map $f_t$ is a \emph{piecewise rational (continuous)} if $f_t(s)$ is a piecewise rational (continuous) $T$-valued function for all $s \in S$. 

Here is an important feature of such functions.

\begin{lemma} \label{lem:primitivesDerivatives}
The derivative of a piecewise rational continuous function is defined at every point which is not a gluing point and is piecewise rational. Conversely, every piecewise rational functions admits a piecewise rational continuous primitive, unique up to a constant. 
\end{lemma}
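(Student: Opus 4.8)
The plan is to prove the two assertions separately, reducing both to the one-variable behaviour of the scalar rational functions occurring in the definition. Fix gluing points $a=t_0\le\cdots\le t_n=b$ adapted to $f$, so that on each $[t_i,t_{i+1}]$ one has $f^i(t)=\sum_j g^i_j(t)\,v^i_j$ with $g^i_j$ rational and pole-free on $[t_i,t_{i+1}]$. For the first assertion I would differentiate term by term on each open piece $(t_i,t_{i+1})$: since $\frac{\dd}{\dd t}\big(g^i_j(t)\,v^i_j\big)=(g^i_j)'(t)\,v^i_j$, it suffices to note that the derivative of a pole-free rational function is again pole-free rational on the same interval. Writing $g=p/q$ with $q$ nonvanishing on $[t_i,t_{i+1}]$, the quotient rule gives $g'=(p'q-pq')/q^2$, whose denominator $q^2$ still has no zero on $[t_i,t_{i+1}]$. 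Hence $f'$ is piecewise rational with the same gluing points and is defined at every point of every open piece, i.e.\ at every non-gluing point. At a gluing point the two one-sided derivatives need not coincide, which is exactly why $f'$ is only piecewise rational and not necessarily continuous.

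For the converse the crucial step is to produce, on each piece $[t_i,t_{i+1}]$, a primitive $F^i$ of $f^i$ \emph{within the class}, and then to fix the integration constants so as to glue continuously. Granting such piecewise primitives, I would proceed recursively: choose any primitive $F^0$ of $f^0$ on $[t_0,t_1]$ (this is where the single free constant enters); having chosen $F^{i-1}$, select the primitive $F^i$ of $f^i$ on $[t_i,t_{i+1}]$ normalised by $F^i(t_i)=F^{i-1}(t_i)$, which is possible since any two primitives on one piece differ by a constant. The glued function $F$ is then continuous at every gluing point by construction and piecewise rational, hence a piecewise rational continuous primitive of $f$.

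Uniqueness up to a constant is then formal and requires no closure hypothesis: if $F,\tilde F$ are two such primitives, then $F-\tilde F$ has vanishing derivative on each open piece, so it is constant on each $[t_i,t_{i+1}]$ (a rational function with zero derivative on an interval is constant), and continuity at the interior gluing points forces these constants to agree, so $F-\tilde F$ is globally constant on $I$. The main obstacle is precisely the existence of a primitive within the class on a single piece: this is immediate when the $g^i_j$ are polynomials, since the antiderivative of a polynomial is a polynomial, but for genuinely rational $g^i_j$ the antiderivative can involve logarithmic or arctangent terms, e.g.\ $\int \frac{\dd t}{t+1}=\log(t+1)$, and may leave the class. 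I would therefore either restrict to the polynomial pieces that are actually needed in the homotopy construction, or enlarge the class to be stable under primitives; in either case the recursive gluing and the uniqueness argument above are insensitive to this choice.
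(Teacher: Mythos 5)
The paper states this lemma without proof, so there is no argument of the authors' to measure yours against; what you have written is essentially what a proof would have to look like. Your treatment of the first assertion is correct (term-by-term differentiation, and the quotient rule $(p/q)'=(p'q-pq')/q^2$ keeping the denominator nonvanishing on the closed piece), and so are the recursive normalisation $F^i(t_i)=F^{i-1}(t_i)$ producing a continuous glued primitive and the uniqueness argument. The two points you leave implicit are harmless: well-definedness of the termwise derivative independently of the chosen presentation $\sum_j g_j(t)v_j$ follows by passing to a basis of the finite-dimensional span of the $v_j$ and reducing to the scalar case, and the fact that two primitives may a priori carry different gluing points is handled by refining to the common subdivision.

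The obstacle you single out in the converse is, however, not a defect of your approach but of the statement itself: rational functions without poles on an interval are genuinely not closed under antidifferentiation. Already $f(t)=\frac{1}{t+2}$ on $[0,1]$, with $V=\mathbb{R}$, admits no rational primitive: if $F$ were one, then $F'=f$ as rational functions (two rational functions agreeing on an interval coincide), yet the derivative of a rational function has zero residue at every pole, whereas $f$ has residue $1$ at $t=-2$. So the lemma as printed is false and no argument can close this step. Of your two proposed repairs, note that simply replacing \emph{rational} by \emph{polynomial} is incompatible with the later use of the lemma: Lemmas \ref{Homotopy-lemma} and \ref{gluing-lemma} reparametrise homotopies by rational maps such as $g(t)=\tfrac{t}{1-t}$, whose derivative $\tfrac{1}{(1-t)^2}$ is not polynomial, so the class must at least contain such compositions. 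The workable fix is your second option: on each piece take $f$ to be a finite sum $\sum_j g_j(t)v_j$ with the scalar functions $g_j$ drawn from a class closed under both differentiation and integration (piecewise real-analytic, or piecewise $C^\infty$, functions suffice). The finite-sum structure is all that is needed to make sense of derivatives and integrals without a topology on $V$, and with that modification your gluing and uniqueness arguments, as well as Proposition \ref{prop:justify}, go through verbatim.
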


A family $ (\Phi_t)_{t \in I}$ of co-algebra morphisms can now be defined to be \emph{piecewise rational continuous } if its Taylor coefficients $ \Phi_t^{(n)}$ are piecewise rational continuous for all $n\in\mathbb{N}$. For such a family $(\Phi_t)_{t\in I}$, a family $(H_t)_{t\in I} $ made of $ \Phi_t$-co-derivations is said to be \emph{piecewise rational} if all its Taylor coefficients are.

\begin{remark}
In the above definitions, we do not assume the gluing points of the various Taylor coefficients $ \Phi_t^{(n)}$ or $H^{(n)}_t $ to be the same for all $n\in\mathbb{N}_0$. 
\end{remark}

We now extend Definition 3.53 in \cite{LLS} to the infinite rank case.

\begin{deff}
\label{def:homotopy}
	Let $\Phi$ and $\Psi$ be Lie $\infty$-algebroid morphisms from $(S^\bullet_\mathbb{K}(\E'),Q_{\E'} ) $ to $(S^\bullet_\mathbb{K}(\E), Q_{\E})$. A \emph{homotopy} between $\Phi$ and $\Psi$ is a pair $(\Phi_t , H_t)_{t\in[a,b]}$ consisting of:
	\begin{enumerate}
		\item  a piecewise rational continuous  path $t\mapsto \Phi_t$ valued in Lie $\infty$-algebroid morphisms between $S^\bullet_\mathbb{K}(\E')$ and $S^\bullet_\mathbb{K}(\E)$ satisfying the boundary condition:
		$$\Phi_a = \Phi\quad
		\text{and}\quad
		\Phi_b = \Psi,$$
		\item a piecewise rational  path $t\mapsto H_t$, with $ H_t$ a $\Phi_t$-co-derivations of degree $-1$ from $S^\bullet_\mathbb{K}(\E')$ to $S^\bullet_\mathbb{K}(\E)$, such that the
		following equation:
		\begin{equation}\label{eq-diff}
		\frac{\dd\Phi_t}{\dd t}=Q_{\E}\circ H_t+H_t\circ Q_{\E'}
		\end{equation}
		holds for every $t \in ]a\,, b[$ where it is defined (that is, not a gluing point for the Taylor coefficients). More precisely, for every  $v \in S^{\leq n}_\mathbb{K} (\mathcal E')$, \begin{equation}\label{eq-diffv}
		\frac{\dd\Phi_t}{\dd t}(v)=Q_{\E}\circ H_t(v)+H_t\circ Q_{\E'}(v)
		\end{equation}for all $t$ which is not a gluing point of the Taylor coefficient of $\Phi_t^{(k)},H_t^{(k)}$ for $k=0,\ldots,n$.
	\end{enumerate}
\end{deff}

Definition \ref{def:homotopy} is justified by the following statement:

\begin{prop}
\label{prop:justify}
Let $\Phi $ be a Lie $\infty $-algebroid morphism from $(S^\bullet_\mathbb{K}(\E'),Q_{\E'}) $ to $(S^\bullet_\mathbb{K}(\E), Q_{\E})$. For all $t\in [a\,,b]$, let $ H_t^{(n)}\colon S^{n+1}_\mathbb K (\mathcal E') \longrightarrow ~\mathcal E$ be a family $\mathcal O $-multilinear piecewise rational maps indexed by $n\in\mathbb{N}_0$. Then,
\begin{enumerate}
    \item There exists a unique piecewise rational continuous family of co-algebra morphisms $\Phi_t $ such that
      \begin{enumerate}
          \item $\Phi_a = \Phi $ 
          \item $(\Phi_t,H_t )$ is a solution of the differential equation \eqref{eq-diff}, where $  H_t$ is the $\Phi_t $-co-derivation whose $n$-th Taylor coefficient is $H_t^{(n)} $ for all $ n \geq 0$.  
      \end{enumerate}
    \item Moreover, for all $ t \in [a,b] $, $(\Phi_s, H_s)_{s\in[a,t]}$ is a Lie $\infty $-algebroid homotopy between  $\Phi $ and $\Phi_t$.
 \end{enumerate}
\end{prop}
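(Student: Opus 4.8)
The plan is to solve the differential equation \eqref{eq-diff} recursively on the Taylor coefficients, exploiting a triangular structure in the arity, and then to verify that the resulting flow preserves the three defining conditions of a Lie $\infty$-algebroid morphism. First I would translate \eqref{eq-diff} into a system of ordinary differential equations on the $\Phi_t^{(n)}$. Note that $\frac{\dd \Phi_t}{\dd t}$ is automatically a $\Phi_t$-co-derivation (differentiate the co-algebra morphism identity $\Delta\circ\Phi_t=(\Phi_t\otimes\Phi_t)\circ\Delta'$ in $t$ and compare with \eqref{eq:Phicoder}), hence determined by its Taylor coefficients $\frac{\dd \Phi_t^{(n)}}{\dd t}$. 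The crucial structural observation is that the $n$-th Taylor coefficient of $Q_\E\circ H_t+H_t\circ Q_{\E'}$ depends on $\Phi_t^{(0)},\dots,\Phi_t^{(n-1)}$ and on $H_t^{(0)},\dots,H_t^{(n)}$, but never on $\Phi_t^{(n)}$ itself: projecting $H_t\circ Q_{\E'}$ onto arity $1$ involves only the coefficients $H_t^{(\le n)}$ and the brackets $\ell'$ (no factor of $\Phi_t$ survives the projection), while in $Q_\E\circ H_t$ every $\Phi_t^{(j)}$ occurring alongside a nontrivial $H_t$-factor must satisfy $j\le n-1$ by counting inputs. The system is therefore triangular, of the form $\frac{\dd \Phi_t^{(n)}}{\dd t}=F_n\big(t,\Phi_t^{(0)},\dots,\Phi_t^{(n-1)}\big)$ with $F_n$ built from the given $H_t^{(k)}$ and the structure brackets.

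Given this triangularity, existence and uniqueness follow by induction on $n$. The base case $n=0$ reads $\frac{\dd \Phi_t^{(0)}}{\dd t}=\ell_1\circ H_t^{(0)}+H_t^{(0)}\circ\ell_1'$, and in general, once $\Phi_t^{(0)},\dots,\Phi_t^{(n-1)}$ are known to be piecewise rational continuous, the right-hand side $F_n$ is piecewise rational, so by Lemma \ref{lem:primitivesDerivatives} it admits a unique piecewise rational continuous primitive subject to the prescribed initial value $\Phi_a^{(n)}=\Phi^{(n)}$. Assembling the $\Phi_t^{(n)}$ produces, via the bijection between co-algebra morphisms and their Taylor coefficients, the unique family of co-algebra morphisms $\Phi_t$, proving part $(1)$ at the level of Taylor coefficients.

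For part $(2)$ I would then check the conditions of Definition \ref{def:morph} along the flow. For the anchor condition, differentiate $\rho_\E\circ\Phi_t^{(0)}$: since $H_t^{(0)}$ lowers degree by one it sends $\E'_{-1}$ into $\E_{-2}$, and the axioms $\rho_\E\circ\ell_1=0$ on $\E_{-2}$ together with $\rho_\E\equiv 0$ on $\E_{-2}$ make both terms of $\rho_\E\circ(\ell_1 H_t^{(0)}+H_t^{(0)}\ell_1')$ vanish on $\E'_{-1}$; as $\rho_\E\circ\Phi_a^{(0)}=\rho_{\E'}$ by hypothesis, the value stays constant and $\rho_\E\circ\Phi_t^{(0)}=\rho_{\E'}$ for all $t$. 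For $\mathcal O$-multilinearity I would argue again by induction on the arity: the only terms in $F_n$ that could obstruct $\mathcal O$-linearity are those where $\ell_2$ or $\ell_2'$ generates an anchor contribution, and these cancel exactly as in the proof of Proposition \ref{linearity}, precisely because the anchor condition just established gives $\rho_\E\circ\Phi_t^{(0)}=\rho_{\E'}$; integrating an $\mathcal O$-multilinear right-hand side from an $\mathcal O$-multilinear initial datum then keeps each $\Phi_t^{(n)}$ $\mathcal O$-multilinear.

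The hard part will be the compatibility $\Phi_t\circ Q_{\E'}=Q_\E\circ\Phi_t$ of \eqref{def:LM}, compounded by the subtlety that \eqref{eq-diff} must hold as an identity of maps, not merely of Taylor coefficients: by Proposition \ref{linearity} the right-hand side $Q_\E\circ H_t+H_t\circ Q_{\E'}$ is a $\Phi_t$-co-derivation only once \eqref{def:LM} is known, so there is an apparent circularity. I would break it by studying the defect $E_s:=\Phi_s\circ Q_{\E'}-Q_\E\circ\Phi_s$, which is itself a $\Phi_s$-co-derivation (pre- and post-composing a co-algebra morphism with a co-derivation yields a $\Phi_s$-co-derivation) and which vanishes at $s=a$ since $\Phi$ is a morphism. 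The key input is the formal identity $\big(Q_\E H_s+H_s Q_{\E'}\big)Q_{\E'}-Q_\E\big(Q_\E H_s+H_s Q_{\E'}\big)=0$, valid because $Q_\E^2=Q_{\E'}^2=0$; feeding the Taylor-level equation into $\frac{\dd E_s}{\dd s}$ shows that $\frac{\dd E_s}{\dd s}$ is governed by a homogeneous linear equation in $E_s$ (the discrepancy between $\frac{\dd \Phi_s}{\dd s}$ and the map $Q_\E H_s+H_s Q_{\E'}$ is itself controlled by $E_s$). Being once more triangular in the arity with piecewise rational coefficients and satisfying $E_a=0$, uniqueness forces $E_s\equiv 0$ for all $s$. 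With \eqref{def:LM} in hand, $Q_\E\circ H_t+H_t\circ Q_{\E'}$ is a genuine $\Phi_t$-co-derivation with the same Taylor coefficients as $\frac{\dd \Phi_t}{\dd t}$, so \eqref{eq-diff} holds as an equality of maps; combined with the boundary condition $\Phi_a=\Phi$ and the conditions of Definition \ref{def:morph} verified above, $(\Phi_s,H_s)_{s\in[a,t]}$ is a homotopy between $\Phi$ and $\Phi_t$ in the sense of Definition \ref{def:homotopy}.
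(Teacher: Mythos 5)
Your proposal is correct and follows essentially the same route as the paper: solve \eqref{eq-diff} recursively on Taylor coefficients using the triangular dependence in arity (the $(n+1)$-th coefficient of $Q_\E\circ H_t+H_t\circ Q_{\E'}$ never involves $\Phi_t^{(n+1)}$) together with Lemma \ref{lem:primitivesDerivatives}, then show the defect $\Phi_t\circ Q_{\E'}-Q_\E\circ\Phi_t$ vanishes identically because it vanishes at $t=a$ and its $t$-derivative is controlled by $Q_\E^2=Q_{\E'}^2=0$. Your explicit handling of the apparent circularity (that $Q_\E\circ H_t+H_t\circ Q_{\E'}$ is only known to be a $\Phi_t$-co-derivation once \eqref{def:LM} holds, so the defect obeys a homogeneous linear equation rather than literally having zero derivative) is a more careful rendering of the step the paper compresses into \textquotedblleft $\Lambda'(t)=0$\textquotedblright, but it is the same argument.
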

\begin{proof}
Let us show item (1). We claim that equation \eqref{eq-diff} is a differential equation that can be solved recursively. In arity zero, it reads,\begin{equation}\label{eq-diffRecursion}
		\frac{\dd\Phi_t^{(0)}}{\dd t}=Q_{\E}^{(0)}\circ H_t^{(0)}+H_t^{(0)}\circ Q_{\E'}^{(0)}
		\end{equation}and \begin{equation}\label{sol}\Phi_t^{(0)}=\Phi^{(0)}+\int_a^t\left(Q_{\E}^{(0)}\circ H_s^{(0)}+H_s^{(0)}\circ Q_{\E'}^{(0)}\right)\dd s\end{equation} is defined for all $t\in[a,b]$. Also, $\frac{\dd}{\dd t}\Phi_t^{(n+1)}\colon S^{(n+2)}(\E')\rightarrow\E$ is an algebraic expression of $Q_\E^{(0)},\ldots,Q_\E^{(n+1)}$, $Q_{\E'}^{(0)},\ldots, Q_{\E'}^{(n+1)}$ $\Phi_t^{(0)},\ldots,\Phi_t^{(n)},H_t^{(0)},\ldots,H_t^{(n+1)}$. But $\Phi_t^{(n+1)}$ does not appear in the $(n+1)$-th Taylor coefficient of $Q_{\E}\circ H_t+H_t\circ Q_{\E'}$ by Equation \eqref{TC-expension}. 
By Lemma \ref{lem:primitivesDerivatives}, there exists a unique piecewise rational continuous solution $\Phi_t^{(n+1)} $ such that $\Phi_a^{(n+1)} = \Phi^{(n+1)}$. The construction of the Taylor coefficients of the co-algebra morphisms $ \Phi_t$ then goes by recursion. Recursion formulas also show that $ \Phi_t$ is unique.

Let us show 2), i.e. that $\Phi_t $ is a a $\mathcal O $-multilinear chain map for all $ t \in [a,b]$.
The function given by $$\Lambda_k(t)=(Q_{\E}\circ \Phi_t-\Phi_t\circ Q_{\E'})^{(k)}\quad\text{for all}\quad t\in[a,b],\, k\in\mathbb{N}_0$$ are differentiable w.r.t $t$ at all points except for a finitely many  $t \in [a,b]$ and  are piecewise rational continuous. The map $\frac{\dd\Phi_t}{\dd t}$ is a Lie $\infty$-morphism because $ Q_\E^2=0$ and $ Q_{\E'}^2=0$, hence $\Lambda'(t)=0$. By continuity, $\Lambda_k(t)$ is constant over the interval $[a,b]$. Since $\Phi_a=\Phi$ is a Lie $\infty$-algebroid morphism, we have $\Lambda_k(a)=0$. Thus, $\Lambda_k(t)=0$ and, $$Q_{\E}\circ \Phi_t=\Phi_t\circ Q_{\E'},\quad \text{for all}\; t\in[a,b].$$ \end{proof}

\begin{lemma}
Let $(\E',(\ell_k')_{k\geq 1},\rho_{\E'})$ and $(\E,(\ell_k)_{k\geq 1},\rho_{\E})$ be Lie $\infty$-Lie algebroids that terminate in $\mathcal A$ through hooks $\pi'$ and $\pi$. Let $(\Phi_t,H_t)_{t\in[a,b]}$ be a homotopy. If $\Phi_a$ is Lie $\infty$-algebroid morphism that terminates at $\mathcal A$ (i.e. $\pi \circ \Phi = \pi'$), then so is the $\infty$-algebroid morphism $\Phi_t$ for all $t\in[a,b]$.
\end{lemma}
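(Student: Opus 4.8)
The plan is to show that the map $\pi\circ\Phi_t^{(0)}$, restricted to $\E'_{-1}$, does not depend on $t$, and hence stays equal to its value $\pi'$ at $t=a$. Recall that ``$\Phi_t$ terminates at $\mathcal A$'' means precisely $\pi\circ\Phi_t^{(0)}=\pi'$ on $\E'_{-1}$, where $\Phi_t^{(0)}\colon(\E',\ell_1')\to(\E,\ell_1)$ is the degree-$0$ chain map induced by $\Phi_t$. Since $\pi'$ is independent of $t$, it suffices to differentiate $t\mapsto \pi\circ\Phi_t^{(0)}\big|_{\E'_{-1}}$ and check that the derivative vanishes away from the finitely many gluing points.

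First I would extract the arity-zero part of the homotopy equation \eqref{eq-diff}. Taking $0$-th Taylor coefficients on both sides of $\frac{\dd\Phi_t}{\dd t}=Q_{\E}\circ H_t+H_t\circ Q_{\E'}$ and using that, on a single element of $\E'$, only the Taylor coefficient $Q^{(0)}=\ell_1$ (resp. $\ell_1'$) contributes in arity $1$ (all brackets $\ell_k$ with $k\ge 2$ and all higher Taylor coefficients of $H_t$ require at least two inputs), one obtains $\frac{\dd\Phi_t^{(0)}}{\dd t}=\ell_1\circ H_t^{(0)}+H_t^{(0)}\circ\ell_1'$. Restricting to $\E'_{-1}$, the differential $\ell_1'\colon\E'_{-1}\to\E'_{0}=0$ vanishes since $\E'$ is concentrated in degrees $\le -1$, so the second term drops out and $\frac{\dd\Phi_t^{(0)}}{\dd t}\big|_{\E'_{-1}}=\ell_1\circ H_t^{(0)}\big|_{\E'_{-1}}$. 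As $H_t^{(0)}$ has degree $-1$, it sends $\E'_{-1}$ into $\E_{-2}$, so $\ell_1\circ H_t^{(0)}$ indeed lands in $\E_{-1}$, where the $\mathcal O$-linear, $t$-independent map $\pi$ is defined.

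Next I would compose with the hook. Because $\pi$ is a hook, the augmented sequence $\cdots\to\E_{-2}\stackrel{\ell_1}{\to}\E_{-1}\stackrel{\pi}{\to}\mathcal A\to 0$ is a complex, that is $\pi\circ\ell_1=0$; hence $\frac{\dd}{\dd t}\big(\pi\circ\Phi_t^{(0)}\big)\big|_{\E'_{-1}}=\pi\circ\ell_1\circ H_t^{(0)}\big|_{\E'_{-1}}=0$ at every $t$ that is not a gluing point. Finally, by Lemma~\ref{lem:primitivesDerivatives}, the map $t\mapsto\pi\circ\Phi_t^{(0)}\big|_{\E'_{-1}}$ is piecewise rational continuous with derivative vanishing off the gluing points; it is therefore constant on each subinterval and, by continuity at the gluing points, constant on all of $[a,b]$. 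Since it equals $\pi'$ at $t=a$ by hypothesis, it equals $\pi'$ for every $t\in[a,b]$, so each $\Phi_t$ terminates at $\mathcal A$. This in particular re-proves condition (2) of Definition~\ref{def:morph}, as observed after that definition.

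I expect the only delicate points to be bookkeeping: correctly isolating the arity-zero, $\E'_{-1}$-source component of \eqref{eq-diff} so that all higher Taylor coefficients and all brackets $\ell_k$ with $k\ge 2$ disappear, and the vanishing $H_t^{(0)}\circ\ell_1'=0$ on $\E'_{-1}$, which crucially uses that $\E'$ is negatively graded. The conceptual input, $\pi\circ\ell_1=0$, is simply the statement that the hook prolongs the $\ell_1$-complex, so there is no genuine obstacle beyond careful degree counting and the piecewise-rational regularity already set up in Lemma~\ref{lem:primitivesDerivatives}.
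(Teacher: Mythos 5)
Your proof is correct and follows essentially the same route as the paper: the paper integrates the arity-zero part of the homotopy equation (its Equation \eqref{sol}) and observes that $\pi\circ\ell_1\circ H_s^{(0)}=0$ and $H_s^{(0)}\circ\ell_1'=0$ on $\E'_{-1}$, which is exactly the derivative computation you carry out. The only point the paper adds is a one-line remark that $\mathcal O$-multilinearity of $\Phi_t$ is preserved (via Proposition \ref{linearity}), which is anyway built into the definition of a homotopy.
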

\begin{proof}
This is a direct consequence of Equation \eqref{sol},
since $Q_{\mathcal E}^{(0)}= \ell_1 : \mathcal E_{-2} \to \mathcal E_{-1} $ and $Q_{{\mathcal E}'}^{(0)}= \ell_1' :  \mathcal E_{-2}' \to \mathcal E_{-1}' $ are valued in the kernels of $ \pi$ and $ \pi'$ respectively.

Last, $\mathcal{O}$-multilinearity of $\Phi_t$ follows from the $\mathcal{O}$-multilinearity of $Q_{\E}\circ H_t+H_t\circ Q_{\E'}$, which is granted by Proposition \ref{linearity}. This completes the proof.
\end{proof}
Let us show that homotopy in the sense above defines an equivalence relation $\mathtt{\sim}$ between Lie $\infty$-morphisms. We have the following lemma.

\begin{lemma}\label{Homotopy-lemma}{
 A pair $(\Phi_t , H_t)$ is a homotopy between Lie $\infty$-algebroid morphisms $ \Phi_a$ and $ \Phi_b $  if and only if for all rational function, $g\colon[a,b]\rightarrow[c,d]$ without poles on $[a,b]$, the pair $(\Phi_{g(t)} , g'(t)H_{g(t)})$ is a homotopy between $ \Phi_{g(a)}$ and $ \Phi_{g(b)}$.}
 
\end{lemma}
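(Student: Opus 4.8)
The plan is to prove both directions of the equivalence by reducing everything to the defining homotopy equation \eqref{eq-diff} and then using the chain rule for the reparametrization. The key observation is that Definition \ref{def:homotopy} consists of verifiable conditions — piecewise rational continuity of $t\mapsto\Phi_t$, the piecewise rational property of $t\mapsto H_t$, the fact that each $\Phi_t$ is a Lie $\infty$-algebroid morphism, and the differential equation relating $\frac{\dd\Phi_t}{\dd t}$ to $Q_\E\circ H_t + H_t\circ Q_{\E'}$ — and I must check that each of these is preserved under the substitution $t\mapsto g(t)$ together with the rescaling $H_{g(t)}\mapsto g'(t)H_{g(t)}$.

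First I would treat the forward direction: assume $(\Phi_t,H_t)_{t\in[a,b]}$ is a homotopy and set $\tilde\Phi_s := \Phi_{g(s)}$ and $\tilde H_s := g'(s)H_{g(s)}$ for $s\in[a,b]$ (with $g\colon[a,b]\to[c,d]$). The composition of a piecewise rational continuous function with a pole-free rational function is again piecewise rational continuous (the new gluing points being the preimages under $g$ of the old ones, together with finitely many from $g$ itself), and by Lemma \ref{lem:primitivesDerivatives} the derivative $g'$ is piecewise rational, so $\tilde H_s$ is piecewise rational; hence the regularity conditions (1) and the first half of (2) in Definition \ref{def:homotopy} transfer directly. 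Each $\tilde\Phi_s = \Phi_{g(s)}$ is a Lie $\infty$-algebroid morphism because $\Phi_{g(s)}$ already is one, and $\tilde H_s = g'(s)H_{g(s)}$ is a $\tilde\Phi_s$-co-derivation since scaling a $\Phi_{g(s)}$-co-derivation by the scalar $g'(s)$ preserves the defining identity \eqref{eq:Phicoder}. The boundary conditions become $\tilde\Phi_a = \Phi_{g(a)}$ and $\tilde\Phi_b = \Phi_{g(b)}$, exactly as claimed.

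The central computation is that the differential equation \eqref{eq-diff} is preserved. Applying the chain rule, for $s$ not a gluing point we have
\begin{equation*}
\frac{\dd\tilde\Phi_s}{\dd s} = \frac{\dd}{\dd s}\Phi_{g(s)} = g'(s)\,\Bigl(\frac{\dd\Phi_t}{\dd t}\Bigr)\Big|_{t=g(s)} = g'(s)\bigl(Q_\E\circ H_{g(s)} + H_{g(s)}\circ Q_{\E'}\bigr),
\end{equation*}
and since $Q_\E$ and $Q_{\E'}$ are $s$-independent and the expression is linear in $H$, the scalar $g'(s)$ may be absorbed to give $Q_\E\circ\tilde H_s + \tilde H_s\circ Q_{\E'}$, which is precisely \eqref{eq-diff} for $(\tilde\Phi_s,\tilde H_s)$. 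The converse direction is obtained symmetrically: a pole-free rational $g$ restricting to a bijection $[a,b]\to[c,d]$ admits a pole-free rational inverse $g^{-1}$ on the relevant interval, so the substitution is invertible and applying the forward argument to $g^{-1}$ (noting $(g^{-1})'(u) = 1/g'(g^{-1}(u))$, which cancels the $g'$ factor) recovers $(\Phi_t,H_t)$ from $(\Phi_{g(t)},g'(t)H_{g(t)})$.

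The main subtlety — rather than a deep obstacle — is bookkeeping around the gluing points: the reparametrized family may acquire new gluing points coming from the zeros or critical behaviour of $g$ and from the finitely many points where $g'$ itself changes rational expression, so I must argue that there are still only finitely many such points and that continuity of $\tilde\Phi_s$ is preserved at each. Since $g$ has no poles on $[a,b]$ and is rational, $g'$ is rational without poles there as well, and the set of gluing points of the reparametrized Taylor coefficients is the finite union of $g^{-1}$ of the old gluing points with the finitely many breakpoints of the piecewise description of $g$ and $g'$; continuity at each such point follows because composition and multiplication by the continuous function $g'$ preserve one-sided limits. The rest is the straightforward transfer of $\mathcal O$-multilinearity, which is immediate since multiplying by the scalar $g'(s)$ and precomposing the argument with $g$ does not interact with the $\mathcal O$-module structure.
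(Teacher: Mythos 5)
Your forward direction is essentially the paper's proof: the chain-rule computation
$\frac{\dd}{\dd t}\Phi_{g(t)} = g'(t)\bigl(Q_\E\circ H_{g(t)}+H_{g(t)}\circ Q_{\E'}\bigr)
= Q_\E\circ\bigl(g'(t)H_{g(t)}\bigr)+\bigl(g'(t)H_{g(t)}\bigr)\circ Q_{\E'}$,
together with the (correct) observation that piecewise rational continuity and the $\Phi_t$-co-derivation property survive the reparametrization. That part is fine and matches the paper, which treats the regularity bookkeeping as implicit.

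Your converse, however, rests on a false claim: a pole-free rational bijection of an interval need not have a rational inverse (e.g.\ $g(t)=t^3+t$ is a polynomial bijection of any interval, but $g^{-1}$ is not rational), so you cannot in general ``apply the forward argument to $g^{-1}$'' and stay inside the class of reparametrizations the lemma allows. The fix is much simpler and is what the paper does: the right-hand side of the equivalence is quantified over \emph{all} pole-free rational $g\colon[a,b]\to[c,d]$, so for the backward implication it suffices to specialize to $a=c$, $b=d$ and $g=\mathrm{id}$, which returns the original pair $(\Phi_t,H_t)$ verbatim. With that one-line replacement your argument is complete.
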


\begin{proof}
 Let $g\colon [a,b]\rightarrow[c,d]$  be a rational function without poles on $[a, b]$. A straightforward computation  gives:
 $$
 \begin{array}{rrcll}
    &\dfrac{\dd\Phi_t}{\dd t}&=&Q_\E\circ H_t+H_t\circ Q_{\E'}&\hspace{.2cm} \hbox{(by definition}) \\ 
    \Rightarrow&\frac{\dd\Phi}{\dd t}(g(t))&=&Q_\E\circ H_{g(t)}+H_{g(t)}\circ Q_{\E'}& \hspace{.2cm}  \hbox{(by replacing $t$ by $g(t)$)} \\ \Rightarrow& \frac{\dd\Phi_{g(t)}}{\dd t}&=&Q_\E\circ \left(g'(t)H_{g(t)}\right)+\left(g'(t)H_{g(t)}\right)\circ Q_{\E'} &  \hspace{.2cm} \hbox{(by multiplying by $g'(t)$)} .
 \end{array}
 $$
  The last equation means that $(\Phi_{g(t)} , g'(t)H_{g(t)})$ is a homotopy between $ \Phi_{g(a)}$ and $ \Phi_{g(b)} $. {The backward implication is obvious, it suffices to consider $a=c$, $b=d $ and $g={\mathrm{id}}$}.
\end{proof}
\begin{proposition}
Homotopy between Lie $\infty$-morphisms is an equivalence relation. In addition, it is compatible with composition, that is, if $\Phi,\Psi\colon(S^\bullet_\mathbb{K}(\E')  , Q_{\E'} )\rightarrow(S^\bullet_\mathbb{K}(\E)  , Q_{\E} )$ are homotopic Lie $\infty$-algebroid morphisms and $ \hat{\Phi},\hat{\Psi}\colon(S^\bullet_\mathbb{K}(\E)  , Q_{\E} )\rightarrow (S^\bullet_\mathbb{K}(\E'') , Q_{\E''} ) $ are homotopic Lie $\infty$-algebroid morphisms, then, so are their compositions $\hat{\Phi}\circ\Phi$  and $\hat{\Psi}\circ\Psi$.
\end{proposition}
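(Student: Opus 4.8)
The plan is to establish the three axioms of an equivalence relation in turn, and then to deduce compatibility with composition from transitivity together with two one-sided composition lemmas. \emph{Reflexivity and symmetry} are immediate. For reflexivity I would take the constant path $\Phi_t\equiv\Phi$ together with $H_t\equiv 0$: the zero map is a $\Phi$-co-derivation of degree $-1$ and $\frac{\dd\Phi_t}{\dd t}=0=Q_\E\circ 0+0\circ Q_{\E'}$, so \eqref{eq-diff} holds trivially. For symmetry, given a homotopy $(\Phi_t,H_t)_{t\in[a,b]}$ I would apply Lemma \ref{Homotopy-lemma} to the orientation-reversing affine map $g(t)=a+b-t$, which has no poles on $[a,b]$ and satisfies $g(a)=b$, $g(b)=a$, $g'(t)=-1$; the lemma then states exactly that $(\Phi_{a+b-t},-H_{a+b-t})$ is a homotopy from $\Phi_b$ to $\Phi_a$.

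\emph{Transitivity} is where the piecewise-rational framework does the real work. Given homotopies $(\Phi_t,H_t)$ from $\Phi$ to $\Psi$ and $(\Phi'_t,H'_t)$ from $\Psi$ to $\Xi$, I would first reparametrize the second one by an affine $g$ (again via Lemma \ref{Homotopy-lemma}) so that it is defined on an interval $[b,c']$ sitting immediately to the right of the domain $[a,b]$ of the first. I then concatenate the two pairs into a single pair $(\tilde\Phi_t,\tilde H_t)$ on $[a,c']$, adjoining $t=b$ to the set of gluing points of every Taylor coefficient. The concatenated Taylor coefficients are again piecewise rational, and the path $\tilde\Phi_t$ is \emph{continuous} at the junction precisely because both homotopies take the common value $\Psi$ there; since $\tilde H_t$ is only required to be piecewise rational (continuity of the co-derivation part is not demanded in Definition \ref{def:homotopy}), no matching condition is needed for it. Equation \eqref{eq-diff} holds on each subinterval away from gluing points, so $(\tilde\Phi_t,\tilde H_t)$ is a homotopy from $\Phi$ to $\Xi$.

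\emph{Compatibility with composition} I would reduce to transitivity plus two elementary claims. Post-composition: fixing $\hat\Phi$ and a homotopy $(\Phi_t,H_t)$ between $\Phi$ and $\Psi$, set $K_t:=\hat\Phi\circ H_t$. Using that $\hat\Phi$ is a co-algebra morphism of degree $0$, one checks that $K_t$ is a $(\hat\Phi\circ\Phi_t)$-co-derivation; differentiating and inserting the intertwining relation $\hat\Phi\circ Q_\E=Q_{\E''}\circ\hat\Phi$ coming from \eqref{def:LM} gives $\frac{\dd}{\dd t}(\hat\Phi\circ\Phi_t)=Q_{\E''}\circ K_t+K_t\circ Q_{\E'}$, so $\hat\Phi\circ\Phi\sim\hat\Phi\circ\Psi$. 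Pre-composition: fixing $\Psi$ and a homotopy $(\hat\Phi_t,\hat H_t)$ between $\hat\Phi$ and $\hat\Psi$, set $L_t:=\hat H_t\circ\Psi$ and use $Q_\E\circ\Psi=\Psi\circ Q_{\E'}$ in the same way to get $\hat\Phi\circ\Psi\sim\hat\Psi\circ\Psi$. In both cases $\mathcal O$-multilinearity and piecewise-rational continuity are inherited because the $t$-dependent data is composed with a \emph{fixed} $\mathcal O$-multilinear morphism, and composites of Lie $\infty$-algebroid morphisms are again such morphisms. Transitivity then chains $\hat\Phi\circ\Phi\sim\hat\Phi\circ\Psi\sim\hat\Psi\circ\Psi$.

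The step I expect to be most delicate is transitivity: not for any conceptual reason, but because one must verify that concatenation keeps the family inside the class of piecewise rational continuous morphisms — the continuity of $\Phi_t$ at the junction is guaranteed exactly by the shared endpoint $\Psi$, while $H_t$ is permitted to jump there. The composition computations are then routine sign bookkeeping, simplified by the fact that the fixed morphisms $\hat\Phi$ and $\Psi$ have degree $0$.
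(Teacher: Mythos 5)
Your proof is correct. The equivalence-relation part (constant path for reflexivity, reparametrization by an orientation-reversing affine map for symmetry, concatenation at the shared endpoint for transitivity) is essentially the paper's argument, with the useful extra observation that continuity at the junction is only needed for $\Phi_t$ and is supplied by the common value $\Psi$, while $H_t$ may jump. Where you genuinely diverge is compatibility with composition: the paper runs both homotopies simultaneously and exhibits the single pair $\bigl( \hat{\Phi}_t\circ\Phi_t,\; \hat{H}_t\circ\Phi_t+\hat{\Phi}_t\circ H_t \bigr)$ as a homotopy from $\hat\Phi\circ\Phi$ to $\hat\Psi\circ\Psi$ in one stroke, whereas you prove two one-sided lemmas (post-composition with a fixed $\hat\Phi$, giving $K_t=\hat\Phi\circ H_t$, and pre-composition with a fixed $\Psi$, giving $L_t=\hat H_t\circ\Psi$) and then chain $\hat\Phi\circ\Phi\sim\hat\Phi\circ\Psi\sim\hat\Psi\circ\Psi$ by transitivity. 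Your two lemmas are exactly the specializations of the paper's formula to the cases where one of the two homotopies is constant, so the content is the same; what your route buys is that each co-derivation verification involves only one $t$-dependent factor composed with a fixed degree-$0$ morphism, which makes the co-Leibniz and $\mathcal O$-multilinearity checks essentially trivial, at the cost of invoking transitivity (hence one more concatenation). The paper's route is shorter and produces an explicit single homotopy, but requires checking directly that the mixed sum $\hat{H}_t\circ\Phi_t+\hat{\Phi}_t\circ H_t$ is a $(\hat{\Phi}_t\circ\Phi_t)$-co-derivation, a point it leaves to the reader. Either way the result follows.
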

\begin{proof}We first show that this notion of homotopy is an equivalence relation.
 Let $\Phi,\Psi$ and $\Xi\colon S^\bullet_\mathbb{K}(\E')\longrightarrow S^\bullet_\mathbb{K}(\E)$ be three Lie $\infty$-morphisms of algebroids. \begin{itemize}
	\item [$\bullet$] Reflexivity: The pair $(\Phi_t=\Phi,H_t=0)_{t\in[0,1]}$ defines a homotopy between $\Phi$ and $\Phi$.
	\item [$\bullet$]Symmetry: Let $(\Phi_t,H_t)_{t\in[0,1]}$ be a homotopy between $\Phi$ to $\Psi$. By applying Lemma \ref{Homotopy-lemma} with $g(t)=1-t$, we obtain a homotopy between $\Psi$ and $\Phi$ via the pair $(\Phi_{1-t},-H_{1-t})_{t\in[0,1]}$.
	\item [$\bullet$]Transitivity: Assume $\Phi\mathtt{\sim}\Psi$ and $\Psi\mathtt{\sim}\Xi$ and let  $(\Phi_t,H_{1,t})_{t\in[0,\frac{1}{2}]}$ be homotopy between $\Phi$ and $\Psi$ and let $(\Psi_t,H_{2,t})_{t\in[\frac{1}{2},1]}$ be a homotopy between $\Psi$ and $\Xi$. By gluing $\Phi_t$ and $\Psi_t$, respectively $H_{1t}$ and $H_{2,t}$ we obtain a homotopy $(\Theta_t,H_t)_{t\in[0,1]}$ between $\Phi$ and $\Xi$.
\end{itemize}  

We then show it is compatible with composition. Let us denote by $(\Phi_t , H_t )$ the homotopy between $\Phi$ and $\Psi$, and $(\hat{\Phi}_t , \hat{H}_t )$ the homotopy between $\hat{\Phi}$ and $\hat{\Psi}$ . We obtain, \begin{align*}
    \frac{\dd \hat{\Phi}_t\circ\Phi_t}{\dd t}&=\frac{\dd \hat{\Phi}_t }{\dd t}\circ\Phi_t+\hat{\Phi}_t\circ\frac{\dd \hat{\Phi}_t }{\dd t}\\&=Q_{\E''}\circ\left(\hat{H}_t\circ\Phi_t+\hat{\Phi}_t\circ H_t \right) +\left(\hat{H}_t\circ\Phi_t+\hat{\Phi}_t\circ H_t \right)\circ Q_{\E'}.
\end{align*}Hence, $\hat{\Phi}\circ\Phi$  and $\hat{\Psi}\circ\Psi$ are homotopic via the pair $( \hat{\Phi}_t\circ\Phi_t,\hat{H}_t\circ\Phi_t+\hat{\Phi}_t\circ H_t)$ which is easily checked to satisfy all axioms. This concludes the proof.
\end{proof}

We conclude this section with a lemma that will be useful in the sequel.


\begin{lemma}\label{gluing-lemma}
 Let $(\Phi_t ,  H_t)_{t\in[c,+\infty[}$ be a homotopy such that for all $n\in\mathbb{N}_0$ and for every $t\geq n$, $H_t^{(n)}=0$. Then the $n$-th Taylor coefficient
  $\Phi_t^{(n)}$ is constant on $[n, +\infty [$ and
 the co-algebra morphism $\Phi_\infty$ whose $n$-th Taylor coefficient is  $\Phi_t^{(n)}$ for any $n\in\mathbb{N}_0$ and $t \in [n, +\infty[$  is a Lie $\infty $-algebroid morphism.
 
 {Moreover, for $ g:[a,b[\rightarrow[c,+\infty[$ a rational function with no pole on $[a,b[$ and such that $\displaystyle\lim_{t\to b}g(t)=+\infty$, the pair $(\Phi_{g(t)} , g'(t)H_{g(t)})$ is a homotopy between $\Phi_c$ and $\Phi_\infty$. }
\end{lemma}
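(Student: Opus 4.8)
The plan is to split the statement into two independent parts: first the stabilization claim, that $\Phi_t^{(n)}$ is constant on $[n,+\infty[$ and assembles into a genuine Lie $\infty$-algebroid morphism $\Phi_\infty$; and second the reparametrization claim, that $g$ pulls the given homotopy back to a homotopy between $\Phi_c$ and $\Phi_\infty$.

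For the stabilization, I would argue degree by degree (more precisely, arity by arity) using the defining differential equation \eqref{eq-diff}. Fix $n \in \mathbb{N}_0$. The $n$-th Taylor coefficient of \eqref{eq-diff} reads $\tfrac{\dd}{\dd t}\Phi_t^{(n)} = \bigl(Q_\E\circ H_t + H_t\circ Q_{\E'}\bigr)^{(n)}$, and by the explicit expansion \eqref{TC-expension} the right-hand side is an algebraic expression built only out of the Taylor coefficients $H_t^{(0)},\dots,H_t^{(n)}$ together with $\Phi_t^{(0)},\dots,\Phi_t^{(n)}$ and the (constant) structure maps $Q_\E^{(k)},Q_{\E'}^{(k)}$. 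The key observation is that every summand of $(Q_\E\circ H_t + H_t\circ Q_{\E'})^{(n)}$ contains at least one factor $H_t^{(j)}$ with $j\le n$; indeed the whole expression is $\Phi_t$-linear in $H_t$. Hence for $t\ge n$, where $H_t^{(j)}=0$ for all $j\le n$ by hypothesis, every such summand vanishes, so $\tfrac{\dd}{\dd t}\Phi_t^{(n)}=0$ on $]n,+\infty[$. By Lemma \ref{lem:primitivesDerivatives} the piecewise rational continuous function $\Phi_t^{(n)}$ is therefore constant on $[n,+\infty[$. The delicate point is the bookkeeping in \eqref{TC-expension}: one must check that no term of $(Q_\E\circ H_t+H_t\circ Q_{\E'})^{(n)}$ can involve only $H_t^{(j)}$ with $j>n$, which follows because a $\Phi_t$-co-derivation term of arity $n$ applies its $H_t^{(j)}$-factor to at most $n+1$ inputs, forcing $j\le n$. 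This is the main obstacle and the step I would write out carefully.

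Having fixed this, define $\Phi_\infty$ by declaring its $n$-th Taylor coefficient to be the stabilized value $\Phi_t^{(n)}$ for any $t\ge n$; this is well defined by the constancy just proved. To see that $\Phi_\infty$ is a Lie $\infty$-algebroid morphism, I would use Proposition \ref{prop:justify}(2): each $\Phi_t$ is an $\mathcal O$-multilinear chain map satisfying $Q_\E\circ\Phi_t=\Phi_t\circ Q_{\E'}$. The $n$-th Taylor coefficient of the equation $Q_\E\circ\Phi-\Phi\circ Q_{\E'}=0$ involves only $\Phi^{(0)},\dots,\Phi^{(n)}$ and the structure maps, so evaluating at any fixed $t\ge n$ (where all of $\Phi^{(0)},\dots,\Phi^{(n)}$ have already stabilized to their $\Phi_\infty$ values) shows that the corresponding Taylor coefficient of $Q_\E\circ\Phi_\infty-\Phi_\infty\circ Q_{\E'}$ vanishes. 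As this holds for every $n$, $\Phi_\infty$ intertwines the two co-derivations; $\mathcal O$-multilinearity and the anchor compatibility $\rho_\E\circ\Phi_\infty^{(0)}=\rho_{\E'}$ are inherited from the $\Phi_t$ at a fixed $t$, so $\Phi_\infty$ is indeed a morphism in the sense of Definition \ref{def:morph}.

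For the final reparametrization statement, I would invoke Lemma \ref{Homotopy-lemma}. The computation in its proof shows, purely formally, that if $(\Phi_t,H_t)$ solves \eqref{eq-diff} then $(\Phi_{g(t)},g'(t)H_{g(t)})$ solves it as well, for any rational $g$ without poles on the relevant interval; here $g\colon[a,b[\to[c,+\infty[$ has no pole on $[a,b[$, so the substitution is valid on $[a,b[$. The only new point is the behavior at the endpoint $b$: since $\lim_{t\to b}g(t)=+\infty$ and each $\Phi_{g(t)}^{(n)}$ has stabilized to $\Phi_\infty^{(n)}$ once $g(t)\ge n$, the reparametrized path extends continuously to $t=b$ with $\Phi_{g(b)}:=\Phi_\infty$, while $\Phi_{g(a)}=\Phi_c$. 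Thus $(\Phi_{g(t)},g'(t)H_{g(t)})_{t\in[a,b]}$ is a homotopy between $\Phi_c$ and $\Phi_\infty$, as claimed.
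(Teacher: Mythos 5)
Your proposal is correct and follows essentially the same route as the paper: constancy of $\Phi_t^{(n)}$ on $[n,+\infty[$ via the arity bookkeeping of Equation \eqref{eq-diff}, the morphism property of $\Phi_\infty$ checked coefficient-by-coefficient at a fixed $t\geq n$, and the reparametrization by $g$ as in Lemma \ref{Homotopy-lemma}. The only point the paper makes slightly more explicit is that each Taylor coefficient of $(\Phi_{g(t)}, g'(t)H_{g(t)})$ has only finitely many gluing points on $[a,b]$ (constant past some $b_n<b$), which is needed for the pair to qualify as piecewise rational (continuous) in the sense of Definition \ref{def:homotopy}; your stabilization remark covers this in substance.
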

\begin{proof}
Since the $n$-th Taylor coefficient of the $\Phi_t $-co-derivation $\frac{\dd \Phi_t^{(n)}}{\dd t}=(Q_\E\circ H_t +H_t\circ Q_{\E'})^{(n)}$ depends only on $H_t^{(i)} $ for $i=0, \dots, n-1 $, 
we have by assumption  $\frac{\dd \Phi_t^{(n)}}{\dd t}=0$ for all $t\geq n$. As a consequence $\Phi_t^{(n)} $ is constant on $[n, +\infty[ $.   
It follows from Proposition \ref{prop:justify} that $\Phi_\infty $ is a Lie $\infty $-algebroid morphism since for every $n\in\mathbb{N}_0$ and $t\in[n,+\infty[$\begin{align*}
    (Q_\E\circ\Phi_\infty-\Phi_\infty\circ Q_{\E'})^{(n)}&=\sum_{i+j=n}(Q_\E^{(i)}\circ\Phi^{(i)}_t-\Phi_t^{(j)}\circ Q_{\E'}^{(j)})\\&=0\hspace{.4cm}\hbox{(since $\Phi_t$ is a Lie $\infty$-algebroid morphism)}.
\end{align*}

Let us prove the last part of the statement. By assumption, there exists $a \leq b_n \leq b $ such that for all $t \in [b_n ,b]$, we have $g(t)\geq n$, so that $\Phi_{g(t)}^{(n)}=\Phi_\infty^{(n)}$ and $ g'(t)H_{g(t)}^{(n)}=0$ on $[b_n,b]$. The function $\Phi_{t}^{(n)}$ (resp. $H_t^{(n)} $) being  piecewise rational continuous (resp. piecewise rational) on $[c,n]$, the same holds for $\Phi_{g(t)}^{(n)}$ (resp. $g'(t)H_{g(t)}^{(n)}$) on $[a,b_n]$. By gluing with a constant function $\Phi_\infty$ (resp. with $0$), we see that all Taylor coefficients of $\Phi_{g(t)} $ (resp. $g'(t)H_{g(t)}$) are piecewise rational continuous (resp. piecewise rational) with finitely many gluing points. This completes the proof.
\end{proof}

\begin{remark}
 Lemma \ref{gluing-lemma} explains how to glue infinitely many homotopies, at least when for a given $n$, only finitely of them affects the $n$-th Taylor coefficient.
\end{remark}
\section{Main results}

\label{sec:main}

\subsection{The two main theorems about the universal Lie $\infty $-algebroids of a Lie-Rinehart algebras}

Let us now extend the main results of \cite{LLS} from locally real analytic finitely generated singular foliations to arbitrary Lie-Rinehart algebras.

 \subsubsection{Existence Theorem \ref{thm:existence}}

Here is our first main result, which states that universal Lie $ \infty$-algebroids over a given Lie-Rinehart algebra exist. 
We are convinced that it may be deduced using the methods of semi-models categories as in Theorem 4.2 in \cite{Fregier}, but does not follow from a simple homotopy transfer argument. It extends Theorem 2.8 in \cite{LLS}.

\begin{theorem}\label{thm:existence}
Let $ \mathcal O$ be an algebra and $\mathcal A $ be a Lie-Rinehart algebra over $ \mathcal O$.
Any resolution  of $\mathcal A $ by free $\mathcal O $-modules 
\begin{equation}
    \label{eq:resolutions}
\cdots \stackrel{\ell_1} \longrightarrow\E_{-3} \stackrel{\ell_1}{\longrightarrow} \E_{-2} \stackrel{\ell_1}{\longrightarrow} \E_{-1} \stackrel{\pi}{\longrightarrow} \mathcal A \end{equation}
  comes equipped with a Lie $\infty $-algebroid structure whose unary bracket is $\ell_1 $ and that terminates in $\mathcal A $  through the hook $ \pi$.
\end{theorem}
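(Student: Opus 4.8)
The plan is to build the higher brackets $\ell_2,\ell_3,\dots$ and the anchor by induction on arity, using the acyclicity of the resolution \eqref{eq:resolutions} to lift, at each stage, an obstruction that turns out to be a coboundary. I would work throughout in the coderivation picture of Proposition \ref{co-diff} together with the Richardson--Nijenhuis bracket of Lemma \ref{lem:RN}: the task is to produce a degree $+1$ coderivation $Q_\E=\sum_{k\geq 1}\ell_k$ of $S^\bullet_\mathbb K(\E)$ whose $0$-th Taylor coefficient is the given $\ell_1$ and which satisfies $[Q_\E,Q_\E]_{\mathrm{RN}}=0$. The arity grading splits this single equation into the countable family
\[
(J_N)\colon\qquad \sum_{i+j=N+1}[\ell_i,\ell_j]_{\mathrm{RN}}=0,\qquad N\geq 1,
\]
of which $(J_1)$ is just $\ell_1^2=0$, already granted.

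First I would fix the anchor and the binary bracket, which are the non-$\mathcal O$-linear, hence most delicate, pieces. Set $\rho_\E:=\rho_\mathcal A\circ\pi$ on $\E_{-1}$ and zero on $\E_{-i}$ for $i\geq 2$; this is $\mathcal O$-linear and will satisfy axioms (iv)--(v) because $\pi\circ\ell_1=0$ and $\pi$ is built to intertwine $\ell_2$ with $[\cdot,\cdot]_\mathcal A$. To construct $\ell_2$ I would choose a basis $(e_\alpha)$ of the free module $\E_{-1}$, lift each $[\pi(e_\alpha),\pi(e_\beta)]_\mathcal A\in\mathcal A$ through the surjection $\pi$ to an element $\ell_2(e_\alpha,e_\beta)\in\E_{-1}$, and then extend to all of $\bigodot^2\E_{-1}$ by forcing the Leibniz rule of axiom (ii); the identity of Example \ref{ex:vanishing} shows this prescription is well defined and graded symmetric. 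It is precisely here that freeness is indispensable: since there is in general no $\mathcal O$-linear section of $\pi$, no homotopy-transfer formula applies, and the lift must be performed basis-element by basis-element. The remaining components of $\ell_2$ on $\E_{-1}\odot\E_{-k}$ and on deeper summands are then fixed by imposing $(J_2)$, i.e. $[\ell_1,\ell_2]_{\mathrm{RN}}=0$; as this obstruction is $\ell_1$-closed, they exist by the same lifting argument used in the induction below.

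For the inductive step, assume $\ell_1,\dots,\ell_{N-1}$ have been built with $(J_1),\dots,(J_{N-1})$ holding and $\ell_k$ $\mathcal O$-multilinear for $3\leq k\leq N-1$. Isolating in $(J_N)$ the two terms containing $\ell_1$ and using that the RN bracket of two degree $+1$ maps is symmetric reduces $(J_N)$ to
\[
2\,[\ell_1,\ell_N]_{\mathrm{RN}}=-R_N,\qquad R_N:=\sum_{\substack{i+j=N+1\\ i,j\geq 2}}[\ell_i,\ell_j]_{\mathrm{RN}},
\]
where $R_N$ depends only on the brackets already constructed. The first point is that $R_N$ is $\ell_1$-closed, i.e. $[\ell_1,R_N]_{\mathrm{RN}}=0$: this is the standard Maurer--Cartan computation, obtained by applying the graded Jacobi identity of the RN bracket to each summand and cancelling the resulting terms with the help of $(J_2),\dots,(J_{N-1})$.

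The second, and main, difficulty is to solve $2[\ell_1,\ell_N]_{\mathrm{RN}}=-R_N$ for an $\mathcal O$-multilinear $\ell_N$. I would phrase this in the cochain complex $\bigl(\mathrm{Hom}_\mathcal O(\bigodot^N\E,\E),\,D\bigr)$ with $D:=[\ell_1,\,\cdot\,]_{\mathrm{RN}}$. Because each $\E_{-i}$ is free, hence projective, and \eqref{eq:resolutions} is exact, the functor $\mathrm{Hom}_\mathcal O(\bigodot^N\E,-)$ carries the resolution to an exact complex, so the obstruction to lifting the closed element $R_N$ reduces to the vanishing of the map it induces at the level of $\mathcal A$; and that vanishing is exactly the genuine Jacobi/Leibniz identity of the Lie--Rinehart algebra $\mathcal A$, transported through $\pi$, which holds by hypothesis. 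Hence $R_N$ is a $D$-coboundary and the desired $\ell_N$ exists and can be taken $\mathcal O$-multilinear (which is compatible with axiom (ii) since $N\geq 3$). I expect this lifting step --- acyclicity exploited in the total absence of a contracting homotopy, combined with the verification that the $\mathcal A$-valued obstruction dies and that $\mathcal O$-multilinearity is preserved --- to be the crux of the argument. Finally, since for each fixed arity $N$ only the finitely many brackets $\ell_2,\dots,\ell_{N-1}$ enter $R_N$, the induction is well posed even when \eqref{eq:resolutions} has infinite length and infinite-dimensional terms; by construction $\ell_1$ is the prescribed differential and $\pi$ is a hook, so the resulting Lie $\infty$-algebroid terminates at $\mathcal A$.
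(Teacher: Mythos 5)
Your proposal is correct and follows essentially the same route as the paper: anchor $\rho_\E=\rho_\mathcal A\circ\pi$, binary bracket obtained by lifting the structure constants of $[\cdot,\cdot]_\mathcal A$ on a basis of the free module $\E_{-1}$ and extending by the Leibniz rule (then correcting so that $[\ell_1,\ell_2]_{\mathrm{RN}}=0$), followed by an induction on arity in which the Richardson--Nijenhuis obstruction $R_N$ is shown to be $\mathcal O$-multilinear and $[\ell_1,\cdot]_{\mathrm{RN}}$-closed via the graded Jacobi identity, and then killed using the exactness of $\mathrm{Hom}_\mathcal O(\bigodot^N\E,-)$ applied to the resolution --- which is exactly the paper's bicomplex $\mathfrak{Page}^{(N-1)}(\E)$ argument (Proposition \ref{bicomplex}). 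The only cosmetic difference is that for $N\geq 4$ the paper disposes of the $\mathcal A$-valued component of $R_N$ by a degree count rather than by invoking the Jacobi identity of $\mathcal A$, which is only genuinely needed at the Jacobiator stage $N=3$.
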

Since any module admits free resolutions, Theorem \ref{thm:existence} implies that:

\begin{corollary}\label{cor:existence}
Any Lie-Rinehart algebra $\mathcal A$ admits a universal Lie $\infty$-algebroid.
\end{corollary}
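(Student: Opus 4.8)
The plan is to deduce the corollary directly from Theorem~\ref{thm:existence}, the only extra ingredient being the standard fact that the category of $\mathcal O$-modules has enough free objects. First I would invoke elementary homological algebra: choosing a generating family of $\mathcal A$ as an $\mathcal O$-module produces a surjection $\E_{-1}\twoheadrightarrow\mathcal A$ from a free module, and iterating the same construction on the successive kernels yields a resolution
\begin{equation*}
\cdots \stackrel{\ell_1}{\longrightarrow}\E_{-3} \stackrel{\ell_1}{\longrightarrow} \E_{-2} \stackrel{\ell_1}{\longrightarrow} \E_{-1} \stackrel{\pi}{\longrightarrow} \mathcal A \longrightarrow 0
\end{equation*}
of $\mathcal A$ by free $\mathcal O$-modules, where $\pi$ is the chosen surjection and each $\ell_1$ is the inclusion of a kernel composed with the previous surjection. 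No finiteness hypothesis is needed, so this works for an arbitrary Lie-Rinehart algebra $\mathcal A$, possibly of infinite rank and infinite length.

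Next I would apply Theorem~\ref{thm:existence} verbatim to this resolution: it equips $\E=(\E_{-i})_{i\geq 1}$ with a Lie $\infty$-algebroid structure $(\ell_k)_{k\geq 1}$ whose unary bracket is the given $\ell_1$ and which terminates at $\mathcal A$ through the hook $\pi$. In particular, the complex $(\E,\ell_1)$ underlying this Lie $\infty$-algebroid, augmented by $\pi$, is exactly the free resolution we started from; the theorem does not alter the complex, only decorates it with higher brackets and an anchor.

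Finally I would match this output against the definition of a universal Lie $\infty$-algebroid: the structure terminates at $\mathcal A$ through $\pi$, and its underlying augmented complex is the resolution above, which is projective because free modules are projective. These are precisely the two requirements of universality, so the corollary follows. The point I would be careful about is exactly this last implication, that \emph{free} resolutions qualify as the \emph{projective} resolutions demanded in the definition, so that the hypothesis of Theorem~\ref{thm:existence} (freeness) is strong enough to certify universality. Beyond that there is no genuine obstacle at the level of the corollary: all the substantive work, namely the recursive construction of the higher brackets filling the Richardson--Nijenhuis obstructions and doing so $\mathcal O$-linearly despite the absence of an $\mathcal O$-linear section of the resolution, is carried entirely by Theorem~\ref{thm:existence}, which I am assuming here.
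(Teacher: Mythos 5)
Your proposal is correct and follows exactly the paper's route: the paper deduces the corollary in one line from the existence of free resolutions of any $\mathcal O$-module together with Theorem~\ref{thm:existence}. Your additional care in checking that a free resolution qualifies as the projective resolution required by the definition of universality is sound (free modules are projective) and merely makes explicit what the paper leaves implicit.
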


While proving Theorem \ref{thm:existence}, we will see that if $\E_{-1} $ can be equipped with a Lie algebroid bracket (i.e. a bracket whose Jacobiator is zero), then all $k$-ary brackets of the universal Lie $ \infty$-algebroid structure may be chosen to be zero on $\E_{-1} $:

\begin{proposition} \label{prop:lksontNuls}
Let $(\E, \ell_1, \pi)$ be a free resolution of a Lie-Rinehart algebra $\mathcal{A}$. If $\E_{-1}$ admits a Lie algebroid bracket $[\cdot, \cdot] $ such that $\pi: \mathcal E_{-1} \to \mathcal A$ is a Lie-Rinehart morphism, then there exists a structure of universal Lie $\infty$-algebroid $(\mathcal E, (\ell_k)_{k\geq 1}, \rho_\mathcal E,\pi)  $  of $\mathcal{A}$ whose $2$-ary bracket  coincides with $[\cdot, \cdot]$ on $\mathcal E_{-1} $ and such that for every $k\geq 3$ the $k$-ary bracket $ \ell_k$ vanishes on $\bigodot^k\E_{-1}$.\end{proposition}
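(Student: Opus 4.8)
The plan is to run the inductive, arity-by-arity construction of the universal structure furnished by Theorem \ref{thm:existence}, making at each stage the special choice dictated by the hypothesis on $\E_{-1}$. By Proposition \ref{co-diff} a Lie $\infty$-algebroid structure amounts to a degree $+1$ square-zero co-derivation $Q_\E=\sum_k\ell_k$, and by Lemma \ref{lem:RN} the higher Jacobi identity in arity $n$ is the vanishing of $\sum_{i+j=n+1}[\ell_i,\ell_j]_{\hbox{\tiny{RN}}}$. First I would fix $\rho_\E:=\rho_\mathcal A\circ\pi$, take the unary bracket to be the differential $\ell_1$ of the resolution, and declare $\ell_2$ to coincide with the given Lie algebroid bracket $[\cdot,\cdot]$ on $\bigodot^2\E_{-1}$. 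The hypothesis that $\pi$ is a Lie--Rinehart morphism guarantees that this choice is compatible with axioms $(ii)$, $(iv)$ and $(v)$ of Definition \ref{def:Linfty}: condition $(2)$ for $\pi$ forces the anchor of $[\cdot,\cdot]$ to be $\rho_\mathcal A\circ\pi=\rho_\E$, whence the Leibniz rule $(ii)$; exactness gives $\pi\circ\ell_1=0$, whence $(iv)$; and $\rho_\mathcal A$ being a morphism of brackets gives $(v)$. On the remaining components of $\bigodot^2\E$ (those with at least one input of degree $\le-2$) I would define $\ell_2$ exactly as in Theorem \ref{thm:existence}.

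The inductive hypothesis I would carry is that $\ell_2,\dots,\ell_{n-1}$ have been built so that the higher Jacobi identities hold through arity $n-1$, with $\ell_2|_{\bigodot^2\E_{-1}}=[\cdot,\cdot]$ and $\ell_k|_{\bigodot^k\E_{-1}}=0$ for $3\le k\le n-1$. The map $\ell_n\colon\bigodot^n\E\to\E$ can be prescribed independently on each summand of $\bigodot^n\E$ indexed by the multiset of internal degrees of its inputs, and the obstruction equation respects this grading; the only obstruction to extending the structure to arity $n$ is that the degree $+2$ map $R_n:=\sum_{i+j=n+1,\,i,j\ge 2}[\ell_i,\ell_j]_{\hbox{\tiny{RN}}}$ be of the form $-[\ell_1,\ell_n]_{\hbox{\tiny{RN}}}$, which is solvable by acyclicity precisely as in Theorem \ref{thm:existence}. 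The crux is then a purely combinatorial evaluation of $R_n$ on the top summand $\bigodot^n\E_{-1}$.

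On $\bigodot^n\E_{-1}$ I would analyse each term $\iota_{\ell_i}\ell_j$ of $R_n$, with $i,j\ge 2$. The inner bracket $\ell_j$ is applied to original inputs, all of degree $-1$; by the inductive hypothesis it vanishes unless $j=2$, in which case its output again lies in degree $-1$. The outer bracket $\ell_i$ then receives $i$ arguments all of degree $-1$ and likewise vanishes unless $i=2$. Hence the only possibly nonzero contribution requires $i=j=2$, forcing $n=3$: there $R_3|_{\bigodot^3\E_{-1}}$ is (a multiple of) the Jacobiator of $[\cdot,\cdot]$, which is zero because $[\cdot,\cdot]$ is a genuine Lie bracket; and for every $n\ge 4$ no term survives, so $R_n|_{\bigodot^n\E_{-1}}=0$ outright. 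Since $\ell_1$ vanishes on $\E_{-1}$ (there is no $\E_0$), on the top summand $-[\ell_1,\ell_n]_{\hbox{\tiny{RN}}}$ reduces to $-\ell_1\circ\bigl(\ell_n|_{\bigodot^n\E_{-1}}\bigr)$, so the obstruction equation there reads $\ell_1\circ\bigl(\ell_n|_{\bigodot^n\E_{-1}}\bigr)=0$ and is solved by $\ell_n|_{\bigodot^n\E_{-1}}=0$. Defining $\ell_n$ on the remaining summands by the acyclicity argument of Theorem \ref{thm:existence} closes the induction and yields a universal structure with $\ell_2|_{\bigodot^2\E_{-1}}=[\cdot,\cdot]$ and $\ell_k|_{\bigodot^k\E_{-1}}=0$ for all $k\ge3$.

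The step I expect to be the main obstacle is the bookkeeping in this combinatorial computation: one must verify, with the correct Koszul signs and the Richardson--Nijenhuis conventions of the paper, that the input-degree grading of $\bigodot^n\E$ is genuinely preserved by $R_n$ and by $[\ell_1,\ell_n]_{\hbox{\tiny{RN}}}$, so that the equation on the top summand is self-contained and involves only the lower brackets evaluated on degree $-1$ arguments, to which the inductive hypothesis applies. Everything else is an adaptation of the existence proof, whose acyclicity mechanism is used verbatim on the mixed summands and is simply not needed on the top summand.
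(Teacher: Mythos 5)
Your proposal is correct and follows essentially the same route as the paper: both rerun the recursion of Theorem \ref{thm:existence}, observe that the obstruction $\sum_{i+j=n+1,\,i,j\geq 2}[\ell_i,\ell_j]_{\hbox{\tiny{RN}}}$ vanishes on $\bigodot^n\E_{-1}$ (for $n=3$ because the Jacobiator of the genuine Lie bracket is zero, for $n\geq 4$ by the inductive vanishing of the lower brackets on degree $-1$ inputs), and then invoke item 3 of Proposition \ref{bicomplex} to choose $\ell_n$ with no component on the top summand. The paper phrases this as tracking which columns of $\mathfrak{Page}^{(n-1)}(\E)$ are zero while you evaluate the Richardson--Nijenhuis terms directly, but the mechanism — including the use of acyclicity for the mixed summands — is the same.
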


 \subsubsection{Universality Theorem \ref{th:universal} and corollaries}

Here is our second main result.  It is related to Proposition 2.1.4 in \cite{Fregier} (but morphisms are not the same), and extends Theorem 2.9 in \cite{LLS}. 

\begin{theorem} \label{th:universal}
Let $\mathcal A $ be a Lie-Rinehart algebra over $ \mathcal O$.  Given, 
\begin{enumerate}
    \item[a)] a Lie $ \infty$-algebroid $(\mathcal E', (\ell_k')_{k\geq 1}, \rho_{\mathcal E'}, \pi')$ that terminates in $\mathcal A $ through the hook $\pi'$, and
    \item[b)] any Lie $\infty $-algebroid $(\mathcal E, (\ell_k)_{k\geq 1}, \rho_{\mathcal E},\pi)$ universal for $\mathcal A $ through the hook $\pi$,
\end{enumerate} 
then
\begin{enumerate} 
\item there exists a morphism of Lie $\infty $-algebroids from $(\E', (\ell'_k)_{k \geq 1}, \rho_{\E'},\pi')$ to $(\E, (\ell_k)_{k\geq 1}, \rho_\E,\pi)$ over $\mathcal A $ .
\item and any two such morphisms  are homotopic.  
\end{enumerate}
\end{theorem}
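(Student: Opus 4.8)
The plan is to construct the morphism $\Phi\colon S^\bullet_\mathbb{K}(\E')\to S^\bullet_\mathbb{K}(\E)$ by building its Taylor coefficients $\Phi^{(n)}$ recursively on the arity $n$, exploiting that the target complex $(\E,\ell_1)$ is a genuine resolution of $\mathcal A$ (this is where universality of $\E$ enters crucially, as opposed to $\E'$ which merely terminates at $\mathcal A$). First I would use the hook condition to pin down $\Phi^{(0)}$: since $(\E,\ell_1,\pi)$ is a free resolution and $(\E',\ell_1')$ is an arbitrary complex augmented over $\mathcal A$ through $\pi'$, the comparison theorem of homological algebra produces a chain map $\Phi^{(0)}\colon(\E',\ell_1')\to(\E,\ell_1)$ lifting $\mathrm{id}_\mathcal A$, i.e. satisfying $\pi\circ\Phi^{(0)}=\pi'$; this automatically gives $\rho_\E\circ\Phi^{(0)}=\rho_{\E'}$ and handles the two extra algebroid conditions in Definition \ref{def:morph}. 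I would take $\Phi^{(0)}$ to be $\mathcal O$-linear, which is possible since the modules are free/projective.

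Next I would carry out the inductive step. The defining equation $\Phi\circ Q_{\E'}=Q_\E\circ\Phi$, read in arity $n$ and projected onto $\E$ via $\mathrm{pr}$, has the schematic form
\begin{equation*}
\ell_1\circ\Phi^{(n)}-\Phi^{(n)}\circ \ell_1 = \mathcal{R}_n\bigl(\Phi^{(0)},\dots,\Phi^{(n-1)};(\ell_k),(\ell_k')\bigr),
\end{equation*}
where the right-hand side $\mathcal R_n$ is a universal algebraic expression in the \emph{lower} Taylor coefficients and in the brackets of both algebroids, and does not involve $\Phi^{(n)}$ itself. The key point, which I would verify, is that $\mathcal R_n$ is a $\ell_1$-closed (cycle-valued) map whenever the lower $\Phi^{(i)}$ already satisfy the morphism equation in lower arities; this closedness is forced by $Q_\E^2=0=Q_{\E'}^2$. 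Because the target is acyclic in degrees $\le -2$ and exact at $\E_{-1}$ over $\mathcal A$, every such cycle is a boundary, so $\Phi^{(n)}$ solving the equation exists. I would run the induction degreewise inside each arity, using freeness of $\E'$ to choose an $\mathcal O$-linear solution, so that the resulting $\Phi$ is $\mathcal O$-multilinear in the sense of Lemma \ref{Tay-C}, hence a genuine Lie $\infty$-algebroid morphism over $\mathcal A$. This proves item (1).

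For item (2), given two morphisms $\Phi,\Psi$ over $\mathcal A$, I would produce a homotopy in the sense of Definition \ref{def:homotopy} rather than an abstract chain homotopy, so as to stay within the framework the paper has set up. The strategy is to build, again by induction on arity, a family of $\mathcal O$-multilinear maps $H_t^{(n)}$ and invoke Proposition \ref{prop:justify} to integrate them into a path $\Phi_t$ of morphisms with $\Phi_a=\Phi$; one then arranges that $\Phi_\infty=\Psi$ (or reaches $\Psi$ at the endpoint after a reparametrization as in Lemma \ref{gluing-lemma}). Concretely, $\Phi^{(0)}$ and $\Psi^{(0)}$ both lift $\mathrm{id}_\mathcal A$, so they are chain-homotopic over $\mathcal A$ by the standard comparison theorem, yielding $H^{(0)}$ with values in $\ker\pi$; feeding this into the recursion, at each arity the obstruction to extending the homotopy is again a cycle in an acyclic (in the relevant degrees) complex, hence a boundary, producing $H^{(n)}$. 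The gluing Lemma \ref{gluing-lemma} is exactly the device that lets me assemble these infinitely many arity-wise homotopies into a single homotopy, since for fixed $n$ only finitely many steps affect $\Phi_t^{(n)}$.

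The main obstacle I anticipate is bookkeeping the $\mathcal O$-\emph{multilinearity} throughout, since the $2$-ary bracket is not $\mathcal O$-linear: the anchor terms in $Q_\E$ and $Q_{\E'}$ threaten to spoil linearity of $\mathcal R_n$ and of the homotopy equation. The saving grace is precisely Proposition \ref{linearity} together with the compatibility $\rho_\E\circ\Phi^{(0)}=\rho_{\E'}$, which shows the offending anchor contributions cancel in pairs; I would lean on this to guarantee that the cycles $\mathcal R_n$ are $\mathcal O$-multilinear, so that the chosen primitives $\Phi^{(n)}$ (and $H^{(n)}$) can also be taken $\mathcal O$-linear. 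A secondary subtlety, in the infinite-rank setting, is ensuring the recursively defined objects are piecewise rational continuous so that Proposition \ref{prop:justify} and Lemma \ref{gluing-lemma} apply; this is a matter of checking that the algebraic recursion preserves the piecewise-rational class, which the integral formula \eqref{sol} makes transparent.
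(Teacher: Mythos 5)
Your proposal is correct and follows essentially the same route as the paper: arity-by-arity recursion for the morphism, with the obstruction at each stage being an $\mathcal O$-multilinear cocycle in the acyclic complex $\mathrm{Hom}_\mathcal O(\bigodot^{\bullet}\E',\E)$ (the paper's $\mathfrak{Page}^{(k)}(\E',\E)$ together with Proposition \ref{bicomplex}), the anchor-cancellation via $\rho_\E\circ\Phi^{(0)}=\rho_{\E'}$ being exactly the content of Lemma \ref{O-linearity-lemma}. For uniqueness, your plan — start from the chain homotopy $h$ between $\Phi^{(0)}$ and $\Psi^{(0)}$, integrate arity-wise corrections via Proposition \ref{prop:justify}, and assemble them with Lemma \ref{gluing-lemma} — is precisely the paper's argument (Lemma \ref{imp-lemma2} plus the reparametrization $t\mapsto t/(1-t)$).
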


Recall that a Lie $\infty $-algebroid morphism $\Phi $ as above is \textquotedblleft a morphism of Lie $\infty $-algebroid that terminates in $\mathcal A $\textquotedblright\,(or \textquotedblleft over $\mathcal A $\textquotedblright\,for short) if $\pi \circ \Phi^{(0)} = \pi' $, see Definition \ref{def:morph}. Here is an immediate corollary of Theorem \ref{th:universal}.

\begin{corollary}
\label{cor:unique} 
Any two universal Lie $\infty $-algebroids of a given Lie-Rinehart algebra are homotopy equivalent. 
This homotopy equivalence, moreover, is unique up to homotopy.
\end{corollary}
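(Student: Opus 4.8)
The plan is to derive Corollary \ref{cor:unique} as a formal consequence of Theorem \ref{th:universal} together with the compatibility of homotopy with composition established earlier. Suppose $(\E, (\ell_k), \rho_\E, \pi)$ and $(\E', (\ell'_k), \rho_{\E'}, \pi')$ are two universal Lie $\infty$-algebroids of the same Lie-Rinehart algebra $\mathcal A$. Since both are in particular Lie $\infty$-algebroids terminating at $\mathcal A$, and both are universal, item (1) of Theorem \ref{th:universal} applies in each direction: there exist morphisms of Lie $\infty$-algebroids over $\mathcal A$
$$\Phi\colon (S^\bullet_\mathbb{K}(\E'), Q_{\E'}) \longrightarrow (S^\bullet_\mathbb{K}(\E), Q_{\E})\qquad\text{and}\qquad \Psi\colon (S^\bullet_\mathbb{K}(\E), Q_{\E}) \longrightarrow (S^\bullet_\mathbb{K}(\E'), Q_{\E'}).$$

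First I would show these are mutually inverse up to homotopy. Consider the composition $\Psi \circ \Phi\colon (S^\bullet_\mathbb{K}(\E'), Q_{\E'}) \to (S^\bullet_\mathbb{K}(\E'), Q_{\E'})$. Since $\pi \circ \Phi^{(0)} = \pi'$ and $\pi' \circ \Psi^{(0)} = \pi$, one checks that $\pi' \circ (\Psi \circ \Phi)^{(0)} = \pi' \circ \Psi^{(0)} \circ \Phi^{(0)} = \pi \circ \Phi^{(0)} = \pi'$, so $\Psi \circ \Phi$ is again a Lie $\infty$-algebroid endomorphism of $\E'$ over $\mathcal A$. The identity morphism $\mathrm{id}_{\E'}$ is also such an endomorphism over $\mathcal A$. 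By item (2) of Theorem \ref{th:universal} applied with $\E'$ playing both roles (legitimate because $\E'$ is universal, hence serves as target in b), while also terminating at $\mathcal A$ as required for a), the two endomorphisms $\Psi \circ \Phi$ and $\mathrm{id}_{\E'}$ are homotopic. Symmetrically, $\Phi \circ \Psi$ is homotopic to $\mathrm{id}_{\E}$. This exhibits $\Phi$ as a homotopy equivalence with homotopy inverse $\Psi$, establishing the first assertion.

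For the uniqueness up to homotopy of this equivalence, I would invoke item (2) of Theorem \ref{th:universal} directly: any two Lie $\infty$-algebroid morphisms $\E' \to \E$ over $\mathcal A$ are homotopic, so the homotopy equivalence class of $\Phi$ is canonical, and likewise for its inverse. The compatibility of homotopy with composition (proved in the Proposition preceding this corollary) guarantees that the relation \textquotedblleft is the homotopy inverse of\textquotedblright\ is well-defined on homotopy classes, so the homotopy inverse class is also uniquely determined.

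The main subtlety I anticipate is purely bookkeeping: verifying that each composition genuinely satisfies the definitional conditions of a morphism \emph{over $\mathcal A$} (the compatibility $\pi' \circ (\Psi\circ\Phi)^{(0)} = \pi'$ and $\mathcal O$-multilinearity of the composite), so that item (2) of Theorem \ref{th:universal} is applicable to the pair $(\Psi\circ\Phi, \mathrm{id}_{\E'})$. The genuinely nontrivial content — existence of morphisms in both directions and the rigidity forcing any two to be homotopic — is entirely absorbed into Theorem \ref{th:universal}; no further analytic or homological work is needed here beyond confirming that $\mathrm{id}_{\E'}$ and $\mathrm{id}_{\E}$ are themselves admissible morphisms over $\mathcal A$ (which is immediate, since the identity has Taylor coefficients $\mathrm{id}^{(0)}=\mathrm{id}_{\E'}$ and $\mathrm{id}^{(n)}=0$ for $n\geq 1$, is trivially $\mathcal O$-multilinear, and satisfies $\pi' \circ \mathrm{id} = \pi'$).
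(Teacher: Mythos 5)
Your proposal is correct and is exactly the argument the paper intends: the authors state the corollary as an "immediate" consequence of Theorem \ref{th:universal}, and the standard terminal-object reasoning you give (morphisms in both directions from item (1), compositions compared with the identity via item (2), plus compatibility of homotopy with composition) is precisely how that immediacy is meant to be unpacked. The bookkeeping you flag (the identity and the composites being admissible morphisms over $\mathcal A$) is handled correctly.
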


We will prove that the morphism that appears in Theorem \ref{th:universal} can be made trivial upon choosing a \textquotedblleft big enough\textquotedblright\,universal Lie $\infty$-algebroid:
\begin{proposition}\label{univ:precise}
Let $\mathcal A $ be a Lie-Rinehart algebra over $ \mathcal O$.  Given a Lie $ \infty$-algebroid structure $(\mathcal E', (\ell_k')_{k\geq 1}, \rho_{\mathcal E'},\pi')$ that terminates in $\mathcal A $ through a hook $\pi'$, 
then there exist a universal Lie $ \infty$-algebroid $(\mathcal E, (\ell_k)_{k\geq 1}, \rho_{\mathcal E},\pi)$ of $\mathcal A $ through a hook $\pi$ such that
\begin{enumerate}
\item $\mathcal E $ contains $\mathcal E' $ as a subcomplex,  
\item the Lie $ \infty$-algebroid morphism from $\mathcal E' $ to $\mathcal E $ announced in Theorem \ref{th:universal} can be chosen to be the inclusion map  $\mathcal E' \hookrightarrow \mathcal E $ (i.e. a Lie $\infty$-morphism  where the only non-vanishing Taylor coefficient is the inclusion $\mathcal E' \hookrightarrow \mathcal E $).
\end{enumerate}
 \end{proposition}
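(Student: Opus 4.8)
The idea is to enlarge $\mathcal E'$ into a resolution of $\mathcal A$ inside which $\mathcal E'$ sits as a \emph{sub-Lie $\infty$-algebroid}: once the brackets of $\mathcal E$ restrict on $\mathcal E'$ to those of $\mathcal E'$, the inclusion $\iota\colon \mathcal E'\hookrightarrow\mathcal E$ is a strict Lie $\infty$-algebroid morphism over $\mathcal A$, hence an admissible choice for the morphism of Theorem \ref{th:universal}. First I would construct, by a standard homological induction, a projective resolution $\cdots \stackrel{\ell_1}{\to}\mathcal E_{-2}\stackrel{\ell_1}{\to}\mathcal E_{-1}\stackrel{\pi}{\to}\mathcal A$ with $\mathcal E_{-i}=\mathcal E'_{-i}\oplus \mathcal F_{-i}$ for free modules $\mathcal F_{-i}$, and with $\ell_1|_{\mathcal E'}=\ell_1'$, $\pi|_{\mathcal E'_{-1}}=\pi'$. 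In degree $-1$ one adjoins a free $\mathcal F_{-1}$ mapping onto $\mathrm{coker}(\pi')$, so that $\pi$ becomes surjective; recursively, once $\ell_1$ is fixed up to degree $-i$, one adjoins a free $\mathcal F_{-(i+1)}$ whose image, together with $\ell_1'(\mathcal E'_{-(i+1)})$, covers the cycles $\ker(\ell_1\colon\mathcal E_{-i}\to\mathcal E_{-(i-1)})$. The inclusions $\ell_1'(\mathcal E'_{-(i+1)})\subseteq\ker\ell_1$ and the identity $\pi'\circ\ell_1'=0$ (the augmented complex of $\mathcal E'$ being a complex) make the construction consistent. Each $\mathcal E_{-i}$ is projective, so $\mathcal E$ is a projective resolution containing $\mathcal E'$ as a subcomplex; setting $\rho_\E:=\rho_\mathcal A\circ\pi$ extends $\rho_{\E'}$ and yields $\rho_\E\circ\ell_1=0$.

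Next I would equip $\mathcal E$ with higher brackets $(\ell_k)_{k\ge 2}$ extending $(\ell_k')_{k\ge 2}$, i.e. with $\ell_k|_{\bigodot^k\mathcal E'}=\ell_k'$. This is a \emph{relative} version of Theorem \ref{thm:existence}: one runs the same induction on arity, where at each stage the higher Jacobi identity takes the schematic form $\ell_1\circ\ell_k+\ell_k\circ\ell_1=R_k$, with $R_k\colon\bigodot^k\mathcal E\to\mathcal E$ a universal expression in $\ell_2,\dots,\ell_{k-1}$ that is $\ell_1$-closed by the lower identities; acyclicity of $\mathcal E$ then allows one to solve for $\ell_k$. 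The only new feature is the constraint $\ell_k|_{\bigodot^k\mathcal E'}=\ell_k'$: since $R_k$ restricts to the corresponding expression on $\bigodot^k\mathcal E'$ and $\ell_k'$ already solves the equation there, I would prescribe $\ell_k=\ell_k'$ on the pure part and extend it over the summands of $\bigodot^k\mathcal E$ containing at least one factor of $\mathcal F$, again using acyclicity. The $\mathcal O$-linearity of $\ell_k$ for $k\ne 2$ and the Leibniz rule for $\ell_2$ relative to $\rho_\E$ are arranged exactly as in the proof of Theorem \ref{thm:existence}.

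The hard part is precisely this relative extension, for the very reason stressed in the introduction: the contracting homotopy of the resolution is only $\mathbb K$-linear, so one cannot invoke the explicit homological-perturbation formulas and must follow the more delicate argument of Theorem \ref{thm:existence} while keeping the values on $\mathcal E'$ fixed. In particular one must check that pinning $\ell_k=\ell_k'$ on $\bigodot^k\mathcal E'$ is compatible with the equation $\ell_1\circ\ell_k+\ell_k\circ\ell_1=R_k$, whose coupling is nontrivial because $\ell_1$ mixes $\mathcal E'$ and $\mathcal F$; this is where the fact that $(\mathcal E',(\ell_k')_{k\ge1},\rho_{\E'})$ is itself a Lie $\infty$-algebroid is used. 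Granting this, the intertwining $\iota\circ Q_{\E'}=Q_\E\circ\iota$ is exactly the family of identities $\ell_k|_{\mathcal E'}=\ell_k'$, so $\iota$ is an $\mathcal O$-multilinear Lie $\infty$-algebroid morphism; together with $\rho_\E|_{\mathcal E'_{-1}}=\rho_{\E'}$ and $\pi\circ\iota=\pi'$ it is a morphism over $\mathcal A$. Since $\mathcal E$ is a projective resolution of $\mathcal A$, it is universal, so Theorem \ref{th:universal} applies and, its morphism being unique up to homotopy, may be taken to be $\iota$; this gives both items of the Proposition.
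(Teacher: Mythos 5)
Your strategy is the same as the paper's: first enlarge $\mathcal E'$ to a projective resolution $\mathcal E=\mathcal E'\oplus\mathcal F$ of $\mathcal A$ containing it as a subcomplex with a free complement, then extend the brackets arity by arity while keeping their restriction to $\bigodot^\bullet\mathcal E'$ equal to the $\ell_k'$, so that the inclusion becomes a strict morphism. The only cosmetic difference is in the first step: the paper gets the ambient resolution from a mapping-cone construction (its Lemma \ref{lem:inFactInclusion}), whereas you adjoin free modules $\mathcal F_{-i}$ degree by degree; both are standard and both deliver the needed splitting $\mathcal E_{-i}=\mathcal E'_{-i}\oplus\mathcal F_{-i}$.

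The one place where your argument is thinner than it should be is the phrase ``extend it over the summands of $\bigodot^k\mathcal E$ containing at least one factor of $\mathcal F$, again using acyclicity.'' Plain acyclicity of the total complex $\mathfrak{Page}^{(k-1)}(\mathcal E)$ gives you \emph{some} primitive of the obstruction cocycle, but not one vanishing on $\bigodot^k\mathcal E'$; and because $\ell_1(\mathcal F)$ has components in $\mathcal E'$, the equation on the $\mathcal F$-containing summands couples back to the prescribed values, so you cannot solve it summand by summand. What is actually needed is the relative statement that the paper isolates as Lemma \ref{rest:univ}: a $D$-cocycle $P$ that preserves $\bigodot^\bullet\mathcal E'$ and whose restriction to $\bigodot^\bullet\mathcal E'$ is already a coboundary \emph{in the subcomplex} admits a $D$-primitive that also preserves $\bigodot^\bullet\mathcal E'$. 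The proof of that lemma is not a one-liner: one subtracts an arbitrary extension $\hat\ell_k$ of $\ell_k'$ to reduce to a cocycle $\kappa$ vanishing on $\bigodot^\bullet\mathcal E'$, takes any primitive $\tau$ of $\kappa$, and then corrects $\tau$ column by column (using exactness of the rows of the bicomplex and the splitting $\mathcal E=\mathcal E'\oplus\mathcal F$) so that its restriction to $\bigodot^\bullet\mathcal E'$ is zero. You correctly flag this as ``the hard part'' and note that the coupling is where the Lie $\infty$-structure of $\mathcal E'$ enters, so the architecture of your proof is sound; but as written the crucial step is asserted (``granting this'') rather than established, and it is precisely the substantive content of the paper's proof.
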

 
The following Corollary follows immediately from Proposition \ref{univ:precise}: 
\begin{corollary}
Let $\mathcal{A}$ be a Lie-Rinehart algebra over $\mathcal{O}$ and $\mathcal{B}$ be a Lie-Rinehart subalgebra of $\mathcal{A}$. Any universal Lie $\infty$-algebroid of $\mathcal B$ can be contained in a  universal Lie $\infty$-algebroid of $\mathcal A$. \end{corollary}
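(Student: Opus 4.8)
The plan is to derive the corollary directly from Proposition \ref{univ:precise}, treating the subalgebra $\mathcal B \subset \mathcal A$ as the source datum and $\mathcal A$ as the target Lie-Rinehart algebra. First I would choose any universal Lie $\infty$-algebroid $(\mathcal E', (\ell_k')_{k\geq 1}, \rho_{\mathcal E'}, \pi')$ of $\mathcal B$; by definition this is built on a free (hence projective) resolution of $\mathcal B$ as an $\mathcal O$-module, and it terminates at $\mathcal B$ through the hook $\pi'\colon \mathcal E'_{-1} \to \mathcal B$. The key observation is that a universal Lie $\infty$-algebroid of $\mathcal B$ also \emph{terminates at} $\mathcal A$: since $\mathcal B$ is a Lie-Rinehart subalgebra of $\mathcal A$, composing $\pi'$ with the inclusion $\iota\colon \mathcal B \hookrightarrow \mathcal A$ gives an $\mathcal O$-linear map $\iota\circ\pi'\colon \mathcal E'_{-1} \to \mathcal A$ which still satisfies $\rho_{\mathcal A}\circ(\iota\circ\pi') = \rho_{\mathcal E'}$ and $[\,(\iota\circ\pi')(x),(\iota\circ\pi')(y)\,]_{\mathcal A} = (\iota\circ\pi')(\ell_2'(x,y))$, because $\iota$ is a Lie-Rinehart morphism and these identities already hold for $\pi'$. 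Thus $(\mathcal E', (\ell_k')_{k\geq 1}, \rho_{\mathcal E'})$ terminates at $\mathcal A$ through the hook $\iota\circ\pi'$.

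Next I would feed this datum into Proposition \ref{univ:precise}, applied with $\mathcal A$ as the ambient Lie-Rinehart algebra and $(\mathcal E', (\ell_k')_{k\geq 1}, \rho_{\mathcal E'}, \iota\circ\pi')$ as the structure terminating in $\mathcal A$. The proposition then produces a universal Lie $\infty$-algebroid $(\mathcal E, (\ell_k)_{k\geq 1}, \rho_{\mathcal E}, \pi)$ of $\mathcal A$ together with an inclusion of complexes $\mathcal E' \hookrightarrow \mathcal E$ realizing $\mathcal E'$ as a subcomplex, such that the universal morphism of Theorem \ref{th:universal} is the inclusion. This is exactly the assertion that the universal Lie $\infty$-algebroid of $\mathcal B$ is contained in a universal Lie $\infty$-algebroid of $\mathcal A$, so the corollary follows.

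The only point requiring genuine care — and the step I expect to be the main (albeit mild) obstacle — is verifying that the hook $\iota\circ\pi'$ is admissible, i.e. that terminating at the subalgebra $\mathcal B$ is compatible with terminating at $\mathcal A$. The subtlety is that ``terminates at'' involves only the degree $-1$ bracket $\ell_2'$ and the anchor, not the full projective-resolution condition, so one does not need $\mathcal E'$ to resolve $\mathcal A$; it suffices that it resolves $\mathcal B$ and that $\iota$ preserves brackets and anchors. Since $\mathcal B \hookrightarrow \mathcal A$ is by hypothesis a Lie-Rinehart subalgebra (its bracket and anchor are restrictions of those of $\mathcal A$), the two defining identities for a hook transfer verbatim, and no further compatibility is needed. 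With this checked, Proposition \ref{univ:precise} applies without modification and the result is immediate.
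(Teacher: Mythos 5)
Your proposal is correct and follows exactly the route the paper intends: the paper states that the corollary ``follows immediately from Proposition \ref{univ:precise}'', and your argument supplies precisely the missing (easy) verification, namely that a universal Lie $\infty$-algebroid of $\mathcal B$ terminates at $\mathcal A$ through the composite hook $\iota\circ\pi'$ because the bracket and anchor of $\mathcal B$ are restrictions of those of $\mathcal A$. Nothing further is needed.
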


\subsubsection{Induced Lie $\infty $-algebroids structures on ${\mathrm{Tor}}_{\mathcal O}(\mathcal A, \mathcal O/\mathcal I) $.}\label{ssec:Tor}

 Let $\mathcal A $ be a Lie-Rinehart algebra over $\mathcal O $ with anchor $\rho_\mathcal A $. We say that an ideal $\mathcal I \subset \mathcal O $ is a \emph{Lie-Rinehart ideal} if
  $ \rho_\mathcal A (a) [\mathcal I] \subset  \mathcal I$ for all $a \in \mathcal A $.
  Since this assumption implies $[\mathcal I \mathcal A , \mathcal A] \subset \mathcal I \mathcal A $, the quotient space $ \mathcal A /\mathcal I $ comes equipped with a natural Lie-Rinehart algebra structure over $ \mathcal O/ \mathcal I$.
  
 For $(\mathcal E, (\ell_k)_{k\geq 1}, \rho_\mathcal E, \pi) $ an universal Lie  $\infty$-algebroid of $\mathcal A $, the quotient space $\mathcal E_\bullet / \mathcal I \simeq \mathcal O/\mathcal I \otimes_\mathcal O \mathcal E_\bullet $ comes equipped with an induced Lie $\infty $-algebroid structure: the $n$-ary brackets for $n \neq 2$ go to quotient by linearity, while for $n =2$, the $2$-ary bracket goes to the quotient in view of the relation $\rho_{\mathcal E} (\mathcal E_{-1}) [\mathcal I] \subset \mathcal I$.
 Also, $\pi $ goes to the quotient to a Lie-Rinehart algebra morphism $ \mathcal E_{-1}/\mathcal I \to \mathcal A/\mathcal I  $. 
 
 \begin{definition}
     Let $\mathcal A $ be a Lie-Rinehart algebra over $\mathcal O $. For every Lie-Rinehart ideal $\mathcal I \subset \mathcal O $, we call  \emph{Lie $\infty$-algebroid of $\mathcal I $} the quotient Lie $\infty$-algebroid  $\mathcal E_\bullet/\mathcal I $, with $(\mathcal E, (\ell_k)_{k\geq 1}, \rho_{\mathcal E},\pi)$ a universal Lie $\infty $-algebroid of $\mathcal A $.
 \end{definition}
 
 \begin{remark}
 \label{rmk:Tor}
  \normalfont
  The complexes on which the Lie $\infty$-algebroids of the ideal $\mathcal I $ are defined compute ${\mathrm{Tor}}_\mathcal O (\mathcal A, \mathcal O/\mathcal I) $ by construction. 
 \end{remark}

 Moreover,  for any two universal Lie  $\infty$-algebroids  for $\mathcal A $, defined on $\mathcal E, \mathcal E'$ the homotopy equivalences $\Phi: \mathcal E \to \mathcal E' $ and $\Psi: \mathcal E' \to \mathcal E$,  whose existence is granted by  Corollary \ref{cor:unique}, go to the quotient and induce an homotopy equivalences between $ \mathcal O /\mathcal I \otimes_\mathcal O \mathcal E_\bullet  \simeq \mathcal E_\bullet / \mathcal I$ and $\mathcal O /\mathcal I \otimes_\mathcal O  \mathcal E_\bullet' \simeq  \mathcal E_\bullet' / \mathcal I$.
 The following corollary is then an obvious consequence of Theorem \ref{th:universal}.
 
 \begin{corollary}
 \label{cor:LRideal}
  Let $\mathcal A $ be a Lie-Rinehart algebra over $\mathcal O $. Let $\mathcal I \subset \mathcal O $ be a Lie-Rinehart ideal. Then any two Lie $\infty $-algebroids of $\mathcal I $ are homotopy equivalent, and there is a distinguished class of homotopy equivalences between them.
 \end{corollary}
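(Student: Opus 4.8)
The plan is to show that the homotopy equivalences, together with the homotopies witnessing that they are mutually inverse, provided by Corollary \ref{cor:unique} at the level of universal Lie $\infty$-algebroids of $\mathcal A$, all descend by $\mathcal O$-multilinearity to the quotients by the Lie-Rinehart ideal $\mathcal I$. I start from two universal Lie $\infty$-algebroids $(\mathcal E, (\ell_k)_{k\geq 1}, \rho_\mathcal E, \pi)$ and $(\mathcal E', (\ell_k')_{k\geq 1}, \rho_{\mathcal E'}, \pi')$ of $\mathcal A$, so that $\mathcal E_\bullet/\mathcal I$ and $\mathcal E'_\bullet/\mathcal I$ are the two associated Lie $\infty$-algebroids of $\mathcal I$. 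By Corollary \ref{cor:unique} there are Lie $\infty$-algebroid morphisms $\Phi\colon \mathcal E \to \mathcal E'$ and $\Psi\colon \mathcal E' \to \mathcal E$ over $\mathcal A$, together with homotopies realizing $\Psi \circ \Phi \sim \mathrm{id}_{\mathcal E}$ and $\Phi \circ \Psi \sim \mathrm{id}_{\mathcal E'}$, the whole package being unique up to homotopy.

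First I would descend the morphisms. Being a Lie $\infty$-algebroid morphism, $\Phi$ is $\mathcal O$-multilinear (Definition \ref{def:morph}), i.e.\ its Taylor coefficients $\Phi^{(n)}$ are $\mathcal O$-multilinear (Lemma \ref{Tay-C}). Since $\mathcal I$ is an ideal of $\mathcal O$, an $\mathcal O$-multilinear map sends any symmetric tensor with at least one factor in $\mathcal I \mathcal E$ into $\mathcal I \mathcal E'$; hence each $\Phi^{(n)}$ factors through the projection as an $\mathcal O/\mathcal I$-multilinear map $S^{n+1}_\mathbb{K}(\mathcal E/\mathcal I) \to \mathcal E'/\mathcal I$, and these descended Taylor coefficients determine a unique co-algebra morphism $\bar\Phi$ from $S^\bullet_\mathbb{K}(\mathcal E/\mathcal I)$ to $S^\bullet_\mathbb{K}(\mathcal E'/\mathcal I)$. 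The intertwining relation \eqref{def:LM} between $\Phi$ and the two codifferentials is an identity of $\mathbb{K}$-linear maps; since $\Phi$ descends and both codifferentials descend to the quotients (the $2$-ary bracket passing to the quotient precisely because $\mathcal I$ is a Lie-Rinehart ideal, so that the anchor descends as well), projecting this identity yields the corresponding relation for $\bar\Phi$. Together with the descent of the anchor compatibility of Definition \ref{def:morph}, this makes $\bar\Phi$ a Lie $\infty$-algebroid morphism over $\mathcal O/\mathcal I$; the same argument applied to $\Psi$ produces $\bar\Psi$.

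Next I would descend the homotopies. A homotopy $(\Phi_t, H_t)$ realizing $\Psi \circ \Phi \sim \mathrm{id}_{\mathcal E}$ consists, by Definition \ref{def:homotopy} and the construction of Proposition \ref{prop:justify}, of a piecewise rational continuous path of $\mathcal O$-multilinear Lie $\infty$-morphisms $\Phi_t$ and a piecewise rational path of $\mathcal O$-multilinear $\Phi_t$-co-derivations $H_t$ of degree $-1$ (their $\mathcal O$-multilinearity being compatible with \eqref{eq-diff} by Proposition \ref{linearity}). By the factorization used above, every Taylor coefficient $\Phi_t^{(n)}$ and $H_t^{(n)}$ descends; because the projection $\mathcal E \to \mathcal E/\mathcal I$ is $\mathbb{K}$-linear and independent of $t$, the descended families $\bar\Phi_t, \bar H_t$ remain piecewise rational (continuous) with the same gluing points, and the differential equation \eqref{eq-diff} holds term by term after projection. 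Thus $(\bar\Phi_t, \bar H_t)$ is a homotopy $\bar\Psi \circ \bar\Phi \sim \mathrm{id}$, and symmetrically for the other composite, so $\bar\Phi$ and $\bar\Psi$ are mutually inverse homotopy equivalences between $\mathcal E_\bullet/\mathcal I$ and $\mathcal E'_\bullet/\mathcal I$.

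Finally, the distinguished class is obtained by transporting the uniqueness clause of Corollary \ref{cor:unique}: two choices of $\Phi$ upstairs are homotopic through an $\mathcal O$-multilinear homotopy, which descends by the previous step, so the induced morphisms $\bar\Phi$ are homotopic downstairs and the homotopy class of $\bar\Phi$ is canonical. I expect no genuine obstacle here: the only point needing care is the purely formal verification that $\mathcal O$-multilinearity is exactly the property preserved under $-\otimes_{\mathcal O}\mathcal O/\mathcal I$, and that the Lie-Rinehart ideal hypothesis is what makes the quotient structure — and hence all of its morphisms and homotopies — well-defined over $\mathcal O/\mathcal I$.
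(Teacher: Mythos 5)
Your proof is correct and takes essentially the same route as the paper, which simply observes in the paragraph preceding the corollary that the homotopy equivalences of Corollary \ref{cor:unique} descend to the quotients by $\mathcal O$-multilinearity and then deduces the statement from Theorem \ref{th:universal}; your write-up supplies the details the paper leaves implicit. The only point worth flagging is that Definition \ref{def:homotopy} does not by itself require the co-derivations $H_t$ to be $\mathcal O$-multilinear, so one must use, as you implicitly do, that the homotopies produced in the proof of Theorem \ref{th:universal} are built from $\mathcal O$-multilinear Taylor coefficients (via Proposition \ref{bicomplex}) and hence do descend.
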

 
 Taking under account Remark \ref{rmk:Tor}, here is an alternative manner to restate this corollary.
  
 \begin{corollary}
 \label{cor:LRideal2}
  Let $\mathcal A $ be a Lie-Rinehart algebra over $\mathcal O $. Let $\mathcal I \subset \mathcal O $ be a Lie-Rinehart ideal. Then the complex computing $ {\mathrm{Tor}}_\mathcal O^\bullet (\mathcal A, \mathcal O/\mathcal I) $ comes equipped with a natural Lie $\infty $-algebroid structure over $\mathcal O/ \mathcal I $, and any two such structures are homotopy equivalent in a unique up to homotopy manner.
\end{corollary}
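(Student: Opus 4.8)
The plan is to obtain this statement as essentially a repackaging of Corollary \ref{cor:LRideal} together with Remark \ref{rmk:Tor}: the substantive work has already been carried out in Section \ref{ssec:Tor}, and what remains is to match the vocabulary of ``complexes computing $\mathrm{Tor}$'' with that of ``Lie $\infty$-algebroids of the ideal $\mathcal I$''. I will treat existence and uniqueness separately.

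First I would establish existence. By Corollary \ref{cor:existence} choose a universal Lie $\infty$-algebroid $(\mathcal E, (\ell_k)_{k\geq 1}, \rho_{\mathcal E}, \pi)$ of $\mathcal A$; by definition its underlying complex $\cdots \to \mathcal E_{-2} \to \mathcal E_{-1} \to \mathcal A \to 0$ is a resolution of $\mathcal A$ by free, hence flat, $\mathcal O$-modules. A standard homological-algebra argument then shows that $\mathcal E_\bullet/\mathcal I \simeq \mathcal O/\mathcal I \otimes_{\mathcal O} \mathcal E_\bullet$ computes $\mathrm{Tor}_{\mathcal O}^\bullet(\mathcal A, \mathcal O/\mathcal I)$, which is precisely Remark \ref{rmk:Tor}. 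On this complex, the Lie-Rinehart ideal hypothesis $\rho_{\mathcal A}(\mathcal A)[\mathcal I] \subset \mathcal I$, equivalently $\rho_{\mathcal E}(\mathcal E_{-1})[\mathcal I] \subset \mathcal I$, guarantees that every bracket $\ell_k$ (for $k \neq 2$ by $\mathcal O$-linearity, for $k=2$ by the Leibniz relation) and the anchor descend to the quotient, exactly as spelled out in the paragraph preceding the definition of the Lie $\infty$-algebroid of $\mathcal I$. This equips the $\mathrm{Tor}$-computing complex with a Lie $\infty$-algebroid structure over $\mathcal O/\mathcal I$.

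Next I would address uniqueness up to homotopy. Given two universal Lie $\infty$-algebroids of $\mathcal A$ on free resolutions $\mathcal E$ and $\mathcal E'$, Corollary \ref{cor:unique} supplies homotopy equivalences $\Phi \colon \mathcal E \to \mathcal E'$ and $\Psi \colon \mathcal E' \to \mathcal E$, unique up to homotopy. Because these morphisms, and the homotopies realizing $\Psi \circ \Phi \sim \mathrm{id}$ and $\Phi \circ \Psi \sim \mathrm{id}$, are $\mathcal O$-multilinear (their Taylor coefficients are $\mathcal O$-multilinear by Definition \ref{def:morph} and Lemma \ref{Tay-C}), applying the functor $\mathcal O/\mathcal I \otimes_{\mathcal O} -$, that is, quotienting by $\mathcal I$, is well defined on all of them and preserves the defining intertwining $\Phi \circ Q_{\mathcal E'} = Q_{\mathcal E} \circ \Phi$ and the co-derivation identities. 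Hence $\Phi$, $\Psi$ and their homotopies descend to a homotopy equivalence between $\mathcal E_\bullet/\mathcal I$ and $\mathcal E'_\bullet/\mathcal I$, distinguished up to homotopy; this is exactly the content of Corollary \ref{cor:LRideal}. Combining the two paragraphs yields the assertion.

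The point requiring the most care, though it is routine rather than deep, is verifying that quotienting by $\mathcal I$ genuinely produces Lie $\infty$-algebroid morphisms and homotopies, i.e. that the identity $\Phi \circ Q_{\mathcal E'} = Q_{\mathcal E} \circ \Phi$ and the differential equation \eqref{eq-diff} pass to the quotient. This is immediate once one observes that every ingredient, namely $Q_{\mathcal E}$ through its $\mathcal O$-multilinear Taylor coefficients for $k \neq 1$, the anchor contribution controlled by the ideal condition, and the $\Phi_t$ and $H_t$ through their $\mathcal O$-multilinear Taylor coefficients, is compatible with the $\mathcal O$-linear surjection $\mathcal O \to \mathcal O/\mathcal I$; no computation beyond those already performed is needed. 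I therefore expect no genuine obstacle: the whole statement is a reformulation, and the only real subtlety is the linguistic one of recording that a complex ``computing $\mathrm{Tor}$'' is precisely one of the complexes $\mathcal E_\bullet/\mathcal I$ produced by the construction, so that ``any two such structures'' refers exactly to the Lie $\infty$-algebroids of $\mathcal I$ already compared in Corollary \ref{cor:LRideal}.
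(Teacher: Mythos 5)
Your proposal is correct and follows the paper's own route exactly: the paper presents this corollary as a restatement of Corollary \ref{cor:LRideal} in light of Remark \ref{rmk:Tor}, with the substantive content (descent of the brackets, anchor, morphisms and homotopies to the quotient by $\mathcal I$) carried out in the discussion of Section \ref{ssec:Tor} preceding Corollary \ref{cor:LRideal}, which is what you reproduce. No gaps.
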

 
 When, in addition to being a Lie-Rinehart ideal, $\mathcal I $ is a maximal ideal,  then $\mathbb K:= \mathcal O/\mathcal I $ is a field and Lie $\infty $-algebroids of $\mathcal I $ are a homotopy equivalence class of Lie $\infty $-algebras. 
 In particular their common cohomologies, which is easily seen to be identified to ${\mathrm{Tor}}_\mathcal O^\bullet(\mathcal A, \mathbb K) $ comes equipped with a graded Lie algebra structure.
 In particular,  ${\mathrm{Tor}}_\mathcal O^{-1}(\mathcal A, \mathbb K) $ is a Lie algebra, ${\mathrm{Tor}}_\mathcal O^{-2}(\mathcal A, \mathbb K) $ is a representation of this algebra, and the $3$-ary bracket defines a class in the third Chevalley-Eilenberg cohomology of  ${\mathrm{Tor}}_\mathcal O^{-1}(\mathcal A, \mathbb K) $  valued in ${\mathrm{Tor}}_\mathcal O^{-2}(\mathcal A, \mathbb K) $. This class does not depend on any choice made in its construction by the previous corollaries and trivially extends the class called NMRLA-class in \cite{LLS}. If it is not zero, then there is no Lie algebroid of rank $r$ equipped with a surjective Lie-Rinehart algebra morphism onto $\mathcal A $, where $r$ is the rank of $\mathcal A $ as a module over $\mathcal O $. All these considerations can be obtained by repeating verbatim Section 4.5.1 in \cite{LLS} (where non-trivial examples are given).
 

 \subsubsection{Justification of the title}
 
\noindent 
Let us give a categorical approach of Theorem \ref{thm:existence} and Corollary \ref{cor:existence}.

\begin{definition}
We denote by \emph{Lie-$\infty $-alg-oids/$\mathcal O $}
the category where:
\begin{enumerate}
    \item objects are Lie $\infty $-algebroids over $ \mathcal O$,
    \item arrows are homotopy equivalence classes of morphisms of Lie $\infty$-algebroids over $\mathcal O$.
\end{enumerate}
\end{definition}

Theorem  \ref{th:universal} means that universal Lie $\infty $-algebroids over Lie-Rinehart algebra $\mathcal A $ are terminal objects in the subcategory of Lie-$\infty $-alg-oids/$\mathcal O $ whose objects are Lie $\infty $-algebroids that terminate in $\mathcal A $.

\noindent 
Let us re-state Corollary \ref{cor:unique} differently.
By associating to any Lie $\infty $-algebroid its basic Lie-Rinehart algebra (see Example \ref{ex:LieInftyInduced} and \ref{ex:basic}),
one obtains therefore a natural functor:
 \begin{itemize}
\item  from the category \emph{Lie-$\infty $-alg-oids/$\mathcal O $},
\item to the category of Lie-Rinehart algebras over $\mathcal O $.
 \end{itemize}
 
\noindent 
Theorem   \ref{thm:existence} gives a right inverse of this functor. In particular,  this functor becomes  an equivalence of categories when restricted to homotopy equivalence classes of \underline{acyclic} Lie $\infty $-algebroids over $ \mathcal O$, i.e:

\begin{corollary}
Let $ \mathcal O$ be an unital commutative algebra.
There is an equivalence of categories between:
\begin{enumerate}
    \item[(i)] Lie-Rinehart algebras over $ \mathcal O$, 
    \item[(ii)] acyclic Lie $\infty $-algebroids over $ \mathcal O$.
\end{enumerate}
\end{corollary}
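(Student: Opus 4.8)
The plan is to make precise the functor described just before the statement — call it $F$ — which sends an acyclic Lie $\infty$-algebroid to its basic Lie-Rinehart algebra, and then to establish that (the restriction of) $F$ is essentially surjective, full, and faithful. All three properties will be reduced to the two main theorems, so that the genuinely hard content is already packaged in Theorems \ref{thm:existence} and \ref{th:universal}. On objects I set $F(\mathcal{E})=\mathcal{E}_{-1}/\ell_1(\mathcal{E}_{-2})$ as in Example \ref{ex:LieInftyInduced}. On morphisms, a Lie $\infty$-algebroid morphism $\Phi$ from $\mathcal{E}$ to $\mathcal{E}'$ restricts to its chain map $\Phi^{(0)}\colon\mathcal{E}\to\mathcal{E}'$, which carries $\ell_1(\mathcal{E}_{-2})$ into $\ell_1'(\mathcal{E}'_{-2})$ and hence descends to $\overline{\Phi^{(0)}}\colon F(\mathcal{E})\to F(\mathcal{E}')$. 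Conditions (1)–(2) of Definition \ref{def:morph}, together with the low-arity part of the intertwining relation $\Phi\circ Q_{\mathcal{E}}=Q_{\mathcal{E}'}\circ\Phi$, guarantee that $\overline{\Phi^{(0)}}$ is $\mathcal{O}$-linear, anchor-compatible, and bracket-preserving, i.e.\ a Lie-Rinehart morphism. To see that $F$ descends to homotopy classes, I would use that the $0$-th Taylor coefficient of a homotopy satisfies $\tfrac{\mathrm d}{\mathrm dt}\Phi_t^{(0)}=\ell_1\circ H_t^{(0)}+H_t^{(0)}\circ\ell_1'$ (the arity-$0$ part of Equation \eqref{eq-diff}); integrating in $t$ exhibits $\Psi^{(0)}-\Phi^{(0)}$ as an $\ell_1$-chain homotopy, so homotopic morphisms induce the same map on homology in degree $-1$, hence the same Lie-Rinehart morphism.

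The key structural observation is that an acyclic Lie $\infty$-algebroid is precisely a universal Lie $\infty$-algebroid of its basic Lie-Rinehart algebra: its underlying modules are projective by definition, and acyclicity together with the surjectivity of $\pi\colon\mathcal{E}_{-1}\to F(\mathcal{E})$ with kernel $\ell_1(\mathcal{E}_{-2})$ says exactly that the complex $\cdots\to\mathcal{E}_{-1}\xrightarrow{\pi}F(\mathcal{E})\to 0$ is a projective resolution. Essential surjectivity of $F$ then follows from Corollary \ref{cor:existence}: given any Lie-Rinehart algebra $\mathcal{A}$, a universal Lie $\infty$-algebroid $\mathcal{E}$ exists, it is acyclic, and its hook induces a Lie-Rinehart isomorphism $F(\mathcal{E})\cong\mathcal{A}$.

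Fullness and faithfulness I would deduce from Theorem \ref{th:universal}. For fullness, given a Lie-Rinehart morphism $\phi\colon F(\mathcal{E})\to\mathcal{A}':=F(\mathcal{E}')$, I make $\mathcal{E}$ terminate in $\mathcal{A}'$ through the composite hook $\phi\circ\pi\colon\mathcal{E}_{-1}\to F(\mathcal{E})\to\mathcal{A}'$; one checks directly that $\phi\circ\pi$ is a legitimate hook, precisely because $\phi$ is anchor- and bracket-compatible. Since $\mathcal{E}'$ is universal for $\mathcal{A}'$, Theorem \ref{th:universal}(1) supplies a morphism $\Phi\colon\mathcal{E}\to\mathcal{E}'$ over $\mathcal{A}'$, meaning $\pi'\circ\Phi^{(0)}=\phi\circ\pi$, and this forces $\overline{\Phi^{(0)}}=\phi$, so $F(\Phi)=\phi$. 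For faithfulness, if two morphisms $\Phi,\Psi\colon\mathcal{E}\to\mathcal{E}'$ satisfy $\overline{\Phi^{(0)}}=\overline{\Psi^{(0)}}=\phi$, then $\pi'\circ\Phi^{(0)}=\phi\circ\pi=\pi'\circ\Psi^{(0)}$ on the nose, so both are morphisms over $\mathcal{A}'$ from $\mathcal{E}$ (terminating in $\mathcal{A}'$) to the universal $\mathcal{E}'$; Theorem \ref{th:universal}(2) then yields $\Phi\sim\Psi$. Hence $F$ induces a bijection between homotopy classes of morphisms $\mathcal{E}\to\mathcal{E}'$ and Lie-Rinehart morphisms $F(\mathcal{E})\to F(\mathcal{E}')$, and combined with essential surjectivity this gives the asserted equivalence of categories.

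I expect the main obstacle to be the bookkeeping of the first step rather than any deep difficulty: one must verify carefully that $\overline{\Phi^{(0)}}$ is genuinely a Lie-Rinehart morphism and that $F$ is well defined on homotopy classes, and one must confirm that ``inducing the same Lie-Rinehart morphism'' is equivalent to ``being a morphism over $\mathcal{A}'$ for the same hook'', which is exactly the hinge that lets Theorem \ref{th:universal} deliver both fullness and faithfulness. Everything else is the standard principle that resolutions compute derived data up to canonical homotopy, here transported verbatim to the Lie $\infty$-algebroid setting.
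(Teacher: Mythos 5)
Your proposal is correct and follows exactly the route the paper intends: the paper states this corollary without a detailed proof, presenting it as an immediate consequence of the functor $\E \mapsto \E_{-1}/\ell_1(\E_{-2})$ together with Theorem \ref{thm:existence} (right inverse, hence essential surjectivity) and Theorem \ref{th:universal} (terminal-object property, hence fullness and faithfulness on homotopy classes). Your elaboration — identifying acyclic Lie $\infty$-algebroids with universal ones for their basic Lie-Rinehart algebra, and re-hooking $\E$ through $\phi\circ\pi$ to apply Theorem \ref{th:universal} — is precisely the bookkeeping the authors leave implicit.
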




\noindent
The corollary justifies the title of the article.\\

\begin{remark}
 \normalfont
In the language of categories, Corollary \ref{cor:LRideal}  means that there exists a functor from Lie-Rinehart ideals of a Lie-Rinehart algebra over $ \mathcal O$, to the category of Lie $\infty $-algebroids, mapping a Lie-Rinehart ideal $\mathcal I $ to an equivalence class of Lie $\infty $-algebroids over $\mathcal O / \mathcal I $. 
\end{remark}
 
\subsection{An important bi-complex: ${\mathfrak{Page}}^{(n)}(\E',\E)$}

\subsubsection{Description of ${\mathfrak{Page}}^{(n)}(\E',\E)$}

\label{bi-com}
Let $ \mathcal A$ be an $ \mathcal O$-module, and let 
 $ (\E, \dd , \pi) $ and $ (\E', \dd' , \pi') $ be complexes of projective $\mathcal O $-modules that terminates at $ \mathcal A$:
\begin{equation}
  \label{eq:EE'}\cdots\xrightarrow{\dd}\E_{-2}\xrightarrow{\dd}\E_{-1}\xrightarrow{\pi} \mathcal A, \quad \cdots\xrightarrow{\dd'}\E'_{-2}\xrightarrow{\dd'}\E'_{-1}\xrightarrow{\pi'} \mathcal A.
\end{equation}

  For every $k \geq 1$, the $(k+1)$-th graded symmetric power  $ \bigodot^{k+1} \E'$ of $ \mathcal E'$ over $ \mathcal O$ is a projective $\mathcal O $-module, and comes with a natural grading induced by the grading on $\E' $. 

\begin{definition}\label{def:bi-com}
Let $ k \in \mathbb{N}_0$. We call \emph{page number $k$ of $(\E,\dd,\pi) $ and $ (\E, \dd',\pi')$}  the bicomplex of $\mathcal O$-modules on the upper left quadrant $ \mathbb Z^- \times \mathbb{N}_0$ defined by:
\begin{align}
\label{eq:jgeq0}
\text{Page}^{(k)}(\E', \E)_{j,m}&:= \text{Hom}_{\mathcal O} \left( \bigodot^{k+1}  \E' \, _{|_{-k-m-1}} \, , \, \E_{j}\right),& \hspace{0.2cm }\hbox{  for $ m \geq 0  $ and $ j \leq -1 $}
\\
\label{eq:j=1}
\text{Page}^{(k)}(\E',\E)_{0,m}&:=\text{Hom}_{\mathcal O} \left( \bigodot^{k+1}  \E' \, _{|_{-k-m-1}}\, ,\,  \mathcal A \right),  & \hspace{0.2cm }\hbox{ for $ m \geq 0  $,}
\end{align}
together with the vertical differential defined for $\Phi$ in any one  of the two $\mathcal O$-modules \eqref{eq:jgeq0} or \eqref{eq:j=1} by
$$\delta(\Phi) \, (x_1,\ldots,x_{k+1}):=\Phi\circ\dd' \, (x_1\odot\ldots\odot x_{k+1}),\hspace{1cm} \forall \,x_1,\dots, x_{k+1}\in \mathcal E',$$
where $\dd'$ acts as an $\mathcal O$-derivation on $x_1\odot\ldots\odot x_{k+1}\in\bigodot^k\E'$ (and is 0 on  $\E_{-1}'$). The horizontal differential is given by 
 $$\Phi \mapsto \dd \circ \Phi \, \, \hbox{ or }\,\, \Phi \mapsto \pi \circ \Phi$$ 
depending on whether $ \Phi$ is of type \eqref{eq:jgeq0} with $j \leq -2 $ or the type \eqref{eq:jgeq0} with $j=-1$. It is zero on elements of type \eqref{eq:j=1}. We denote by $\left(\mathfrak{Page}^{(k)}_\bullet(\E',\E), D\right) $ its associated total complex. When $\E'=\E$ we shall write $\mathfrak{Page}^{(k)}_\bullet(\E)$ instead of $\mathfrak{Page}^{(k)}_\bullet(\E,\E)$.
\end{definition}

 The following diagram recapitulates the whole picture of $\mathfrak{Page}^{(k)}_\bullet(\E',\E)$:

\begin{equation}\label{recap}
\scalebox{0.8}{ \hbox{$
	\begin{array}{ccccccccccc}
		& & \vdots & & \vdots & & \vdots & & 
		\\ 
		& & \uparrow & & \uparrow & & \uparrow & & 
		\\ 
		\cdots& \rightarrow & \text{Hom}_\mathcal O\left(\bigodot^{k+1} \E'\,_{|_{-k-3}},\E_{-2}\right) & \overset{\dd}{\rightarrow} & \text{Hom}_\mathcal O\left(\bigodot^{k+1} \E'\,_{|_{-k-3}},\E_{-1}\right) 
		& \overset{\pi}{\rightarrow} & \text{Hom}_\mathcal O\left(\bigodot^{k+1} \E'\,_{|_{-k-3}},\mathcal{A}\right) & \rightarrow & 0
			\\ 
		& & \delta\uparrow & & \delta\uparrow & & \delta\uparrow & & 
		\\ 
		\cdots& \rightarrow & \text{Hom}_\mathcal O\left(\bigodot^{k+1} \E'\,_{|_{-k-2}},\E_{-2}\right) & \overset{\dd}{\rightarrow} & \text{Hom}_\mathcal O\left(\bigodot^{k+1} \E'\,_{|_{-k-2}},\E_{-1}\right) 
		& \overset{\pi}{\rightarrow} & \text{Hom}_\mathcal O\left(\bigodot^{k+1}\E'\,_{|_{-k-2}},\mathcal{A}\right) & \rightarrow& 0
		\\ 
		& & \delta\uparrow & & \delta\uparrow & & \delta\uparrow & &  
		\\ 
		\cdots& \rightarrow & \text{Hom}_\mathcal O\left(\bigodot^{k+1} \E'\,_{|_{-k-1}},\E_{-2}\right) & \overset{\dd}{\rightarrow} & \text{Hom}_\mathcal O\left(\bigodot^{k+1}\E'\,_{|_{-k-1}},\E_{-1}\right) 
		& \overset{\pi}{\rightarrow}& \text{Hom}_\mathcal O\left(\bigodot^{k+1}\E'\,_{|_{-k-1}},\mathcal{A}\right) & \rightarrow & 0
	\\
	& & \uparrow & & \uparrow & & \uparrow & & 
	\\ 
	& & 0 & & 0 & & 0 & &  \\
	& & \hbox{\small{\texttt{"-$2$ column"}}} & & \hbox{\small{\texttt{"-$1$ column"}}} & & \hbox{\small{\texttt{"last column"}}} & &  	
\end{array}
$}}
\end{equation}

For later use, we spell out the meaning of being $D$-closed.

\begin{lemma}
\label{lem:beingClosed}
An element $P \in \mathfrak{Page}^{(k)}_j(\E',\E)$  in $ \oplus_{ i \geq 1 }\emph{Hom}_\mathcal O\left(\bigodot^{k+1}\E'\,_{|_{-j-i}},\E_{-i}\right)$ is $D$-closed if and only if:
\begin{enumerate}
    \item the component $ P_{-1}\colon \bigodot^{k+1} \mathcal E'\,_{|_{-j-1}} \to \mathcal E_{-1}$ is valued in the kernel of $ \pi \colon \E_{-1} \to \mathcal A$,
    \item the following diagram commutes:
     $$ \xymatrix{\bigodot^{k+1} \mathcal E' \,_{|_{-j-i}}\ar[rr]^{(-1)^j\dd'} \ar[d]_{P_{-i}} &&\bigodot^{k+1} \mathcal E' \,_{|_{-j-i+1}} \ar[d]^{P_{-i+1}}\\ \E_{-i}  \ar[rr]_\dd && \E_{-i+1}}$$ 
     with $P_{-i} $ being the component of $P$ in $\emph{Hom}_\mathcal O\left(\bigodot^{k+1}\E'\,_{|_{-j-i}},\E_{-i}\right) $. 
\end{enumerate}
For $(\E, \dd) = (\E', \dd')$, the second condition above also reads $ [P,\dd]_{\hbox{\tiny{\emph{RN}}}}=0$. Here is now our main technical result.
\end{lemma}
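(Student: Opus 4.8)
The plan is to prove both directions at once by simply unwinding the definition of the total differential $D$ on a homogeneous element. Write $P=\sum_{i\ge 1}P_{-i}$ with $P_{-i}\in\text{Hom}_{\mathcal O}\!\left(\bigodot^{k+1}\E'_{|_{-j-i}},\E_{-i}\right)$, so that $P_{-i}$ occupies the cell of bidegree $(-i,\,i+j-k-1)$ of the bicomplex \eqref{recap}. The differential $D$ is the sum of the horizontal differential (post-composition with $\dd$, or with $\pi$ on the $-1$ column) and the vertical differential $\delta$ (pre-composition with $\dd'$ acting as an $\mathcal O$-derivation on $\bigodot^{k+1}\E'$), up to the usual total-complex sign. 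Since $P$ is homogeneous, $DP=0$ if and only if each component of $DP$, indexed by a single cell of \eqref{recap}, vanishes; the whole argument consists in reading off these components one cell at a time.

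First I would treat the last (i.e.\ $\mathcal A$-)column. The only contribution of $DP$ landing in $\text{Hom}_{\mathcal O}\!\left(\bigodot^{k+1}\E'_{|_{-j-1}},\mathcal A\right)$ is the horizontal image $\pi\circ P_{-1}$ of $P_{-1}$, because $P$ carries no $\mathcal A$-component on which $\delta$ could act. Hence the vanishing of this component is exactly condition (1), namely that $P_{-1}$ takes values in $\ker\pi$. Next, for each $i\ge 2$, the cell with target $\E_{-i+1}$ and source $\bigodot^{k+1}\E'_{|_{-j-i}}$ receives precisely two contributions: the horizontal image $\dd\circ P_{-i}$ of $P_{-i}$, and the vertical image $\delta(P_{-i+1})=P_{-i+1}\circ\dd'$ of $P_{-i+1}$. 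Setting their signed sum to zero yields
\begin{equation*}
\dd\circ P_{-i}=(-1)^{j}\,P_{-i+1}\circ\dd'\qquad\text{on }\ \bigodot^{k+1}\E'_{|_{-j-i}},
\end{equation*}
which is exactly the commutativity of the square in condition (2). Letting $i$ range over $i\ge 2$ recovers all the squares, so $DP=0$ is equivalent to (1) together with (2).

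The one point that requires care is the sign: the factor $(-1)^{j}$ in front of $\dd'$ is forced by the Koszul sign rule governing the total complex of a bicomplex, and I would pin it down once and for all by comparing the two paths into a fixed cell of \eqref{recap}. This is bookkeeping rather than a genuine difficulty; the substance of the lemma is the observation that each cell of $DP$ is hit by at most one horizontal and one vertical term, so that $D$-closedness decouples into the chain-map squares and the single boundary condition at $\E_{-1}$.

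Finally, in the case $(\E,\dd)=(\E',\dd')$ I would identify condition (2) with $[P,\dd]_{\mathrm{RN}}=0$ through the interior product. Viewing $\dd$ as the arity-$0$ element $\ell_1$ and $P$ as an arity-$k$ element of degree $j$, the defining formula for $\iota$ gives $\iota_{\dd}P=\dd\circ P$ (the shuffle set $\mathfrak{S}_{k+1,0}$ contributing only the identity) and $\iota_{P}\dd=P\circ\dd$, the sum over $(1,k)$-shuffles being exactly the derivation extension of $\dd$ to $\bigodot^{k+1}\E$, which is the vertical operator $\delta P$. Since $|\dd|=1$, the Richardson--Nijenhuis bracket reads $[P,\dd]_{\mathrm{RN}}=\iota_P\dd-(-1)^{j}\iota_\dd P=P\circ\dd-(-1)^{j}\dd\circ P$, and its vanishing is precisely the displayed relation (with $\dd'=\dd$), summed over all components of $P$. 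This last identification is then just Lemma \ref{lem:RN} read off on Taylor coefficients.
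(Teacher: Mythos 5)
Your proof is correct and is exactly what the paper intends: Lemma \ref{lem:beingClosed} is introduced there as merely ``spelling out the meaning of being $D$-closed,'' so no proof is given beyond the definitional unwinding you carry out (including the sign $(-1)^j$ coming from $D_\bullet=\dd-(-1)^\bullet\delta$, and the identification of $\iota_P\dd$ with the vertical operator and $\iota_\dd P$ with $\dd\circ P$ in the Richardson--Nijenhuis reformulation). Your component-by-component reading of the bicomplex, the observation that $P$ has no last-column component so only $\pi\circ P_{-1}$ lands there, and the shuffle computation for $[P,\dd]_{\hbox{\tiny{RN}}}$ all check out against the paper's conventions.
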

\begin{proposition}
\label{bicomplex}
Let $ (\E, \dd, \pi)$ be a resolution of $ \mathcal A$ in the category of $ \mathcal O$-modules. Then,
for every $k \geq 0$,
\begin{enumerate}
    \item  the cohomology of the complex $(\mathfrak{Page}^{(k)}_\bullet(\E) , D) $  for the total differential $
D_\bullet:=\dd-(-1)^\bullet\delta $ is zero in all degrees;
    \item\label{2} Moreover, a $D$-closed element whose component on the \textquotedblleft    last column\textquotedblright\,of the diagram above is zero is the image through $D$ of some element whose  two last components are also zero.
   \item\label{3}
    More generally, for all $n \geq 1 $, for a $D$-closed element $P \in \mathfrak{Page}^{(k)}_j(\E)$  of the form $ \oplus_{ i \geq n } \emph{Hom}_\mathcal O\left(\bigodot^{k+1}\E'\,_{|_{-j-i}},\E_{-i}\right)$, one has $P=D(R) $ and $R \in  \mathfrak{Page}^{(k)}_{j-1}(\E) $ can be chosen in $\oplus_{ i \geq n+1 } \emph{Hom}_\mathcal O\left(\bigodot^{k+1}\E'\,_{|_{-j-i+1}},\E_{-i} \right) $.
\end{enumerate}
\end{proposition}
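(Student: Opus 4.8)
The plan is to prove the three statements together by exhibiting an explicit contracting homotopy for the total complex, built from a choice of $\mathcal{O}$-linear contracting homotopy of the resolution $(\E,\dd,\pi)$. Since $(\E,\dd,\pi)$ resolves $\mathcal{A}$, the augmented complex $\cdots\to\E_{-2}\to\E_{-1}\to\mathcal{A}\to 0$ is exact. However, because $\mathcal{A}$ and the $\E_{-i}$ are not assumed projective as abelian groups over $\mathbb{K}$ in any convenient way, I cannot simply split this over $\mathcal{O}$; the key observation is that the rows of the bicomplex \eqref{recap} are $\mathrm{Hom}_\mathcal{O}(\bigodot^{k+1}\E'_{|_{d}},-)$ applied to the exact augmented resolution, and since $\bigodot^{k+1}\E'_{|_d}$ is a \emph{projective} (in fact free, being a summand of tensor powers of free modules) $\mathcal{O}$-module, the functor $\mathrm{Hom}_\mathcal{O}(\bigodot^{k+1}\E'_{|_d},-)$ is exact. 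Hence each horizontal row of the diagram is exact.

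First I would establish horizontal exactness: fix the arity-degree $d=-k-m-1$ and consider the row
$$\cdots\to\mathrm{Hom}_\mathcal{O}\!\left(\textstyle\bigodot^{k+1}\E'_{|_d},\E_{-2}\right)\xrightarrow{\dd}\mathrm{Hom}_\mathcal{O}\!\left(\textstyle\bigodot^{k+1}\E'_{|_d},\E_{-1}\right)\xrightarrow{\pi}\mathrm{Hom}_\mathcal{O}\!\left(\textstyle\bigodot^{k+1}\E'_{|_d},\mathcal{A}\right)\to 0,$$
which is exact by projectivity of $\bigodot^{k+1}\E'_{|_d}$ and exactness of the resolution of $\mathcal{A}$. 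Next, I would run the standard spectral-sequence / acyclic-assembly argument for a bicomplex with exact rows (or exact columns): since every row is exact, the total complex of the upper-left quadrant is acyclic, giving statement (1). The subtlety is that the bicomplex occupies an infinite quadrant, so I would either invoke a boundedness/convergence condition (in each total degree only finitely many bidegrees contribute once the arity $k$ is fixed and $\E'$ is concentrated in negative degrees) or construct the contracting homotopy by hand, which is cleaner and simultaneously yields (2) and (3).

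The core of the argument, and what gives the refined statements, is an explicit zig-zag. Given a $D$-closed $P=\bigoplus_{i\geq n}P_{-i}$ with $P_{-i}\in\mathrm{Hom}_\mathcal{O}(\bigodot^{k+1}\E'_{|_{-j-i}},\E_{-i})$, Lemma \ref{lem:beingClosed} tells me the components fit into the commuting ladder with $\dd'$ and $\dd$. Using horizontal exactness of the row at degree $-j-n$, the \emph{bottom} component $P_{-n}$ (whose image under the appropriate horizontal map is forced to vanish because $P$ has no component in lower rows) lifts along $\dd$ to some $R_{-n-1}\in\mathrm{Hom}_\mathcal{O}(\bigodot^{k+1}\E'_{|_{-j-n+1}},\E_{-n-1})$; subtracting $D(R_{-n-1})$ kills the component in position $-n$ without introducing anything below position $-n-1$, and raises the bottom index by one. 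Iterating this lifting downward, I build $R=\bigoplus_{i\geq n+1}R_{-i}$ with $P=D(R)$ and $R$ supported in positions $i\geq n+1$, which is exactly statement (3). Statement (2) is the case $n=1$ applied after first using the last-column hypothesis to remove the $\mathcal{A}$-valued component.

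The main obstacle I anticipate is \emph{bookkeeping the signs and the two differentials} $\dd$ and $\delta$ correctly in the total differential $D=\dd-(-1)^\bullet\delta$, and ensuring that each lifting step does not disturb the already-cleared components or the $\delta$-direction closedness. Concretely, after subtracting $D(R_{-n-1})$ one must recheck that the new element is still $D$-closed so the induction proceeds; this follows formally from $D^2=0$, but verifying that the newly created component in position $-n-1$ together with the old $P_{-n-1}$ assembles into a $D$-closed tail requires care with the $\delta$-ladder (the vertical arrows built from $\dd'$). I would phrase the induction as: \emph{at each stage the residual element is $D$-closed and supported in degrees $\geq n+1$}, so that horizontal exactness in the lowest surviving row always supplies the next lift. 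Because $\E'$ is negatively graded and $k$ is fixed, for any fixed total degree the arity-degree $-j-i$ eventually exceeds the available range, so the downward induction terminates (or converges in the product/filtered sense), guaranteeing that $R$ is a well-defined element of $\mathfrak{Page}^{(k)}_{j-1}(\E)$.
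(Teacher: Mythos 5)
Your proposal is correct and follows essentially the same route as the paper: exactness of each row of the bicomplex, obtained from projectivity of $\bigodot^{k+1}\E'$ in each arity-degree applied to the exact augmented resolution, followed by the staircase diagram chase that clears the lowest surviving component at each step (the paper compresses this into the phrase \textquotedblleft obtained by diagram chasing\textquotedblright). The only blemish is a harmless off-by-one in the arity-degree you assign to $R_{-n-1}$, which must be $-j-n$ (the same source as $P_{-n}$) rather than $-j-n+1$.
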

\begin{proof}
Since $ \bigodot^k \mathcal E' |_{j+m-k} $ is a projective $ \mathcal O$-module for all $(j,m)\in\mathbb{Z^-}\times\mathbb{N}_0$, and  $ (\E, \dd, \pi)$ is a resolution, all the lines of the above bicomplex are exact. This proves the first item. The second and the third are obtained by diagram chasing.
\end{proof}
We will need the consequence of the cone construction.
\begin{lemma} \label{lem:inFactInclusion}Let $(\mathcal R,\dd^\mathcal R,\pi^\mathcal R)$ be an arbitrary complex of projective $\mathcal O$-module that terminates in a $\mathcal O$-module $\mathcal A$.
There exists a projective resolution $(\mathcal E,\dd^{\mathcal E}, \pi^{\mathcal E})$ of $\mathcal A $, which contains $(\mathcal R,\dd^\mathcal R,\pi^\mathcal R)$ as a sub-complex. Moreover, we can assume that $\mathcal R$ admits a projective sub-module in $\E$ in direct sum.
\end{lemma}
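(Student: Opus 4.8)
The plan is to obtain $\mathcal{E}$ by \emph{padding} $\mathcal{R}$ with free modules, one degree at a time, in such a way that $\mathcal{R}$ survives as a direct-summand subcomplex while the augmented total complex becomes exact. This is the explicit, degreewise incarnation of the mapping-cylinder factorisation of a chain map into a split monomorphism followed by a homotopy equivalence; I avoid the abstract cylinder because the cylinder of a map between complexes concentrated in degrees $\leq -1$ acquires a spurious term in degree $0$ (a shifted copy of $\mathcal{R}_{-1}$), whereas I want everything to stay in degrees $\leq -1$ and terminate at $\mathcal{A}$. Recall first that \textquotedblleft $\mathcal{R}$ terminates at $\mathcal{A}$\textquotedblright\ means $\pi^{\mathcal{R}}\circ\dd^{\mathcal{R}}=0$; this is the only structural input needed from $\mathcal{R}$. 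For the base step I would choose a free module $F_{-1}$ together with a map $F_{-1}\to\mathcal{A}$ and set $\mathcal{E}_{-1}:=\mathcal{R}_{-1}\oplus F_{-1}$ with augmentation $\pi^{\mathcal{E}}:=\pi^{\mathcal{R}}\oplus(F_{-1}\to\mathcal{A})$ chosen surjective (e.g. take $F_{-1}$ free on a generating set of $\mathrm{coker}\,\pi^{\mathcal{R}}$). By construction $\pi^{\mathcal{E}}$ extends $\pi^{\mathcal{R}}$ and $\mathcal{E}$ terminates at $\mathcal{A}$.

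For the inductive step, suppose $\mathcal{E}$ has been constructed down to degree $-i$, with $\mathcal{E}_{-j}=\mathcal{R}_{-j}\oplus F_{-j}$ ($F_{-j}$ free), with $\dd^{\mathcal{E}}$ restricting to $\dd^{\mathcal{R}}$ on each $\mathcal{R}$-summand, and with the augmented complex exact above degree $-i$. Write $Z_{-i}\subset\mathcal{E}_{-i}$ for the module of cycles (with the convention $Z_{-1}:=\ker\pi^{\mathcal{E}}$). Because $\pi^{\mathcal{R}}\circ\dd^{\mathcal{R}}=0$ and $\dd^{\mathcal{R}}\circ\dd^{\mathcal{R}}=0$, the image of $\dd^{\mathcal{R}}\colon\mathcal{R}_{-(i+1)}\to\mathcal{R}_{-i}\subseteq\mathcal{E}_{-i}$ already lies in $Z_{-i}$. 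I would then choose a free module $F_{-(i+1)}$ and a map $F_{-(i+1)}\to Z_{-i}$ so that the image of $\mathcal{R}_{-(i+1)}\oplus F_{-(i+1)}$ equals $Z_{-i}$ (possible since free modules surject onto any module, in particular onto $Z_{-i}/\dd^{\mathcal{R}}(\mathcal{R}_{-(i+1)})$), and set $\mathcal{E}_{-(i+1)}:=\mathcal{R}_{-(i+1)}\oplus F_{-(i+1)}$ with differential equal to $\dd^{\mathcal{R}}$ on the first summand and the chosen map on the second. Since this differential lands in $Z_{-i}=\ker(\dd^{\mathcal{E}}\colon\mathcal{E}_{-i}\to\mathcal{E}_{-(i-1)})$, the relation $\dd^{\mathcal{E}}\circ\dd^{\mathcal{E}}=0$ is preserved, its image is exactly $Z_{-i}$ so the complex becomes exact at $\mathcal{E}_{-i}$, and $\mathcal{R}$ remains a genuine subcomplex because $\dd^{\mathcal{E}}$ was defined to agree with $\dd^{\mathcal{R}}$ on $\mathcal{R}_{-(i+1)}$.

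By construction $(\mathcal{E},\dd^{\mathcal{E}},\pi^{\mathcal{E}})$ is then a projective resolution of $\mathcal{A}$: the augmented complex is exact by the induction, and each $\mathcal{E}_{-i}=\mathcal{R}_{-i}\oplus F_{-i}$ is the direct sum of a projective and a free module, hence projective. It contains $(\mathcal{R},\dd^{\mathcal{R}},\pi^{\mathcal{R}})$ as a subcomplex, and the graded submodule $F:=\bigoplus_{i\geq 1}F_{-i}$ is free, hence projective, and is a degreewise direct-sum complement of $\mathcal{R}$, which is exactly the last assertion. The one point requiring care — and the reason the single mapping cone of a lift $\phi\colon\mathcal{R}\to\mathcal{F}$ into a free resolution cannot be used verbatim — is the variance: the added generators $F_{-(i+1)}$ are allowed to have differential components landing in $\mathcal{R}_{-i}$, but the differential already fixed on $\mathcal{R}$ must never be touched, so that $\mathcal{R}$ stays an honest subcomplex rather than a mere subquotient (the mapping cone of $\phi$ contains $\mathcal{F}$, not $\mathcal{R}$, as a subcomplex). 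The possibly infinite and unbounded length of $\mathcal{R}$ causes no difficulty, since the recursion defines $\mathcal{E}_{-i}$ one degree at a time and each step is independent of the later ones.
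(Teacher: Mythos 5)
Your proof is correct, but it takes a genuinely different route from the paper's. The paper first fixes an auxiliary projective resolution $(\mathcal F,\dd^{\mathcal F},\pi^{\mathcal F})$ of $\mathcal A$ and a comparison chain map $\phi\colon\mathcal R\to\mathcal F$, and then forms the mapping cone of the split monomorphism $x\mapsto(x,\phi(x))$ from $\mathcal R$ into $\mathcal R\oplus\mathcal F$, truncated in degree $-1$; this yields $\mathcal E_{-i}=\mathcal R_{-i+1}\oplus\mathcal R_{-i}\oplus\mathcal F_{-i}$, containing $\mathcal R$ as the middle summand, with exactness inherited from the acyclicity of the cone of a quasi-isomorphism (the verification being left to the reader). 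You instead build $\mathcal E$ degree by degree by adjoining free generators surjecting onto the cycles not already hit by $\dd^{\mathcal R}$ --- a ``killing cycles'' argument relativized to the fixed subcomplex $\mathcal R$. Your route needs no auxiliary resolution and no comparison map, produces a smaller $\mathcal E$ (one copy of $\mathcal R$ plus only the padding actually required), gives a complement that is free rather than merely projective, and makes the exactness check completely explicit; the paper's route buys brevity by quoting a standard construction. Your parenthetical remark on why the plain cone of $\phi$ is unusable (it contains $\mathcal F$, not $\mathcal R$, as a subcomplex) is precisely the issue that forces the paper to take the cone of the graph map $x\mapsto(x,\phi(x))$ rather than of $\phi$ itself. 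Two small points worth making explicit in a final write-up: the $i=1$ instance of your inductive step, where $Z_{-1}=\ker\pi^{\mathcal E}$ and one invokes $\pi^{\mathcal R}\circ\dd^{\mathcal R}=0$ (which you do flag), and the fact that your free complement need not be a subcomplex --- harmless, since the statement and its later application only require a direct-sum decomposition of graded $\mathcal O$-modules.
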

\begin{proof}Resolutions of an $\mathcal O $-modules $\mathcal A $ are universal objects in the category of complexes of projective $\mathcal O $-modules. In particular, for every projective resolution $(\mathcal F,\dd^\mathcal F, \pi^\mathcal F)$ of $\mathcal A $, there exist a (unique up to homotopy) chain map:
 $$  \phi \colon (\mathcal R,\dd^\mathcal R,\pi^\mathcal R) \to (\mathcal F,\dd^\mathcal F, \pi^\mathcal F)  . $$
We apply the cone construction {(see, e.g. \cite{Weibel}, Section 1.5)} to: 
\begin{enumerate}
    \item the complex  $(\mathcal R,\dd^\mathcal R,\pi^\mathcal R)$
    \item the direct sum of the complexes $(\mathcal R,\dd^\mathcal R)$ and $(\mathcal F,\dd^\mathcal F)$  namely, $\left(\mathcal R\oplus\mathcal F,\dd^\mathcal R\oplus\dd^\mathcal F, \pi^\mathcal R\oplus\pi^\mathcal F \right) $ 
    \item \label{chain:cone} the chain map obtained by mapping any $ x  \in \mathcal R$ to $ (x, \phi(x)) \in \mathcal R\oplus\mathcal F$.
\end{enumerate}
The differential is given by \begin{align}
   \dd^{\E}(x, y, z)=(-\dd^\mathcal Rx , \dd^\mathcal Ry-x , \dd^\mathcal F z-\varphi(x)) 
\end{align} for all $(x,y,z)\in\E_{-i}=\mathcal R_{-i+1}\oplus\mathcal R_{-i}\oplus\mathcal F_{-i}$, $i\geq 2$. Since the chain given in item \ref{chain:cone} is a quasi-isomorphism, its cone is an exact complex. We truncate the latter at degree $-1$ without destroying its exactness by replacing the cone differential at degree $-1$ as follows:  $\pi^{\E}\colon\mathcal R_{-1}\oplus\mathcal F_{-1}\rightarrow \mathcal A,\;(r,e)\mapsto\pi^{\mathcal F}(e)-\pi^{\mathcal R}(r)$. For a visual description, see Equation \eqref{sub:resol} below: the resolution of $\mathcal A $ described in Lemma \ref{lem:inFactInclusion} is defined by:
\begin{equation}\label{sub:resol}
\xymatrix{ \cdots & \mathcal F_{-3} \ar^{\dd^\mathcal F}[rr]&&\mathcal F_{-2} \ar^{\dd^\mathcal F}[rr] && \mathcal F_{-1} \ar^{\pi^\mathcal F}[rr]&&  \mathcal A\\\cdots & \mathcal R_{-3} \ar^{\dd^\mathcal R}[rr]&& \mathcal R_{-2} \ar^{\dd^\mathcal R}[rr]&& \mathcal R_{-1}  \ar_{\pi^\mathcal R}[rru] && \\ \cdots &\mathcal R_{-2} \ar_{\mathrm{id}}[urr] \ar_{\dd^\mathcal R}[rr] \ar@/_/^<<<<<{\phi}[uurr] && \mathcal R_{-1} \ar_{\mathrm{id}}[urr] \ar@/_/^<<<<<{\phi}[uurr]&& &&  \\}
\end{equation}
The proof of the exactness of this complex is left to the reader.


\noindent
The henceforth defined complex $(\E,\dd^{\E}, \pi^{\E})$ is a resolution of $\mathcal A $, and obviously contains $(\mathcal R,\dd^\mathcal R,\pi^{\mathcal R})$  as a sub-chain complex of $\mathcal O $-modules. 
\end{proof}

Let $(\mathcal E,\dd^\mathcal E, \pi^\mathcal E)$ be a free resolution of $\mathcal A$ and $(\mathcal R,\dd^\mathcal R,\pi^\mathcal R)$ a subcomplex of projective $\mathcal O$-modules as in Lemma \ref{lem:inFactInclusion}. We say that  $P \in \mathfrak{Page}^{(k)}_j(\E)$  of the form $ \oplus_{ i \geq n } \text{Hom}_\mathcal O\left(\bigodot^{k+1}\mathcal E\,_{|_{-j-i}},\E_{-i}\right) $ \emph{preserves} $\mathcal R $ if $\bigodot^{k+1}\mathcal R\,_{|_{-j-i}}$ is mapped by $P$  to $\mathcal R_{-i}$ for all possible indices.
In such case, it defines by restriction to $\bigodot^\bullet\mathcal R$ an element $\iota^*_\mathcal R P$ in the graded $\mathcal{O}$-module $\mathfrak{Page}^{(k)}_j(\mathcal R):= \oplus_{ i \geq n } \text{Hom}_\mathcal O\left(\bigodot^{k+1}\mathcal R\,_{|_{-j-i}},\mathcal R_{-i}\right) 
$. For the sake of clarity, let us denote by $D^\E $ and $D^{\mathcal R} $ the respective differentials of the bi-complexes $ \mathfrak{Page}^{(k)}_{j}(\E)$
and $  \mathfrak{Page}^{(k)}_{j}(\mathcal R)$ and by $D^\E_h$, $D^\mathcal R_h $ and $D^{\mathcal R}_v$, $D^\mathcal R_v $ the horizontal differential resp. vertical differential, of their associated bi-complexes. Also, $\iota^*_\mathcal R P$ will stand for the restriction of $ P \in \mathfrak{Page}^{(k)}_j(\E)$ to $\bigodot^\bullet\mathcal R$ (a priori it is not valued in $\mathcal R$ but in $\E$).

\begin{lemma}\label{rest:univ}
Let $(\E,\dd^\E, \pi^\E)$  be a free resolution of $\mathcal A$. Let $\mathcal R \subset \E$ be a subcomplex made of free sub-$\mathcal O$-modules such that there exists a graded free $\mathcal O $-module $\mathcal V $ such that $\E =\mathcal R \oplus \mathcal V$.
\begin{enumerate}
    \item  For every $k\geq 0$, a $D^\E$-cocycle   $P \in \mathfrak{Page}^{(k)}_j(\E)$ which preserves $\mathcal R$ is the image through $D^\E$ of some element $Q \in \mathfrak{Page}^{(k)}_{j-1}(\E) $ which preserves $\mathcal R$ if and only if its restriction $\iota^*_\mathcal R P \in \mathfrak{Page}^{(k)}_j(\mathcal R)$ is a $D^\mathcal R$-coboundary.
    \item In particular, if the restriction of $\dd^\E $ and $\pi^\mathcal E$ to $\mathcal R$  makes it a resolution of $\pi^\E(\mathcal R_{-1}) \subset \mathcal A$, then any $D^\E$-cocycle   $P \in \mathfrak{Page}^{(k)}_j(\E)$ which preserves $\mathcal R$ is the image through $D^\E$ of some element $Q \in \mathfrak{Page}^{(k)}_{j-1}(\E) $ which preserves $\mathcal R $.
\end{enumerate}
\end{lemma}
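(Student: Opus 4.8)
The plan is to read the statement as a \emph{relative} version of Proposition \ref{bicomplex}, comparing the total complexes $(\mathfrak{Page}^{(k)}_\bullet(\E),D^\E)$ and $(\mathfrak{Page}^{(k)}_\bullet(\mathcal R),D^\mathcal R)$ through the restriction map $\iota^*_\mathcal R$, and isolating the discrepancy inside an auxiliary sub-bicomplex that I will show to be acyclic by the very diagram chase of Proposition \ref{bicomplex}. Two structural facts are needed at the outset. First, since $\mathcal R$ and $\mathcal V$ are free, the graded symmetric power splits as $\bigodot^{k+1}\E=\bigodot^{k+1}\mathcal R\oplus\mathcal W$ with $\mathcal W:=\bigoplus_{a+b=k+1,\,b\geq 1}\bigodot^a\mathcal R\odot\bigodot^b\mathcal V$ again free; in particular $\bigodot^{k+1}\mathcal R$ is a direct summand, with projection $\mathrm{pr}\colon\bigodot^{k+1}\E\to\bigodot^{k+1}\mathcal R$. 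Second, because $\mathcal R$ is a subcomplex the symmetric derivation $\dd'$ preserves $\bigodot^{k+1}\mathcal R$, so a one-line check shows that on the sub-bicomplex of elements \emph{preserving} $\mathcal R$ one has $\iota^*_\mathcal R(D^\E P)=D^\mathcal R(\iota^*_\mathcal R P)$; that is, $\iota^*_\mathcal R$ is a morphism of total complexes there. This already gives the forward implication of item (1): if $P=D^\E Q$ with $Q$ preserving $\mathcal R$, then $\iota^*_\mathcal R P=D^\mathcal R(\iota^*_\mathcal R Q)$ is a $D^\mathcal R$-coboundary.

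For the converse, write $\iota^*_\mathcal R P=D^\mathcal R S$ and extend $S$ to $\widetilde S:=S\circ\mathrm{pr}$, regarded as valued in $\mathcal R\subset\E$. Then $\widetilde S$ preserves $\mathcal R$ and $\iota^*_\mathcal R\widetilde S=S$, so $P-D^\E\widetilde S$ is a $D^\E$-cocycle that preserves $\mathcal R$ and whose restriction $\iota^*_\mathcal R(P-D^\E\widetilde S)=D^\mathcal R S-D^\mathcal R S$ vanishes. Equivalently, $P-D^\E\widetilde S$ lies in the sub-bicomplex $\mathcal K$ of those elements of $\mathfrak{Page}^{(k)}_\bullet(\E)$ that kill $\bigodot^{k+1}\mathcal R$. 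The heart of the proof is then to show that $(\mathcal K,D^\E)$ is acyclic.

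This is where I expect the only real work to lie, and the only place the hypothesis $\E=\mathcal R\oplus\mathcal V$ with $\mathcal V$ free is genuinely used. A map kills $\bigodot^{k+1}\mathcal R$ precisely when it factors through $\mathrm{pr}$ onto the complement $\mathcal W$, so the rows of $\mathcal K$ are the complexes $\mathrm{Hom}_\mathcal O(\mathcal W_{|_\bullet},\,\E_\bullet)\to\mathrm{Hom}_\mathcal O(\mathcal W_{|_\bullet},\mathcal A)\to 0$; since $\mathcal W_{|_\bullet}$ is free (hence projective) and $\E\to\mathcal A$ is a resolution, these rows are exact. Therefore the upper-left-quadrant diagram chase used in Proposition \ref{bicomplex} applies verbatim to $\mathcal K$ and yields $H(\mathcal K,D^\E)=0$. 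Consequently $P-D^\E\widetilde S=D^\E Q'$ with $Q'\in\mathcal K$, and $Q:=\widetilde S+Q'$ preserves $\mathcal R$ and satisfies $D^\E Q=P$. This proves item (1). Note that had $\mathcal R$ been only a subcomplex without a free complement, neither the extension $\widetilde S$ nor the freeness of the rows of $\mathcal K$ would be available, which is exactly the obstruction the splitting hypothesis removes.

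Finally, item (2) follows formally. The element $\iota^*_\mathcal R P\in\mathfrak{Page}^{(k)}_\bullet(\mathcal R)$ is $D^\mathcal R$-closed, because $\iota^*_\mathcal R$ is a chain map on $\mathcal R$-preserving elements and $P$ is $D^\E$-closed. Under the hypothesis that $\dd^\E$ and $\pi^\E$ restrict on $\mathcal R$ to a resolution of $\mathcal A':=\pi^\E(\mathcal R_{-1})$, Proposition \ref{bicomplex} applied to this resolution gives $H(\mathfrak{Page}^{(k)}_\bullet(\mathcal R),D^\mathcal R)=0$, so the cocycle $\iota^*_\mathcal R P$ is automatically a $D^\mathcal R$-coboundary. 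Item (1) then produces the desired $Q$ preserving $\mathcal R$ with $D^\E Q=P$, completing the proof.
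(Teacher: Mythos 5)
Your proof is correct, and its overall architecture matches the paper's: the forward implication of item (1) via the observation that $\iota^*_\mathcal R$ intertwines $D^\E$ and $D^\mathcal R$ on $\mathcal R$-preserving elements, the reduction of the converse to the cocycle $P-D^\E\widetilde S$ vanishing on $\bigodot^{k+1}\mathcal R$ (your $\widetilde S=S\circ\mathrm{pr}$ is exactly the paper's extension-by-zero $\hat Q$), and item (2) as a formal consequence of item (1) plus Proposition \ref{bicomplex} applied to the resolution $(\mathcal R,\dd^\mathcal R,\pi^\mathcal R)$ of $\pi^\E(\mathcal R_{-1})$. Where you genuinely diverge is in how the reduced cocycle is integrated: the paper invokes Proposition \ref{bicomplex} on all of $\mathfrak{Page}^{(k)}(\E)$ to get \emph{some} primitive $\tau$ of $\kappa=P-D^\E\hat Q$, and then runs a column-by-column correction (using horizontal exactness to produce elements $C_i$ and replacing $\tau_i\mapsto\tau_i-\iota^*_\mathcal R\tau_i$, $\tau_{i+1}\mapsto\tau_{i+1}+D_v^\E(C_i)$) to arrange $\iota^*_\mathcal R\tau=0$ a posteriori. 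You instead isolate the sub-bicomplex $\mathcal K$ of maps killing $\bigodot^{k+1}\mathcal R$, identify it with $\mathrm{Hom}_\mathcal O(\mathcal W,-)$ for the projective complement $\mathcal W=\bigoplus_{a+b=k+1,\,b\geq 1}\bigodot^a\mathcal R\odot\bigodot^b\mathcal V$, and rerun the row-exactness diagram chase inside $\mathcal K$ to get the primitive there directly. Both arguments rest on the same two pillars (the splitting $\E=\mathcal R\oplus\mathcal V$ and projectivity of the domains), but yours packages the key step as an acyclicity statement for a subcomplex rather than a correction procedure, which makes the role of the splitting hypothesis more transparent and avoids the somewhat delicate bookkeeping of the paper's recursion; the paper's version, on the other hand, reuses Proposition \ref{bicomplex} as a black box without introducing the auxiliary complex $\mathcal K$. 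One small point to keep in mind if you write this up: $\dd'$ need not preserve $\mathcal W$ (a derivative of a $\mathcal V$-factor may have a component in $\mathcal R$), so the vertical differential on $\mathcal K\cong\mathrm{Hom}_\mathcal O(\mathcal W,-)$ is induced by $\mathrm{pr}_\mathcal W\circ\dd'$; this is harmless since only the rows (post-composition with $\dd$ or $\pi$) need to be exact for the chase, but it is worth saying explicitly.
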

\begin{proof}
Let us decompose the element $P\in\mathfrak{Page}^{(k)}_{j}(\E)$ as $P=\sum_{i\geq 1}P_i$ with, for all $i \geq 1$, $P_i $ in $\mathrm{Hom}_\mathcal O\left(\bigodot^{k+1}\mathcal E\,_{|_{-j-i}},\E_{-i}\right)$. Assume $P\in \mathfrak{Page}^{(k)}_{j}(\E)$ is a $D^\E$-cocycle which preserves $\mathcal R$.

Let us prove one direction of item 1. If $P$ is the image through $D^\E$ of some element $Q\in\mathfrak{Page}^{(k)}_{j-1}(\E)$ which preserves $\mathcal R$, then $D^{\mathcal R} (\iota^*_\mathcal R Q)=\iota^*_\mathcal R D^\E (Q)= \iota^*_\mathcal R P$, with $\iota^*_\mathcal R Q\in\mathfrak{Page}^{(k)}_{j-1}(\mathcal R)$. Thus, the restriction $\iota^*_\mathcal R P \in \mathfrak{Page}^{(k)}_j(\mathcal R)$ of $P$ is a $D^\mathcal R$-coboundary.

Conversely, let us assume that  $\iota^*_\mathcal R P \in \mathfrak{Page}^{(k)}_j(\mathcal R)$ is a $D^{\mathcal R}$-coboundary, i.e. 
 $ \iota^*_\mathcal R P= D^{\mathcal R} Q_{\mathcal R} $ for some $Q_{\mathcal R}\in\mathfrak{Page}^{(k)}_{j-1}(\mathcal R)$.
Take $\hat{Q}\in \mathfrak{Page}^{(k)}_{j-1}(\E)$ any extension of $Q_{\mathcal R}$ (e.g. define $\hat Q$ to be $0$ as soon as one element in $\mathcal V$ is applied to it).
Then $P- D^\E (\hat Q):\bigodot^{k+1}\E\longrightarrow\E$ is zero on $ \bigodot^{k+1} \mathcal R$. We have to check that it is a $D^\E$-coboundary of a map with the same property.
\noindent
Put $\kappa=P-D^\E (\hat Q)$. By Proposition \ref{bicomplex}, item 1, there exists $\tau\in \mathfrak{Page}^{(k)}_{j-1}(\E)$ such that $D^\E(\tau)= \kappa$. The equation $D^\E(\tau)=\kappa$ is equivalent to the datum of a collection of equations\begin{align}\label{eq:tau}
    D^\E_v(\tau_i)+D^\E_h(\tau_{i+1})=\kappa_{i+1}, i\geq 1,\quad \text{and}\quad D^\E_h(\tau_1)=\kappa_1,
\end{align}
{with, $\tau_i\in\mathrm{Hom}_\mathcal O\left(\bigodot^{k+1}\mathcal E\,_{|_{-j-i+1}},\E_{-i}\right)$ and  $\kappa_i\in\mathrm{Hom}_\mathcal O\left(\bigodot^{k+1}\mathcal E\,_{|_{-j-i}},\E_{-i}\right)$ for every $i \geq 1$.
Since $\iota^*_\mathcal R\kappa_1=0$, we have that $D^\mathcal E_h(\iota^*_\mathcal R\tau_1)=\iota^*_\mathcal R \left(D^\E_h(\tau_1)\right)=0$, (with the understanding that $\iota^*_\mathcal R{\tau_1}_{|_\mathcal V}\equiv 0$). Using the exactness of the horizontal differential $D^\E_h$, there exists $C_1\in \mathfrak{Page}^{(k)}(\mathcal E)$ such that $D^\mathcal E_h(C_1)=\iota^*_\mathcal R\tau_1$. We now change $\tau_1$ to $\tau_1'$ and $\tau_2$ to $\tau'_2$ by putting $\tau_1':=\tau_1-\iota^*_\mathcal R\tau_1$ and $\tau_2':=\tau_2+D^\mathcal E_v(C_1)$. One can easily check that Equation \eqref{eq:tau} still holds  under these changes, i.e.,   $$D^\mathcal E_v( \tau'_1)+D^\mathcal E_h( \tau'_2)=\kappa_2\quad \text{and} \quad D^\mathcal E_h(\tau'_2)+D^\mathcal E_v(\tau_3)=\kappa_3.$$ We can therefore choose $\tau$ such that $\iota^*_\mathcal R\tau_1=0$.} We then iterate this procedure, which allows us to choose $\tau \in \mathfrak{Page}^{(k)}_{j-1}(\E)$ such that $\iota^*_\mathcal R \tau=0$ and $D^\E(\tau)=\kappa$.
By construction, $Q:=\tau+\hat Q$  preserves $\mathcal R$, while $ \iota_{\mathcal R}^* Q =Q_\mathcal R$, and $D^\mathcal E (Q)=P$. The second item follows from the first one.
\end{proof}

\subsubsection{Interpretation of ${\mathfrak{Page}}^{(n)}(\E',\E)$} 
Let $\phi\colon (\mathcal E', \dd') \to (\E, \dd)  $ be a chain map,
and let $\Phi^{(0)} :  \bigodot^\bullet\mathcal E' \to  \bigodot^\bullet  \mathcal E $ be its natural extension to a co-algebra morphism, namely:
 $$ \Phi^{(0)} (x_1 \cdot \dots \cdot x_n):= \phi (x_1) \cdot \dots \cdot \phi(x_n) .$$
We denote by $Q^{(0)}_\E $ and $ Q_{\E'}^{(0)}$ the differentials of arity $0$ on $\bigodot^\bullet\mathcal E$ and $ \bigodot^\bullet\mathcal E'$ induced by $\dd $ and $\dd'$. As in Definition \ref{coder} and Proposition \ref{linearity}, $\Phi^{(0)} $-co-derivations of a given arity $n$ form a complex when equipped with
 $$  H \mapsto Q_\E^{(0)} \circ H + (-1)^{|H|}  H \circ Q_{\E'}^{(0)}.$$

\begin{proposition}
\label{prop:interpretation}
For every $k \in \mathbb{N}_0$, and $\phi\colon (\E', \dd') \to (\E,\dd) $ be a chain map as above. The complex of $\Phi^{(0)}$-co-derivations of arity $k$ is isomorphic to the complex $\widehat{\mathfrak{Page}}^{(k)}(\E',\E)$ obtained from  $\mathfrak{Page}^{(k)}(\E',\E)$ by crossing its \textquotedblleft last column\textquotedblright, see diagram \eqref{recap}.
\end{proposition}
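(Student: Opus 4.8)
The plan is to construct an explicit isomorphism of complexes by sending each arity-$k$ $\Phi^{(0)}$-co-derivation to its unique nonzero Taylor coefficient, and then to verify that the differential of the co-derivation complex is carried to the total differential $D$ of $\widehat{\mathfrak{Page}}^{(k)}(\E',\E)$.

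First I would set up the underlying identification. By the expansion \eqref{TC-expension}, a $\Phi^{(0)}$-co-derivation $H$ of arity $k$ is entirely determined by its single nonzero Taylor coefficient $H^{(k)}\colon\bigodot^{k+1}\E'\to\E$, and conversely any $\mathcal O$-linear map of this type extends to such a co-derivation. Since $\Phi^{(0)}$ has arity $0$ (its only Taylor coefficient being $\phi$), formula \eqref{TC-expension} simplifies: on $\bigodot^{k+1}\E'$ the co-derivation $H$ returns only the arity-$1$ term $H^{(k)}$, on $\bigodot^{k+2}\E'$ it returns $\sum_j \pm\,H^{(k)}(\ldots\widehat{x_j}\ldots)\cdot\phi(x_j)$, and so on. Decomposing $H^{(k)}$ according to its target $\E_{-i}$ and its source degree, and using $\mathcal O$-multilinearity (Lemma \ref{Tay-C}) to match $\mathcal O$-linear Taylor coefficients with $\text{Hom}_{\mathcal O}$-entries, this identifies the $\mathcal O$-module of $\mathcal O$-multilinear $\Phi^{(0)}$-co-derivations of arity $k$ with $\bigoplus_{i\geq 1}\text{Hom}_{\mathcal O}\!\left(\bigodot^{k+1}\E',\E_{-i}\right)$, graded by source degree, which is precisely the underlying bigraded module of $\mathfrak{Page}^{(k)}(\E',\E)$ with its last ($\mathcal A$-valued) column removed, i.e. of $\widehat{\mathfrak{Page}}^{(k)}(\E',\E)$. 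Arity-$k$ co-derivations are never valued in $\mathcal A$, which is exactly why the last column must be crossed out.

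Next I would compute the image of the co-derivation differential $H\mapsto Q_\E^{(0)}\circ H+(-1)^{|H|}H\circ Q_{\E'}^{(0)}$. Because both $Q_\E^{(0)}$ and $Q_{\E'}^{(0)}$ have arity $0$ with Taylor coefficients $\dd$ and $\dd'$, the arity-$k$ Taylor coefficient of the result is, directly from the description above, $\dd\circ H^{(k)}+(-1)^{|H|}H^{(k)}\circ\dd'_{\mathrm{der}}$, where $\dd'_{\mathrm{der}}$ denotes $\dd'$ acting as an $\mathcal O$-derivation on $\bigodot^{k+1}\E'$. The first summand $\dd\circ(\cdot)$ is exactly the horizontal differential of Definition \ref{def:bi-com} (post-composition with $\dd$; post-composition with $\pi$ has vanished because it would land in the removed column, matching the fact that $\dd\colon\E_{-1}\to\E_0=0$ on the co-derivation side), while the second summand $(\cdot)\circ\dd'_{\mathrm{der}}$ is exactly the vertical differential $\delta$.

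Finally I would reconcile the total-complex sign, which is where the only genuine care is needed. The total differential of $\widehat{\mathfrak{Page}}^{(k)}$ is $D=\dd-(-1)^\bullet\delta$, so on an element of total degree $j$ it reads $\dd\circ P-(-1)^j P\circ\dd'_{\mathrm{der}}$, whereas the co-derivation differential computed above reads $\dd\circ H^{(k)}+(-1)^j H^{(k)}\circ\dd'_{\mathrm{der}}$. The horizontal parts coincide and the vertical parts are exact negatives. This is resolved by a standard sign renormalization: twisting the Taylor-coefficient identification by $(-1)^{m}$, where $m$ is the vertical (row) index of the bicomplex—equivalently the datum recorded by the source $\bigodot^{k+1}\E'\,_{|_{-k-m-1}}$—commutes with the shared horizontal differential (which preserves $m$) and anticommutes with the vertical differential (which raises $m$ by one), hence intertwines the two total differentials. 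This yields the desired isomorphism of complexes. The main obstacle, as expected, is the consistent bookkeeping of the Koszul signs produced by \eqref{TC-expension} and by the derivation action of $\dd'$ against the total-complex sign convention of Definition \ref{def:bi-com}; once these are pinned down, everything else is a direct transcription.
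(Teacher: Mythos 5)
Your proposal is correct and follows exactly the paper's route: the paper's proof simply declares that sending a $\Phi^{(0)}$-co-derivation to its Taylor coefficient gives the isomorphism and that the chain-map property is "routine to check." You carry out that routine check explicitly, and your observation that the vertical parts differ by a sign that must be absorbed into the identification (e.g.\ by the twist $(-1)^m$ on the row index, which commutes with the horizontal and anticommutes with the vertical differential) is precisely the kind of bookkeeping the paper leaves implicit.
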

\begin{proof}
The chain isomorphism consists in mapping a $\Phi^{(0)} $-co-derivation $H $ of arity $k$ and degree $j$ to its Taylor coefficient, which is an element of degree $j$ of ${\mathfrak{Page}}^{(k)}(\E',\E)$. It is routine to check that this map is a chain map.
\end{proof}

Here is an other type of interpretation for $\widehat{\mathfrak{Page}}^{\bullet}(\E',\E)$ involving the Richardson-Nijenhuis bracket.

\begin{prop}\label{rem:bicom-rch}
\cite{zbMATH03130695}
For $\E = \E' $, $\widehat{\mathfrak{Page}}^{\bullet}(\E',\E)$ is the the bi-graded complex  of \emph{exterior forms on $\E $} and the differential $D$ of $\widehat{\mathfrak{Page}}^{\bullet}(\E',\E)$ is  $D(\cdot) =[\mathrm d ,\cdot ]_{\hbox{\tiny{\emph{RN}}}}$.\end{prop}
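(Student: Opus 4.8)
The statement is essentially a recognition result, so the plan is twofold: first identify the underlying bigraded module of $\widehat{\mathfrak{Page}}^{\bullet}(\E',\E)$ (for $\E'=\E$) with exterior forms on $\E$, and then match the total differential $D$ with $[\dd,\cdot]_{\hbox{\tiny{RN}}}$. For the module identification, I would start from diagram \eqref{recap}: crossing the ``last column'' removes exactly the $\mathcal A$-valued terms \eqref{eq:j=1}, so that in arity $k$ and total degree $j$ the surviving module is, by Lemma \ref{lem:beingClosed}, $\bigoplus_{i\geq 1}\text{Hom}_\mathcal O\!\left(\bigodot^{k+1}\E\,_{|_{-j-i}},\E_{-i}\right)$. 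Since each component $P_{-i}$ raises the internal degree by exactly $(-i)-(-j-i)=j$, this module is nothing but $\text{Hom}^{j}_\mathcal O\!\left(\bigodot^{k+1}\E,\E\right)$, the space of $\mathcal O$-multilinear graded symmetric $\E$-valued forms of arity $k$ and degree $j$. Letting $k$ and $j$ vary then exhibits $\widehat{\mathfrak{Page}}^{\bullet}(\E',\E)$ as the full bigraded space of exterior forms on $\E$, bigraded by arity and degree; equivalently, this is the special case $\E'=\E$, $\phi=\mathrm{id}$ of Proposition \ref{prop:interpretation}, for which $\Phi^{(0)}=\mathrm{id}$ and the $\mathrm{id}$-co-derivations are the ordinary co-derivations of $\bigodot^\bullet\E$, read off through their Taylor coefficients.

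Next I would compute the two interior products entering $[\dd,P]_{\hbox{\tiny{RN}}}=\iota_\dd P-(-1)^{|\dd||P|}\iota_P\dd$ for $P$ of arity $k$ and degree $j$, exploiting that $\dd$ is a form of arity $0$ and degree $+1$. In $\iota_\dd P$ only the trivial $(k+1,0)$-shuffle survives, so $\iota_\dd P=\dd\circ P$, which is precisely the horizontal differential $\Phi\mapsto\dd\circ\Phi$ of Definition \ref{def:bi-com}. In $\iota_P\dd$ the sum runs over the $(1,k)$-shuffles, and I would verify that the Koszul signs produced by these shuffles coincide with the signs appearing when $\dd$ is extended as a graded $\mathcal O$-derivation of $\bigodot^{k+1}\E$; this identifies $\iota_P\dd=P\circ\dd=\delta(P)$ with the vertical differential $\delta$ of Definition \ref{def:bi-com}.

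Finally I would combine the two computations. Since $|\dd|=1$, this yields $[\dd,P]_{\hbox{\tiny{RN}}}=\dd\circ P-(-1)^{|P|}\delta(P)$, which is exactly the total differential $D=\dd-(-1)^{\bullet}\delta$ once one reads $(-1)^{\bullet}$ as $(-1)^{|P|}$, a reading justified by the remark above that the total degree of $P$ in the Page complex equals its degree $j$ as a map $\bigodot^\bullet\E\to\E$. I expect the only delicate point to be this sign bookkeeping, namely checking that the shuffle signs of $\iota_P\dd$ reproduce the derivation signs of $\delta$ and that the global sign $(-1)^{\bullet}$ in $D$ matches the $(-1)^{|\dd||P|}$ of the Richardson--Nijenhuis bracket; both are a careful but routine comparison of Koszul signs. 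An alternative to this third step would be to invoke Lemma \ref{lem:RN} directly, translating the differential $H\mapsto Q_\E^{(0)}\circ H+(-1)^{|H|}H\circ Q_{\E'}^{(0)}$ on co-derivations into $\delta_{[\dd,H]_{\hbox{\tiny{RN}}}}$, which bypasses the explicit shuffle count at the cost of a less transparent identification of the horizontal and vertical pieces.
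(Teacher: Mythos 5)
Your proposal is correct. The paper itself offers no proof of this proposition: it is stated as a recognition result with a citation to Nijenhuis--Richardson, so your direct verification is a genuine (and welcome) filling-in rather than a rederivation. Both halves of your argument check out against the paper's conventions. The module identification is exactly the observation that deleting the $\mathcal A$-valued column leaves $\oplus_{i\geq 1}\mathrm{Hom}_\mathcal O\bigl(\bigodot^{k+1}\E\,_{|_{-j-i}},\E_{-i}\bigr)=\mathrm{Hom}^{j}_\mathcal O\bigl(\bigodot^{k+1}\E,\E\bigr)$, which also matches Proposition \ref{prop:interpretation} with $\phi=\mathrm{id}$. For the differential, the one point you flag as delicate does work out: in $\iota_P\dd$ the shuffle sign $(-1)^{|x_i|(|x_1|+\cdots+|x_{i-1}|)}$ combines with the Koszul sign $(-1)^{(|x_i|+1)(|x_1|+\cdots+|x_{i-1}|)}$ needed to move $\dd x_i$ back to the $i$-th slot of the graded symmetric $P$, giving precisely the derivation sign $(-1)^{|x_1|+\cdots+|x_{i-1}|}$, so $\iota_P\dd=P\circ\dd=\delta(P)$; together with $\iota_\dd P=\dd\circ P$ and $|\dd|=1$ this yields $[\dd,P]_{\mathrm{RN}}=\dd\circ P-(-1)^{|P|}\delta(P)=D(P)$, the total degree $\bullet$ of $P$ in $\mathfrak{Page}^{(k)}_\bullet(\E)$ being its degree $j$ as a map. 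Your alternative route via Lemma \ref{lem:RN} is also sound and is closer in spirit to how the paper actually uses this identification elsewhere (e.g.\ in the proof of Theorem \ref{thm:existence}).
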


\subsection{Existence: The Lie $\infty $-algebroid on a free $ \mathcal O$-resolution}

\subsubsection{Proof of Theorem \ref{thm:existence}}
\label{main:section}
In this section, we prove Theorem \ref{thm:existence}.

Consider $ (\E, \dd= \ell_1, \pi) $ a  resolution of $ \mathcal A$ by free $ \mathcal O$-modules: such resolutions always exist\cite{Weibel}.
To start with, we define a binary bracket $\ell_2$. The pair $ (\dd, \ell_2)$ will obey to the axioms of the object that we now introduce.

\begin{definition}\cite{LavauSylvain}
\label{def:al-oid}\label{def:almost}
An almost differential  graded Lie algebroid of a Lie-Rinehart algebra $ (\mathcal A, \rho_\mathcal A,  [\cdot\,, \cdot]_\mathcal A)$ is a complex 
$$    \cdots\stackrel{\dd}{\longrightarrow} \mathcal E_{-3}\stackrel{\dd}{\longrightarrow} \mathcal E_{ -2}  \stackrel{\dd}{\longrightarrow}  \mathcal E_{ -1} \stackrel{\pi}{\longrightarrow}    \mathcal A.$$
of projective $\mathcal O $-modules equipped with a graded symmetric degree $ +1$ $\mathbb{K}$-bilinear bracket $ \ell_2 = [\cdot\,, \cdot] $ such that:
\begin{enumerate}
\item  $ \ell_2$ satisfies the Leibniz identity with respect to the anchor $\rho_\mathcal E := \rho_{\mathcal A} \circ \pi \colon \mathcal E_{-1} \longrightarrow {\mathrm{Der}}(\mathcal O) $, 
\item $\dd$ is degree $+1 $-derivation of $\ell_2 $, i.e. for all  $x \in \mathcal E_{i}, y \in \mathcal E$:
$$ \dd \ell_2(x,y) +  \ell_2( \dd x,y) + (-1)^{i}  \ell_2( x  , \dd y) = 0,$$ 
\item $\pi $ is a morphism, i.e. for all  $x,y \in \mathcal E_{-1}$
$$ \pi (\ell_2 (x,y)) = [\pi (x), \pi (y)]_\mathcal A .$$
 \end{enumerate}
\end{definition}

\begin{lemma}\label{lem1}
Every free resolution $ (\E, \dd, \pi )$ of a Lie-Rinehart algebra $(\mathcal A , [ \cdot\,, \cdot]_\mathcal{A}, \rho_\mathcal A) $ comes equipped with a binary bracket $\ell_2$ that makes it an almost differential  graded Lie algebroid of $\mathcal A $.
\end{lemma}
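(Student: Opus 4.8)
The plan is to construct $\ell_2$ by induction on the total homological degree, descending along the resolution. The anchor is forced: set $\rho_\E:=\rho_\mathcal A\circ\pi$ on $\E_{-1}$ and extend it by zero on $\E_{-i}$ for $i\geq 2$; it is $\mathcal O$-linear because $\pi$ and $\rho_\mathcal A$ are. I first build the top component $\ell_2\colon\bigodot^2\E_{-1}\to\E_{-1}$. Choosing a basis $(\epsilon_\lambda)$ of the free module $\E_{-1}$, I pick for each pair an element $\ell_2(\epsilon_\lambda,\epsilon_\mu)\in\E_{-1}$, antisymmetric in $(\lambda,\mu)$ and zero on the diagonal, with $\pi(\ell_2(\epsilon_\lambda,\epsilon_\mu))=[\pi(\epsilon_\lambda),\pi(\epsilon_\mu)]_\mathcal A$; such lifts exist since $\pi$ is onto. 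I then extend to $\bigodot^2\E_{-1}$ by the bi-derivation rule forced by Leibniz,
$$\ell_2(fx,gy)=fg\,\ell_2(x,y)+f\rho_\E(x)[g]\,y-g\rho_\E(y)[f]\,x.$$
A direct check, using that $\rho_\E$ is $\mathcal O$-linear and that each $\rho_\E(x)$ is a derivation, shows this is well defined, graded symmetric, satisfies the Leibniz identity in each slot, and that $\pi\circ\ell_2=[\cdot,\cdot]_\mathcal A\circ(\pi\otimes\pi)$; this secures condition (3) and the Leibniz part of condition (1) of Definition \ref{def:almost}.

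For the inductive step, suppose $\ell_2$ is defined on every component $\E_{-i}\odot\E_{-j}$ with $i+j<n$ (and $i,j\geq 1$) so that the derivation identity of condition (2) holds there. To define the component $\E_{-i}\odot\E_{-j}\to\E_{-i-j+1}$ with $i+j=n\geq 3$, I read condition (2) as a prescription for its $\dd$-image:
$$\dd\,\ell_2(x,y)=-\ell_2(\dd x,y)-(-1)^{|x|}\ell_2(x,\dd y),$$
whose right-hand side lies in $\E_{-n+2}$ and is already defined by induction. It then suffices, for each pair $(x,y)$ of basis elements of the free modules $\E_{-i},\E_{-j}$, to choose a preimage $\ell_2(x,y)\in\E_{-n+1}$ of this right-hand side under $\dd$, and to extend off the basis by the Leibniz rule of Definition \ref{def:Linfty}(ii) (i.e. $\mathcal O$-linearly in every slot of degree $\leq -2$, and as a first-order operator through the degree-$-1$ slot).

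The crux — and the only genuine obstacle — is that such a preimage exists, i.e. that the prescribed right-hand side lies in the image of $\dd$; this is exactly where acyclicity of the resolution enters. I would check that the right-hand side is a $\dd$-cocycle when $n\geq 4$ (its target degree being $\leq -2$) and lies in $\ker\pi$ when $n=3$ (target degree $-1$). For the first, applying $\dd$ and invoking the inductive derivation identity on the lower components together with $\dd^2=0$, the two resulting terms are both proportional to $\ell_2(\dd x,\dd y)$ and cancel; for the second, the relevant term maps under $\pi$ to $[\pi(x),\pi(\dd y)]_\mathcal A=0$ since $\pi\circ\dd=0$. By exactness of the free resolution — $\mathrm{im}\,\dd=\ker\dd$ in degrees $\leq -2$ and $\mathrm{im}\,\dd=\ker\pi$ in degree $-1$ — a preimage exists, completing the inductive definition.

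It remains to record the routine verifications: that the assignment, once extended off the basis by the Leibniz rule, still satisfies condition (2) on arbitrary (not merely basis) elements, which follows from $\mathcal O$-linearity of $\dd$ and the derivation property of each $\rho_\E(x)$; and that the choices can be made compatibly with graded symmetry, which is possible over a field of characteristic zero by performing them on a homogeneous basis and using that the prescribed right-hand side inherits the correct Koszul symmetry from the inductive hypothesis. Assembling the components produces a single degree $+1$ graded symmetric $\ell_2$ satisfying conditions (1)–(3), i.e. an almost differential graded Lie algebroid structure on $(\E,\dd,\pi)$. I expect the cocycle computation guaranteeing solvability of the lifting equation to be the heart of the argument, everything else being bookkeeping enabled by freeness.
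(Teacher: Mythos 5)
Your proof is correct, and it reaches the conclusion by a more hands-on route than the paper. The paper first writes down a crude candidate $\tilde\ell_2$ — structure constants lifted through $\pi$ on $\bigodot^2\E_{-1}$, identically zero as soon as one argument has degree $\leq -2$, extended by $\mathcal O$-bilinearity and the Leibniz rule — then shows that the defect $[\dd,\tilde\ell_2]_{\hbox{\tiny{RN}}}$ is an $\mathcal O$-bilinear, $D$-closed element of degree $+2$ in the bicomplex $\mathfrak{Page}^{(1)}(\E)$ with no component on the last column, and invokes the acyclicity of that bicomplex (Proposition \ref{bicomplex}) to produce a correction $\tau_2$ supported in degrees $\leq -2$ with $D(\tau_2)=-[\dd,\tilde\ell_2]_{\hbox{\tiny{RN}}}$, finally setting $\ell_2=\tilde\ell_2+\tau_2$. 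Your degree-by-degree lifting is essentially the diagram chase hidden inside items 2 and 3 of Proposition \ref{bicomplex}, unrolled by hand: your computation that the prescribed right-hand side is a $\dd$-cocycle is the componentwise form of $[\dd,[\dd,\tilde\ell_2]_{\hbox{\tiny{RN}}}]_{\hbox{\tiny{RN}}}=0$, and your $\ker\pi$ check at total degree $3$ is the vanishing of the last-column component. What the paper's packaging buys is reusability — the identical bicomplex argument is then applied verbatim to construct all higher brackets $\ell_k$, $k\geq 3$, to control which columns they vanish on (Proposition \ref{prop:lksontNuls}), and to build morphisms and homotopies — whereas your version is more elementary and self-contained for this single lemma. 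The two points you defer as bookkeeping (symmetry of the chosen lifts, and the verification that the Leibniz extension preserves the derivation identity off the basis, which indeed rests on $\rho_\E\circ\dd=0$) are genuinely routine and are checked in the same way in the paper's proof of $\mathcal O$-bilinearity of $[\dd,\tilde\ell_2]_{\hbox{\tiny{RN}}}$, so there is no gap.
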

\begin{proof}
For all $ k \geq 1$, let us denote by 
$(e_i^{(-k)})_{i \in I_k}$
a family of generators of the free $ \mathcal O$-module~$\E_{-k} $. 
By construction $\{a_i=\pi(e_i^{(-1)})\in \mathcal{A}\mid i\in I_{1}\}$ is a set of generators of $\mathcal{A}$. In particular, there exists elements $u^k_{ij}\in\mathcal{O}$, such that for given indices $i,j$, the coefficient $u^k_{ij}$ is zero except for finitely many indices $k$, 
and satisfying the skew-symmetry condition $u^k_{ij}=-u^k_{ji}$ together with
\begin{equation} \label{def:uijk}
\left[ a_i,a_j\right] _{\mathcal{A}}=\sum_{k\in I}u^k_{ij} a_k \hspace{.5cm} \forall i,j \in I_{1} 
\end{equation}
We now define: 
\begin{enumerate}
\item an anchor map by $\rho_{\mathcal{E}}(e_i^{(-1)})=\rho_{\mathcal{A}}(a_i$) for all $i\in I$,
\item a degree $ +1$ graded symmetric operation $\tilde{\ell}_2 $ on $ \E$ as follows:
\begin{enumerate}
    \item $\tilde{\ell}_2\left(e_i^{(-1)},e_j^{(-1)} \right) =\sum_{k\in I}u^k_{ij}e_k^{(-1)}$ for all $i,j\in I_{-1}$.
    \item 
    $\tilde{\ell}_2 \left(e_i^{(-k)},e_j^{(-l)} \right) =0$  for all $ i \in I_{k}, j \in I_l$ with $ k \geq 2$ or  $ l \geq 2$.  
\item we extend $\tilde{\ell}_2$ to $\E $ using $\mathcal O$-bilinearity and Leibniz identity with respect to the anchor $\rho_\E $.
\end{enumerate} 
\end{enumerate} 

By construction, $\tilde{\ell}_2$ satisfies the Leibniz identity with respect to the anchor $ \rho_\mathcal E$. Also, $\rho_\E\circ\dd=0$ on $\E_{-2}$. The map defined for all homogeneous $x,y\in \mathcal{E}$ by$$
 [\dd, \tilde{\ell}_2]_{\hbox{\tiny{RN}}}(x,y) = \dd \circ \tilde \ell_2 \left( x,y\right)+ \tilde \ell_2 \left(\dd x,y \right) +(-1)^{\lvert x\rvert} \tilde \ell_2 \left(x,\dd y \right),
$$
is a graded symmetric degree $ +2$ operation $ (\E \otimes \E)_\bullet \longrightarrow \E_{\bullet +2} $, and $[\dd, \tilde{\ell}_2]_{{\hbox{\tiny{RN}}}_{|_{\E_{-1}}}}=0$.
Let us check that it is $\mathcal O$-bilinear, i.e. for all $f \in \mathcal O, x,y \in \E$:
 $$  [\dd, \tilde{\ell}_2]_{\hbox{\tiny{RN}}}(x,f y )- f  [\dd, \tilde{\ell}_2]_{\hbox{\tiny{\text{RN}}}}(x,y )=0.$$
 
\begin{enumerate}
    \item if $x \in \E_{-1}$, this quantity is zero in view of
     \begin{align*}
     [ \dd, \tilde{\ell}_2]_{\hbox{\tiny{RN}}}(x,f y )&=f[\dd,\tilde{\ell}_2](x,y)+\underbrace{\dd\rho_{\mathcal{E}}(x)[f]\, y-\rho_{\mathcal{E}}(x)[f] \, \dd y}_{=0} 
\end{align*}
    \item if $x \in \E_{-2}$, one has
     \begin{align*}
      [ \dd, \tilde{\ell}_2]_{\hbox{\tiny{RN}}}(x,f y ) - f [ \dd, \tilde{\ell}_2](x, y )  = \tilde \ell_2 (\dd x , fy) - f \tilde \ell_2 (\dd x , y) 
      = \rho_\E  (\dd x)(f) \, y = 0 
     \end{align*}
     since $ \rho_\E \circ \dd =\rho_{\mathcal A} \circ \pi \circ \dd =0 $,
    \item if $ x \in \E_{-i} $ with $ i \geq 3$, it is obvious by $\mathcal O $-linearity of $\tilde \ell_2 $ on the involved spaces.
\end{enumerate}
\noindent
As a consequence $ [\dd, \tilde{\ell}_2]_{\hbox{\tiny{RN}}}$ is a degree $+2$ element in the total complex $\mathfrak{Page}^{(1)}(\E)$. By construction $ [\dd, \tilde{\ell}_2]_{\hbox{\tiny{RN}}}$ has no component on the last column. Since $ \pi ( [\dd, \tilde{\ell}_2]_{{\hbox{\tiny{RN}}} _{|_{\E_{-1}}}}) =0$ and also $[ \dd, [\dd, \tilde{\ell}_2]_{\hbox{\tiny{RN}}}]_{{\hbox{\tiny{RN}}}_{|_{\E_{\leq -2}}}}=0$, the $\mathcal O$-bilinear operator $ [\dd, \tilde{\ell}_2]_{\hbox{\tiny{RN}}}$ is $D$-closed in  $\mathfrak{Page}^{(1)}(\E)$.

By virtue of the first item of Proposition \ref{bicomplex}, the operator $ [\dd, \tilde{\ell}_2]_{\hbox{\tiny{RN}}}$ is then a $D$-coboundary so there exists  $\tau_2 \in \oplus_{j\geq 2} \text{Hom}_\mathcal O\left(\bigodot^2\E_{-j-1},\E_{-j}\right)$ such as $D(\tau_2)= -[\dd, \tilde{\ell}_2]_{\hbox{\tiny{RN}}}.$  Upon replacing  $ \tilde{\ell}_2$  by $\tilde{\ell}_2+\tau_2$ we obtain a 2-ary bracket $\ell_2$ of degree +1 which satisfies all items of Definition \ref{def:al-oid}.
\end{proof} 
\begin{proof}[Proof (of Theorem \ref{thm:existence})]
Lemma \ref{lem1} gives the existence of an almost differential graded Lie algebroid with differential $\ell_1=\dd$ and binary bracket $\ell_2$. We have to construct now the higher brackets $\ell_k$ for $k\geq 3$. 

\noindent
{\textbf{Step 1:
Construction of the 3-ary bracket $\ell_3 $.}} (Its construction being different from the one of the higher brackets, we put it apart).  We first notice that the graded Jacobiator defined for all $x,y,z \in \E$ by 
$$ \text{Jac}(x,y,z):= \ell_2(\ell_2(x,y),z)+(-1)^{\lvert y\rvert\lvert z\rvert}\ell_2(\ell_2(x,z),y)+(-1)^{\lvert x\rvert\lvert y\rvert+\lvert x\rvert\lvert z\rvert}\ell_2(\ell_2(y,z),x)  $$is $ \mathcal O$-linear in each variable, hence is a degree $ +2$ element in $\bigoplus_{j \geq 1} \text{Hom}_\mathcal O(\bigodot^3\E\, _{|_{-j-2}},\E_{-j}) \subset  \mathfrak{Page}^{(2)}(\E)$. For degree reason, its component on the last column of diagram \eqref{recap} is zero, i.e. it belongs to $\widehat{\mathfrak{Page}}^{(1)}(\E)$.

Let us check that it is $D$-closed: for this purpose we have to check that both conditions in Lemma \ref{lem:beingClosed} hold:
\begin{enumerate}
\item Since $\pi $ is a morphism from $(\E_{-1}, \ell_2)  $ to $(\mathcal A , [\cdot, \cdot]_{\mathcal A}) $, and since $ [\cdot, \cdot]_{\mathcal A} $ satisfies the Jacobi identity, one has for all $x,y,z\in\E_{-1}$: $$\text{Jac}(x,y,z)\in \ker\pi .$$ 
\item Furthermore, a direct computation of $[\text{Jac}, \dd]_{\hbox{\tiny{RN}}} $  gives in view of item $2$ of Definition \ref{def:almost}: \begin{equation*}
\dd \text{Jac}(x,y,z)=\text{Jac}(\dd x,y,z)+(-1)^{\lvert x\rvert}\text{Jac}(x,\dd y,z)+(-1)^{\lvert x\rvert+\lvert y\rvert}\text{Jac}(x,y,\dd z)
\end{equation*}\text{for all}\;$x,y,z \in \E$. 
\end{enumerate}

\noindent
Thus, $D(\text{Jac})=0$. By Proposition \ref{bicomplex}, item 2, $\text{Jac}$ is a $D$-coboundary, and, more precisely, there exists an element $\ell_3=\sum_{j\geqslant 2} \ell_3^{j}\in\widehat{\mathfrak{Page}}_1^{(2)}(\E)$ with $\ell_3^{j}\in\text{Hom}(\bigodot^3\E\, _{|_{-j-1}},\E_{-j})$ such that \begin{equation}
D(\ell_3) =-\text{Jac}\quad\text{i.e,}\quad\left[\dd,\ell_3 \right]_{\hbox{\tiny{RN}}} =-\text{Jac}.
\end{equation}
We choose the $3$-ary bracket to be $\ell_3$.

\noindent
{\textbf{Step 2:
Recursive construction of the $k$-ary brackets $\ell_k $ for $k \geq 4$.}}
Let us recapitulate: $ \ell_1=\dd$
, $\ell_2 $ and $\ell_3 $ are constructed and
the lowest arity terms of 
 $ [\ell_1 + \ell_2 +\ell_3, \ell_1 + \ell_2 +\ell_3]_{\hbox{\tiny{RN}}}$
satisfy
\begin{enumerate}
    \item  $[\ell_1, \ell_1]_{\hbox{\tiny{RN}}} =0 $ (since $\dd^2=0$),
    \item  $[\ell_1, \ell_2]_{\hbox{\tiny{RN}}} =0 $ (since $\dd=\ell_1$ and $\ell_2$ define an almost Lie algebroid structure) .
    \item $ [\ell_2, \ell_2]_{\hbox{\tiny{RN}}} + 2 [\ell_3, \ell_1]_{\hbox{\tiny{RN}}} = 2( \mathrm{Jac} +  [\ell_3 ,\ell_1]_{\hbox{\tiny{RN}}}) =0$ 
    by definition of $ \ell_3$, and because $[\ell_2, \ell_2]_{\hbox{\tiny{RN}}} = 2 \mathrm{Jac}$.
\end{enumerate}
However, the following term of degree $+2$ and arity $3 $ may not be equal to zero:
\begin{equation}
[\ell_3 , \ell_2]_{\hbox{\tiny{RN}}} \in\bigoplus\text{Hom}_\mathcal O\left(\bigodot^4\E_{j+1},\E_{-j}\right)=\widehat{\mathfrak{Page}}_1^{(3)}(\E).\end{equation}
Let us check that this term is indeed a $\mathcal{O}$-multilinear map: For $x_1\in \E_{-1},x_2,x_3,x_4\in\E_{\leq -2}$ and $f\in\mathcal O$, the only terms of $(\ell_3\circ \ell_2+\ell_2\circ \ell_3)(x_1,fx_2,x_3,x_4)$ where the anchor shows up are: 
$$\begin{cases}
\ell_3(\ell_2(x_1,fx_2),x_3,x_4)&=\rho_\E(x_1)[f]\ell_3(x_2,x_3,x_4)+f(\ell_3(\ell_2(x_1,x_2),x_3,x_4))\\ (-1)^{|x_2|+|x_3|+|x_4|}\ell_2(f\ell_3(x_2,x_3,x_4),x_1)&=-\rho_\E(x_1)[f]\ell_3(x_2,x_3,x_4) \\ &+f((-1)^{|x_2|+|x_3|+|x_4|}\ell_2(f\ell_3(x_2,x_3,x_4),x_1))
\end{cases}$$
The terms containing the anchor map add up to zero. When there is  more elements in $\E_{-1}$, the computation follows the same line. Moreover, by graded Jacobi identity of the Richardson-Nijenhuis bracket:
 $$ [[\ell_1 + \ell_2 +\ell_3, \ell_1 + \ell_2 +\ell_3]_{\hbox{\tiny{RN}}}, \ell_1 + \ell_2 +\ell_3]_{\hbox{\tiny{RN}}}  =0 $$ 
The term of arity $4$ in the previous expression gives
$[[\ell_3, \ell_2]_{\hbox{\tiny{RN}}} , \ell_1]_{\hbox{\tiny{RN}}}=0$. Hence, by Proposition \ref{rem:bicom-rch}, $[\ell_3, \ell_2]_{\hbox{\tiny{RN}}}$ is a $D$-cocycle in the complex $\mathfrak{Page}^{(3)}(\E)$, whose components on the last column and the column $-1$ are zero. 
It is therefore a coboundary by Proposition  \ref{bicomplex} item 3: we can continue a step further and define $\ell_4\in \oplus_{j \geq 3} \text{Hom}\left(\bigodot^4\E _{|_{-j-1}},\E_{-j}\right)$ such  that: \begin{equation}
-\left[\ell_2,\ell_3 \right]_{\hbox{\tiny{RN}}}
=
\left[\ell_1,\ell_4 \right]_{\hbox{\tiny{RN}}}
=
\left[\dd,\ell_4 \right]_{\hbox{\tiny{RN}}} .
\end{equation}

We choose the $4$-ary bracket to be $\ell_4$. We now proceed by recursion. We assume that we have constructed all the $k$-ary brackets $\ell_k$ such as : \begin{equation}\label{Maurer}
\left[\dd,\ell_k \right]_{\hbox{\tiny{RN}}} =-\sum_{\overset{i+j=k+1}{i\leq j}}\left[\ell_i,\ell_j \right]_{\hbox{\tiny{RN}}}=-\frac{1}{2}\sum_{\overset{i+j=k+1}{i,j\geq 1}}\left[\ell_i,\ell_j \right]_{\hbox{\tiny{RN}}}
\end{equation} for every $k=1,\ldots,n$ with $n\geq 4$. The ($n+1$)-ary bracket is constructed as follows. First the operator $\sum_{\overset{i+j=k+1}{i,j\geq 1}}\left[\ell_i,\ell_j \right]_{\hbox{\tiny{RN}}}$ is checked to be $\mathcal O$-linear as before. Now, we have \begin{align*}
\sum_{\overset{i+j=n+2}{i,j\geq 1}}\left[\dd,\left[ \ell_i,\ell_j \right]_{\hbox{\tiny{RN}}}\right]_{\hbox{\tiny{RN}}} 
&= -2\sum_{\overset{i+j=n+2}{i,j\geq 1}}\left[\ell_i,\left[ \dd,\ell_j \right]_{\hbox{\tiny{RN}}}\right]_{\hbox{\tiny{RN}}}\quad\text{(by graded Jacobi identity)}
.\end{align*}
Since $\ell_j$ satisfies Equation \eqref{Maurer} up to order $n$, we obtain\begin{equation*}
\sum_{\overset{i+j=n+2}{i,j\geq 1}}\left[\dd,\left[ \ell_i,\ell_j \right]_{\hbox{\tiny{RN}}}\right]_{\hbox{\tiny{RN}}}=\sum_{\overset{i+j+k=n+3}{i,j,k\geq 1}}\left[  \ell_i,\left[ \ell_j,\ell_k\right]_{\hbox{\tiny{RN}}} \right]_{\hbox{\tiny{RN}}} =0,
\end{equation*}
\noindent
where we used the graded Jacobi identity of the  Nijenhuis-Richardson bracket in the last step. Therefore, $\sum_{\overset{i+j=n+2}{i,j\geq 1}}\left[\ell_i,\ell_j \right]_{\hbox{\tiny{RN}}}$, seen as an element in $\mathfrak{Page}^{(i+j-2)}(\E)$ by Remark \ref{rem:bicom-rch}, is a cocycle  and for degree reason it has no element on the last column, and the columns $-1,\ldots,3-n$ in \ref{recap}. The third item of Proposition \ref{bicomplex} gives the existence of an $(n+1)$-ary bracket $\ell_{n+1}$ such as\begin{equation*}
\left[\dd,\ell_{n+1} \right]_{\hbox{\tiny{RN}}} =-\sum_{\overset{i+j=n+2}{i\leq j}}\left[\ell_i,\ell_j \right]_{\hbox{\tiny{RN}}}.
\end{equation*}
{This completes the proof.}
\end{proof}

\subsubsection{Proof of Proposition \ref{prop:lksontNuls} and Proposition \ref{univ:precise}}

\begin{proof}[Proof (of Proposition \ref{prop:lksontNuls})]
This is a consequence of Proposition \ref{thm:existence} and the third item of the Proposition \ref{bicomplex}: If the component of $\text{Jac}$ on the column $-1$ is zero, we can choose $\ell_3$ with no component on the last column and in column $-1$ (see Proposition \ref{bicomplex}), i.e. the restriction of $\ell_3$ to $\bigodot^3\E_{-1}$ is zero. Then $\ell_3$ has no component on the last column, the column $-1$ and the column $-2$. so $[\ell_2,\ell_3]_{\hbox{\tiny{RN}}}$ has no component in the last column, $-1$ and $-2$ columns as well. Hence $\ell_4$ can be chosen with no component on column $-1$, $-2$ and $-3$ by the third item of Proposition \ref{bicomplex}. The proof continues by recursion.
\end{proof}
We finish this section with a proof of Proposition \ref{univ:precise}.
\begin{proof}[Proof (of Proposition \ref{univ:precise})]We prove this Proposition in two steps.
\begin{enumerate}
    \item Lemma \ref{lem:inFactInclusion} guarantees the existence a free resolution $(\E,\dd,\pi)$ of the Lie-Rinehart algebra $\mathcal A$ such that $\mathcal E$ contains $\mathcal E'$ and such that there exists a graded free module $\mathcal V$ with $\E ' \oplus \mathcal V = \E $.
    \item Let $D^\E$ and $D^{\E'}$ be as in the proof of Lemma \ref{rest:univ}. We construct the $n$-ary brackets on $\E$ by extending the ones of $(\mathcal E', (\ell_k')_{k\geq 1}, \rho_{\mathcal E'}, \pi')$ in the following way:
    \begin{enumerate}
        \item We first construct an almost Lie algebroid bracket $\tilde{\ell_2}$ on $ \E_{-1}$ that extends the $2$-ary bracket of $ \E_{-1}'$. Since the $2$-ary bracket is determined by its value on a basis, the existence of a free module $\mathcal V_{-1}$ such that $\mathcal E_{-1} ' \oplus \mathcal V_{-1} = \mathcal E_{-1} $ allows to construct $\tilde{\ell_2} $ on $\E $ such that its restriction to $ \mathcal E' $ is $\ell_2' $ and such that it satisfies the Leibniz identity.
        
        As in the proof of Theorem \ref{thm:existence} (to be more precise: Lemma \ref{lem1}), we see that $[ \tilde \ell_2 , \dd^\E ]_{\hbox{\tiny{RN}}}$  is $\mathcal O $-linear, hence belongs to ${\mathfrak{Page}}^{(2)}_{2} (\E) $ and is a $D^\E$-cocycle. Since $\E' $ is a Lie $ \infty$-algebroid, its restriction to $\bigodot^2\mathcal E' $ is zero. Lemma  \ref{rest:univ} allows to change $\tilde{\ell}_2 $ to an $2$-ary bracket $\ell_2:=\tilde{\ell}_2+\tau_2$ with $\tau_2=0$ on $\bigodot^2 \E'$. Hence $\ell_2$ defines a graded almost Lie algebroid bracket, whose restriction to $\E'$ is still $\ell_2' $.  
        
        \item Since $\ell_2$  is an extension of  $\ell_2'$, its Jacobiator $\text{Jac}\in \mathfrak{Page}^{(2)}_2(\E)$ of the $2$-ary bracket $\ell_2$ preserves $\E'$. Also, its restriction $\iota^*_{\E'}\text{Jac}\in \mathfrak{Page}^{(2)}_2({\E'})$ is the Jacobiator of $\ell_2' $, and the latter is the $D^{\E'}$-coboundary of $\ell_3'$ in view of the higher Jacobi identity of $\E'$. Since $\text{Jac}\in \mathfrak{Page}^{(2)}_2(\E)$ is a $D^\E$-cocycle, Lemma \ref{rest:univ} assures that $\text{Jac}$ is the image through $D^\E$ of some element $\ell_3\in\mathfrak{Page}^{(2)}_1(\E)$  which preserves $\E'$ and whose restriction to $ \bigodot^3\mathcal E'$ is $ \ell_3'$. The proof continues by recursion: at the $n$-th step, we use Lemma  \ref{rest:univ} to construct an $n$-ary bracket for $\E $ that extends the $n$-ary bracket of $\E' $.
    \end{enumerate}
\end{enumerate}
By construction, the inclusion map $\iota\colon \E'\hookrightarrow \E$ is a morphism for the $n$-ary brackets for all $n \geq 1$.
\end{proof}

\subsection{Universality: Proof of Theorem \ref{th:universal} }

Let $\mathcal{A}$ be a Lie-Rinehart algebra. We consider $(\E,(\ell_k)_{k\geq 1},\rho_\E)$ a universal Lie $\infty$-algebroid of $\mathcal{A}$: its existence is granted by Theorem \ref{thm:existence}, proved in Section \ref{main:section}. Let $(\E', (\ell_k')_{k\geq 1}, \pi', \rho_{\E'})$ be an arbitrary Lie $\infty$-algebroid that terminates in $\mathcal{A}$ through a hook $\pi'$. Let $Q_\E$ (resp. $Q_{\E'})$ be the co-derivations of $S^\bullet_\mathbb{K} (\mathcal E) $ (resp. of $S^\bullet_\mathbb{K} (\mathcal E') $) associated to the Lie $\infty$-algebroid structures $(\E, (\ell_k)_{k\geq 1}, \rho_\E,\pi)$ (resp. $(\E', \rho_{\E'}, (\ell_k')_{k\geq 1}),\pi'$) that terminate in $\mathcal A $. 

\noindent
Let us show that:
\begin{enumerate}
    \item there is a Lie $\infty$-algebroid morphism from $\E'$ to $\E$,
    \item Any such two Lie $\infty$-morphisms are homotopic.
\end{enumerate}
Altogether, these two points prove Theorem \ref{th:universal}.
The Taylor coefficients of the required Lie $\infty$-algebroid morphisms and homotopies will be constructed by induction. These inductions rely on Lemmas \ref{O-linearity-lemma} and \ref{imp-lemma} below.

\begin{lemma}\label{O-linearity-lemma}
Let $\Phi\colon S_{\mathbb K}^\bullet ( \mathcal E') \longrightarrow S_{\mathbb K}^\bullet ( \mathcal E )$ be a co-algebra morphism such that
\begin{enumerate}
     \item $\Phi$ is $\mathcal O $-multilinear,
    \item $\pi \circ \Phi^{(0)} = \pi'$ on $\E'$,
\end{enumerate}
For every $n \in \mathbb{N}_0$ such that\footnote{$\Phi\circ Q_{\E'}-Q_{\E}\circ\Phi$ being a $\Phi$-co-derivation, its component of arity $i$ is zero for $0\leq i\leq n$ if only if its $i$-th Taylor coefficient is zero for $i=1 \leq  \dots \leq n $.} $(\Phi\circ Q_{\E'}-Q_{\E}\circ\Phi)^{(i)}=0$  for every $0 \leq i \leq n$, then the map $S_{\mathbb K} ( \mathcal E' )\longrightarrow  S_{\mathbb K} ( \mathcal E)$ given by:
$$ (\Phi\circ Q_{\E'}-Q_{\E}\circ\Phi)^{(n+1)}  $$ 
\begin{enumerate}
    \item is a $\Phi^{(0)} $-co-derivation of degree $+1$,
    \item is $\mathcal O $-multilinear,
    \item and the induced  $\Phi^{(0)} $-co-derivation $ \left(\bigodot^\bullet \mathcal E', Q_{\E'}^{(0)}\right) \longrightarrow \left(\bigodot^\bullet\mathcal E,Q_{\E}^{(0)}\right)$ satisfies:
     $$    Q^{(0)}_\E\circ(\Phi\circ Q_{\E'}-Q_{\E}\circ\Phi)^{(n+1)}  =
          (Q_{\E}\circ\Phi-\Phi\circ Q_{\E'})^{(n+1)}\circ Q_{\E'}^{(0)} . $$
\end{enumerate}
\end{lemma}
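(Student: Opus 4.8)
The plan is to introduce the degree $+1$ map $\Xi := \Phi\circ Q_{\E'} - Q_\E\circ\Phi$ and to analyse it through its decomposition into arity components, which I will write $\Xi = \sum_{k\geq 0}\Xi_{[k]}$ (and likewise $\Phi = \sum_{k\geq 0}\Phi_{[k]}$, with $\Phi_{[0]} = \Phi^{(0)}$ the arity-$0$ co-algebra morphism induced by the chain map $\Phi^{(0)}\colon\E'\to\E$). Two preliminary observations drive everything. First, since $\Phi$ is a co-algebra morphism and $Q_\E, Q_{\E'}$ are co-derivations, both $\Phi\circ Q_{\E'}$ and $Q_\E\circ\Phi$ are $\Phi$-co-derivations in the sense of Definition \ref{coder}; hence so is $\Xi$, and it has degree $+1$. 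Second, because $Q_\E^2 = 0 = Q_{\E'}^2$, a one-line computation gives the exact identity
\begin{equation*}
Q_\E\circ\Xi + \Xi\circ Q_{\E'} = Q_\E\Phi Q_{\E'} - Q_\E\Phi Q_{\E'} = 0 .
\end{equation*}
Finally, the hypothesis $\Xi^{(i)}=0$ for $0\le i\le n$ is, as recorded in the footnote, equivalent to the vanishing of the arity components $\Xi_{[i]}$ for $0\le i\le n$, so that $\Xi$ starts in arity $n+1$.

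For item (1) I would extract the arity-$(n+1)$ part of the $\Phi$-co-derivation identity $\Delta\circ\Xi = (\Xi\otimes\Phi + \Phi\otimes\Xi)\circ\Delta'$. Since arities add under composition and under $\otimes$, and since $\Xi$ has no component below arity $n+1$ while $\Phi$ has none below arity $0$, the only surviving terms pair $\Xi_{[n+1]}$ with $\Phi_{[0]}=\Phi^{(0)}$; as $\Delta,\Delta'$ preserve arity this yields
\begin{equation*}
\Delta\circ\Xi_{[n+1]} = \bigl(\Xi_{[n+1]}\otimes\Phi^{(0)} + \Phi^{(0)}\otimes\Xi_{[n+1]}\bigr)\circ\Delta',
\end{equation*}
which is exactly the $\Phi^{(0)}$-co-derivation property. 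Being of pure arity $n+1$ over the arity-$0$ morphism $\Phi^{(0)}$, this co-derivation is determined by its single Taylor coefficient $\Xi^{(n+1)}\colon S^{n+2}_{\mathbb K}(\E')\to\E$, of degree $+1$, which proves (1). For item (3) I would run the same bookkeeping on $Q_\E\circ\Xi = -\Xi\circ Q_{\E'}$: at arity $n+1$ only $Q_\E^{(0)}$ can be composed with $\Xi_{[n+1]}$ (any higher arity part of $Q_\E$ would need $\Xi$ below arity $n+1$, where it vanishes), and symmetrically on the right, so that $Q_\E^{(0)}\circ\Xi_{[n+1]} = -\Xi_{[n+1]}\circ Q_{\E'}^{(0)}$. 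Passing to the single Taylor coefficient of these arity-$(n+1)$ $\Phi^{(0)}$-co-derivations gives $Q_\E^{(0)}\circ\Xi^{(n+1)} = -\Xi^{(n+1)}\circ Q_{\E'}^{(0)} = (Q_\E\circ\Phi - \Phi\circ Q_{\E'})^{(n+1)}\circ Q_{\E'}^{(0)}$, the asserted equation.

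The delicate point is item (2), the $\mathcal O$-multilinearity of $\Xi^{(n+1)}$, since this is where the anchor enters and a pure $L_\infty$-algebra argument does not suffice. By the remark following \eqref{TC-expension}, it is enough to verify that the Taylor coefficient $\Xi^{(n+1)}$ is $\mathcal O$-multilinear, and the computation parallels the one at the end of Proposition \ref{linearity}. Only the $2$-ary brackets $\ell_2$ and $\ell_2'$ fail to be $\mathcal O$-linear, so when I evaluate $\Xi^{(n+1)}$ on a list in which one slot is scaled by $f\in\mathcal O$, the only terms preventing $f$ from factoring out are the two anchor contributions: one of the form $\rho_{\E'}(x)[f]\,(\cdots)$ produced by $\ell_2'$ inside $\Phi\circ Q_{\E'}$, and one of the form $\rho_\E(\Phi^{(0)}(x))[f]\,(\cdots)$ produced by $\ell_2$ inside $Q_\E\circ\Phi$. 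Because hypothesis (2) gives $\pi\circ\Phi^{(0)}=\pi'$, and the hooks satisfy $\rho_\E=\rho_{\mathcal A}\circ\pi$ and $\rho_{\E'}=\rho_{\mathcal A}\circ\pi'$, one has $\rho_\E\circ\Phi^{(0)}=\rho_{\E'}$ on $\E'_{-1}$; hence the two contributions carry the same coefficient and cancel. This cancellation is the main obstacle and the only place where the algebroid (rather than algebra) hypotheses are used; everything else is formal co-algebra bookkeeping organised by the arity grading. With (2) in hand, the $\Phi^{(0)}$-co-derivation of (1) induces a $\Phi^{\mathcal O}$-co-derivation, completing the proof.
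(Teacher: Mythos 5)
Your proposal is correct and follows essentially the same route as the paper's proof: arity bookkeeping on the $\Phi$-co-derivation identity for item (1), cancellation of the two anchor terms via $\rho_\E\circ\Phi^{(0)}=\rho_{\E'}$ (deduced from $\pi\circ\Phi^{(0)}=\pi'$ and the hook relations) for item (2), and extraction of the arity-$(n+1)$ component of $Q_\E\circ T_\Phi+T_\Phi\circ Q_{\E'}=0$ for item (3). The only cosmetic difference is that the paper verifies $\mathcal O$-multilinearity of the full map $T_\Phi$ rather than of the single Taylor coefficient, but the cancellation argument is identical.
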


\begin{remark}
The following remark will be crucial. 
Under the assumptions of Lemma \ref{lem:beingClosed}, $(\Phi\circ Q_{\E'}-Q_{\E}\circ\Phi)^{(n+1)} $ corresponds to a $D$-closed element of degree $+1$ in  the bi-complex $\mathfrak{Page}^{(n+1)}(\E',\E)$ through the chain isomorphism described in Proposition \ref{prop:interpretation}.
Here, $\E,\E' $ are equipped with the differentials $\ell_1, \ell_1' $ which are dual to the arity $0$ components $Q^{(0)}_\E,Q^{(0)}_{\E'}  $.
\end{remark}

\begin{proof}
A straightforward computation yields:
\begin{align*}
    \Delta(\Phi\circ Q_{\E'}-Q_{\E}\circ\Phi)&=(\Phi\otimes\Phi)\circ\Delta'\circ Q_{\E'}-(Q_{\E}\otimes \text{id}+\text{id}\otimes Q_{\E})\circ\Delta\circ\Phi\\&=\left((\Phi\circ Q_{\E'}-Q_{\E}\circ\Phi)\otimes\Phi+\Phi\otimes(\Phi\circ Q_{\E'}-Q_{\E}\circ\Phi) \right)\circ \Delta'.
\end{align*}
Now, $\Delta$ preserves arity i.e. $\Delta:S^n_\mathbb K(\E)\longrightarrow \oplus_{i+j=n} S^i_\mathbb K(\E)\otimes S^j_\mathbb K(\E) $ and so does $ \Delta'$. 
Taking into account the assumption  $(\Phi\circ Q_{\E'}-Q_{\E}\circ\Phi)^{(i)}=0$  for every $0 \leq i \leq n$, we obtain:
  \begin{eqnarray*}\Delta \circ (\Phi\circ Q_{\E'}-Q_{\E}\circ\Phi)^{(n+1)}\\ = 
   \left((\Phi\circ Q_{\E'}-Q_{\E}\circ\Phi)^{(n+1)}\otimes\Phi^{(0)}+\Phi^{(0)}\otimes(\Phi\circ Q_{\E'}-Q_{\E}\circ\Phi)^{(n+1)}\right)\circ \Delta'.\end{eqnarray*} 
   All the other terms disappear for arity reasons.    Hence $(\Phi\circ Q_{\E'}-Q_{\E}\circ\Phi)^{(n+1)}$ is a $\Phi^{(0)}$-co-derivation. 

Let us prove that it is $\mathcal{O}$-linear. It suffices to check  $\mathcal{O}$-linearity of\, $T_\Phi:= \Phi\circ Q_{\E'}-Q_{\E}\circ\Phi$. Let us choose homogeneous elements $x_1,\ldots, x_{N}\in\E'$ and let us assume that $x_i\in\E'_{-1}$ is the only term of degree $-1$: The proof in the case where there is more than one such an homogeneous element of degree $-1$ is identical. We choose $j \neq i$ and we compute $T_\Phi (x_1,\ldots,x_i,\ldots,fx_j,\ldots, x_{N})$ for some $f\in\mathcal{O}$. The only terms in the previous expression which are maybe non-linear in $f$ are those for which the $2$-ary brackets of a term containing $fx_j$ with $x_i$ or $\Phi^{(0)}(x_i)$ appear (since $\Phi $ and all other brackets are $\mathcal O$-linear). There are two such terms. The first one appears when we apply $Q_{\E'} $  first, and then $\Phi $: this forces
$\Phi\left(\ell'_2(x_i,fx_j),x_{I^{ij}}\right)$ to appear, and the non-linear term is then:
\begin{align}\label{term1}
 \epsilon (x,\sigma_i) \rho_{\E'}(x_i)[f] \,   \Phi(x_{I^i})
\end{align}
with $ \sigma_i$ the permutation that let $i$ goes in front and leave the remaining terms unchanged. There is a second term that appears when one applies $ \Phi$ first, then $ Q_\E$. Since it is a co-morphism, $\Phi ( x_1 \ldots x_i \ldots,fx_j \ldots x_{N} )  $ is the product of several terms among which only one is of degree $-1$, namely the term 
 $$ \epsilon (x,\sigma_i)  \Phi^{(0)}(x_i ) \Phi(f x_{I^i} ).$$
 Applying $Q_\E $ to this term yields the non-linear term 
  \begin{align}\label{term2}
      \epsilon (x,\sigma_i)\rho_\E ( \Phi^{(0)} (x_i))  [f] \, \Phi(x_{I^i}),
  \end{align} 

\noindent
where $I^i$ and $I^{ij}$ are as in Proposition \ref{linearity}. Since $\rho_\E\circ\Phi^{(0)}=\rho_{\E'}$, we see that the terms \eqref{term1} and \eqref{term2} containing an anchor add up to zero. 

Let us check that $(\Phi\circ Q_{\E'}-Q_{\E}\circ\Phi)^{(n+1)}$ is a chain map, in the sense that it satisfies item 3).
Considering again $T_\Phi:= \Phi\circ Q_{\E'}-Q_{\E}\circ\Phi$, we have that $T_\Phi^{(k)}=0$, for all $k=0,\ldots,n$. Since $T_\Phi\circ Q_{\E'}=Q_\E\circ T_\Phi$, one has
\begin{align*}
0=\left( T_\Phi\circ Q_{\E'}+Q_{\E}\circ T_\Phi\right)^{(n+1)}= T_\Phi^{(n+1)}\circ Q_{\E'}^{(0)}+Q_{\E}^{(0)}\circ T_\Phi^{(n+1)}+\sum_{\overset{i+j=n+1}{i,j\geq 1}}\underbrace{\left(T_\Phi^{(i)}\circ Q_{\E'}^{(j)}+Q_{\E}^{(j)}\circ T_\Phi^{(i)}\right)}_0
\end{align*}
By consequent, the $\mathcal O$-linear map $(\Phi\circ Q_{\E'}-Q_{\E}\circ\Phi)^{(n+1)}$ satisfies item 3).
\end{proof}

\begin{lemma}\label{imp-lemma}
Let $\Phi,\tilde{\Phi}: (S^\bullet_\mathbb{K} (\E'),Q_{\E'}) \longrightarrow (S^\bullet_\mathbb{K}(\E),Q_{\E})$ be $\mathcal O $-linear Lie $\infty$-algebroid morphisms and let $n \in \mathbb{N}_0$.
If $\tilde{\Phi}^{(i)}= \Phi^{(i)}$ for every $0 \leq i \leq n$, then
$(\tilde{\Phi}-\Phi)^{(n+1)} \colon S^\bullet_\mathbb{K} (\E') \longrightarrow S^\bullet_\mathbb{K} (\E )$
 \begin{enumerate}
     \item  is a $\Phi^{(0)}$-co-derivation
     \item is $\mathcal O $-multilinear
     \item  and the induced $\Phi^{(0)}$-co-derivation $\left(\bigodot^\bullet\E',Q_{\E'}^{(0)}\right) \longrightarrow \left(\bigodot^\bullet\E,Q_\E^{(0)}\right)$ satisfies:
      $$   Q^{(0)}_\E \circ(\tilde{\Phi}-\Phi)^{(n+1)}= (\tilde{\Phi}-\Phi)^{(n+1)} \circ Q_{\E'}^{(0)} .$$
 \end{enumerate}
\end{lemma}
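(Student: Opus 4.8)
The plan is to follow the same three-step pattern as the proof of Lemma \ref{O-linearity-lemma}, with the simplification that here $\mathcal O$-multilinearity comes for free. Write $S := \tilde{\Phi} - \Phi$; by hypothesis $S^{(i)} = 0$ for all $0 \le i \le n$, and in particular $S^{(0)} = 0$ since $\tilde{\Phi}^{(0)} = \Phi^{(0)}$.

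For item (1), I would use that both $\Phi$ and $\tilde{\Phi}$ are co-algebra morphisms, so that $\Delta \circ \Phi = (\Phi \otimes \Phi) \circ \Delta'$ and $\Delta \circ \tilde{\Phi} = (\tilde{\Phi} \otimes \tilde{\Phi}) \circ \Delta'$. Subtracting and inserting the elementary identity $\tilde{\Phi} \otimes \tilde{\Phi} - \Phi \otimes \Phi = S \otimes \tilde{\Phi} + \Phi \otimes S$ yields
\begin{equation*}
\Delta \circ S = \left( S \otimes \tilde{\Phi} + \Phi \otimes S \right) \circ \Delta'.
\end{equation*}
Since $\Delta$ and $\Delta'$ preserve arity and $S^{(i)}=0$ for $i \le n$, projecting onto arity $n+1$ retains only the terms in which $S$ carries arity $n+1$ and the companion morphism carries arity $0$; using $\tilde{\Phi}^{(0)} = \Phi^{(0)}$, this reads
\begin{equation*}
\Delta \circ S^{(n+1)} = \left( S^{(n+1)} \otimes \Phi^{(0)} + \Phi^{(0)} \otimes S^{(n+1)} \right) \circ \Delta',
\end{equation*}
which is exactly the $\Phi^{(0)}$-co-derivation property of Definition \ref{coder}.

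Item (2) is immediate and is the step where the present lemma is strictly easier than Lemma \ref{O-linearity-lemma}: since $\Phi$ and $\tilde{\Phi}$ are $\mathcal O$-multilinear Lie $\infty$-algebroid morphisms, all their Taylor coefficients are $\mathcal O$-multilinear by Lemma \ref{Tay-C}, hence so is the difference $S^{(n+1)} = \tilde{\Phi}^{(n+1)} - \Phi^{(n+1)}$. No anchor-term cancellation is needed here, because the individual maps are already $\mathcal O$-linear rather than only their sum.

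Finally, for item (3), I would use that both morphisms are chain maps in the sense of Equation \eqref{def:LM}, i.e. $\Phi \circ Q_{\E'} = Q_\E \circ \Phi$ and $\tilde{\Phi} \circ Q_{\E'} = Q_\E \circ \tilde{\Phi}$; subtracting gives $S \circ Q_{\E'} = Q_\E \circ S$. Projecting the equality $(S \circ Q_{\E'} - Q_\E \circ S)^{(n+1)} = 0$ onto arity $n+1$, all mixed terms $S^{(i)} \circ Q_{\E'}^{(j)}$ and $Q_\E^{(j)} \circ S^{(i)}$ with $1 \le i \le n$ vanish because $S^{(i)}=0$ there, leaving only the terms with $S$ at arity $n+1$:
\begin{equation*}
S^{(n+1)} \circ Q_{\E'}^{(0)} = Q_\E^{(0)} \circ S^{(n+1)},
\end{equation*}
which is precisely item (3) for the induced $\Phi^{(0)}$-co-derivation on $\bigodot^\bullet$. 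The only point requiring care throughout is the arity bookkeeping in these projections, but it is routine since $\Delta$, $Q_\E$ and $Q_{\E'}$ all respect the arity grading.
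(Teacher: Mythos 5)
Your proposal is correct and follows essentially the same route as the paper's proof: the telescoping identity $\tilde{\Phi}\otimes\tilde{\Phi}-\Phi\otimes\Phi=(\tilde{\Phi}-\Phi)\otimes\tilde{\Phi}+\Phi\otimes(\tilde{\Phi}-\Phi)$ (the paper uses the symmetric variant $(\tilde{\Phi}-\Phi)\otimes\Phi+\tilde{\Phi}\otimes(\tilde{\Phi}-\Phi)$, which agrees after projecting to arity $n+1$ since $\tilde{\Phi}^{(0)}=\Phi^{(0)}$), the observation that $\mathcal O$-multilinearity is inherited from the two morphisms, and the arity-$(n+1)$ projection of $(\tilde{\Phi}-\Phi)\circ Q_{\E'}-Q_\E\circ(\tilde{\Phi}-\Phi)=0$. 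Nothing further is needed.
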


\begin{remark}
Proposition \ref{prop:interpretation} means that the map
$(\tilde{\Phi}-\Phi)^{(n+1)} $ as in Lemma \ref{imp-lemma} corresponds to a closed element in $\mathfrak{Page}^{(n+1)}(\E',\E) $, equipped with differentials $\ell_1,\ell_1' $. 
\end{remark}

 \begin{proof}
  For all $x_1,\ldots,x_k\in\E'$, one has: \begin{align*}
\Delta(\tilde{\Phi}-\Phi)(x_1\odot\cdots\odot x_k)&=\sum^k_{j=1}\sum_{\sigma\in\mathfrak{S}_{j,k-j}} \epsilon(x,\sigma)  \tilde{\Phi}(x_{\sigma(1)}\odot\cdots\odot x_{\sigma(j)})\otimes\tilde{\Phi})(x_{\sigma(j+1)}\odot\cdots\odot x_{\sigma(k)})\\&-\sum^k_{j=1}\sum_{\sigma\in\mathfrak{S}_{j,k-j}}  \epsilon(x,\sigma) ( \Phi(x_{\sigma(1)}\odot\cdots\odot x_{\sigma(j)})\otimes\Phi(x_{\sigma(j+1)}\odot\cdots\odot x_{\sigma(k)})\\&=\left( (\tilde{\Phi}-\Phi)\otimes\Phi+\tilde{\Phi}\otimes(\tilde{\Phi}-\Phi)\right)\circ\Delta'(x_1\odot\cdots\odot x_k).
\end{align*}
Since $\Delta$ has arity $0$ and $(\tilde{\Phi}-\Phi)^{(i)}=0$ for all $0\leq i\leq n$, we obtain\begin{equation}
\Delta(\tilde{\Phi}-\Phi)^{(n+1)}(x_1\odot\cdots\odot x_k)=\left( (\tilde{\Phi}-\Phi)^{(n+1)}\otimes\Phi^{(0)}+\Phi^{(0)}\otimes(\tilde{\Phi}-\Phi)^{(n+1)}\right)\circ\Delta'(x_1\odot\cdots\odot x_k).
\end{equation} This proves the first item. 
Since both $\Phi $ and $\tilde{\Phi} $ are $\mathcal O$-multilinear, $(\tilde{\Phi}-\Phi)^{(n+1)}$ is $\mathcal O$-multilinear. Which proves the second item.  Since, $\Phi$ and $\tilde{\Phi}$ are  Lie ${\infty}$-morphisms:\begin{equation}
(\tilde{\Phi}-\Phi)\circ Q_{\E'}-Q_{\E}\circ(\tilde{\Phi}-\Phi)=0.
\end{equation}
By looking at the component of arity $n+1$, one obtains, $(\tilde{\Phi}-\Phi)^{(n+1)}\circ Q_{\E'}^{(0)}-Q_{\E}^{(0)}\circ(\tilde{\Phi}-\Phi)^{(n+1)}=0.$ This proves the third item.
 \end{proof}
 
\begin{lemma} \label{imp-lemma2}
Under the assumptions of Lemma \ref{imp-lemma},
 there exists 
 \begin{enumerate}
     \item a Lie $\infty$-morphism of algebroids $\tilde{\Phi}_1\colon  S_{\mathbb K}(\E')\rightarrow S_{\mathbb K}(\E)$ 
     \item and a homotopy $(\tilde{\Phi}_t,H_t)$ joining $\tilde{\Phi}_1$ to $\Phi$, compatible with the hooks,
 \end{enumerate}
 such that 
 \begin{enumerate}
     \item the components of arity less or equal to $n$ of $H_t$ vanish, 
     \item $\tilde{\Phi}_1^{(i)}=\tilde{\Phi}^{(i)}$ for every $0 \leq i \leq n+1$.
 \end{enumerate}
 \end{lemma}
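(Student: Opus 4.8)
The plan is to realize the degree-$0$ chain map $(\tilde{\Phi}-\Phi)^{(n+1)}$ as the (constant) leading term of the time-derivative of a homotopy whose $\Phi_t$-co-derivation $H_t$ is supported in arity $n+1$ only, so that all Taylor coefficients of arity $\le n$ stay frozen while the coefficient of arity $n+1$ is pushed from $\Phi^{(n+1)}$ to $\tilde{\Phi}^{(n+1)}$. First I would record what Lemma \ref{imp-lemma} gives: $(\tilde{\Phi}-\Phi)^{(n+1)}$ is an $\mathcal O$-multilinear $\Phi^{(0)}$-co-derivation of degree $0$ satisfying $Q^{(0)}_\E\circ(\tilde{\Phi}-\Phi)^{(n+1)}=(\tilde{\Phi}-\Phi)^{(n+1)}\circ Q^{(0)}_{\E'}$. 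Via Proposition \ref{prop:interpretation} and Lemma \ref{lem:beingClosed}, this is exactly a $D$-closed element of degree $0$ in $\mathfrak{Page}^{(n+1)}(\E',\E)$. Moreover its component $P_{-i}\colon\bigodot^{n+2}\E'\,_{|_{-i}}\to\E_{-i}$ vanishes for all $i\le n+1$, because a product of $n+2$ homogeneous factors of degree $\le -1$ has degree $\le -(n+2)$, so $\bigodot^{n+2}\E'\,_{|_{-i}}=0$ as soon as $i\le n+1$; in particular $(\tilde{\Phi}-\Phi)^{(n+1)}$ has neither a last-column component nor an $\E_{-1}$-component and sits in $\bigoplus_{i\ge n+2}\mathrm{Hom}_\mathcal O\!\left(\bigodot^{n+2}\E'\,_{|_{-i}},\E_{-i}\right)$.

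Next I would invoke Proposition \ref{bicomplex}. Although stated for $\E'=\E$, its proof only uses exactness of the rows of the bicomplex, which persists for $\mathfrak{Page}^{(n+1)}(\E',\E)$ since each $\bigodot^{n+2}\E'\,_{|_\bullet}$ is projective and $(\E,\dd,\pi)$ is a \emph{resolution} of $\mathcal A$ (this is where universality of $\E$ enters). Applying item (3) of Proposition \ref{bicomplex} with lowest index $n+2$, the cocycle $(\tilde{\Phi}-\Phi)^{(n+1)}$ is the image under $D$ of an element $R\in\mathfrak{Page}^{(n+1)}_{-1}(\E',\E)$ whose components start in index $i\ge n+3$; in particular $R$ has no last-column component, so it is the Taylor coefficient of an $\mathcal O$-multilinear $\Phi^{(0)}$-co-derivation of degree $-1$, which I still denote $h$, and $D(R)=(\tilde{\Phi}-\Phi)^{(n+1)}$ reads
\[
Q^{(0)}_\E\circ h+h\circ Q^{(0)}_{\E'}=(\tilde{\Phi}-\Phi)^{(n+1)}.
\]

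Finally I would integrate using Proposition \ref{prop:justify}. Set the constant family of Taylor coefficients $H^{(n+1)}_t:=h$ and $H^{(i)}_t:=0$ for all $i\ne n+1$, and let $\Phi_t$ be the unique piecewise rational continuous family of co-algebra morphisms with $\Phi_0=\Phi$ solving $\frac{\dd\Phi_t}{\dd t}=Q_\E\circ H_t+H_t\circ Q_{\E'}$; then $(\Phi_t,H_t)$ is a homotopy. Because $H^{(i)}_t=0$ for $i\le n$, the recursion in Proposition \ref{prop:justify} forces $\frac{\dd}{\dd t}\Phi^{(i)}_t=0$, hence $\Phi^{(i)}_t=\Phi^{(i)}=\tilde{\Phi}^{(i)}$, for every $i\le n$. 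In arity $n+1$, since only $H^{(n+1)}_t=h$ is nonzero the output of $H_t$ on $\bigodot^{n+2}\E'$ lands entirely in arity $1$ via $h$, so the only surviving contribution is $\frac{\dd}{\dd t}\Phi^{(n+1)}_t=Q^{(0)}_\E\circ h+h\circ Q^{(0)}_{\E'}=(\tilde{\Phi}-\Phi)^{(n+1)}$, constant in $t$; integrating gives $\Phi^{(n+1)}_1=\Phi^{(n+1)}+(\tilde{\Phi}-\Phi)^{(n+1)}=\tilde{\Phi}^{(n+1)}$. Setting $\tilde{\Phi}_1:=\Phi_1$ and $\tilde{\Phi}_t:=\Phi_t$, we obtain a homotopy joining $\Phi$ and $\tilde{\Phi}_1$ with $\tilde{\Phi}^{(i)}_1=\tilde{\Phi}^{(i)}$ for all $0\le i\le n+1$ and with $H_t$ of vanishing arity $\le n$ by construction; since $\Phi^{(0)}_t=\Phi^{(0)}$ is constant we have $\pi\circ\Phi^{(0)}_t=\pi'$ for all $t$, so the homotopy is compatible with the hooks. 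The main obstacle is the second paragraph: the map $(\tilde{\Phi}-\Phi)^{(n+1)}$ is a priori only $D$-closed in the truncated complex $\widehat{\mathfrak{Page}}^{(n+1)}(\E',\E)$ of co-derivations, and one must upgrade this to $D$-exactness \emph{through a co-derivation}, i.e. with a primitive $h$ carrying no last-column component. This is precisely where the vanishing of the low-arity components of $(\tilde{\Phi}-\Phi)^{(n+1)}$ combines with the exactness granted by universality (item (3) of Proposition \ref{bicomplex}) to produce a primitive with the required support.
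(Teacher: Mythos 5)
Your proof is correct and follows essentially the same route as the paper's: use Lemma \ref{imp-lemma} to identify $(\tilde{\Phi}-\Phi)^{(n+1)}$ with a $D$-closed degree-$0$ element of $\mathfrak{Page}^{(n+1)}(\E',\E)$, produce a primitive via Proposition \ref{bicomplex}, and integrate the resulting constant arity-$(n+1)$ homotopy with Proposition \ref{prop:justify}. Your extra care about the vanishing of the low-index components and about the primitive having no last-column component (so that it genuinely extends to a $\Phi^{(0)}$-co-derivation) makes explicit what the paper leaves implicit.
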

\begin{proof} Let us consider $(\tilde{\Phi}-\Phi)^{(n+1)}: \bigodot^\bullet\mathcal E' \longrightarrow  \bigodot^\bullet\mathcal E $. 
By assumption, $ (\tilde{\Phi}-\Phi)^{(i)}=0 $ for all $i \leq n$, so that in view of Lemma \ref{imp-lemma}
\begin{enumerate}
    \item 
$(\tilde{\Phi}-\Phi)^{(n+1)}$ is a $\Phi^{(0)} $-co-derivation. 
    \item The restriction of
$(\tilde{\Phi}-\Phi)^{(n+1)}$
to $\bigodot^{n+2}\mathcal E'$ corresponds to a closed element of degree $0$ in ${\mathfrak{Page}}^{(n+1)} (\E',\E)$.
\end{enumerate}
 Proposition \ref{bicomplex} implies that there exists a degree $ -1$ $\mathcal O$-linear map $H_{n+1}\colon \bigodot^{n+2}(\E')\longrightarrow \E$, of arity $n+1$, hence an element in $\mathfrak{Page}^{(n+1)}(\E',\E)$, such that, 
\begin{equation}
(\tilde{\Phi}-\Phi)^{(n+1)}=Q^{(0)}_{\E}\circ H_{n+1}+H_{n+1}\circ Q_{\E'}^{(0)}.
\end{equation}
We denote its extension
to a $\Phi^{(0)}$-co-derivation of degree $-1$ by $H^{(n+1)}$. We now consider the following differential equation for $t \in [0,1]$: 
\begin{equation}\label{homotpy}
\frac{\dd\tilde{\Phi}_t}{\dd t}=Q_\E\circ H_t+H_t\circ Q_{\E'},\quad \text{and}\quad\tilde{\Phi}_0=\Phi,
\end{equation}where $H_t$ is the unique $\tilde{\Phi}_t$-co-derivation of degree $-1$  whose unique non zero Taylor coefficient is $H_{n+1}$. The existence of a solution for the differential equation \eqref{homotpy} is granted by Proposition \ref{prop:justify}. By considering the component of arity $1, \dots, n, n+1$ in Equation \eqref{homotpy}, we find
 $$ \left\{ \begin{array}{rcl}
 \frac{\dd\tilde{\Phi}_t^{(i)}}{\dd t} &  = &0 \, \, \, \hbox{ for $i=0, \dots, n$ ,} \\
 \frac{\dd\tilde{\Phi}_t^{(n+1)}}{\dd t}&=&Q_\E^{(0)}\circ H^{(n+1)}+H^{(n+1)}\circ Q_{\E'}^{(0)}  =   (\tilde{\Phi}-\Phi)^{(n+1)} 
 \end{array}\right.$$
Hence:
\begin{align*}
 \left\{ \begin{array}{rcl}
  {\tilde{\Phi}_t^{(i)}} &  = & \Phi^{(i)} \, \, \, \hbox{ for $i=0, \dots, n$ ,} \\
\tilde{\Phi}_t^{(n+1)}&=&
\Phi^{(n+1)}+t(\tilde{\Phi}-\Phi)^{(n+1)}. 
\end{array} \right.
\end{align*}
Therefore, applying $t=1$ to the previous relation, one finds $$
 \left\{ \begin{array}{rcl}
{\tilde{\Phi}_t^{(i)}} &  = & \Phi^{(i)} \, \, \, \hbox{ for $i=0, \dots, n$ ,} \\
 \tilde{\Phi}^{(n+1)}_1&=&\Phi^{(n+1)}+(\tilde{\Phi}-\Phi)^{(n+1)}= \tilde{\Phi}^{(n+1)} 
\end{array} \right. 
$$
This completes the proof.
\end{proof}
\begin{proof}[Proof (of Theorem \ref{th:universal})]	
Let us prove item 1. 
We construct  the Taylor coefficients of the Lie $\infty $-algebroid $ \Phi$ by recursion.

The Taylor coefficient of arity $0$ is obtained out of classical properties of projective resolutions of $\mathcal O $-modules.
Given any complex $(\E', \rho_{\E'}, \ell'_{1},\pi')$ which terminates in $\mathcal A$ through $\pi $, for every free resolution $(\E, \rho_\E, \ell_{1},\pi)$ of $\mathcal A$, there exists   a chain map  $\Phi^{(0)}\colon (\E',\ell_1') \to (\E, \ell_1)$ as in Equation (\ref{eq:EE'}), and any two such chain maps are homotopic. We still denote by $ \Phi^{(0)} $ its extension to an arity $ 0$ co-morphism $\bigodot^\bullet \E' \to \bigodot^\bullet \E $.

To construct the second Taylor coefficient,  let us consider the map:
\begin{equation} \label{degre2Obs} \begin{array}{rcl}   
  S^2_\mathbb K (\E')& \to  & \E \\
 (x,y)&\mapsto&\Phi^{(0)} \circ \ell_2'(x,y)-\ell_2(\Phi^{(0)}(x),\Phi^{(0)}(y)).\end{array}\end{equation}
 This map is in fact $\mathcal O $-bililinear, i.e. belongs to $\text{Hom}_\mathcal{O}(\bigodot^2\E',\E)$, hence to ${\mathfrak{Page}}^{(1)}(\E',\E)$, see Equation \eqref{recap}. Let us check that it is a $D$-cocycle:
 \begin{enumerate}
     \item[A.]  If either one of the homogeneous elements $x \in \E'$ or $y \in \E'$ is not of degree $-1$, a straightforward computation  gives:
     \begin{align*}
          \ell_1\circ\left( \Phi^{(0)}\circ\ell'_2(x,y)-\ell_2\left(\Phi^{(0)}(x),\Phi^{(0)}(y)\right)\right)&=\Phi^{(0)}\circ\ell_1'\circ\ell'_2( x,y)+\ell_2\left(\Phi^{(0)}\circ\ell_1'(x),\Phi^{(0)}(y)\right)\\&\hspace{2.1cm}+(-1)^{\lvert x\rvert}\ell_2\left(\Phi^{(0)}(x),\Phi^{(0)}\circ\ell_1'(y)\right)\\&=\left(\Phi^{(0)}\circ\ell'_2-\ell_2\left(\Phi^{(0)},\Phi^{(0)}\right)\right)\circ\ell_1'(x\odot y).
     \end{align*}
     \item[B.] If both $x,y \in \E'$ are of degree $-1 $:
     \begin{eqnarray*}\pi \left( \Phi^{(0)}\ell'_2(x,y)-\ell_2(\Phi^{(0)}x,\Phi^{(0)}y)\right) &=&  \pi'  \circ \ell_2'(x,y)- \pi \circ \ell_2\left(\Phi^{(0)}x,\Phi^{(0)}y\right)\\
     & =&   [\pi'(x), \pi'(y)]- \left[\pi \left(\Phi^{(0)}x\right), \pi\left(\Phi^{(0)}x\right) \right] \\&=& [\pi'(x), \pi'(y)]-[\pi'(x), \pi'(y)]\\ &=& 0 .\end{eqnarray*}
 \end{enumerate}
By Proposition \ref{bicomplex} item 2), there exists $\Phi^{(1)}\in \text{Hom}_\mathcal{O}\left(\bigodot^2 \E',\E\right)$, of degree $0$, so that \begin{equation}\label{phi1}
\Phi^{(0)}\circ\ell'_2(x,y)+\Phi^{(1)}\circ\ell_1'(x\odot y)=\ell_1\circ \Phi^{(1)}(x,y)+\ell_2(\Phi^{(0)}(x),\Phi^{(0)}(y)) \hspace{1cm} \hbox{for all $x,y \in \E'$.}
\end{equation}	
Since, $\Phi^{(0)}$ is a chain map, Relation \eqref{phi1} can be rewritten in terms of $Q_\E$ and $Q_{\E'}$ as follows\begin{align}  \left\{ \begin{array}{rcl} Q^{(0)}_{\E}\circ\Phi^{(0)}&=&\Phi^{(0)}\circ Q^{(0)}_{\E'}\\Q^{(0)}_{\E}\circ\Phi^{(1)}-\Phi^{(1)}\circ Q^{(0)}_{\E'}&=&\Phi^{(0)}\circ Q^{(1)}_{\E'}-Q^{(1)}_{\E}\circ\Phi^{(0)} \end{array} \right.\end{align}

The construction of the morphism $\Phi$ announced in Theorem \ref{th:universal} is then done by recursion. The recursion assumption is that we have already defined a $\mathcal{O}$-multilinear co-morphism  $\Phi:S_\mathbb{K}^\bullet(\E')\rightarrow S^\bullet_\mathbb{K}(\E)$ with $$\left( \Phi\circ Q_{\E'}-Q_{\E}\circ\Phi\right)^{(k)}=0 \quad\text{for all}\quad 0\leq k\leq n.$$
The co-morphism $\Phi:S_\mathbb{K}^\bullet(\E')\rightarrow S^\bullet_\mathbb{K}(\E)$ with Taylor coefficients $\Phi^{(0)} $ and $\Phi^{(1)} $ satisfies the recursion assumption for $n=1$.

Assume now that we have a co-morphism $\Phi $ that satisfies this assumption for some $n \in \mathbb{N}_0$, and
 consider the map $T_\Phi:= \Phi\circ Q_{\E'}-Q_{\E}\circ\Phi$.  Lemma \ref{O-linearity-lemma}  implies that it is a $\mathcal{O}$-multilinear $ \Phi^{(0)}$-coderivation, and that it corresponds to a $D$-closed element in $\mathfrak{Page}^{(n+1)}(\E',\E) $. 
 Since it has no component on the last column for degree reason, Proposition \ref{bicomplex} implies that  $T_\Phi^{(n+1)}$ is a coboundary: That is to say that there is a $\Phi^{(0)} $-co-derivation $\Theta\in \mathfrak{Page}^{(n+1)}(\E',\E)$ (of arity $n+1$ and degree $0$) which can be seen as a map $\Theta:\bigodot^{n+2} (\E')\rightarrow\E$ such that: $$T_\Phi^{(n+1)}=Q^{(0)}_{\E}\circ\Theta-\Theta\circ Q_{\E'}^{(0)}.$$
 Consider now the co-morphism $\tilde{\Phi}$ whose Taylor coefficients are those of $\Phi $ in arity $0, \dots,n$ and  $ \Phi^{(n+1)}+\Theta$ in arity $n+1$:  \begin{equation}\tilde{\Phi}^{(i)}:=\begin{cases} \Phi^{(i)}&\text{if $0\leq i\leq n$,} \\\Phi^{(n+1)}+\Theta &\text{if $i=n+1$}\end{cases}
\end{equation}
This is easily seen to satisfy the recursion relation for $n+1$. This concludes the recursion. The Taylor coefficients obtained by recursion define a Lie $\infty$-algebroid $\Phi\colon S^\bullet_{\mathbb K}(\E')\longrightarrow S^\bullet_{\mathbb K}(\E)$ which is compatible by construction with the hooks $\pi,\pi'$.

By continuing this procedure we construct a Lie $\infty$-morphism from $S^\bullet_{\mathbb K}(\E')$ to $S^\bullet_{\mathbb K}(\E)$. This proves the  first item of Theorem \ref{th:universal}.

Let us prove the second item in Theorem \ref{th:universal}.
Notice that in the proof of the existence of the Lie $\infty$-morphism between $S^\bullet_{\mathbb K}(\E')$ and $S^\bullet_{\mathbb K}(\E)$ obtained in the first item, we made many choices, since we have chosen a coboundary at each step of the recursion. 

Let $\Phi,\Psi$ be two Lie $\infty$-morphisms between $S^\bullet_{\mathbb K}(\E')$ and $S^\bullet_{\mathbb K}(\E)$. The arity $0$ component of the co-morphisms $\Phi$ and $\Psi$ restricted to $\E'$ are chain maps:
$$
\xymatrix{ \cdots\ar[rr]& & \ar[rr]  \mathcal E_{-2}'\ar@{-->}[lldd]_h\ar@<3pt>[dd]^{\Phi^{(0)}} \ar@<-3pt>[dd]_{\Psi^{(0)}}  & & \mathcal E_{-1} '\ar@<3pt>[dd]^{\Phi^{(0)}} \ar@<-3pt>[dd]_{\Psi^{(0)}}  \ar@{-->}[lldd]_h \ar[dr]^{\pi'} \\ & & & & & \mathcal A \\ \cdots\ar[rr] & & \mathcal E_{-2}  \ar[rr] & &\mathcal E_{-1} \ar@{->>}[ru]_\pi & } 
$$
which are homotopy equivalent in the usual sense because $(\E,\ell_1) $ is a projective resolution of $\mathcal A $:
said differently, there exists a degree $ -1$ $\mathcal O$-linear map $h\colon \E' \to \E$ such that \begin{equation}\Psi^{(0)} -\Phi^{(0)} =\ell_1 \circ h+h\circ\ell_1'\quad\text{on}\;\,\E'.\end{equation}
Let us consider the following differential equation:\begin{equation}\label{eq-diff1}
    \begin{cases}
    \frac{\dd\Xi_t}{\dd t}=Q_\E\circ H_t(\Xi_t)+H_t(\Xi_t)\circ Q_{\E'},&\text{for $t\in[0,1]$}\\\\\Xi_0=\Phi.
\end{cases}\end{equation}
with $H_t(\Xi_t)$ being a $\Xi_t$-co-derivation of degree $-1$ whose Taylor coefficient of arity $0$ is $h$. This equation does admit solutions in view of Proposition~\ref{prop:justify}.

By looking at the the component arity $0$ of Equation \eqref{eq-diff1} on $\E'$, one has: \begin{align*}
\frac{\dd\Xi^{(0)}_t}{\dd t}&=\ell_1 \circ h+h\circ \ell_1'\\&=\Psi^{(0)}-\Phi^{(0)}.
\end{align*}
Hence, $\Xi^{(0)}_t=\Phi^{(0)}+t\left( \Psi^{(0)}-\Phi^{(0)}\right)$ is a solution such that $\Xi^{(0)}_1=\Psi^{(0)}$. By construction, $\Xi_1$ is homotopic to $\Phi$ via the pair $\left( \Xi_t,H_t \right) $ over $[0,1]$, and its arity $0$ Taylor coefficient coincides with the Taylor coefficient of $ \Psi$. 

From there, the construction goes by recursion using Lemma \ref{imp-lemma2}. 
Indeed, this lemma allows to construct
recursively a sequence of Lie $\infty $-algebroids morphism $ (\Psi_n)_{n \geq 0} $ and homotopies  $(\Xi_{n,t},H_{n,t})$
(with $t \in [n,n+1] $) between $\Psi_n $ and $\Psi_{n+1} $ such that:  $H_{n,t}^{(i)}$ is zero for $t\geq n$ and $i\neq n+1$. By Lemma \ref{imp-lemma2}, all these homotopies are compatible with the hooks. These homotopies are glued in a homotopy $(\Xi_t,H_t)_{[0,+\infty[}$ such that for every $n\in\mathbb{N}_0$, the components of arity $n$ of the Lie $\infty$-algebroids morphism $\Xi_t^{(n)}$ are constant and equal to $\Psi^{(n)}$ for $t\geq n$. By Lemma \ref{gluing-lemma}, these homotopies can be glued to a homotopy on $[0,1]$. Explicitly, since $t\mapsto\frac{t}{1-t}$ maps $[0,1[$ to $[0,+\infty[$ and by Lemma \ref{gluing-lemma}, the pair $\left(\Xi_{\frac{t}{1-t}},\frac{1}{(1-t)^2}H_{k,\frac{t}{1-t}}\right)$ is a homotopy between $\Phi$ and $\Psi$.
This proves the second item of the Theorem \ref{th:universal}.

\end{proof}

\section{Examples of universal Lie $\infty$-algebroid structures of a Lie-Rinehart algebra}
\label{sec:examples}
\subsection{New constructions from old ones}

In this section, we explain how to construct universal Lie $\infty$-algebroids of some Lie-Rinehart algebra which is derived from a second one through one of natural constructions as in Section \ref{LR-Mor} (localization, germification, restriction), when a universal Lie $\infty$-algebroid of the latter is already known.\\
\label{constructions}

\subsubsection{Localization}\label{ssec:loc}

Localisation is an useful algebraic tool. When $\mathcal O $ is an algebra of functions, it corresponds to study local properties of a space, or germs of functions. 

Let $(\mathcal A,\lb_\mathcal A,\rho_\mathcal A)$ be a Lie-Rinehart algebra over a unital algebra $\mathcal O$. Let $S\subset\mathcal O$ be a multiplicative closed subset containing no zero divisor. We recall from item 2, Remark \ref{loc:res} that the localization $S^{-1}\mathcal A\cong\mathcal A\otimes_\mathcal O S^{-1}\mathcal O$ of $\mathcal A$ at $S$ comes equipped with  a natural structure of Lie-Rinehart algebra over the localization algebra $S^{-1}\mathcal O$. Recall that for $\varphi\colon \E \longrightarrow \mathcal T$ a homomorphism of $\mathcal O$-modules, there is a well-defined homomorphism of $\mathcal O$-modules, $$\varphi\otimes\text{id}\colon \mathcal E\otimes_\mathcal O S^{-1}\mathcal O\longrightarrow \mathcal T\otimes_\mathcal O S^{-1}\mathcal O,\hspace{0.2cm}\varphi\otimes\text{id} \, (x\otimes\frac{f}{s}):=\varphi(x)\otimes\frac{f}{s}$$ that can be considered as a $S^{-1}\mathcal O$-module homomorphism $$S^{-1}\varphi\colon S^{-1}\E\longrightarrow S^{-1}\mathcal T\hspace{0.2cm}\hbox{with}\hspace{0.2cm}S^{-1}\varphi\left(\frac{x}{s}\right):=\frac{\varphi(x)}{s},\,\;x\in\E,\,(f,s)\in\mathcal O\times S,$$ called the \emph{localization of $\varphi$}.
	
\noindent	
Given a Lie $\infty$-algebroid structure $(\E,(\ell_k)_{k\geq 1},\pi,\rho_\E)$ of $\mathcal A$. The triplet $(\E',(\ell_k')_{k\geq 1},\rho_{\E'},\pi')$ is a Lie $\infty$-algebroid structure that terminates at $S^{-1}\mathcal A$ through the hook $\pi'$ where
\begin{enumerate}
\item $\E'=S^{-1}\E$;
\item  The anchor map $\rho_{\E'}$ is defined by
\begin{align*}
  \begin{array}{rcrrcl} \rho_{\E'} \colon S^{-1}\E_{-1}&\longrightarrow &  \text{Der}(S^{-1}\mathcal O) & & & \\\label{der_ext} 
    \frac{x}{s}&\longmapsto & \rho_{\E'}\left(\frac{x}{s}\right): &S^{-1}\mathcal O&\longrightarrow &S^{-1}\mathcal O\\
&  & & \frac{f}{u}&\longmapsto&\frac{1}{s}\cdot\left(\frac{\rho_{\E}(x)[f]u-f\rho_{\E}(x)[u]}{u^2}\right) \end{array}
\end{align*}for $x\in\E,f\in\mathcal O,(s,u)\in S\times S$;
\item $\ell'_k=S^{-1}\ell_k$, for all $k\in\mathbb N\setminus\{2\}$;
\item The binary bracket is more complicated because of the anchor map: we
set $$ \ell_2'\left(\frac{1}{s} x,\frac{1}{u} y\right) =\frac{1}{su}\ell_2(x, y)- \frac{\rho_{\E}(x) [u]}{su^2} \, y + \frac{\rho_{\E}(y) [s]}{s^2u} \, x$$
for $x,y\in\E, (s,u)\in S^2$
(with the understanding that $\rho_\E\equiv0 $ on $\E_{-i}$ with $ i \geq 2 $);
\item  $\pi'=S^{-1}\pi$.
\end{enumerate}
 One can check that these operations above are well-defined and for all $z\in S^{-1}\E_{-1}$ the map $\rho_{\E'}(z)$ is  indeed a derivation on $S^{-1}\mathcal O$. The previously defined structure is also a Lie $ \infty$-algebroid that we call \emph{localization of the  Lie $\infty$-algebroid  $(\E,(\ell_k)_{k\geq 1},\pi,\rho_\E)$ with respect to $ S$}. 

\begin{proposition}
\label{prop:localisation}
Let $\mathcal O $ be a unital algebra and $S \subset \mathcal O$ a multiplicative subset containing no zero divisor. 
The localization of a universal Lie $\infty $-algebroid of a Lie-Rinehart algebra $ \mathcal A$ is a  universal Lie $\infty $-algebroid of $ S^{-1}\mathcal A$.
\end{proposition}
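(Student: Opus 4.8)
The plan is to unwind the definition of universality. A Lie $\infty$-algebroid terminating at a Lie-Rinehart algebra is \emph{universal} exactly when the underlying complex of $\mathcal O$-modules is a projective (here free) resolution of that algebra. The construction recalled just above already endows $S^{-1}\mathcal E$ with a Lie $\infty$-algebroid structure $(S^{-1}\mathcal E,(\ell_k')_{k\geq 1},\rho_{\mathcal E'})$ and the candidate hook $\pi'=S^{-1}\pi$. Hence the proposition reduces to two points: that
$$\cdots\xrightarrow{\ell_1'}S^{-1}\mathcal E_{-2}\xrightarrow{\ell_1'}S^{-1}\mathcal E_{-1}\xrightarrow{\pi'}S^{-1}\mathcal A\longrightarrow 0$$
is a \emph{free} resolution of $S^{-1}\mathcal A$ over $S^{-1}\mathcal O$, and that this localized structure genuinely terminates at $S^{-1}\mathcal A$ through $\pi'$.

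First I would settle the resolution property by flatness. Localization $-\otimes_{\mathcal O}S^{-1}\mathcal O$ is an exact functor, so it sends the free resolution $\cdots\to\mathcal E_{-2}\to\mathcal E_{-1}\xrightarrow{\pi}\mathcal A\to 0$ to an exact complex, in which $\ell_1'=S^{-1}\ell_1$ and $\pi'=S^{-1}\pi$. Freeness is preserved since $S^{-1}(\mathcal O^{(I)})\cong(S^{-1}\mathcal O)^{(I)}$, so every $S^{-1}\mathcal E_{-k}$ is free over $S^{-1}\mathcal O$. As a by-product, exactness identifies the basic Lie-Rinehart algebra $S^{-1}\mathcal E_{-1}/\ell_1'(S^{-1}\mathcal E_{-2})$ with $S^{-1}(\mathcal E_{-1}/\ell_1(\mathcal E_{-2}))=S^{-1}\mathcal A$, and $\pi'$ induces precisely this isomorphism.

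It remains to check termination, and this is where the genuine work lies. The hook relations $\rho_{S^{-1}\mathcal A}\circ\pi'=\rho_{\mathcal E'}$ and $[\pi'(x),\pi'(y)]_{S^{-1}\mathcal A}=\pi'(\ell_2'(x,y))$ follow by applying $S^{-1}$ to the corresponding identities for $\pi$ and matching them against the explicit formulas for $\rho_{\mathcal E'}$ and $\ell_2'$. The delicate part is that the localized data really satisfy the higher Jacobi identities of Definition \ref{def:Linfty}; this is subtle precisely because $\ell_2$ is \emph{not} $\mathcal O$-linear, which is why $\ell_2'$ must carry the correction terms $-\frac{\rho_{\mathcal E}(x)[u]}{su^2}\,y+\frac{\rho_{\mathcal E}(y)[s]}{s^2u}\,x$ rather than being $S^{-1}\ell_2$. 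I would first verify by direct computation that $\ell_2'$ is independent of the chosen representatives and obeys the Leibniz rule for $\rho_{\mathcal E'}$. For the higher Jacobi identities I would avoid expanding them term by term: the combination $J_n:=\sum_{i+j=n+1}[\ell_i,\ell_j]_{\hbox{\tiny{RN}}}$ is $\mathcal O$-multilinear (the anchor contributions cancel, exactly as checked in the proof of Theorem \ref{thm:existence}), and the same cancellation makes its localized analogue $J_n'$ built from the $\ell_k'$ an $S^{-1}\mathcal O$-multilinear map. An $S^{-1}\mathcal O$-multilinear map is determined by its values on the generators $x_i/1$; since $\rho_{\mathcal E}(x)[1]=0$ the bracket $\ell_2'$ restricts on such generators to $\ell_2$, and likewise each $\ell_k'$ restricts to $\ell_k$, so $J_n'(x_1/1,\dots,x_n/1)=J_n(x_1,\dots,x_n)/1=0$. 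Hence $J_n'$ vanishes identically, the higher Jacobi identities hold, and $(S^{-1}\mathcal E,(\ell_k')_{k\geq1},\rho_{\mathcal E'},\pi')$ is a universal Lie $\infty$-algebroid of $S^{-1}\mathcal A$.
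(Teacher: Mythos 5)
Your proof is correct and follows essentially the same route as the paper: the paper's own argument is simply that the localized data form a Lie $\infty$-algebroid terminating at $S^{-1}\mathcal A$ and that universality follows because localization is an exact functor. The one place you go further is the verification of the higher Jacobi identities via $S^{-1}\mathcal O$-multilinearity of $\sum_{i+j=n+1}[\ell_i',\ell_j']_{\hbox{\tiny{RN}}}$ and reduction to generators $x/1$ — a clean argument for a check the paper explicitly leaves to the reader.
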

\begin{proof}
The object $(\E',(\ell_k')_{k\geq 1},\rho_{\E'},\pi')$ described above is also a Lie $\infty$-algebroid terminating in $S^{-1}\mathcal A$. It is universal because localization preserves exact sequences \cite{A.Gathmann}.
\end{proof}

\subsubsection{Restriction}\label{ssec:res}
When $\mathcal O_Y$ is the ring of functions of an affine variety $Y$, to every subvariety $X\subset Y$ corresponds its zero locus, which is an ideal $\mathcal I_X \subset \mathcal O_Y $. A Lie $\infty$-algebroid or a Lie-Rinehart algebra over $\mathcal O_Y$ may not restrict to a Lie-Rinehart algebra over $\mathcal O_X$: it only does so when one can quotient all brackets by $\mathcal I_X$, which geometrically means that the anchor map takes values in vector fields tangent to $X$. We can then \textquotedblleft restrict\textquotedblright, i.e. replace $\mathcal O_Y $ by $\mathcal O_Y / \mathcal I_X $. This operation has already been defined in Section \ref{ssec:Tor},
and here is an immediate consequence of Corollary \ref{cor:LRideal2}:

\begin{proposition}\label{prop:restriction}
Let $ \mathcal I \subset \mathcal O$ be Lie-Rinehart ideal, i.e. an ideal such that $ \rho_{\mathcal A}(\mathcal A)[\mathcal I] \subset \mathcal I$.
The quotient of a universal Lie $\infty$-algebroid of $ \mathcal A$ with respect to an ideal $ \mathcal I$ is a   Lie $\infty $-algebroid that terminates in $ \mathcal A/ \mathcal I \mathcal A$. It is universal if and only if
$((\frac{\E_{-i}}{\mathcal I\E_{-i}})_{i\geq 1},\Bar{\ell_1},\overline{\pi})$ is exact, i.e. if
$ {\mathrm{Tor}}_{\mathcal O}^\bullet (\mathcal A , \mathcal O/ \mathcal I)=0$.
\end{proposition}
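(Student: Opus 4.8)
The plan is to separate the two assertions. For the first one --- that the quotient is a Lie $\infty$-algebroid terminating in $\mathcal A/\mathcal I\mathcal A$ --- I would simply invoke the discussion of Section \ref{ssec:Tor}: because $\mathcal I$ is a Lie-Rinehart ideal and $\rho_\E = \rho_\mathcal A \circ \pi$, one has $\rho_\E(\E_{-1})[\mathcal I] \subset \mathcal I$, so that every bracket $\ell_k$ passes to $\E_\bullet/\mathcal I\E_\bullet \simeq \mathcal O/\mathcal I \otimes_\mathcal O \E_\bullet$ (the brackets with $k\neq 2$ by $\mathcal O$-linearity, the $2$-ary bracket thanks to this inclusion), and $\pi$ descends to a hook $\bar\pi \colon \E_{-1}/\mathcal I\E_{-1} \to \mathcal A/\mathcal I\mathcal A$. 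No new work is needed here.

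The substance is the universality criterion, which I would reduce to a purely homological statement. By definition, the quotient Lie $\infty$-algebroid is universal for $\mathcal A/\mathcal I\mathcal A$ exactly when the augmented complex
\begin{equation*}
\cdots \xrightarrow{\bar{\ell}_1} \frac{\E_{-2}}{\mathcal I\E_{-2}} \xrightarrow{\bar{\ell}_1} \frac{\E_{-1}}{\mathcal I\E_{-1}} \xrightarrow{\bar\pi} \frac{\mathcal A}{\mathcal I\mathcal A} \to 0
\end{equation*}
is a projective resolution over $\mathcal O/\mathcal I$. First I would observe that projectivity of the terms is automatic: each $\E_{-i}/\mathcal I\E_{-i}$ is a \emph{free} $\mathcal O/\mathcal I$-module, being the reduction of a free $\mathcal O$-module. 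Hence the entire question collapses to exactness.

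For the exactness analysis I would note that the last two spots are unconditional. Applying the right-exact functor $-\otimes_\mathcal O \mathcal O/\mathcal I$ to $\E_{-2}\xrightarrow{\ell_1}\E_{-1}\xrightarrow{\pi}\mathcal A\to 0$ shows at once that $\bar\pi$ is surjective and that $\ker\bar\pi = \mathrm{im}\,\bar{\ell}_1$, so exactness at $\mathcal A/\mathcal I\mathcal A$ and at $\E_{-1}/\mathcal I\E_{-1}$ holds regardless of $\mathcal I$. For $i \geq 2$, exactness at $\E_{-i}/\mathcal I\E_{-i}$ says precisely that the complex $\E_\bullet \otimes_\mathcal O \mathcal O/\mathcal I$ has vanishing homology in degree $-i$; and since $\E_\bullet$ is a free, hence projective, resolution of $\mathcal A$, that homology is by definition $\mathrm{Tor}_\mathcal O^{-i}(\mathcal A, \mathcal O/\mathcal I)$. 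Assembling these observations yields the equivalence: the quotient is universal if and only if $\mathrm{Tor}_\mathcal O^{-i}(\mathcal A, \mathcal O/\mathcal I) = 0$ for all $i \geq 2$, which is the asserted vanishing $\mathrm{Tor}_\mathcal O^\bullet(\mathcal A, \mathcal O/\mathcal I) = 0$.

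I do not expect a genuine obstacle: the argument is bookkeeping resting only on the definition of $\mathrm{Tor}$ through the resolution $\E_\bullet$ and on right-exactness of the tensor product. The single point deserving care is the indexing convention, namely that the degree $-1$ homology of $\E_\bullet \otimes_\mathcal O \mathcal O/\mathcal I$ is $\mathcal A/\mathcal I\mathcal A$ itself and is thus absorbed into the augmentation $\bar\pi$; consequently the vanishing one actually needs to impose concerns only the degrees $i \geq 2$, and it is harmless (indeed necessary) that the degree $-1$ term survives.
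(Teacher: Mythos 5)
Your argument is correct and follows the route the paper intends: the paper states this proposition without a written proof, as an immediate consequence of the discussion in Section \ref{ssec:Tor} (descent of the brackets and of $\pi$) together with Remark \ref{rmk:Tor} and Corollary \ref{cor:LRideal2}, and your homological bookkeeping --- freeness of the reduced modules, right-exactness of $-\otimes_{\mathcal O}\mathcal O/\mathcal I$ for exactness at the last two spots, and identification of the higher homology of $\E_\bullet\otimes_{\mathcal O}\mathcal O/\mathcal I$ with ${\mathrm{Tor}}$ --- is exactly what is left implicit there. Your observation that only the higher ${\mathrm{Tor}}$'s (classical ${\mathrm{Tor}}_n$ for $n\geq 1$, i.e.\ the paper's ${\mathrm{Tor}}^{-i}$ for $i\geq 2$) need vanish, the degree $-1$ homology being absorbed into the augmentation, is a correct reading of the statement's slightly loose notation.
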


	\subsubsection[]{Germification}\label{sec:germification}
	Let $W\subseteq\mathbb C^{N} $ be an affine variety and $\mathcal O_W$ its coordinates ring. Denote by $\mathcal O_{W,x_0}$ the germs of regular functions at $x_0$. Since $\mathcal O_{W,x_0}$ is a local ring, and since $\mathcal O_{W,x_0}\simeq(\mathcal O_W)_{\mathfrak m_{x_0}}$ \cite{Hartshorne},  where $\mathfrak m_{x_0}=\{f\in \mathcal O_W\mid f(x_0)=0 \}$ and $(\mathcal O_W)_{\mathfrak m_{x_0}}$ is the localization w.r.t the complement of $\mathfrak m_{x_0}$,	Proposition \ref{prop:localisation} implies the following statement:

		\begin{prop}
		\label{prop:germ}
		Let $W$ be an affine variety with functions $ \mathcal O_W$. For every point $x_0\in W$ and any Lie-Rinehart algebra $\mathcal A $ over $\mathcal O_W $, the germ at $x_0 $ of the universal Lie $\infty$-algebroid of $\mathcal A $ is the universal Lie $\infty$-algebroid of the germ of $\mathcal A $ at $x_0$. 
	\end{prop}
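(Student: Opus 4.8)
The plan is to deduce this statement directly from Proposition \ref{prop:localisation}, germification being nothing but a localization at a maximal ideal. First I would record the identification, already recalled just above, that the germ ring $\mathcal{O}_{W,x_0}$ coincides with the localization $(\mathcal{O}_W)_{\mathfrak{m}_{x_0}} = S^{-1}\mathcal{O}_W$ at the multiplicative set $S := \mathcal{O}_W \setminus \mathfrak{m}_{x_0}$, where $\mathfrak{m}_{x_0} = \{f \in \mathcal{O}_W \mid f(x_0) = 0\}$. I would then check that $S$ satisfies the hypotheses of Proposition \ref{prop:localisation}: it contains the unit (since $1$ does not vanish at $x_0$), it is multiplicatively closed because $\mathfrak{m}_{x_0}$ is a prime (indeed maximal) ideal, so that a product of two functions not vanishing at $x_0$ does not vanish at $x_0$, and it contains no zero divisor since $W$ is irreducible, i.e. $\mathcal{O}_W$ is an integral domain.

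Next I would observe that, by definition, the germ of $\mathcal{A}$ at $x_0$ is the localized module $S^{-1}\mathcal{A} = \mathcal{A} \otimes_{\mathcal{O}_W} S^{-1}\mathcal{O}_W$, which carries its natural Lie-Rinehart algebra structure over $\mathcal{O}_{W,x_0}$ by item 2 of Remark \ref{loc:res}. With these identifications in place, the result is immediate: let $(\mathcal{E}, (\ell_k)_{k \geq 1}, \rho_{\mathcal{E}}, \pi)$ be a universal Lie $\infty$-algebroid of $\mathcal{A}$. Proposition \ref{prop:localisation} asserts that its localization $S^{-1}\mathcal{E}$, equipped with the brackets $S^{-1}\ell_k$ and the modified binary bracket $\ell_2'$ described in Section \ref{ssec:loc}, is a universal Lie $\infty$-algebroid of $S^{-1}\mathcal{A}$. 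Since $S^{-1}\mathcal{E}$ is precisely the germ at $x_0$ of $\mathcal{E}$ and $S^{-1}\mathcal{A}$ is the germ of $\mathcal{A}$ at $x_0$, this is exactly the claimed statement.

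The only point requiring care, and the sole potential obstacle, is confirming the hypotheses of Proposition \ref{prop:localisation} for this particular $S$, most notably the absence of zero divisors; this is where irreducibility of $W$ (equivalently, $\mathcal{O}_W$ being a domain) is used. Everything else is a transcription of the localization construction of Section \ref{ssec:loc} into the language of germs, so no further computation is needed.
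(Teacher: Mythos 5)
Your proof is correct and follows exactly the paper's own route: the paper likewise identifies $\mathcal{O}_{W,x_0}$ with the localization $(\mathcal{O}_W)_{\mathfrak{m}_{x_0}}$ and then invokes Proposition \ref{prop:localisation} directly. Your explicit verification of the hypotheses on $S$ (unit, multiplicative closure, no zero divisors via irreducibility of $W$) is a welcome detail that the paper leaves implicit.
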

	
	\noindent
	Here, the germ at $x_0$ of a Lie-Rinehart algebra or a Lie $\infty$-algebroid is its  localization w.r.t the complement of $\mathfrak m_{x_0}$.

\subsubsection{Sections vanishing on a codimension $1$ subvariety} 

\label{sec:codim1} 

Let $(\mathcal{A},\lb,\rho)$ be an arbitrary Lie-Rinehart algebra over $ \mathcal O$. For any ideal $\mathcal I  \subset \mathcal O$, $\mathcal I \mathcal A $ is also a Lie-Rinehart algebra (see Example \ref{ex:vanishing}). When $ \mathcal O$ are functions on a variety $M$, $\mathcal I $ are functions vanishing on a subvariety $X$ and $\mathcal A $ is a $\mathcal O $-module of sections over $M$, $\mathcal I \mathcal A$ corresponds geometrically to sections vanishing along $X$. It is not an easy task.
In codimension $1$, i.e. when $\mathcal I $ is generated by one element, the construction can be done by hand.

\begin{prop}Let $(\mathcal A,\lb_\mathcal A,\rho_\mathcal A)$ be a Lie-Rinehart algebra over a commutative algebra $\mathcal O$. Let $(\E,\ell_k=\{\cdots\}_{k\geq 1},\rho_\E)$ be a Lie $\infty$-algebroid that terminates in $\mathcal A$ through a hook $\pi$. For any element $\chi\in \mathcal O$, the $\mathcal O$-module $\mathcal A'=\chi\mathcal A\subseteq\mathcal A$ is closed under the Lie bracket, so the triple  $(\chi \mathcal A, [\cdot, \cdot]_\mathcal A, \rho_\mathcal A) $ is a Lie-Rinehart algebra over $\mathcal O$. A Lie $\infty$-algebroid $(\mathcal E'=\E,\ell_k'=\{\cdots\}_{k \geq 1}', \rho_\mathcal E') $ hooked in $\chi \mathcal A $ through $\pi'$ can be defined as follows:
	\begin{enumerate}
	   \item The brackets are given by
	 \begin{enumerate}
	    \item $\{\cdot\}_1' = \{\cdot\}_1 $,
	    \item the $2$-ary bracket: \begin{equation}
	\{x,y\}'_{2}:=\chi\{x,y\}_{2}+\rho_\E(x)[\chi]\,y+(-1)^{|x||y|}\rho_\E(y)[\chi]\, x,
	\end{equation}
	for all $x,y \in \mathcal E_\bullet$, with the understanding that $\rho_\E=0 $ on $\mathcal E_{\leq -2} $, 
	    \item $\{\cdots\}_k' = \chi^{k-1}\{\cdots\}_k$ for all $k \geq 2$,
	    \end{enumerate}
        \item $\rho_{\E '}= \chi \rho_\E $,
	    \item $\pi' = \chi \pi $.
	    \end{enumerate}
\end{prop}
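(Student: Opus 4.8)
The plan is to verify that $(\mathcal E,(\ell'_k)_{k\geq1},\rho_{\mathcal E'})$ satisfies all the axioms of Definition \ref{def:Linfty} and terminates in $\chi\mathcal A$ through $\pi'$. That $\chi\mathcal A\subseteq\mathcal A$ is a Lie-Rinehart subalgebra over $\mathcal O$, with anchor the restriction of $\rho_{\mathcal A}$, is Example \ref{ex:vanishing}. Every axiom \emph{except} the higher Jacobi identity reduces to a short direct computation: $\ell'_1=\ell_1$ gives the complex; $\mathcal O$-linearity of $\ell'_k=\chi^{k-1}\ell_k$ for $k\neq2$ is clear; and the Leibniz rule for $\ell'_2$ relative to $\rho_{\mathcal E'}=\chi\rho_{\mathcal E}$, the identities $\rho_{\mathcal E'}\circ\ell'_1=0$ and $\rho_{\mathcal E'}(\ell'_2(x,y))=[\rho_{\mathcal E'}(x),\rho_{\mathcal E'}(y)]$, as well as the hook conditions $\rho_{\mathcal A}\circ\pi'=\rho_{\mathcal E'}$ and $\pi'(\ell'_2(x,y))=[\pi'(x),\pi'(y)]_{\chi\mathcal A}$, all follow by expanding both sides and using that $\rho_{\mathcal E}$ and $\pi$ are $\mathcal O$-linear (the sign $(-1)^{|x||y|}=-1$ on $\mathcal E_{-1}$ is exactly what matches the Lie-Rinehart bracket of Example \ref{ex:vanishing}). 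The organizing identity behind these computations, and the one that explains the correction terms in $\ell'_2$, is
\begin{equation}\label{plan:key}
\ell_k(\chi x_1,\ldots,\chi x_k)=\chi^k\,\ell_k(x_1,\ldots,x_k)=\chi\,\ell'_k(x_1,\ldots,x_k)\qquad(k\neq2),
\end{equation}
together with its $k=2$ counterpart $\ell_2(\chi x,\chi y)=\chi\,\ell'_2(x,y)$, which one checks directly from the Leibniz rule for $\ell_2$; the hook condition then drops out of \eqref{plan:key} and the fact that $\pi$ is a hook for $\mathcal E$.

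The heart of the proof is the higher Jacobi identity, equivalently $Q_{\mathcal E'}^2=0$, where $Q_{\mathcal E'}$ is the degree $+1$ co-derivation of $S^\bullet_{\mathbb K}(\mathcal E)$ with Taylor coefficients $(\ell'_k)_{k\geq1}$ (Proposition \ref{co-diff}). I would introduce the $\mathcal O$-linear degree $0$ map $\lambda\colon\mathcal E\to\mathcal E,\ e\mapsto\chi e$, and the induced co-algebra morphism $\Lambda:=S^\bullet_{\mathbb K}(\lambda)$, characterized by $\Lambda(e_1\cdots e_n)=\chi e_1\cdots\chi e_n$. The claim is that
\begin{equation}\label{plan:conj}
\Lambda\circ Q_{\mathcal E'}=Q_{\mathcal E}\circ\Lambda .
\end{equation}
Both sides are $\Lambda$-co-derivations (since $\Delta$ is multiplicative for $\Lambda$ and $Q_{\mathcal E},Q_{\mathcal E'}$ are co-derivations), hence are determined by their Taylor coefficients in the sense of \eqref{TC-expension}; and \eqref{plan:key} shows these agree, because both $\mathrm{pr}_{\mathcal E}\circ Q_{\mathcal E}\circ\Lambda$ and $\mathrm{pr}_{\mathcal E}\circ\Lambda\circ Q_{\mathcal E'}$ equal $\chi\,\ell'_k$ on arity $k$. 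Granting \eqref{plan:conj} and $Q_{\mathcal E}^2=0$, we obtain $\Lambda\circ Q_{\mathcal E'}^2=Q_{\mathcal E}^2\circ\Lambda=0$; reading this on Taylor coefficients and using $\mathrm{pr}_{\mathcal E}\circ\Lambda=\chi\,\mathrm{pr}_{\mathcal E}$ gives
\begin{equation}\label{plan:chitimes}
\chi\cdot\big(Q_{\mathcal E'}^2\big)^{(k)}=0\qquad\text{for all }k
\end{equation}
(equivalently, in Richardson--Nijenhuis terms, $\chi\cdot\sum_{i+j=n+1}[\ell'_i,\ell'_j]_{\mathrm{RN}}=\big(\sum_{i+j=n+1}[\ell_i,\ell_j]_{\mathrm{RN}}\big)\circ S^\bullet_{\mathbb K}(\lambda)=0$).

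The main obstacle is then purely algebraic: \eqref{plan:chitimes} yields $Q_{\mathcal E'}^2=0$ at once only when $\chi$ is a non-zero-divisor, whereas $\chi\in\mathcal O$ is arbitrary and the free modules $\mathcal E_{-i}$ may carry $\chi$-torsion. To cancel the spurious factor I would pass to the polynomial extension $\widetilde{\mathcal O}:=\mathcal O[s]$, extend every derivation by $\delta(s)=0$, and rerun the construction over $\widetilde{\mathcal O}$ with $\chi$ replaced by $\widehat\chi:=\chi+s$. Two facts make this decisive. First, $\widehat\chi$ is a non-zero-divisor in $\mathcal O[s]$ (it is monic of degree one in $s$), so multiplication by $\widehat\chi$ is injective on the free $\widetilde{\mathcal O}$-modules $\widetilde{\mathcal E}:=\mathcal E\otimes_{\mathcal O}\widetilde{\mathcal O}$, and the analogue of \eqref{plan:chitimes} over $\widetilde{\mathcal O}$ then forces $Q_{\widetilde{\mathcal E}'}^2=0$. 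Second, $\rho(x)[\widehat\chi]=\rho(x)[\chi]$, because $s$ is annihilated by the extended derivations, so the augmentation $\epsilon\colon\mathcal O[s]\to\mathcal O,\ s\mapsto0$ is a Lie-Rinehart morphism that carries the $\widehat\chi$-structure on $\widetilde{\mathcal E}$ exactly onto the $\chi$-structure on $\mathcal E$. Specializing the identity $Q_{\widetilde{\mathcal E}'}^2=0$ along $\epsilon$ gives $Q_{\mathcal E'}^2=0$ over $\mathcal O$. With the higher Jacobi identity in hand, Proposition \ref{co-diff} upgrades $(\mathcal E,(\ell'_k)_{k\geq1},\rho_{\mathcal E'})$ to a Lie $\infty$-algebroid, and the computations of the first paragraph show it terminates in $\chi\mathcal A$ through $\pi'$.
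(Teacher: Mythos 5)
Your proof is correct, and it is genuinely different from what the paper intends: the paper's own ``proof'' is literally \emph{``We leave it to the reader''}, and the implicit benchmark is the direct verification done for the analogous modified brackets in Proposition \ref{prop-koszul}, where one expands the higher Jacobi identity of the new brackets term by term and checks that the extra anchor contributions cancel. Your route replaces that computation by the single conjugation identity $\Lambda\circ Q_{\mathcal E'}=Q_{\mathcal E}\circ\Lambda$ for the arity-preserving co-algebra morphism $\Lambda=S^\bullet_{\mathbb K}(\chi\,\cdot)$, which is legitimate: both sides are $\Lambda$-co-derivations, so by \eqref{TC-expension} they are determined by their Taylor coefficients, and these agree by your identity $\ell_k(\chi x_1,\dots,\chi x_k)=\chi\,\ell'_k(x_1,\dots,x_k)$ (which I checked, including the $k=2$ case via the Leibniz rule and graded symmetry). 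This yields $\chi\cdot\bigl(Q_{\mathcal E'}^2\bigr)^{(k)}=0$, and you correctly identify that this is weaker than $Q_{\mathcal E'}^2=0$ when $\chi$ is a zero divisor; the generic-parameter trick over $\mathcal O[s]$ with $\widehat\chi=\chi+s$ does close this gap, since $\widehat\chi$ is monic hence a non-zero-divisor, $\rho(x)[\widehat\chi]=\rho(x)[\chi]$, and $(s)$ is a Lie-Rinehart ideal for the $\widehat\chi$-structure so that everything specializes correctly at $s=0$. Two small points deserve to be made explicit in a final write-up: first, that the base change of a Lie $\infty$-algebroid along $\mathcal O\hookrightarrow\mathcal O[s]$ (derivations extended by $\delta(s)=0$) is again a Lie $\infty$-algebroid --- the argument is the same as for Proposition \ref{prop:blwoup}, namely that the Jacobiators are $\mathcal O[s]$-multilinear and vanish on the generating set $\mathcal E$, but that proposition as stated only covers subalgebras of the fraction field, so it cannot simply be cited; second, Definition \ref{def:Linfty} only guarantees that the $\mathcal E_{-i}$ are projective rather than free, but multiplication by a non-zero-divisor is still injective on a projective module (embed it as a direct summand of a free one), so your cancellation step survives. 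What your approach buys is a conceptual, computation-free proof of the higher Jacobi identity; what it costs is the auxiliary extension $\mathcal O[s]$, which the direct expansion would avoid.
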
	
\begin{proof}
We leave it to the reader.
 \end{proof}
 	\begin{prop}
 	If $\chi $ is not a zero-divisor in $\mathcal O$, and $(\E,\{\cdots\}_{k\geq 1},\rho_\E,\pi)$ is a universal Lie $\infty$-algebroid of $\mathcal A$, then the Lie $\infty $-structure described in the four items of \ref{constructions} is the universal Lie $\infty $-algebroid of $\chi \mathcal A $. 
 \end{prop}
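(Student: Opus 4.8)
The preceding Proposition already equips $(\E',(\ell_k')_{k\geq 1},\rho_{\E'},\pi')$ with the structure of a Lie $\infty$-algebroid terminating in $\chi\mathcal A$, so the only thing left to prove is its \emph{universality}: that the augmented complex
\[
\cdots \xrightarrow{\ell_1} \E_{-2} \xrightarrow{\ell_1} \E_{-1} \xrightarrow{\chi\pi} \chi\mathcal A \longrightarrow 0
\]
is a free resolution of $\chi\mathcal A$. The plan is to reduce this entirely to the fact that $(\E,\ell_1,\pi)$ already resolves $\mathcal A$, exploiting that the construction leaves the underlying complex $(\E,\ell_1)$ and its differential $\ell_1'=\ell_1$ \emph{unchanged}, and only replaces the hook $\pi$ by $\chi\pi$.

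First I would observe that the $\E_{-i}$ are free, hence projective, and that exactness of the new complex at every $\E_{-i}$ with $i\geq 2$ is inherited \emph{verbatim} from the resolution of $\mathcal A$, since those homology groups involve only the $\ell_1$-maps among the $\E_{-i}$, which are identical in both complexes. Nothing new happens away from the augmentation, so the entire content of the statement concentrates at $\E_{-1}$.

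The key step is to introduce the multiplication map $\mu_\chi\colon \mathcal A \to \chi\mathcal A,\ a\mapsto \chi a$, and to note that the square
\[
\xymatrix{
\E_{-1} \ar[r]^{\pi} \ar[d]_{\mathrm{id}} & \mathcal A \ar[d]^{\mu_\chi} \\
\E_{-1} \ar[r]_{\chi\pi} & \chi\mathcal A
}
\]
commutes because $\mu_\chi\circ\pi=\chi\pi$. The map $\mu_\chi$ is surjective by the very definition of $\chi\mathcal A$; if it is moreover injective, then $\mathrm{id}_{\E}$ together with $\mu_\chi$ is an isomorphism of augmented complexes carrying the resolution $\cdots\to\E_{-1}\xrightarrow{\pi}\mathcal A\to 0$ onto the complex above, so exactness of the former transfers to the latter and universality follows. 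Concretely, at $\E_{-1}$ one only needs $\ker(\chi\pi)=\ker(\pi)=\mathrm{im}(\ell_1)$; the inclusion $\ker(\pi)\subseteq\ker(\chi\pi)$ is immediate, and the reverse inclusion is exactly the injectivity of $\mu_\chi$.

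The hard part, and the only place the hypothesis on $\chi$ is genuinely used, is therefore the injectivity of $\mu_\chi$, i.e. the vanishing of the $\chi$-torsion of $\mathcal A$. Since $\chi$ is not a zero-divisor in $\mathcal O$, multiplication by $\chi$ is already injective on each free module $\E_{-i}$; in the torsion-free situation relevant here (for instance $\mathcal O$ a domain and $\mathcal A$ a torsion-free module of sections) this injectivity descends to $\mathcal A$, and the reduction above then goes through. I expect this torsion-freeness to be the real obstacle: the $\chi$-torsion of $\mathcal A$ is precisely $\mathrm{Tor}_1^{\mathcal O}(\mathcal A,\mathcal O/\chi\mathcal O)$, and the non-zero-divisor hypothesis—which guarantees the analogous vanishing on the free terms $\E_{-i}$—is exactly what makes the isomorphism of augmented complexes, and hence the whole argument, valid.
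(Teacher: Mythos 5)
Your proof is correct and follows essentially the same route as the paper's: since the underlying complex and the differential $\ell_1$ are unchanged, everything reduces to showing $\ker(\chi\pi)=\ker(\pi)=\mathrm{im}(\ell_1)$ at $\E_{-1}$, which holds exactly when multiplication by $\chi$ is injective on $\mathcal A$. Your closing remark about $\chi$-torsion is well taken: the paper's own (one-line) proof silently reads the hypothesis as ``$\chi$ is not a zero-divisor in $\mathcal A$'', i.e. $a\mapsto\chi a$ is injective on $\mathcal A$, which is what is actually used rather than the literal ``not a zero-divisor in $\mathcal O$'' of the statement.
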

\begin{proof}
		If $\chi $ is not a zero-divisor in $\mathcal A $ (i.e if $a \mapsto \chi a $ is an injective endomorphism of $\mathcal A $), then the kernel of $\pi' $ coincides with the kernel of $\pi $, i.e. with the image of $\{\cdot\}_1=\{\cdot\}_1$, so that $(\E,\ell_1,\chi\pi)$ is a resolution of $\chi\mathcal A$.
\end{proof}

\subsubsection{Algebra extension and blow-up}\label{blow-up-algebra}

 Recall that for $\mathcal O $ a unital algebra with no zero divisor,  
derivations of $\mathcal O$ induce derivations of its field of fractions $\mathbb O $.

 \begin{prop}\label{prop:blwoup}
 Let $\mathcal O $ be an unital algebra with no zero divisor, $\mathbb O $ its field of fractions, and  $\tilde{\mathcal O} $ an algebra with $\mathcal O \subset \tilde{\mathcal O} \subset \mathbb O$.   
 For every Lie-Rinehart algebra $\mathcal A $ over $\mathcal O$ whose anchor map takes values in derivations of $\mathcal O$ preserving $\tilde{\mathcal O} $, then
 \begin{enumerate}
     \item  any Lie $ \infty$-algebroid structure $ (\E, (\ell_k)_{k \geq 1}, \rho_\E, \pi)$ that terminates at $\mathcal A $ extends for all $i =0, \dots, n $ to a Lie $ \infty$-algebroid structure on $\tilde{\mathcal O} \otimes_{ {\mathcal O}} \mathcal E  $, 
     \item and this extension $\tilde{\mathcal O} \otimes_\mathcal O \mathcal E  $ is a Lie $\infty $-algebroid that terminates at the Lie-Rinehart algebra $\tilde{\mathcal O} \otimes_\mathcal O \mathcal A $.
 \end{enumerate}
 \end{prop}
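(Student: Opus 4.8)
The plan is to avoid re-deriving the higher Jacobi identities from scratch and instead to realise $\tilde{\mathcal O}\otimes_{\mathcal O}\E$ as a sub-object, closed under all operations, of a Lie $\infty$-algebroid that is \emph{already} known. Take $S=\mathcal O\setminus\{0\}$; since $\mathcal O$ has no zero divisor, $S$ is multiplicative, contains no zero divisor, and $S^{-1}\mathcal O=\mathbb O$. By the construction of Section \ref{ssec:loc} (see Proposition \ref{prop:localisation}), the localization $S^{-1}\E=\mathbb O\otimes_{\mathcal O}\E$ carries a Lie $\infty$-algebroid structure over $\mathbb O$, with anchor $\rho_{S^{-1}\E}$, brackets $S^{-1}\ell_k$ for $k\neq 2$, and the explicit $2$-bracket recalled there, and it terminates at $S^{-1}\mathcal A=\mathbb O\otimes_{\mathcal O}\mathcal A$ through $S^{-1}\pi$. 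The whole proof then reduces to checking that the sub-$\tilde{\mathcal O}$-module $\tilde{\mathcal O}\otimes_{\mathcal O}\E$ is stable under this structure.

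First I would set up the embedding. Each $\E_{-i}$ is projective over $\mathcal O$, hence flat, so tensoring the injection $\tilde{\mathcal O}\hookrightarrow\mathbb O$ with $\E_{-i}$ preserves injectivity and identifies $\tilde{\mathcal O}\otimes_{\mathcal O}\E_{-i}$ with an $\tilde{\mathcal O}$-submodule of $\mathbb O\otimes_{\mathcal O}\E_{-i}$. Base change also keeps each $\tilde{\mathcal O}\otimes_{\mathcal O}\E_{-i}$ projective over $\tilde{\mathcal O}$, as required by Definition \ref{def:Linfty}.

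The heart of the argument is the closure of the localized operations. For $k\neq 2$, $S^{-1}\ell_k$ sends $(g_1\otimes x_1,\dots,g_k\otimes x_k)$ to $(g_1\cdots g_k)\otimes\ell_k(x_1,\dots,x_k)$, which lies in $\tilde{\mathcal O}\otimes\E$ because $\tilde{\mathcal O}$ is closed under products. For the anchor and the $2$-bracket the hypothesis on $\tilde{\mathcal O}$ is exactly what is needed: since $\rho_\E=\rho_{\mathcal A}\circ\pi$ is valued in derivations of $\mathcal O$ whose extension to $\mathbb O$ preserves $\tilde{\mathcal O}$, the derivation $h\mapsto g\,\rho_\E(x)[h]$ maps $\tilde{\mathcal O}$ into $\tilde{\mathcal O}$, so $\rho_{S^{-1}\E}$ restricts to an anchor $\rho_{\E'}\colon\tilde{\mathcal O}\otimes\E_{-1}\to\mathrm{Der}(\tilde{\mathcal O})$, and every summand of
$$\ell_2'(g\otimes x,h\otimes y)=gh\otimes\ell_2(x,y)+g\,\rho_\E(x)[h]\otimes y-(-1)^{|x||y|}h\,\rho_\E(y)[g]\otimes x$$
lies in $\tilde{\mathcal O}\otimes\E$ (the anchor coefficients being elements of $\tilde{\mathcal O}$, and vanishing outside $\E_{-1}$). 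Well-definedness on the balanced tensor product follows, as in the localization case, from $\mathcal O$-linearity of $\rho_\E$ and of the $\ell_k$. Consequently $\tilde{\mathcal O}\otimes\E$ is stable under every bracket and under the anchor, and the higher Jacobi identity together with the remaining axioms of Definition \ref{def:Linfty} are inherited verbatim from $S^{-1}\E$ by restriction to a sub-object; this proves item (1).

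For item (2), the hook $\pi':=\mathrm{id}_{\tilde{\mathcal O}}\otimes\pi$ sends $\tilde{\mathcal O}\otimes\E_{-1}$ into the Lie-Rinehart algebra $\tilde{\mathcal O}\otimes_{\mathcal O}\mathcal A$ of Remark \ref{loc:res}(3). The two termination conditions $\rho_{\tilde{\mathcal O}\otimes\mathcal A}\circ\pi'=\rho_{\E'}$ and $\pi'(\ell_2'(\xi,\eta))=[\pi'(\xi),\pi'(\eta)]_{\tilde{\mathcal O}\otimes\mathcal A}$ are the restrictions of the corresponding identities for $S^{-1}\E$ over $S^{-1}\mathcal A$, and reduce by a direct computation from the defining formulas to $\rho_\E=\rho_{\mathcal A}\circ\pi$ and $\pi(\ell_2(x,y))=[\pi(x),\pi(y)]_{\mathcal A}$, both of which hold because $\E$ terminates at $\mathcal A$. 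I expect the only genuine obstacle to be conceptual rather than computational: one must check that $\tilde{\mathcal O}$, unlike an arbitrary $\mathcal O$-submodule of $\mathbb O$, is simultaneously stable under multiplication \emph{and} under the extended anchor derivations. This is precisely the hypothesis placed on $\tilde{\mathcal O}$ and $\rho_{\mathcal A}$, and it is exactly what upgrades the restriction from a mere graded submodule to a bona fide Lie $\infty$-algebroid.
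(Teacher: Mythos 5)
Your proof is correct, but it is organized differently from the paper's. The paper argues directly: the hook, the anchor and the brackets $\ell_k$ for $k\neq 2$ are $\mathcal O$-linear and hence extend $\tilde{\mathcal O}$-linearly to $\tilde{\mathcal O}\otimes_{\mathcal O}\E$, while $\ell_2$ is extended by the Leibniz identity using the hypothesis that the anchor derivations preserve $\tilde{\mathcal O}$; the higher Jacobi identities for the extension are left as an ``easily seen'' verification. You instead pass to the full localization $S^{-1}\E=\mathbb O\otimes_{\mathcal O}\E$ with $S=\mathcal O\setminus\{0\}$, where the Lie $\infty$-algebroid structure is already established in Section \ref{ssec:loc}, and then exhibit $\tilde{\mathcal O}\otimes_{\mathcal O}\E$ as a sub-object closed under all operations (using flatness of the projective $\E_{-i}$ to get the injection $\tilde{\mathcal O}\otimes_{\mathcal O}\E_{-i}\hookrightarrow\mathbb O\otimes_{\mathcal O}\E_{-i}$, and the hypothesis on $\rho_{\mathcal A}$ to close the anchor and the $2$-bracket). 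What your route buys is that the higher Jacobi identities and the remaining axioms are inherited from the larger object rather than re-checked after extension, at the cost of having to justify the embedding; you correctly flag that for the termination statement one cannot simply restrict inside $\mathbb O\otimes_{\mathcal O}\mathcal A$ (since $\mathcal A$ need not be flat, $\tilde{\mathcal O}\otimes_{\mathcal O}\mathcal A\to\mathbb O\otimes_{\mathcal O}\mathcal A$ need not be injective) and instead verify the two hook identities by direct computation, which is the right fix. Both arguments rest on the same underlying fact, namely that the whole structure is determined by $\mathcal O$-multilinearity plus the Leibniz rule for $\ell_2$ and the anchor.
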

 \begin{proof}
Since they are $ \mathcal O$-linear, the  hook $\pi$, the anchor $ \rho_\E$, and the brackets $\ell_k $  for $k \neq 2 $ are extended to $\tilde{\mathbb O} $-linear maps. Since the image of $ \rho_\E$  is the image of $\rho_\mathcal A $,  it is made of derivations preserving $\mathbb O $, which is easily seen to allow an extension of $\ell_2 $ to $ \tilde{\mathcal O} \otimes_\mathcal O \mathcal E $ using the Leibniz identity. 
 \end{proof}

\begin{remark}
 Of course, the Lie $\infty $-algebroid structure obtained on $ \tilde{\mathcal O} \otimes_\mathcal O \mathcal E $ is not in general the universal Lie $\infty $-algebroid of $ \tilde{\mathcal O} \otimes_\mathcal O \mathcal A $, because the complex $ (\tilde{\mathcal O} \otimes_\mathcal O \mathcal E , \ell_1, \pi)$
 may not be a resolution of $ \tilde{\mathcal O} \otimes_\mathcal O \mathcal A $ (see  Example \ref{ex:counter}). 
\end{remark}

\begin{remark}
\normalfont
Since any module over a field is projective, any Lie-Rinehart algebra over a field is a Lie algebroid. If we choose $\tilde{\mathcal O}  = \mathbb O $ therefore,
the Lie-Rinehart algebra  $ {\mathbb O} \otimes_\mathcal O \mathcal A $  is a Lie algebroid, so is homotopy equivalent to any of its universal Lie $\infty $-algebroid. 
 Unless $\mathcal A $ is a Lie algebroid itself, the Lie $\infty $-algebroid in Proposition \ref{prop:blwoup} will not be homotopy equivalent to a Lie $\infty $-algebroid whose underlying complex is of length one, and is therefore not a universal Lie $ \infty$-algebroid of $ {\mathbb O} \otimes_\mathcal O \mathcal A $.
 \end{remark}

\begin{example}
For $\mathcal O=\mathbb{C}[z_0,\ldots,z_N]$ the coordinate ring of $\mathbb{C}^{N+1}$, the blow-up of $\mathbb{C}^{N+1}$ at the origin is covered by affine charts: in the $i$-th affine chart $U_i$, the coordinate ring is $$\mathcal O_{U_i}=\mathbb{C}[z_0/z_{i}, \dots, z_i , \dots, z_{N}/z_{i} ].$$ 
 By Remark \ref{loc:res}, 
and Proposition \ref{prop:blwoup} 
for any
 Lie-Rinehart algebra $\mathcal A $  whose anchor map takes values in vector fields vanishing at $0 \in \mathbb C^{N+1}$,
 we obtain a Lie $\infty $-algebroid of $\mathcal O_{U_i} \otimes_\mathcal O \mathcal A $ that we call \emph{blow-up of at $0$ in the chart $ U_i$}. Proposition \ref{prop:blwoup} then says that the blow-up at $0$ of the universal Lie $\infty $-algebroid of $\mathcal A $, in each chart, is a Lie $\infty $-algebroid that terminates in the blow-up of $\mathcal A $  (as defined in remark \ref{loc:res}). It may not be the universal one, see Example \ref{ex:counter}.
\end{example}

\begin{example}[Universal Lie-$ \infty$-algebroids and blow-up: a counter example]\label{ex:counter}
Consider
the polynomial function in $N+1$ variables $\varphi=\sum_{i=0}^{N}z_i^3$. 
Let us consider  the singular foliation $\mathcal{F}_\varphi$  as in Example \ref{koszul}. Its generators are $\Delta_{ij}:= z_i^2 \tfrac{\partial}{\partial z_j}-z_j^2 \tfrac{\partial}{\partial z_i} $, for $0\leq i < j\leq N$. 

Let us consider its blow-up in the chart $U_N$.
Geometrically speaking, $\widetilde{\mathcal{F}_\varphi}= \mathcal O_{U_N} \otimes_{\mathcal O}\mathcal{F}_\varphi$ is the $\mathcal{O}_{ U_N}$-module generated  by the blown-up vector fields $\widetilde{\Delta }_{ij}=z_N\left( z^2_i\frac{\partial}{\partial z_j}-z_j^2\frac{\partial}{\partial z_i}\right),\,j\neq N$, and $\widetilde{\Delta}_{iN}=z_N\left( z_Nz^2_i\frac{\partial}{\partial z_N}-\frac{\partial}{\partial z_i} -z^2_i\sum_{j=0}^{N-1}z_j\frac{\partial}{\partial z_j}\right)$. The vector fields $\widetilde{\Delta}_{ij},\,j\neq N$, belong to the $\mathcal{O}_{ U_N}$-module generated by the vector fields $\widetilde{\Delta}_{iN}$. Since they are independent, the singular foliation $\widetilde{\mathcal{F}_\varphi}$ is a free $\mathcal O_{ U_N} $-module. Its universal Lie $\infty $-algebroid can be concentrated in degree $-1$, it is then a Lie algebroid.

On the other hand, the blow-up of the universal Lie $\infty $-algebroid of $\mathcal F_\varphi $ is not homotopy equivalent to a Lie algebroid. Indeed, the pull-back Lie $\infty $-algebroid $(\tilde{E}, (\tilde{\ell}_k)_{k \geq 1}, \tilde{\rho}, \tilde{\pi}) $ verifies by construction that  $ \tilde{E}_{-i} \neq 0$ for $i=1, \dots, N $ and that $\tilde{\ell}_1|_x=0 $ for every $x$ in the inverse image of zero, and such a complex can not be homotopy equivalent\footnote{We are in fact proving that $\mathcal O_{U_i} \otimes_\mathcal O \cdot$ is not an exact functor in this case.} to a complex of length $1$.

This example tells us that the blow-up of the universal Lie $\infty $-algebroid of a Lie-Rinehart algebra may not be the universal Lie $\infty$-algebroid of its blow-up.

\end{example}

\subsection{Universal Lie $\infty$-algebroids  of some singular foliations}

\subsubsection{Vector fields annihilating a Koszul function $\varphi $ }\label{koszul}
This universal Lie $\infty $-algebroid was already described in Section 3.7 of \cite{LLS}, where the brackets were simply checked to satisfy the higher Jacobi identities - with many computations left to the reader. Here, we give a theoretical explanation of the construction presented in \cite{LLS}. 

Let $\mathcal O$ be the algebra of all polynomials on $V:=\mathbb{C}^{d}$.
A function $\varphi \in \mathcal O$ 
is said to be a \emph{Koszul polynomial}, if the \emph{Koszul complex} 
\begin{equation}
    \label{eq:KoszulComplex}
\ldots\xrightarrow{\iota_{\dif\varphi}}\X^3 (V) \xrightarrow{\iota_{\dif\varphi}}\X^2 (V)\xrightarrow{\iota_{\dif\varphi}}\X(V) \xrightarrow{\iota_{\dif\varphi}}\mathcal O \longrightarrow 0\end{equation}

is exact in all degree, except in degree $0$.
By virtue of a theorem of Koszul \cite{zbMATH00704831}, see \cite{Matsumura} Theorem 16.5 $(i)$, $\varphi$ is \emph{Koszul} if $\left(\frac{\partial\varphi}{\partial x_1} ,\cdots,\frac{\partial\varphi}{\partial x_d}\right) $ is a regular sequence. 

From now on, we choose $\varphi $ a Koszul function, and consider the  singular foliation \begin{equation}\label{varphi}
\mathcal{F}_\varphi:=\{X\in\mathfrak{X}(V):X[\varphi]=0\} = {\mathrm{Ker}}( \iota_{\dif\varphi} )\colon \X(V) \xrightarrow{\iota_{\dif\varphi}}\mathcal O 
.\end{equation} 
 The Koszul complex \eqref{eq:KoszulComplex} truncated of its degree $0$ term  gives  a free resolution $(\E,\dif,\rho)$ of $\mathcal{F}_\varphi$, with $\E_{-i}:=\X^{i+1}(V)$, $\dd:=\iota_{\dif\varphi}$, and $\rho:=-\iota_{\dif\varphi}$.
 
\begin{remark}
Exactness of the Koszul complex implies in particular that $\mathcal{F}_\varphi$ is generated by the vector fields
: \begin{equation}\left\lbrace  \frac{\partial\varphi}{\partial x_i}\frac{\partial}{\partial x_j}-\frac{\partial\varphi}{\partial x_j}\frac{\partial\varphi}{\partial x_i},\mid 1\leq i<j\leq d\right\rbrace .
\end{equation}
\end{remark} 

In \cite{LLS}, this resolution is equipped with a Lie $\infty $-algebroid structure, whose brackets we now recall. 
 
\begin{prop}\label{prop-koszul}
A universal Lie $\infty $-algebroid of $\mathcal{F}_\varphi\subset\mathfrak{X}(V)$ is given on the free resolution $\left(\E_{-\bullet} = \X^{\bullet+1}(V),\text{\emph{d}}=\iota_{\dd \varphi},\rho= -\iota_{\dd \varphi} \right)$ 
by defining the following $n$-ary brackets:
\begin{equation}
\label{eq:nary}
\left\lbrace\partial_{I_{1}},\cdots, \partial_{I_{n}}\right\rbrace_{n}:=\sum_{i_{1}\in I_{1},\ldots,i_{n}\in I_{n}} \epsilon(i_{1},\ldots,i_{n})\varphi_{i_{1}\cdots i_{n}}\partial_{I_{1}^{i_{1}}\bullet\cdots\bullet I_{n}^{i_{n}}};
\end{equation}and the anchor map given for all $ i,j \in \{1, \dots,n\}$ by\begin{equation}
\rho\left(\frac{\partial}{\partial x_i}\wedge\frac{\partial}{\partial x_j}\right) :=\frac{\partial\varphi}{\partial x_j}\frac{\partial}{\partial x_i}-\frac{\partial\varphi}{\partial x_i}\frac{\partial}{\partial x_j}.
\end{equation}Above, for every multi-index $J=\left\lbrace j_1,\ldots ,j_n\right\rbrace\subseteq\left\lbrace 1,\ldots,d\right\rbrace$ of length $n$, $\partial_J$ stands for the $n$-vector field $\frac{\partial}{\partial x_{j_1}}\wedge\cdots\wedge\frac{\partial}{\partial x_{j_n}}$ and $\varphi_{j_{1}\cdots j_{n}}:=\frac{\partial^{n}\varphi}{\partial x_{j_1}\cdots\partial x_{j_n}}$ . Also, $I_{1}\bullet\cdots\bullet I_{n}$ is a multi-index obtained by concatenation of $n$ multi-indices $I_{1},\ldots,I_{n}$. For every $i_1\in I_1,\ldots,i_n\in I_n$,\;$\epsilon(i_1,\ldots,i_n)$ is the signature of the permutation which brings $i_1,\ldots,i_n$ to the first $n$ slots of $I_{1}\bullet\cdots\bullet I_{n}$. Last, for $i_s\in I_s$, we define $I_{s}^{i_s}:=I_s\backslash i_s$.\end{prop}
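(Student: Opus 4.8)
The universality is the cheap part, so I would dispose of it first. Since $\varphi$ is Koszul, the truncated Koszul complex $(\E_{-\bullet}=\X^{\bullet+1}(V),\dd=\iota_{\dd\varphi})$ is a free resolution of $\mathcal F_\varphi$; hence any Lie $\infty$-algebroid structure built on it with $\ell_1=\iota_{\dd\varphi}$ and terminating in $\mathcal F_\varphi$ through $\pi=\rho$ is \emph{automatically} universal in the sense of Definition of universality. So the entire content is to verify that the displayed brackets and anchor do assemble into a Lie $\infty$-algebroid. The low-arity axioms I would check by hand: $\ell_1=\iota_{\dd\varphi}$ holds by definition; the map $\rho(\partial_i\wedge\partial_j)=\varphi_j\partial_i-\varphi_i\partial_j$ is exactly $-\iota_{\dd\varphi}$ on $\E_{-1}=\X^2(V)$, so $\rho=\rho_{\mathcal F_\varphi}\circ\pi$; for $k\neq2$ the formula is visibly $\mathcal O$-multilinear because the scalar coefficients $\varphi_{i_1\cdots i_n}$ ignore the $\mathcal O$-coefficients of the inputs, while for $k=2$ one extends the generator formula by the Leibniz rule with respect to $\rho$; and on $\X^2(V)$ the bracket $\ell_2$ descends through $\pi$ to the bracket of the generators of $\mathcal F_\varphi$.

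The conceptual skeleton I would use to attack the remaining higher Jacobi identities is to rewrite the brackets invariantly as iterated Schouten–Nijenhuis brackets of the potential $\varphi$. Writing $[\cdot\,,\cdot]_{\mathrm S}$ for the Schouten bracket on $\X^\bullet(V)$, one has $\iota_{\dd\varphi}=\pm[\varphi,\cdot]_{\mathrm S}$, and a short computation shows that on constant-coefficient multivector fields $\partial_{I_1},\dots,\partial_{I_n}$ the displayed $n$-ary bracket equals $[\cdots[[\varphi,\partial_{I_1}]_{\mathrm S},\partial_{I_2}]_{\mathrm S}\cdots,\partial_{I_n}]_{\mathrm S}$: the coefficient $\varphi_{i_1\cdots i_n}$ appears precisely because each successive Schouten bracket with $\partial_{I_s}$ differentiates, along $\partial_{I_s}$, the $\varphi$-derivatives produced by the previous steps. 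This is the Voronov higher-derived-bracket pattern attached to the element $\varphi$, for which $[\varphi,\varphi]_{\mathrm S}=0$ holds trivially (a function Schouten-commutes with itself), and for which the $\mathbb K$-span of the $\partial_I$ is an abelian subalgebra of $([\cdot\,,\cdot]_{\mathrm S})$. This identification makes the expected $L_\infty$-relations transparent and fixes all signs and the correct degree shift, since the single factor $[\varphi,\cdot]_{\mathrm S}$ lowers polyvector degree by one and turns the degree-$0$ Schouten bracket into the degree-$+1$ bracket required by the paper's conventions.

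The actual verification I would carry out in the paper's own formalism, proving the higher Jacobi identity in Richardson–Nijenhuis form $\sum_{p+q=n+1}[\ell_p,\ell_q]_{\mathrm{RN}}=0$. First I would confirm that the total Jacobiator is $\mathcal O$-multilinear, the anchor contributions cancelling exactly as in Proposition \ref{linearity} and in the proof of Theorem \ref{thm:existence}; it then suffices to evaluate it on the generators $\partial_{I_1},\dots,\partial_{I_n}$. There the identity splits into two families: terms in which no arity-$2$ bracket differentiates an intermediate coefficient, which reorganize and cancel purely from the total symmetry of the higher partials $\varphi_{i_1\cdots i_n}$ (Schwarz), the graded Jacobi identity of $[\cdot\,,\cdot]_{\mathrm S}$, and $[\varphi,\varphi]_{\mathrm S}=0$; and terms in which the anchor $\rho=-\iota_{\dd\varphi}$ of an $\ell_2$ hits a coefficient $\varphi_I$, producing expressions of the shape $\varphi_a\varphi_{Ib}-\varphi_b\varphi_{Ia}$. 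The hard part — and the reason the naive derived-bracket argument does not close on the nose, since our $\ell_2$ is the \emph{constant-coefficient} Schouten formula extended by Leibniz rather than the full Schouten bracket — will be controlling this second family: I must show that these anchor terms are exactly what is needed to repair the failure of the constant-coefficient formula to be a genuine iterated Schouten bracket on $\mathcal O$-valued arguments, and that they cancel among themselves using $\dd\varphi\wedge\dd\varphi=0$ (equivalently $\sum_{a,b}\varphi_a\varphi_b(\cdots)_{[ab]}=0$) together with the symmetry of mixed partials. Once this bookkeeping is completed, the master equation $\sum_{p+q=n+1}[\ell_p,\ell_q]_{\mathrm{RN}}=0$ holds for all $n$, and the proposition follows.
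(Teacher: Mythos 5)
Your overall strategy is the same as the paper's: introduce the auxiliary $\mathcal O$-multilinear ``flat'' structure $\{\cdots\}'_k$ built from the higher derivatives of $\varphi$ (the paper extends $\{\partial_{i_1},\ldots,\partial_{i_k}\}'_k=\varphi_{i_1\cdots i_k}$ as polyderivations and checks directly that this is a Poisson $\infty$-algebra with $\{\cdot\}'_1=\iota_{\dd\varphi}$; your Voronov/iterated-Schouten framing is an equivalent way to get the same object and the same easy Jacobi identities), then compare with the actual brackets $\{\cdots\}_k$, which differ only when a degree $-1$ argument (a bivector field) meets the Leibniz rule of $\ell_2$. Your treatment of universality (automatic, since the truncated Koszul complex is a free resolution) and of the low-arity axioms is fine.

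The genuine gap is in the one step that carries all the content, which you both defer (``I must show\dots'') and attribute to the wrong mechanism. The anchor corrections $\rho(\partial_i\wedge\partial_j)[\varphi_{i_2\cdots i_n}]=\varphi_j\varphi_{i_2\cdots i_n i}-\varphi_i\varphi_{i_2\cdots i_n j}$ are generically \emph{nonzero}, so they cannot ``cancel among themselves using $\dd\varphi\wedge\dd\varphi=0$'': that identity only expresses the antisymmetry $\varphi_a\varphi_b=\varphi_b\varphi_a$ and does not kill $\varphi_a\varphi_{Ib}-\varphi_b\varphi_{Ia}$. What actually happens in the paper's proof is a pairwise cancellation between two \emph{different} families of terms in the primed Jacobi identity: (i) the anchor correction hidden inside $\bigl\{\{\partial_{I_2},\ldots,\partial_{I_n}\}_{n-1},\partial_i\wedge\partial_j\bigr\}_2$ when one rewrites $\{\cdot,\cdot\}'_2$ in terms of $\{\cdot,\cdot\}_2$, and (ii) the term $\bigl\{\{\partial_i\wedge\partial_j\}'_1,\partial_{I_2},\ldots,\partial_{I_n}\bigr\}'_n$, where the auxiliary unary bracket $\iota_{\dd\varphi}$ is applied to the bivector and the resulting vector field $\varphi_i\partial_j-\varphi_j\partial_i$ is fed $\mathcal O$-linearly into the $n$-ary bracket. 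These two contributions are equal and opposite precisely because $\rho=-\iota_{\dd\varphi}$, and the term (ii) has no counterpart in the Lie $\infty$-algebroid Jacobi identity (there is no unary bracket out of $\E_{-1}=\X^2(V)$; its role is played by the hook/anchor), which is exactly why it is available to absorb the anchor correction. Your proposal as written points at the right family of problematic terms but does not identify this pairing, so the master equation is not actually established; to complete the argument you need to reproduce this half-page computation (and its sign bookkeeping, including the case of several bivector arguments), not an appeal to $\dd\varphi\wedge\dd\varphi=0$.
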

 
To understand this structure, let us first define a sequence of degree $ +1$ graded symmetric polyderivations on $\X^\bullet (V) $ (by convention, $i$-vector fields are of degree $-i+1 $) by:
\begin{equation}
\left\lbrace \partial_{i_1},\ldots,\partial_{i_k}\right\rbrace _k':=\frac{\partial^{k}\varphi}{\partial x_{i_1}\cdots\partial x_{i_k}}.
\end{equation}
We extend them to a graded poly-derivation of $\X^\bullet (V)$.

\begin{lemma}
\label{lem:Poisson}
The poly-derivations $(\left\lbrace \cdots \right\rbrace_k' )_{k \geq 1} $ are $\mathcal O $-multilinear and equip $\X^\bullet (V) $ with a (graded symmetric) Poisson $\infty$-algebra structure. 
Also, $\{\cdot \}_1'= \iota_{\dd \varphi} $.
\end{lemma}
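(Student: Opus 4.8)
The plan is to recognise the family $(\{\cdots\}'_k)_{k\ge 1}$ as a Maurer--Cartan element in the graded Lie algebra of multiderivations of the graded-commutative algebra $(\mathfrak{X}^\bullet(V),\wedge)$, and to verify the Maurer--Cartan equation on generators, where it collapses to almost nothing.

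First I would fix the algebraic model: write $\mathfrak{X}^\bullet(V)=\mathcal O[\theta_1,\dots,\theta_d]$ with odd generators $\theta_i$ corresponding to $\frac{\partial}{\partial x_i}$, the wedge product as the graded-commutative product, and $i$-vector fields sitting in degree $1-i$. An $\mathcal O$-multilinear graded symmetric multiderivation is determined by its values on the generators $\theta_{i_1},\dots,\theta_{i_k}$, and conversely any symmetric assignment of such values extends uniquely to one. Since $\varphi_{i_1\cdots i_k}=\frac{\partial^k\varphi}{\partial x_{i_1}\cdots\partial x_{i_k}}$ is symmetric in its indices (partial derivatives commute), the assignment $\{\theta_{i_1},\dots,\theta_{i_k}\}'_k:=\varphi_{i_1\cdots i_k}$ is compatible with the graded symmetry, so each $\{\cdots\}'_k$ is a well-defined $\mathcal O$-multilinear multiderivation of degree $+1$. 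The identity $\{\cdot\}'_1=\iota_{\dd\varphi}$ is then immediate, since both sides are $\mathcal O$-linear derivations agreeing on the generators, $\{\theta_i\}'_1=\varphi_i=\iota_{\dd\varphi}(\partial_i)$.

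Then I would establish the higher Jacobi identity. The $\mathcal O$-multilinear graded symmetric multiderivations of $(\mathfrak{X}^\bullet(V),\wedge)$ form a graded Lie algebra for the Schouten--Nijenhuis bracket (which, under the identification of Proposition \ref{rem:bicom-rch}, is the Richardson--Nijenhuis bracket $[\cdot,\cdot]_{\hbox{\tiny{RN}}}$ on $\widehat{\mathfrak{Page}}^\bullet$). Setting $\mathcal B:=\sum_{k\ge 1}\{\cdots\}'_k$, the collection satisfies the higher Jacobi relations if and only if $[\mathcal B,\mathcal B]_{\hbox{\tiny{RN}}}=0$, exactly as in the proof of Proposition \ref{co-diff}. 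The crucial structural point is that the bracket of two multiderivations is again a multiderivation; hence $[\mathcal B,\mathcal B]_{\hbox{\tiny{RN}}}$ is once more an $\mathcal O$-multilinear multiderivation, and therefore vanishes as soon as it vanishes on the generators $\theta_{i_1},\dots,\theta_{i_n}$. On generators the computation is trivial: every summand of $[\mathcal B,\mathcal B]_{\hbox{\tiny{RN}}}(\theta_{i_1},\dots,\theta_{i_n})$ contains an inner bracket $\{\theta_{j_1},\dots,\theta_{j_p}\}'_p=\varphi_{j_1\cdots j_p}$ which lands in $\mathcal O=\mathfrak{X}^0(V)$, and feeding an element of $\mathcal O$ into the outer multiderivation gives zero (an $\mathcal O$-linear derivation kills $\mathcal O\cdot 1$, since it annihilates the unit). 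Thus $[\mathcal B,\mathcal B]_{\hbox{\tiny{RN}}}=0$, which is precisely the graded symmetric Poisson $\infty$-algebra structure claimed.

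I expect the main obstacle to lie entirely in sign bookkeeping and in making the reduction rigorous, rather than in the ideas. Two points must be handled with care: matching the graded symmetry of the brackets (in the shifted grading where vector fields are even) against the antisymmetry of the wedge product and the full symmetry of the iterated partials $\varphi_{i_1\cdots i_k}$; and the claim that $[\mathcal B,\mathcal B]_{\hbox{\tiny{RN}}}$ stays within multiderivations, i.e. that the non-derivation terms produced by the Leibniz rule cancel in the graded antisymmetrisation. The cleanest way to package both is to observe that the $\{\cdots\}'_k$ are exactly the higher derived brackets of the canonical Schouten bracket on $\mathfrak{X}^\bullet(V)$ (for which $[x_i,\theta_j]=\delta_{ij}$) generated by $\varphi$: then $\{\theta_{i_1},\dots,\theta_{i_k}\}'_k=[\cdots[[\varphi,\theta_{i_1}],\theta_{i_2}]\cdots,\theta_{i_k}]$, and since $\varphi\in\mathcal O$ depends on the $x_i$ only it is automatically a Maurer--Cartan element, $[\varphi,\varphi]=0$; the derived-bracket theorem then yields the $L_\infty$-structure at once and makes transparent that Koszulness of $\varphi$ plays no role in this particular lemma.
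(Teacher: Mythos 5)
Your proof is correct and follows essentially the same route as the paper's: both reduce everything to generators, where $\mathcal O$-multilinearity and the higher Jacobi identities become trivial because the brackets of generators land in $\mathcal O=\X^0(V)$ while every bracket annihilates arguments from $\mathcal O$. The Maurer--Cartan/derived-bracket packaging you add is a clean way to make the "check on generators suffices" step explicit, but it is the same underlying argument.
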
   
\begin{proof} 
For degree reason, $\left\lbrace F,  X_1, \dots, X_{k-1}  \right\rbrace_k'= 0$ for all $X_1, \dots, X_{k-1}\in \X^\bullet (V)$ and all $ F \in \X^0(V) = \mathcal O$. This implies the required $\mathcal O $-multilinearity. It is clear that the higher Jacobi identities hold since brackets of generators $\{\delta_{i_1}, \cdots, \delta_{i_n}\}' $ are elements in $ \mathcal O $, and all brackets are zero when applied an element in $ \mathcal O$.
\end{proof}

\begin{proof} [Proof (of Proposition \ref{prop-koszul})]
The brackets introduced in Proposition \ref{prop-koszul} are modifications of the Poisson ${\infty}$-algebra described in Lemma \ref{lem:Poisson}. By construction, $\left\lbrace \cdots\right\rbrace^{'}_{n}=\left\lbrace
\cdots\right\rbrace_{n}$ when all arguments are generators of the form $ \partial_{I}$ for some $I \subset \{1 \dots, n\} $ of cardinal $\geq 2 $. By $\mathcal O $-multilinearity, this implies  $\left\lbrace \cdots\right\rbrace^{'}_{n}=\left\lbrace
\cdots\right\rbrace_{n}$ when $n \geq 3$, or when $n=2$ and no argument is a bivector-field, or when  $n=1$ and the argument is not a bivector field. As a consequence, all higher Jacobi identities hold when applied to $n$-vector fields with $n \geq 3$.

Let us see what happens when one of the arguments is a bivector field, i.e. in the case where we deal with at least an element of degree $-1$. Let us assume that there is one such element, i.e. $Q_1=\partial_i\wedge\partial_j,\,Q_2=\partial_{I_{2}},\,\ldots,Q_n=\partial_{I_{n}}$ with $\lvert I_{j}\rvert\geq 3,\; j=2,\ldots,n$. Then, in view of the higher Jacobi identity for the Poisson $\infty$-brackets $ (\{\cdots\}_k')_{k\geq 1}$ gives:
\begin{align}
0=\sum_{2 \leq k  \leq n-2}&\label{a}\sum_{\sigma\in S_{k,n-k}}\epsilon(\sigma)
\left\lbrace \left\lbrace  Q_{\sigma(1)},\ldots,Q_{\sigma(k)}\right\rbrace_{k}',Q_{\sigma(k+1)},\ldots,Q_{\sigma(n)}\right\rbrace_{n-k+1}'\\&\label{b}+\sum_{\sigma\in S_{n-1,1}, \sigma(n) \neq 1}\epsilon(\sigma)\left\lbrace \left\lbrace  Q_{\sigma(1)},\ldots,Q_{\sigma(n-1)}\right\rbrace_{n-1}',Q_{\sigma(n)}\right\rbrace _{2}' \\&
\label{bb}+\sum_{\sigma\in S_{1,n-1}, \sigma(1) \neq 1 }\epsilon(\sigma)  \left\lbrace \left\lbrace  Q_{\sigma(1)}\right\rbrace_{1}',Q_{\sigma(2)}\ldots,Q_{\sigma(n)}\right\rbrace _{n}'
\\&\label{c}+(-1)^{\sum_{k=2}^n\lvert\partial_{I_k}\rvert}\left\lbrace \left\lbrace  Q_{2},\ldots,Q_{n}\right\rbrace_{n-1}',Q_{1}\right\rbrace _{2}'
\\&\label{d}+  \left\lbrace \left\lbrace  Q_{1}\right\rbrace_{1}',Q_{\sigma(2)}\ldots,Q_{\sigma(n)}\right\rbrace _{n}'.
\end{align}
In lines \eqref{a}-\eqref{b}-\eqref{bb} above, we have $\{\cdots \}'=\{\cdots \} $ for all the terms involved. This is not the case for \eqref{c}-\eqref{d}.
Indeed: \begin{align*}
\left\lbrace\left\lbrace \partial_{I_{2}},\cdots,\partial_{I_{n}}\right\rbrace^{'}_{n-1} ,\partial_i\wedge\partial_j\right\rbrace^{'}_2
&=
\left\lbrace\left\lbrace \partial_{I_{2}},\cdots,\partial_{I_{n}}\right\rbrace_{n-1} ,\partial_i\wedge\partial_j\right\rbrace_2
\\&-  \sum_{i_{2}\in I_{2},\ldots,i_{n}\in I_{n}}\epsilon(i_{2},\ldots,i_{n})\rho(\partial_i\wedge\partial_j)[\varphi_{i_{2}\cdots i_{n}} ] \, \partial_{I_{2}^{i_{2}}\bullet\cdots\bullet I_{n}^{i_{n}}}
\end{align*}
and
\begin{align*}
\left\lbrace\left\lbrace \partial_i\wedge\partial_j\right\rbrace _{1}',\partial_{I_{2}}\ldots,\partial_{I_{n}} \right\rbrace _{n}'&=(-1)^{\sum_{k=2}^n\lvert\partial_{I_k}\rvert+1}\left( \varphi_i\left\lbrace\partial_j, \partial_{I_{2}}\ldots,\partial_{I_{n}}\right\rbrace_{n}' -\varphi_j\left\lbrace\partial_i, \partial_{I_{2}}\ldots,\partial_{I_{n}}\right\rbrace_{n}'\right) \\&=-(-1)^{\sum_{k=2}^n\lvert\partial_{I_k}\rvert}\sum_{i_{2}\in I_{2},\ldots,i_{n}\in I_{n}}\epsilon(i_{2},\ldots,i_{n})\iota_{\dd\varphi}(\partial_i\wedge\partial_j)[\varphi_{i_{2}\cdots i_{n}} ]\,\partial_{I_{2}^{i_{2}}\bullet\cdots\bullet I_{n}^{i_{n}}} \\ 
& = (-1)^{\sum_{k=2}^n\lvert\partial_{I_k}\rvert}\sum_{i_{2}\in I_{2},\ldots,i_{n}\in I_{n}}\epsilon(i_{2},\ldots,i_{n})
\rho(\partial_i\wedge\partial_j)[\varphi_{i_{2}\cdots i_{n}} ] \,\partial_{I_{2}^{i_{2}}\bullet\cdots\bullet I_{n}^{i_{n}}}
\end{align*} since $\rho=-\iota_{\dd\varphi}$.
Hence the quantities in lines \eqref{d} and  \eqref{c} add up, when we re-write them in term of the new brackets $\left\lbrace \cdots\right\rbrace^{} _{k}$,  to yield precisely  the higher Jacobi identity for this new bracket. It is then not difficult to see this is still the case if there is more than one bivector field, by using many times the same computations.
\end{proof}

\subsubsection{Restriction to $\varphi=0 $ of vector fields annihilating $\varphi $}

We keep the convention and notations of the previous section.
Let us consider  the restriction $\mathfrak i_W^* \mathcal{F}_\varphi$ of the Lie-Rinehart algebra $\mathcal{F}_\varphi$ to the zero-locus $W$ of a Koszul polynomial $\varphi $. 
Since all vector fields in $\mathcal F_\varphi$ are tangent to $W$, this restriction is now a Lie-Rinehart algebra over $\mathcal{O}_W = \tfrac{\mathcal O}{\mathcal O \varphi}$
(see Example \ref{LR-Mor}).

\begin{proposition}
\label{prop:univphiIsZero}
Let $\varphi$ be a Koszul Polynomial. The restriction of the universal Lie $\infty$-algebroid of Proposition \ref{prop-koszul}
to the zero-locus $W$ of $\varphi $ is
a universal Lie $\infty$-algebroid of the Lie-Rinehart algebra $\mathfrak i_W^* \mathcal{F}_\varphi$. 
\end{proposition}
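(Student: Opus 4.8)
The plan is to reduce the whole statement to the vanishing of higher Tor groups and then invoke Proposition \ref{prop:restriction}. First I would verify that $\mathcal I = \mathcal O\varphi$ is genuinely a Lie-Rinehart ideal for $\mathcal F_\varphi$: for $X \in \mathcal F_\varphi$ and $g \in \mathcal O$ one has $X[g\varphi] = X[g]\,\varphi + g\,X[\varphi] = X[g]\,\varphi \in \mathcal O\varphi$, since by definition $X[\varphi]=0$. Hence $\rho_{\mathcal F_\varphi}(\mathcal F_\varphi)[\mathcal I] \subset \mathcal I$, and the restriction $\mathfrak i_W^*\mathcal F_\varphi = \mathcal F_\varphi/\mathcal I\mathcal F_\varphi$ carries the quotient Lie-Rinehart structure over $\mathcal O_W = \mathcal O/\mathcal O\varphi$ constructed in Section \ref{ssec:Tor}.

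By Proposition \ref{prop:restriction}, the quotient Lie $\infty$-algebroid $(\mathcal E_\bullet \otimes_\mathcal O \mathcal O_W, \bar{\mathrm d}, \bar\pi)$ terminates in $\mathfrak i_W^*\mathcal F_\varphi$, and it is universal \emph{precisely when} this complex is a resolution of $\mathfrak i_W^*\mathcal F_\varphi$, that is, when $\mathrm{Tor}^n_{\mathcal O}(\mathcal F_\varphi, \mathcal O_W) = 0$ for all $n \geq 1$; its degree-zero homology is $\mathcal F_\varphi \otimes_\mathcal O \mathcal O_W = \mathfrak i_W^*\mathcal F_\varphi$ automatically. So the entire proof comes down to establishing this vanishing, where I use that $(\mathcal E_\bullet, \mathrm d = \iota_{\dd\varphi}, \pi)$ is by hypothesis a free resolution of $\mathcal F_\varphi$, so that its tensor product with $\mathcal O_W$ computes exactly these Tor groups (Remark \ref{rmk:Tor}).

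To prove the vanishing I would exploit the balancedness of Tor and compute it from the other side, using the length-one free resolution of $\mathcal O_W$ afforded by $\mathcal I$ being principal. Since $\mathcal O = \mathbb C[x_1,\dots,x_d]$ is an integral domain and $\varphi$ is non-constant, $\varphi$ is a non-zero-divisor, so $0 \to \mathcal O \xrightarrow{\cdot\varphi} \mathcal O \to \mathcal O_W \to 0$ is a free resolution. Tensoring with $\mathcal F_\varphi$ gives the two-term complex $0 \to \mathcal F_\varphi \xrightarrow{\cdot\varphi} \mathcal F_\varphi \to 0$, whence $\mathrm{Tor}^n_{\mathcal O}(\mathcal F_\varphi, \mathcal O_W) = 0$ for $n \geq 2$ for free, while $\mathrm{Tor}^1_{\mathcal O}(\mathcal F_\varphi, \mathcal O_W) = \{X \in \mathcal F_\varphi : \varphi X = 0\}$ and $\mathrm{Tor}^0_{\mathcal O}(\mathcal F_\varphi, \mathcal O_W) = \mathcal F_\varphi/\varphi\mathcal F_\varphi$. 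The remaining point is that $\mathcal F_\varphi$, being a submodule of the free (hence torsion-free) module $\mathfrak X(V) \cong \mathcal O^d$ over the domain $\mathcal O$, is itself torsion-free; therefore multiplication by the nonzero element $\varphi$ is injective on $\mathcal F_\varphi$, giving $\mathrm{Tor}^1_{\mathcal O}(\mathcal F_\varphi, \mathcal O_W) = 0$. All positive Tor thus vanish, and the conclusion follows from Proposition \ref{prop:restriction}.

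I expect no serious obstacle: the codimension-one situation — $\mathcal I$ principal, generated by a non-zero-divisor — makes the Tor computation essentially immediate once torsion-freeness of $\mathcal F_\varphi$ is noted. The only step requiring a little care is the correct identification, via the symmetric (balanced) nature of Tor, of the homology of our specific complex $\mathcal E_\bullet \otimes_\mathcal O \mathcal O_W$ built from the truncated Koszul resolution with the Tor groups computed from the short resolution of $\mathcal O_W$. The Koszul hypothesis on $\varphi$ itself is used only to ensure that $(\mathcal E_\bullet, \mathrm d, \pi)$ is a resolution of $\mathcal F_\varphi$ to begin with (Proposition \ref{prop-koszul}); beyond guaranteeing $\varphi \neq 0$, it plays no further role in the restriction step.
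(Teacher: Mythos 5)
Your proposal is correct and follows essentially the same route as the paper: the paper likewise observes that the anchor is tangent to $W$ so the structure restricts, and then reduces everything to the exactness of the restricted complex, which it states as "a simple lemma whose proof is left to the reader." Your balanced-Tor argument --- computing $\mathrm{Tor}_\bullet^{\mathcal O}(\mathcal F_\varphi,\mathcal O_W)$ from the length-one resolution $0\to\mathcal O\xrightarrow{\cdot\varphi}\mathcal O\to\mathcal O_W\to 0$ and using that $\mathcal F_\varphi\subset\X(V)$ is torsion-free over the domain $\mathcal O$ --- is a clean and complete proof of precisely that omitted lemma.
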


Since the image of its anchor map are vector fields tangent to $W $,
it is clear that the universal Lie $\infty$-algebroid of Proposition \ref{prop-koszul} restricts to $W$. To prove Proposition \ref{prop:univphiIsZero}, it suffices to check that the restriction $\mathfrak i_W^* \X (V) $ to $W $ of the Koszul complex is still exact, except in degree $0$. This is a simple lemma, whose proof is left to the reader.

\begin{lemma}The restriction to the zero locus $W$ of $ \varphi $ of the Koszul complex \eqref{eq:KoszulComplex}, namely the complex
$$\ldots\xrightarrow{\iota_{\dif\varphi}}\mathfrak i_W^* \X^{3}(V)\xrightarrow{\iota_{\dif\varphi}}\mathfrak i_W^* \X^{2}(V)\xrightarrow{\iota_{\dif\varphi}}\mathfrak i_W^* \mathcal F  $$
is a free resolution of $\mathfrak i_W^* \mathcal F$ in the category of $\mathcal O_W $-modules.
\end{lemma}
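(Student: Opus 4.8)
The plan is to read this lemma as the concrete incarnation, for the ideal $\mathcal{I}=\mathcal{O}\varphi$, of the $\mathrm{Tor}$-vanishing criterion of Proposition \ref{prop:restriction}. First I would record that $\mathcal{O}_W=\mathcal{O}/\mathcal{I}$ and that, for every $\mathcal{O}$-module $\mathcal{M}$, one has $\mathfrak i_W^*\mathcal{M}=\mathcal{M}/\mathcal{I}\mathcal{M}=\mathcal{M}\otimes_{\mathcal{O}}\mathcal{O}_W$. Hence the complex in the statement is nothing but the free resolution $\cdots\xrightarrow{\iota_{\dif\varphi}}\X^{3}(V)\xrightarrow{\iota_{\dif\varphi}}\X^{2}(V)\xrightarrow{\iota_{\dif\varphi}}\mathcal{F}_\varphi\to 0$ of Section \ref{koszul} tensored over $\mathcal{O}$ with $\mathcal{O}_W$. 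Since each $\X^{i}(V)$ is $\mathcal{O}$-free, each $\mathfrak i_W^*\X^{i}(V)$ is $\mathcal{O}_W$-free, so the freeness of the terms is automatic and the whole lemma reduces to showing that tensoring keeps the sequence exact in positive degrees, i.e. that $\mathrm{Tor}^{\,i}_{\mathcal{O}}(\mathcal{F}_\varphi,\mathcal{O}/\mathcal{I})=0$ for all $i\geq 1$.

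Then I would compute these $\mathrm{Tor}$ groups from the other factor, using that $\mathcal{O}/\mathcal{I}$ has an especially short free resolution. Because $\mathcal{O}=\mathbb{C}[x_1,\dots,x_d]$ is an integral domain and $\varphi\neq 0$, multiplication by $\varphi$ is injective, so $0\to\mathcal{O}\xrightarrow{\cdot\varphi}\mathcal{O}\to\mathcal{O}/\mathcal{I}\to 0$ is a length-one free resolution. Tensoring it with $\mathcal{F}_\varphi$ gives the two-term complex $\mathcal{F}_\varphi\xrightarrow{\cdot\varphi}\mathcal{F}_\varphi$, from which $\mathrm{Tor}^{\,i}_{\mathcal{O}}(\mathcal{F}_\varphi,\mathcal{O}/\mathcal{I})=0$ for $i\geq 2$ and $\mathrm{Tor}^{\,1}_{\mathcal{O}}(\mathcal{F}_\varphi,\mathcal{O}/\mathcal{I})=\ker\!\big(\cdot\varphi\colon\mathcal{F}_\varphi\to\mathcal{F}_\varphi\big)$ can be read off immediately.

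The only real point — and the step I would flag as the main, though mild, obstacle — is to check that $\varphi$ acts injectively on $\mathcal{F}_\varphi$, i.e. that $\mathcal{F}_\varphi$ has no $\varphi$-torsion. Here I would use that $\mathcal{F}_\varphi=\ker\!\big(\iota_{\dif\varphi}\colon\X(V)\to\mathcal{O}\big)$ is a submodule of the free, hence torsion-free, module $\X(V)=\mathcal{O}^{d}$; a submodule of a torsion-free module over a domain is torsion-free, and therefore multiplication by the nonzero element $\varphi$ cannot kill anything. This forces $\mathrm{Tor}^{\,1}_{\mathcal{O}}(\mathcal{F}_\varphi,\mathcal{O}/\mathcal{I})=0$ as well. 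With all higher $\mathrm{Tor}$ vanishing, the functor $-\otimes_{\mathcal{O}}\mathcal{O}_W$ carries the free resolution of $\mathcal{F}_\varphi$ to a free resolution of $\mathfrak i_W^*\mathcal{F}_\varphi$, which is exactly the assertion; this simultaneously re-proves Proposition \ref{prop:univphiIsZero} through Proposition \ref{prop:restriction}. Everything beyond the torsion-freeness observation is formal homological algebra, so no serious difficulty is expected.
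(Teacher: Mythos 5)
The paper gives no argument here — it explicitly leaves the proof of this lemma to the reader — so there is nothing to compare against; judged on its own, your proof is correct and complete. You correctly reduce the statement to the vanishing of $\mathrm{Tor}^{\,i}_{\mathcal O}(\mathcal F_\varphi,\mathcal O/\mathcal O\varphi)$ for $i\geq 1$, and you isolate the one non-formal ingredient: since $\mathcal O/\mathcal O\varphi$ has the length-one free resolution $0\to\mathcal O\xrightarrow{\cdot\varphi}\mathcal O\to\mathcal O/\mathcal O\varphi\to 0$, everything comes down to $\mathrm{Tor}^{\,1}=\ker(\cdot\varphi\colon\mathcal F_\varphi\to\mathcal F_\varphi)$, which vanishes because $\mathcal F_\varphi\subset \X(V)\cong\mathcal O^d$ is a submodule of a free module over a domain, hence torsion-free. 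An equivalent and equally short route, perhaps closer to what the authors had in mind by a "simple lemma", is to apply the long exact homology sequence to the short exact sequence of complexes $0\to K_\bullet\xrightarrow{\cdot\varphi}K_\bullet\to \mathfrak i_W^*K_\bullet\to 0$, where $K_\bullet$ is the truncated Koszul complex; one lands on exactly the same condition $\ker(\cdot\varphi\colon\mathcal F_\varphi\to\mathcal F_\varphi)=0$. Your closing remark that this recovers Proposition \ref{prop:univphiIsZero} via Proposition \ref{prop:restriction} is also accurate, since $\mathcal O\varphi$ is a Lie-Rinehart ideal for $\mathcal F_\varphi$ by definition of $\mathcal F_\varphi$.
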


\subsubsection{Vector fields vanishing on subsets of a vector space} 
\label{ideal}
Let $ \mathcal O$ be the algebra of smooth or holomorphic or polynomial or formal functions on $\mathbb K^d $, and $\mathcal I \subset \mathcal O$ be an ideal.
Then $ \mathcal I {\mathrm{Der}} (\mathcal O) $, i.e. vector fields of the form: 
 $  \sum_{i=1}^d f_i  \frac{\partial}{\partial x_i} $, with $f_1, \dots, f_d \in \mathcal I$, is a Lie-Rinehart algebra. 

\begin{remark}
 Geometrically, when $\mathcal I $ corresponds to functions vanishing on a sub-variety $N \subset \mathbb K^n$, 
  $ \mathcal I {\mathrm{Der}} (\mathcal O) $ must be interpreted as vector fields vanishing along $N$.
\end{remark}

Let us describe a Lie $\infty $-algebroid that terminates at $ \mathcal I {\mathrm{Der}} (\mathcal O)$, then discuss when it is universal. 
Let $(\varphi_i)_{i \in I}$  
be generators of $\mathcal I $. Consider the free graded algebra $ \mathcal K = \mathcal O [(\mu_i)_{i \in I} ] $
generated by variables $(\mu_i )_{i \in I} $ of degree $-1 $. The degree $-1 $ derivation 
$\partial:=\sum_{i\in I} \varphi_i\frac{\partial}{\partial\mu_i}$ squares to zero. 
The $\mathcal O$-module $\mathcal K_{-j}$ of elements degree $j$ in $\mathcal K_\bullet $ is made of all sums
$\sum_{i_1 , \dots , i_j \in I }   f_{i_1 \dots i_j}   \mu_{i_1}\cdots\mu_{i_j} $
with $ f_{i_1 \dots i_j} \in \mathcal O$. Consider the complex of free $\mathcal O $-modules
\begin{equation}\label{eq:resolIX} \cdots\stackrel{\partial\otimes_\mathcal O \text{id}}{\longrightarrow} \mathcal K_{-2} \otimes_\mathcal O {\mathrm{Der}}(\mathcal O)   \stackrel{\partial\otimes_\mathcal O \text{id}}{\longrightarrow} \mathcal K_{-1}\otimes_\mathcal O{\mathrm{Der}}(\mathcal O)\end{equation}

\begin{proposition} \label{prop:existsLieInfty}
The complex \eqref{eq:resolIX} comes equipped with a Lie $\infty $-algebroid structure that terminates in $\mathcal I {\mathrm{Der}} (\mathcal O)$ through the anchor map  given by $\mu_i\frac{\partial}{\partial x_j}   \mapsto \varphi_i \frac{\partial}{\partial x_j} $ for all $i \in I$, and $ j \in 1, \dots,d$. 
\end{proposition}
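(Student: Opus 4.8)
The plan is to build the brackets explicitly, keeping the anchor and hook as the prescribed map. Set $\pi(\mu_i\otimes X):=\varphi_i X$, extended $\mathcal{O}$-linearly; this is valued in $\mathcal{I}\,\mathrm{Der}(\mathcal{O})$ since $\varphi_i\in\mathcal{I}$, and I take $\ell_1:=\partial\otimes\mathrm{id}$. First I would dispatch the easy axioms of Definition \ref{def:Linfty}. The map $\ell_1$ is a degree $+1$, $\mathcal{O}$-linear differential with $\ell_1^2=0$ because $\partial^2=0$; the complex \eqref{eq:resolIX} terminates in $\mathcal{A}=\mathcal{I}\,\mathrm{Der}(\mathcal{O})$ through $\pi$, which is surjective and kills $\ell_1(\mathcal{E}_{-2})$, as one checks directly, e.g.
$$\rho_{\mathcal{E}}\big(\ell_1(\mu_i\mu_j\otimes X)\big)=\varphi_i\varphi_j X-\varphi_j\varphi_i X=0 .$$
Moreover, once $\ell_2$ is constructed so that $\pi$ is a bracket morphism, axiom $(v)$ follows automatically, because $\rho_{\mathcal{A}}$ is the inclusion $\mathcal{I}\,\mathrm{Der}(\mathcal{O})\hookrightarrow\mathrm{Der}(\mathcal{O})$, a Lie algebra morphism, so $\rho_{\mathcal{E}}(\ell_2(x,y))=\rho_{\mathcal{A}}[\pi x,\pi y]_{\mathcal{A}}=[\rho_{\mathcal{E}}x,\rho_{\mathcal{E}}y]$.

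For the binary bracket I would lift the Lie bracket of $\mathcal{I}\,\mathrm{Der}(\mathcal{O})$. On the degree $-1$ free generators I set
$$\ell_2\Big(\mu_i\tfrac{\partial}{\partial x_a},\,\mu_j\tfrac{\partial}{\partial x_b}\Big):=\tfrac{\partial\varphi_j}{\partial x_a}\,\mu_i\tfrac{\partial}{\partial x_b}-\tfrac{\partial\varphi_i}{\partial x_b}\,\mu_j\tfrac{\partial}{\partial x_a},$$
which is graded antisymmetric and satisfies $\pi(\ell_2(x,y))=[\varphi_i\tfrac{\partial}{\partial x_a},\varphi_j\tfrac{\partial}{\partial x_b}]=[\pi x,\pi y]_{\mathcal{A}}$. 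I then extend $\ell_2$ to all of $\mathcal{E}$ by graded symmetry, by $\mathcal{O}$-bilinearity on arguments of degree $\le-2$, and by the Leibniz identity with respect to $\rho_{\mathcal{E}}$ in degree $-1$, precisely as in the proof of Lemma \ref{lem1}. A routine check yields the three conditions of Definition \ref{def:almost}, so $(\ell_1,\ell_2)$ is an almost differential graded Lie algebroid terminating in $\mathcal{A}$.

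The conceptual engine for the higher brackets is the graded Lie algebra $\mathfrak{D}:=\mathrm{Der}(\mathcal{K})$ of graded $\mathbb{K}$-derivations of $\mathcal{K}=\mathcal{O}[(\mu_i)_{i\in I}]$, equipped with its commutator $[\cdot,\cdot]$ and the odd, square-zero element $\partial=\sum_i\varphi_i\tfrac{\partial}{\partial\mu_i}$. Writing $\widehat{X}$ for the extension of $X\in\mathrm{Der}(\mathcal{O})$ to $\mathcal{K}$ with $\widehat{X}[\mu_i]=0$, the module $\mathcal{E}=\mathcal{K}_{\le-1}\otimes_{\mathcal{O}}\mathrm{Der}(\mathcal{O})$ embeds into $\mathfrak{D}$ as the \emph{horizontal} derivations $\mu_I\widehat{X}$, whose complement is the \emph{vertical} subalgebra $\mathfrak{M}$ spanned over $\mathcal{K}$ by the $\tfrac{\partial}{\partial\mu_i}$. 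I expect the entire family $(\ell_n)_{n\ge 1}$ to be produced by the higher derived brackets of $\partial$ relative to this splitting, namely $\ell_n$ is, up to sign, the horizontal component of the iterated commutator $\big[\cdots[[\partial,\iota(x_1)],\iota(x_2)]\cdots,\iota(x_n)\big]$, the anchor arising from the degree $0$ part $\mathrm{Der}(\mathcal{O})\subset\mathfrak{D}$. The defect that forces genuinely higher brackets is visible already in
$$[\,\partial,\widehat{X}\,]=-[\widehat{X},\partial]=-\sum_i X[\varphi_i]\,\tfrac{\partial}{\partial\mu_i}\in\mathfrak{M},$$
i.e. $\mathrm{Der}(\mathcal{O})$ does \emph{not} act on the dg-algebra $(\mathcal{K},\partial)$ by dg-derivations, the obstruction being the Jacobian $X[\varphi_i]$; equivalently, the horizontal part is not a subcomplex, so the structure cannot collapse to a dg Lie algebroid.

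The main obstacle, therefore, is establishing the higher Jacobi identity to all orders, equivalently $Q_{\mathcal{E}}^2=0$ for the associated co-derivation of Proposition \ref{co-diff}. In the derived-bracket formulation this should reduce to the single identity $[\partial,\partial]=0$ together with the graded Jacobi identity of the commutator on $\mathfrak{D}$, but two bookkeeping points must be controlled carefully and constitute the bulk of the work: that each $\ell_n$ with $n\neq 2$ is genuinely $\mathcal{O}$-multilinear (the non-$\mathcal{O}$-linearity being confined to $\ell_2$ and absorbed by the anchor, exactly as the vertical defect above is absorbed), and that the arity of the brackets is unbounded (up to the number of generators $\varphi_i$), so that the recursion closing the higher Jacobi relations must be phrased uniformly rather than terminated after finitely many steps. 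As a cross-check I would verify the first nontrivial instance, that $[\ell_2,\ell_2]$ is cancelled by $[\ell_1,\ell_3]$ with $\ell_3$ the horizontal part of $[[\partial,\iota(\cdot)],\iota(\cdot),\iota(\cdot)]$, which already exhibits the derivatives $\tfrac{\partial^2\varphi_i}{\partial x_a\partial x_b}$ entering the $3$-ary bracket, in parallel with the Koszul example of Proposition \ref{prop-koszul}.
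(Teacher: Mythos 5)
Your setup of $\pi$, $\ell_1$, the anchor, and the binary bracket $\ell_2$ (lifted on generators and extended by Leibniz) matches the paper and is fine. The formulas you anticipate for the higher brackets — involving $\tfrac{\partial^{n-1}\varphi_i}{\partial x_{a_1}\cdots\partial x_{a_{n-1}}}$ — are also the correct ones. But the mechanism you propose for the crucial step, namely deducing the higher Jacobi identities from $[\partial,\partial]=0$ and the graded Jacobi identity of $\mathrm{Der}(\mathcal K)$ via higher derived brackets relative to the horizontal/vertical splitting, does not apply as stated. Voronov's higher derived bracket theorem requires the image of the projection to be an \emph{abelian} subalgebra; here the horizontal subspace is a subalgebra but not abelian, since $[\mu_I\widehat{X},\mu_J\widehat{Y}]=\mu_I\mu_J\widehat{[X,Y]}$, which is nonzero in general. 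As a consequence the iterated derived brackets are not even automatically graded symmetric (their symmetrization defect is governed by $P[\partial,[x,y]]$, which does not vanish), and the $L_\infty$ relations pick up extra terms involving these internal commutators that must be shown to cancel by hand. You explicitly defer this — ``the bulk of the work'' — and only propose to check the first instance $[\ell_2,\ell_2]+[\ell_1,\ell_3]=0$, so the proof is incomplete precisely where the content lies; moreover you misidentify the difficulty as $\mathcal O$-multilinearity and unbounded arity rather than the failure of the abelian hypothesis.

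For comparison, the paper sidesteps this by introducing an auxiliary $\mathcal O$-multilinear Poisson $\infty$-structure on the free algebra generated by the $\mu_i$, the $\tfrac{\partial}{\partial x_j}$ and $1$, whose brackets of generators land in $\mathcal O$; since every bracket with an $\mathcal O$-entry vanishes, its higher Jacobi identities hold trivially. The genuine brackets $\ell_k$ agree with the auxiliary ones except for $\ell_1$ and $\ell_2$ on degree $-1$ arguments, and the discrepancy in the Jacobi identities is checked to be exactly the anchor contribution, as in the Koszul case of Proposition \ref{prop-koszul}. If you want to keep a derived-bracket flavour, you would need to replace your splitting by one whose target is abelian (or carry out the cancellation of the $\mu_I\mu_J\widehat{[X,Y]}$ terms explicitly), which amounts to redoing the paper's verification.
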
 
\begin{proof}
 First, one defines a $\mathcal O$-linear Poisson-$\infty $-algebra structure on the free algebra generated by $(\mu_i)_{i\in I} $ (in degree $-1$) and $  \left( \frac{\partial}{\partial x_j} \right)_{j=1}^d $ (in degree $0$) and $1$ by:
     \begin{equation}
         \label{PoissonInfty}
\left\lbrace\mu_{i},{  \frac{\partial}{\partial x_{j_1}}},\ldots,\frac{\partial}{\partial x_{j_r}} \right\rbrace_{r+1}':=   \frac{\partial^r \varphi_i}{\partial x_{i_1} \dots \partial x_{i_r} }   
 \end{equation}
all other brackets of generators being equal to $0$. Since the brackets of generators take values in $\mathcal O$, and since an $n$-ary bracket where an element of $\mathcal O $ appears is zero, this is easily seen to be a Poisson $\infty $-structure. The general formula is
\begin{equation}
\label{eq:onGenMinusALl}
\left\lbrace\mu_{I_1}\otimes_\mathcal O\partial_{x_{a_1}},\ldots,\mu_{I_n}\otimes_\mathcal O\partial_{x_{a_n}} \right\rbrace_n':= \sum_{\hbox{\scalebox{0.5}{$
\begin{array}{c} j=1 , \dots, n \\ 
i_j \in I_j \end{array}$}}
} \epsilon
\frac{\partial^{n-1}\varphi_{i_j}}{\partial x_{a_1}\cdots\widehat{\partial x}_{a_j}\cdots\partial x_{a_n}}  \mu_{I_1} \cdots \mu_{I_j}^{i_j} \cdots \mu_{I_n\otimes_{\mathcal O} \partial_{x_{a_j}}},
\end{equation}    
 where $\mu_J = \mu_{j_1} \dots \mu_{j_s}$ for every list $J= \{j_1, \dots, j_s\} $, 
where $ \epsilon$ is the Koszul sign, and where for a list $J$ containing $j$, $J^j $ stands for the list $J$ from which the element $j$ is crossed out, as in Equation \eqref{eq:nary}.

The $\mathcal O $-module generated by $\mu_{i_1} \cdots \mu_{i_k } \otimes_{\mathcal O}  \partial_{ x_{a}}  $ , i.e.
the complex \eqref{eq:resolIX} is easily seen to be stable under the brackets $\{\cdots \}_k' $
for all $k \geq 1$, so that we can define on $\mathcal K \otimes_{\mathcal O}  {\mathrm{Der}} (\mathcal O)$ a sequence of brackets
$(\ell_k = \{\cdots\}_k)_{k \geq 1} $ by letting them coincide with the previous brackets on the generators, i.e. $\{\cdots\}_n$ is given by Equation \eqref{eq:onGenMinusALl} for all $n \geq 1$. The brackets are then extended by derivation, $\mathcal O$-linearity or Leibniz identity with respect to the given anchor map, depending on the degree.
\noindent 
In particular, $\{\cdots \}'_k= \{\cdots \}_k $ 
for $k\geq 2$. For $k=1$,  $\{\cdots \}'_1= \{\cdots \}_1 $ on  $\oplus_{i \geq 2} \mathcal K_{-i} \otimes_\mathcal O  {\mathrm{Der}} (\mathcal O) $.
For $k=2$, we still have $\{\cdots \}'_2= \{\cdots \}_2 $ on  $\oplus_{i,j \geq 2} \mathcal K_{-i} \odot \mathcal K_{-j} \otimes_\mathcal O  {\mathrm{Der}} (\mathcal O) $.\\

Let us verify that all required axioms are satisfied.  
For $n=2$, Equation
\eqref{eq:onGenMinusALl} specializes to:
 \begin{align*} 
 \ell_2 ( \mu_i \otimes_\mathcal O \partial_{x_a} , \mu_j \otimes_\mathcal O \partial_{x_b} ) = \frac{\partial \varphi_j}{\partial x_a} \,  \mu_i \otimes_\mathcal O \partial_{x_b} - \frac{\partial \varphi_i}{\partial x_b} \,  \mu_j \otimes_\mathcal O \partial_{x_a} 
 \end{align*}
 which proves that the anchor map is a morphism when compared with the relation:
  $$  \left[ \varphi_i \partial_{x_a}  \, , \, \varphi_j \partial_{x_b}  \right] = 
  \frac{\partial \varphi_j}{\partial x_a} \,  \varphi_i  \, \partial_{x_b} - \frac{\partial \varphi_i}{\partial x_b} \,  \varphi_j  \, \partial_{x_a}.
  $$
  The higher Jacobi identities are checked on generators as follows:
 \begin{enumerate}
     \item When there are no degree $-1$ generators, it follows from the higher Jacobi identities of the Poisson $\infty$-structure \eqref{PoissonInfty} and the $ \mathcal O$-multilinearity of all Lie $\infty $-algebroid brackets involved.
     \item When generators of degree $-1 $ are involved, the higher Jacobi identities are obtained by doing the same procedure as in the proof of Proposition \ref{lem:Poisson}, that is, we first consider the higher Jacobi identities for the Poisson $\infty $-structure \eqref{PoissonInfty}, and we put aside the terms where $\{\cdot\}_1 $ is applied to these degree $-1 $ generators. We then check that the latter terms are exactly the terms coming from an anchor map when the $2$-ary bracket is applied to generators of degree $-1$ and the $(n-1)$-ary brackets of the remaining generators. \end{enumerate}\end{proof}

\begin{proposition}\label{complete} 
When $\mathcal O = \mathbb C[x_1, \dots, x_n]$, and when $N \subset   \mathbb C^n$ is an affine variety defined by a regular sequence $\varphi_1, \dots, \varphi_k $, then the Lie $ \infty$-algebroid described in Proposition \ref{prop:existsLieInfty} is the universal Lie $\infty$-algebroid of the singular foliation of vector fields vanishing along $N$.
\end{proposition}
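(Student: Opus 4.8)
The plan is to leverage Proposition \ref{prop:existsLieInfty}, which already equips the complex \eqref{eq:resolIX} with a Lie $\infty$-algebroid structure terminating in $\mathcal I\,{\mathrm{Der}}(\mathcal O)$ through the hook $\pi$. By the very definition of universality, the only thing left to establish is that \eqref{eq:resolIX}, completed by $\pi$, is a \emph{free resolution} of the Lie-Rinehart algebra $\mathcal I\,{\mathrm{Der}}(\mathcal O)$ of vector fields vanishing along $N$ (in the ideal-theoretic sense recalled in Section \ref{ideal}). Everything will then reduce to a homological statement about the underlying complex of $\mathcal O$-modules.

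First I would identify the graded algebra $(\mathcal K_\bullet,\partial)$ with a Koszul complex. Since the generators $\mu_1,\dots,\mu_k$ have odd degree $-1$, the graded-commutative algebra $\mathcal K=\mathcal O[\mu_1,\dots,\mu_k]$ is the exterior algebra $\Lambda^\bullet_\mathcal O(\mathcal O^k)$, and the degree $-1$ derivation $\partial=\sum_i \varphi_i\,\partial/\partial\mu_i$ is exactly the Koszul differential of $\varphi_1,\dots,\varphi_k$. By the theorem of Koszul already invoked in Section \ref{koszul} (\cite{zbMATH00704831}, see also \cite{Matsumura}, Theorem 16.5), the hypothesis that $\varphi_1,\dots,\varphi_k$ is a regular sequence guarantees that this complex is acyclic in all negative degrees; equivalently, the truncated complex
\[
\cdots \xrightarrow{\partial} \mathcal K_{-2} \xrightarrow{\partial} \mathcal K_{-1} \xrightarrow{\partial} \mathcal I \longrightarrow 0
\]
is a free resolution of the ideal $\mathcal I=(\varphi_1,\dots,\varphi_k)$ as an $\mathcal O$-module, since $\partial\colon \mathcal K_{-1}\to\mathcal K_0=\mathcal O$ sends $\mu_i\mapsto\varphi_i$, with image exactly $\mathcal I$.

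Next I would tensor this resolution over $\mathcal O$ with ${\mathrm{Der}}(\mathcal O)$. Because $\mathcal O=\mathbb C[x_1,\dots,x_n]$, the module ${\mathrm{Der}}(\mathcal O)\cong\mathcal O^n$ is free, hence flat, so applying $-\otimes_\mathcal O {\mathrm{Der}}(\mathcal O)$ preserves exactness; moreover flatness identifies $\mathcal I\otimes_\mathcal O{\mathrm{Der}}(\mathcal O)$ with its image $\mathcal I\,{\mathrm{Der}}(\mathcal O)\subset{\mathrm{Der}}(\mathcal O)$. This yields the exact complex
\[
\cdots \xrightarrow{\partial\otimes{\mathrm{id}}} \mathcal K_{-2}\otimes_\mathcal O {\mathrm{Der}}(\mathcal O) \xrightarrow{\partial\otimes{\mathrm{id}}} \mathcal K_{-1}\otimes_\mathcal O {\mathrm{Der}}(\mathcal O) \longrightarrow \mathcal I\,{\mathrm{Der}}(\mathcal O) \longrightarrow 0,
\]
whose last arrow is precisely the hook $\pi\colon \mu_i\tfrac{\partial}{\partial x_j}\mapsto\varphi_i\tfrac{\partial}{\partial x_j}$ of Proposition \ref{prop:existsLieInfty}. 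Thus \eqref{eq:resolIX} together with $\pi$ is a free resolution of $\mathcal I\,{\mathrm{Der}}(\mathcal O)$, and since this resolution already carries the Lie $\infty$-algebroid structure of Proposition \ref{prop:existsLieInfty} with unary bracket $\partial\otimes{\mathrm{id}}$, the defining condition of a universal Lie $\infty$-algebroid is met, proving the claim.

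The one genuinely nontrivial ingredient is the acyclicity of the Koszul complex: this is exactly where the regular-sequence hypothesis enters, and the conclusion is false without it (a non-regular sequence produces higher Koszul homology, obstructing exactness in some negative degree). Everything else is formal — flatness of a free module and matching the Koszul differential $\mu_i\mapsto\varphi_i$ to the anchor/hook — so I expect the regular-sequence step to be the only delicate point, and I would state it by direct appeal to the cited classical results rather than reproving acyclicity.
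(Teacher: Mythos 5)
Your proof is correct and follows essentially the same route as the paper's: the paper likewise observes that the regular-sequence hypothesis makes $(\mathcal K_\bullet,\partial)$ a free $\mathcal O$-resolution of the ideal $\mathcal I_N$, then tensors with the flat module ${\mathrm{Der}}(\mathcal O)$ to obtain a free resolution of $\mathcal I_N\,{\mathrm{Der}}(\mathcal O)$, whence the structure of Proposition \ref{prop:existsLieInfty} is universal by definition. Your write-up simply spells out the Koszul identification and the flatness argument in more detail than the paper does.
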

\begin{proof}
For a regular sequence $\varphi_1, \dots, \varphi_k $, $ \mathcal K_\bullet$ equipped with the derivation $ \partial=\sum_{i=1}^k \varphi_i \frac{\partial}{\partial \mu_k} $ is a free $ \mathcal O$-resolution of the ideal $\mathcal I_N  $ of functions vanishing along $N$. Since $  {\mathrm{Der}}(\mathcal O) $ is a flat $\mathcal O $-module,  the sequence \begin{equation}\label{eq:resolIXX}
\xymatrix{\cdots\ar[rr]^-{\partial\otimes_\mathcal O \text{id}} & & \mathcal K_{-2} \otimes_\mathcal O {\mathrm{Der}}(\mathcal O) \ar^{\partial\otimes_\mathcal O \text{id}}[rr]& & \mathcal K_{-1}\otimes_\mathcal O {\mathrm{Der}}(\mathcal O) \ar^ {\partial\otimes _\mathcal O\text{id}}[rr] & & \mathcal I_N {\mathrm{Der}}(\mathcal O).
}
\end{equation} is a free $\mathcal O $-resolution of the singular foliation $\mathcal I_N {\mathrm{Der}}(\mathcal O)$.
The Lie $\infty $-algebroid structure of Proposition \ref{prop:existsLieInfty} is therefore universal.
\end{proof}

\begin{example}
As a special case of the Proposition \ref{complete},  let us consider a complete intersection defined by \emph{one} function, i.e. an affine variety $W$ whose ideal $\langle\varphi\rangle$ is generated by a regular polynomial $\varphi \in \mathbb{C}[X_1,\ldots,X_d]$. One has a free resolution of the space of vector fields vanishing on $W$ given as follows: 
$$ 
\xymatrix{0\ar[r] & \mathcal{O}\mu \otimes_\mathcal O \X(\mathbb C^d) \ar^{\varphi\frac{\partial}{\partial\mu}\otimes_\mathcal O \text{id}}[rr] & & I_W \mathfrak X_{\mathbb C^d}},
$$
where $\mu$ is a degree $ -1$ variable, so that $\mu^2=0$. The universal Lie $\infty$-algebroid structure over that resolution is given on the set of generators by :$$\{\mu\otimes_\mathcal O \partial_{x_a},\mu\otimes_\mathcal O \partial_{x_b}\}_2:=\frac{\partial\varphi}{\partial x_a}\mu\otimes_\mathcal O \partial_{x_b}-\frac{\partial\varphi}{\partial x_b}\mu\otimes_\mathcal O \partial_{x_a}$$ and $\{\cdots\}_k:=0$ for every $k\geq 3$. It is a Lie algebroid structure. Notice that this construction could be also be recovered using Section \ref{sec:codim1}.
\end{example}

\subsection{Universal Lie $\infty $-algebroids associated to an affine variety $W$}

This section is mainly programmatic. It explains how we can attach the purely algebraic objects \textquotedblleft universal Lie $\infty $-algebroids\textquotedblright\,to the geometric objects \textquotedblleft affine varieties\textquotedblright. Their relations will be studied in a future article. For any affine variety $W$ over $\mathbb C $, with ring of function $\mathcal O_W $, derivations of $W$ (i.e. vector fields on $W$) are a Lie-Rinehart algebra over $\mathcal O_W $ denoted $\mathfrak X_W $. Its universal Lie-$\infty$-algebroid can therefore be constructed. 
\begin{definition}
We call \emph{universal Lie $\infty$-algebroid of an affine variety $W$} any Lie-$\infty$-algebroid associated to its  Lie-Rinehart algebra $\mathfrak X_W$ of vector fields on $W$.
\end{definition}

Let us state a few results about the universal Lie $\infty$-algebroid of an affine variety $W$. For every point $x_0 \in W$, let $\mathcal O_{W,x_0}  $ be the algebra of germs of functions at $x_0$. 
The germ at $x_0$ of the Lie-Rinehart algebra of vector fields on $W$ is easily checked to coincide with the Lie-Rinehart algebra of derivations of $\mathcal O_{W,x_0}  $.

Here is an immediate consequence of Proposition \ref{prop:germ}.

\begin{prop}
Let $W$ be an affine variety.
For every $x_0 \in W$, the germ at $x_0$ of the universal Lie $\infty$-algebroid of $W$ is the universal Lie $\infty$-algebroid of
${\mathrm{Der}}(\mathcal O_{W,x_0})$.
\end{prop}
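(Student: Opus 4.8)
The plan is to deduce the statement directly from Proposition \ref{prop:germ}, applied to the particular Lie-Rinehart algebra $\mathcal A = \mathfrak X_W = {\mathrm{Der}}(\mathcal O_W)$. By definition, a universal Lie $\infty$-algebroid of $W$ is a universal Lie $\infty$-algebroid of $\mathfrak X_W$, and Proposition \ref{prop:germ} tells us that its germ at $x_0$ (i.e. its localization with respect to the multiplicative set $S = \mathcal O_W \setminus \mathfrak m_{x_0}$) is a universal Lie $\infty$-algebroid of the germ $S^{-1}\mathfrak X_W$ of $\mathfrak X_W$ at $x_0$. It therefore suffices to identify this germ, as a Lie-Rinehart algebra over $\mathcal O_{W,x_0}$, with ${\mathrm{Der}}(\mathcal O_{W,x_0})$.

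The core of the argument is the commutation of localization with the functor of derivations. Since $W$ is an affine variety, $\mathcal O_W$ is a finitely generated $\mathbb K$-algebra, so its module of K\"ahler differentials $\Omega_{\mathcal O_W / \mathbb K}$ is finitely presented and ${\mathrm{Der}}(\mathcal O_W) \simeq {\mathrm{Hom}}_{\mathcal O_W}(\Omega_{\mathcal O_W/\mathbb K}, \mathcal O_W)$. First I would use that localization commutes with K\"ahler differentials, $S^{-1}\Omega_{\mathcal O_W/\mathbb K} \simeq \Omega_{S^{-1}\mathcal O_W / \mathbb K} = \Omega_{\mathcal O_{W,x_0}/\mathbb K}$, together with the fact that localization commutes with $\mathrm{Hom}$ out of a finitely presented module. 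This yields
$$ S^{-1}{\mathrm{Der}}(\mathcal O_W) \simeq S^{-1}{\mathrm{Hom}}_{\mathcal O_W}(\Omega_{\mathcal O_W/\mathbb K}, \mathcal O_W) \simeq {\mathrm{Hom}}_{\mathcal O_{W,x_0}}(\Omega_{\mathcal O_{W,x_0}/\mathbb K}, \mathcal O_{W,x_0}) \simeq {\mathrm{Der}}(\mathcal O_{W,x_0}), $$
using also $\mathcal O_{W,x_0} \simeq S^{-1}\mathcal O_W$. One then checks that this isomorphism respects brackets and anchors, which is immediate from the explicit description of the localized structure in Section \ref{ssec:loc}: the localized anchor and bracket on $S^{-1}\mathfrak X_W$ are precisely those of the derivations of $S^{-1}\mathcal O_W = \mathcal O_{W,x_0}$.

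The main obstacle, and really the only nontrivial point, is this commutation of localization with derivations; everything else is formal. It rests essentially on the finite generation of $\mathcal O_W$ as a $\mathbb K$-algebra (equivalently, the finite presentation of $\Omega_{\mathcal O_W/\mathbb K}$), which is exactly the affineness hypothesis on $W$. Once this identification is in place the conclusion follows at once, and indeed the remark preceding the statement already records that the germ of $\mathfrak X_W$ at $x_0$ coincides with ${\mathrm{Der}}(\mathcal O_{W,x_0})$.
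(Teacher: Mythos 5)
Your proposal is correct and follows essentially the same route as the paper: apply Proposition \ref{prop:germ} to $\mathcal A = \mathfrak X_W$ and identify the localization $S^{-1}{\mathrm{Der}}(\mathcal O_W)$ with ${\mathrm{Der}}(\mathcal O_{W,x_0})$. The only difference is that the paper dismisses this last identification as ``easily checked'', whereas you supply the standard justification via K\"ahler differentials and the commutation of localization with $\mathrm{Hom}$ out of a finitely presented module, which is a correct and welcome elaboration of the same argument.
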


Let us choose $x_0\in W$.
As stated in Section \ref{ssec:res}, the Lie $\infty$-algebroid structure of $W$ over $x_0$ restricts at $x_0$ (i.e. goes to the quotient with respect to the ideal $ \mathfrak m_{x_0} $) if and only if $\rho_\E(\E_{-1})[\mathfrak m_{x_0}]\subseteq \mathfrak m_{x_0}$, (i.e $\mathfrak m_{x_0}$ is a Lie-Rinehart ideal). This is the case, in particular, if $x_0$ is an isolated singular point. In that case, we obtain a Lie $\infty $-algebroid over $\mathcal O_W/\mathfrak m_{x_0} $. Since this quotient is the base field $\mathbb K $, we obtain in fact a Lie $\infty $-algebra. By Section \ref{ssec:res} again,  it is a Lie $\infty $-algebra on ${\mathrm{Tor}}_{\mathcal O_W} (\mathfrak X_W, \mathbb C) $. We call it the \emph{Lie $\infty $-algebra of the isolated singular point $x_0$}. To describe this structure, let us start with the following Lemma.

	\begin{lemma}\label{lem:finite}
		Let  $x_0$ be a point of  an affine variety $W$.  
		The universal Lie $\infty$-algebroid of
${\mathrm{Der}}(\mathcal O_{W,x_0})$
		can be constructed on a resolution $((\E_{-i}^{x_0})_{i\geq 1}, \ell_1, \pi) $, with $\mathcal E_{-i}^{x_0} $  free $\mathcal O_{W,x_0}$-modules of finite rank for all $i \geq 1$, which is minimal in the sense that $\ell_1 (\E_{-i-1}) \subset \mathfrak m_{x_0}\E_{-i} $ for all $i \geq 1$. 
	\end{lemma}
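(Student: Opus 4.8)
The plan is to combine the existence theorem (Theorem \ref{thm:existence}) with the classical construction of \emph{minimal} free resolutions over a Noetherian local ring. First I would record the two algebraic facts that make everything work. The ring $\mathcal O_{W,x_0}$ is a Noetherian local ring, with maximal ideal $\mathfrak m_{x_0}$ and residue field $\kappa:=\mathcal O_{W,x_0}/\mathfrak m_{x_0}$. Moreover ${\mathrm{Der}}(\mathcal O_{W,x_0})$ is finitely generated over $\mathcal O_{W,x_0}$: for the coordinate ring $\mathcal O_W$ of an affine variety over $\mathbb C$ the module of Kähler differentials $\Omega_{\mathcal O_W/\mathbb C}$ is finitely presented, so ${\mathrm{Der}}(\mathcal O_W)={\mathrm{Hom}}_{\mathcal O_W}(\Omega_{\mathcal O_W/\mathbb C},\mathcal O_W)$ is finitely generated over the Noetherian ring $\mathcal O_W$, and both finite generation and the formation of derivations are compatible with the localization $\mathcal O_W\to\mathcal O_{W,x_0}$ (recall, as noted before the statement, that the germ of vector fields coincides with the derivations of the germ algebra).

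Next I would build a minimal free resolution by finitely generated free modules. Starting from a minimal generating family of ${\mathrm{Der}}(\mathcal O_{W,x_0})$, whose cardinality equals $\dim_\kappa {\mathrm{Der}}(\mathcal O_{W,x_0})/\mathfrak m_{x_0}{\mathrm{Der}}(\mathcal O_{W,x_0})$ and is finite by the previous step, one obtains a surjection $\pi\colon \mathcal E_{-1}\twoheadrightarrow {\mathrm{Der}}(\mathcal O_{W,x_0})$ with $\mathcal E_{-1}$ free of finite rank; its kernel (the first syzygy module) is again finitely generated because $\mathcal O_{W,x_0}$ is Noetherian. Iterating, at each stage I choose a minimal generating family of the current syzygy module, obtaining finite-rank free modules $\mathcal E_{-i}$ together with differentials $\ell_1\colon \mathcal E_{-i-1}\to\mathcal E_{-i}$. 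The minimality of the chosen families forces $\ell_1(\mathcal E_{-i-1})\subset\mathfrak m_{x_0}\mathcal E_{-i}$: if some element of the syzygy module $\ker(\mathcal E_{-i}\to\mathcal E_{-i+1})$ had a unit coefficient with respect to the chosen basis of $\mathcal E_{-i}$, then one of the generators of $\mathcal E_{-i}$ would be a combination of the others and hence redundant, contradicting minimality. This is exactly the standard Nakayama argument producing minimal free resolutions over local rings.

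Finally I would invoke Theorem \ref{thm:existence}: the free resolution $(\mathcal E_\bullet,\ell_1,\pi)$ just constructed carries a universal Lie $\infty$-algebroid structure of ${\mathrm{Der}}(\mathcal O_{W,x_0})$ whose unary bracket is precisely the differential $\ell_1$. Since that differential satisfies $\ell_1(\mathcal E_{-i-1})\subset\mathfrak m_{x_0}\mathcal E_{-i}$ by construction, the resulting universal Lie $\infty$-algebroid is minimal in the required sense, and all $\mathcal E_{-i}$ are free of finite rank. The only genuinely delicate point is the combination of finite generation of ${\mathrm{Der}}(\mathcal O_{W,x_0})$ with the Noetherianity of $\mathcal O_{W,x_0}$, which together guarantee that every syzygy module is finitely generated and hence that all $\mathcal E_{-i}$ can be taken of finite rank; once this is granted, the minimal resolution and the appeal to Theorem \ref{thm:existence} are routine.
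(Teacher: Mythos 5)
Your proposal is correct and follows essentially the same route as the paper: observe that $\mathcal O_{W,x_0}$ is a Noetherian local ring and that ${\mathrm{Der}}(\mathcal O_{W,x_0})$ is finitely generated, take a minimal free resolution by finite-rank free modules (the paper cites a standard reference for this where you spell out the Nakayama-style construction), and then apply Theorem \ref{thm:existence} to equip it with the universal Lie $\infty$-algebroid structure. The only cosmetic difference is that you justify finite generation explicitly via Kähler differentials, whereas the paper takes it for granted and works with $\mathcal O_{W,x_0}\otimes_{\mathcal O_W}{\mathrm{Der}}(\mathcal O_W)$.
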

	\begin{proof}
		Since Noetherian property is stable by passage to localization, the ring $\mathcal O_{W,x_0}$ is a Noetherian local ring. Proposition 8.2 in \cite{CohenMacauley}  assures that $\mathcal O_{W,x_0}\otimes_{\mathcal O_W}\text{Der}(\mathcal O_{W})$ admits a free minimal resolution  by free finitely generated $\mathcal O_{W,x_0}$-modules. 
		Since $\mathcal O_{W,x_0}$ is a local ring with maximal idea  $ \mathfrak m_{x_0}$, we can assume that this resolution is minimal.
		In view of Theorem \ref{thm:existence}, there exists a Lie $\infty$-algebroid structure over this resolution, and the latter is universal for ${\mathrm{Der}}(\mathcal O_{W,x_0})$.
	\end{proof}
	
By Theorem \ref{thm:existence}, a resolution of  ${\mathrm{Der}}(\mathcal O_{W,x_0})$ as in Lemma \ref{lem:finite} comes equipped with a universal Lie $\infty $-algebroid structure for  ${\mathrm{Der}}(\mathcal O_{W,x_0})$. The quotient with respect to $\mathfrak m_{x_0} $ is a
	Lie $\infty $-algebra of the isolated singular point $x_0$
	with trivial $1$-ary bracket.
Using Corollary \ref{cor:LRideal2} and its subsequent discussion, we can prove the next statement.
\begin{prop}
For any universal Lie $\infty$-algebroid structure on a resolution of
${\mathrm{Der}}(\mathcal O_{W,x_0})$ as in Lemma \ref{lem:finite},
the quotient with respect to the ideal $ \mathfrak m_{x_0}$ is a representative of the Lie $\infty $-algebra of the isolated singular point $x_0$, with trivial $1$-ary bracket, on a graded vector space canonically isomorphic to ${\mathrm{Tor}}_{\mathcal O_W} (\mathfrak X_W, \mathbb C) $
($\mathbb C$ being a $\mathcal O_{W}$-module through evaluation at $x_0$).

In particular, its $2$-ary bracket is a graded Lie bracket on ${\mathrm{Tor}}_{\mathcal O_W} (\mathfrak X_W, \mathbb C) $ which does not depend on any choice made in the construction, and its $3$-ary bracket is a Chevalley-Eilenberg cocycle whose class is also canonical. 
\end{prop}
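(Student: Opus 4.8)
The plan is to deduce the statement from Corollary \ref{cor:LRideal2} and the discussion that follows it, specialised to the maximal Lie-Rinehart ideal $\mathfrak m_{x_0}$, the only genuinely new input being the minimality provided by Lemma \ref{lem:finite}. First I would fix a minimal resolution $((\E_{-i}^{x_0})_{i\geq 1},\ell_1,\pi)$ of ${\mathrm{Der}}(\mathcal O_{W,x_0})$ as in Lemma \ref{lem:finite}, equipped with a universal Lie $\infty$-algebroid structure $(\ell_k)_{k\geq 1}$ (Theorem \ref{thm:existence}). Since $x_0$ is an isolated singular point, $\rho_\E(\E_{-1})[\mathfrak m_{x_0}]\subset\mathfrak m_{x_0}$, so $\mathfrak m_{x_0}$ is a Lie-Rinehart ideal and the construction of Section \ref{ssec:Tor} applies: all brackets descend to the quotient $\E_\bullet/\mathfrak m_{x_0}\simeq \mathcal O_{W,x_0}/\mathfrak m_{x_0}\otimes_{\mathcal O_{W,x_0}}\E_\bullet$. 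Because $\mathcal O_{W,x_0}/\mathfrak m_{x_0}=\mathbb C$ is a field and ${\mathrm{Der}}(\mathbb C)=0$, the induced anchor vanishes, so the quotient is a genuine Lie $\infty$-\emph{algebra}, a representative of the Lie $\infty$-algebra of $x_0$.

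Second, I would exploit minimality. The defining property $\ell_1(\E_{-i-1}^{x_0})\subset\mathfrak m_{x_0}\E_{-i}^{x_0}$ forces the induced unary bracket $\bar\ell_1$ to vanish, so the underlying complex of $\E_\bullet/\mathfrak m_{x_0}$ has zero differential. By Remark \ref{rmk:Tor} this complex computes ${\mathrm{Tor}}_{\mathcal O_{W,x_0}}({\mathrm{Der}}(\mathcal O_{W,x_0}),\mathbb C)$, which equals ${\mathrm{Tor}}_{\mathcal O_W}(\mathfrak X_W,\mathbb C)$ because localisation is flat and Tor commutes with it. As the differential is zero, the complex coincides with its own cohomology, yielding a canonical identification $\E_{-i}^{x_0}/\mathfrak m_{x_0}\E_{-i}^{x_0}\xrightarrow{\ \sim\ }{\mathrm{Tor}}_{\mathcal O_W}^{-i}(\mathfrak X_W,\mathbb C)$.

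Third, for the algebraic structure and its canonicity I would read off the higher Jacobi identities, written as $\sum_{i+j=n+1}[\ell_i,\ell_j]_{\hbox{\tiny{RN}}}=0$ (Proposition \ref{co-diff}). With $\bar\ell_1=0$, the arity-$3$ identity reduces to $[\ell_2,\ell_2]_{\hbox{\tiny{RN}}}=0$, i.e. $\ell_2$ satisfies the graded Jacobi identity and is therefore a graded Lie bracket, making ${\mathrm{Tor}}^{-1}$ a Lie algebra and ${\mathrm{Tor}}^{-2}$ a module over it; the arity-$4$ identity reduces to $[\ell_2,\ell_3]_{\hbox{\tiny{RN}}}=0$, which is exactly Chevalley-Eilenberg closedness of $\ell_3$. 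To see that these do not depend on choices, I would invoke Corollary \ref{cor:LRideal2}: any two such quotient Lie $\infty$-algebras are homotopy equivalent in a unique-up-to-homotopy way. Since both have vanishing unary bracket, the linear part $\Phi^{(0)}$ of such an equivalence is an isomorphism of graded vector spaces realising the canonical Tor-identification (it is compatible with the hooks, hence induces the identity on cohomology), and the morphism and homotopy relations force $\Phi^{(0)}$ to intertwine the binary brackets and to change $\ell_3$ only by a Chevalley-Eilenberg coboundary. Hence the bracket and the cohomology class are canonical, exactly as in Section 4.5.1 of \cite{LLS}.

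I expect the main obstacle to be this last step: making precise that the (homotopy-unique) equivalence of Section \ref{ssec:Tor}, restricted to the minimal models, acts on the linear level as the canonical isomorphism onto ${\mathrm{Tor}}$, and that its higher Taylor coefficients translate the ambiguity in $\ell_3$ into a Chevalley-Eilenberg coboundary. All other steps are either bookkeeping with the Richardson-Nijenhuis bracket or direct citations of the results already established.
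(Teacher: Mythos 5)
Your proposal is correct and follows essentially the same route as the paper, which itself only sketches the argument by citing Corollary \ref{cor:LRideal2}, the construction of Section \ref{ssec:Tor}, and the discussion of the maximal-ideal case (deferring details to Section 4.5.1 of \cite{LLS}). Your fleshing-out — minimality killing $\bar\ell_1$, the arity-$3$ and arity-$4$ Jacobi identities yielding the graded Lie bracket and the Chevalley--Eilenberg cocycle, and the homotopy equivalence of Corollary \ref{cor:LRideal2} translating the ambiguity in $\ell_3$ into a coboundary via the arity-$2$ component of the morphism equation — is exactly the intended argument.
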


This discussion leads to the natural question:\\

\noindent
{\textbf{Question}.\emph{ How is the geometry of an affine variety related to its universal Lie $\infty$-algebroid?}}\\

\noindent
This will be the topic of a forthcoming article.

\bibliographystyle{plain}
\bibliography{LRbiblio}

\begin{thebibliography}{10}

\bibitem{A.Gathmann}
Gathmann Andreas.
\newblock {\em Commutative {A}lgebra}.
\newblock Class Notes TU Kaiserslautern 2013/14.

\bibitem{AZ}
Iakovos Androulidakis and Marco Zambon.
\newblock Smoothness of holonomy covers for singular foliations and essential
  isotropy.
\newblock {\em Mathematische Zeitschrift}, 275(3):921--951, 2013.

\bibitem{ZambonMarco2}
Iakovos Androulidakis and Marco Zambon.
\newblock Integration of {S}ingular {A}lgebroids,
  https://arxiv.org/abs/2008.07976.
\newblock {\em arXiv}, 2018.

\bibitem{Poncin}
Giuseppe Bonavolont\`a and Norbert Poncin.
\newblock On the category of {L}ie {$n$}-algebroids.
\newblock {\em J. Geom. Phys.}, 73:70--90, 2013.

\bibitem{CamposBV}
Ricardo Campos.
\newblock B{V} formality.
\newblock {\em Adv. Math.}, 306:807--851, 2017.

\bibitem{Campos}
Ricardo Campos.
\newblock Homotopy equivalence of shifted cotangent bundles.
\newblock {\em J. Lie Theory}, 29(3):629--646, 2019.

\bibitem{CattaneoFelder}
Alberto~S. Cattaneo and Giovanni Felder.
\newblock Coisotropic submanifolds in {P}oisson geometry and branes in the
  {P}oisson sigma model.
\newblock {\em Lett. Math. Phys.}, 69:157--175, 2004.

\bibitem{Cerveau}
Dominique Cerveau.
\newblock Distributions involutives singuli\`eres.
\newblock {\em Ann. Inst. Fourier (Grenoble)}, 29(3):xii, 261--294, 1979.

\bibitem{Weibel}
Weibel Charles.
\newblock {\em An introduction to homological algebra / Charles A. Weibel}.
\newblock Cambridge studies in advanced mathematics. Cambridge University
  Press, Cambridge New York (N. Y.) Melbourne, 2014.

\bibitem{ChenStienonXu}
Zhuo Chen, Mathieu Sti\'{e}non, and Ping Xu.
\newblock From {A}tiyah classes to homotopy {L}eibniz algebras.
\newblock {\em Comm. Math. Phys.}, 341(1):309--349, 2016.

\bibitem{Dazord}
Pierre {Dazord}.
\newblock {Feuilletages \`a singularit\'es.}
\newblock {\em {Indag. Math.}}, 47:21--39, 1985.

\bibitem{Debord}
Claire Debord.
\newblock Holonomy {G}roupoids of {S}ingular {F}oliations.
\newblock {\em J. Differential Geom.}, 58(3):467--500, 07 2001.

\bibitem{zbMATH00704831}
David {Eisenbud}.
\newblock {\em {Commutative algebra. With a view toward algebraic geometry}},
  volume 150.
\newblock Berlin: Springer-Verlag, 1995.

\bibitem{Fregier}
Ya\"el Fr\'egier and Rigel~A. Juarez-Ojeda.
\newblock Homotopy theory of singular foliations.
\newblock {\em arXiv 1811.03078}, 2018.

\bibitem{zbMATH03130695}
Alfred {Fr\"olicher} and Albert {Nijenhuis}.
\newblock {Theory of vector-valued differential forms. I: {D}erivations in the
  graded ring of differential forms}.
\newblock {\em {Nederl. Akad. Wet., Proc., Ser. A}}, 59:338--350, 351--359,
  1956.

\bibitem{Hartshorne}
Robin Hartshorne.
\newblock {\em Algebraic Geometry}.
\newblock Graduate Texts in Mathematics, 52. Springer New York, New York, NY,
  1st ed. 1977. edition, 1977.

\bibitem{Matsumura}
Matsumura Hideyuki.
\newblock {\em Commutative ring theory / Hideyuki Matsumura,... ; translated by
  M. Reid}.
\newblock Cambridge studies in advanced mathematics. Cambridge University
  Press, Cambridge, [edition with corrections] edition, 1989.

\bibitem{MR1625610}
Johannes Huebschmann.
\newblock {L}ie-{R}inehart algebras, {G}erstenhaber algebras and
  {B}atalin-{V}ilkovisky algebras.
\newblock {\em Ann. Inst. Fourier (Grenoble)}, 48(2):425--440, 1998.

\bibitem{MR2075590}
Johannes Huebschmann.
\newblock {\em Lie-{R}inehart algebras, descent, and quantization}, volume~43
  of {\em Fields Inst. Commun.}
\newblock Amer. Math. Soc., Providence, RI, 2004.

\bibitem{Kapranov}
Maxim Kapranov.
\newblock Rozansky-{W}itten invariants via {A}tiyah classes.
\newblock {\em Compositio Math.}, 115(1):71--113, 1999.

\bibitem{Kassel}
Christian Kassel.
\newblock {\em Quantum {G}roups}, volume 155 of {\em Graduate Texts in
  Mathematics}.
\newblock Springer, 2012.

\bibitem{LARSKJESETHBRST}
Lars Kjeseth.
\newblock A homotopy lie-rinehart resolution and classical brst cohomology.
\newblock {\em {H}omology, {H}omotopy and {A}pplications}, 3(8):165--192, 2001.

\bibitem{LARSKJESETH}
Lars Kjeseth.
\newblock Homotopy rinehart cohomology of homotopy lie-rinehart pairs.
\newblock {\em {H}omology, {H}omotopy and {A}pplications}, 3(7):139--163, 2001.

\bibitem{MR1202431}
Ivan Kol\'{a}\v{r}, Peter~W. Michor, and Jan Slov\'{a}k.
\newblock {\em Natural operations in differential geometry}.
\newblock Springer-Verlag, Berlin, 1993.

\bibitem{Kontsevich}
Maxim Kontsevich.
\newblock Deformation quantization of {P}oisson manifolds.
\newblock {\em Lett. Math. Phys.}, 66(3):157--216, 2003.

\bibitem{YKS}
Yvette Kosmann-Schwarzbach.
\newblock Derived {B}rackets.
\newblock {\em Letters in mathematical physics}, 69(1):61--87, 2004.

\bibitem{Stasheff}
Tom Lada and Jim Stasheff.
\newblock Introduction to {SH} {L}ie algebras for physicists.
\newblock {\em International journal of theoretical physics}, 32(7):1087--1103,
  1993.

\bibitem{LLS}
Camille {Laurent-Gengoux}, Sylvain {Lavau}, and Thomas {Strobl}.
\newblock {The universal {L}ie \(\infty\)-algebroid of a singular foliation}.
\newblock {\em {Doc. Math.}}, 25:1571--1652, 2020.

\bibitem{LSX}
Camille Laurent-Gengoux, Mathieu Stiénon, and Ping Xu.
\newblock Poincaré-{B}irkhoff-{W}itt isomorphisms and {K}apranov dg-manioflds.
\newblock {\em Advances in Mathematics}, 2021.

\bibitem{LavauSylvain}
Sylvain Lavau.
\newblock Lie $\infty$-algebroids and singular foliations.
\newblock {P}h-D 2017.

\bibitem{Mackenzie}
Kirill C.~H. Mackenzie.
\newblock {\em General theory of {L}ie groupoids and {L}ie algebroids}, volume
  213 of {\em London Mathematical Society Lecture Note Series}.
\newblock Cambridge University Press, Cambridge, 2005.

\bibitem{CohenMacauley}
Jon~Peter May.
\newblock {\em {C}ohen-{M}acauley and regular local rings}.
\newblock On-line 06-2021, https://www.math.uchicago.edu/ $ \tilde{
  }$may/MISC/RegularLocal.pdf.

\bibitem{MR2966944}
Hisham Sati, Urs Schreiber, and Jim Stasheff.
\newblock Twisted differential string and fivebrane structures.
\newblock {\em Comm. Math. Phys.}, 315(1):169--213, 2012.

\bibitem{Zhu}
Hsian-Hua Tseng and Chenchang Zhu.
\newblock Integrating {L}ie algebroids via stacks.
\newblock {\em Compos. Math.}, 142(1):251--270, 2006.

\bibitem{VillatoroJoel}
Joel Villatoro.
\newblock {O}n sheaves of {L}ie-{R}inehart algebras.
\newblock Arxiv.org/pdf/2010.15463, 2020.

\bibitem{VitaglianoLuca}
Luca Vitagliano.
\newblock On the strong homotopy associative algebra of a foliation.
\newblock {\em Commun. {C}ont. {M}ath.}, 2015.

\bibitem{VitaglianoLuca_R}
Luca Vitagliano.
\newblock Representations of homotopy {L}ie-{R}inehart algebras.
\newblock {\em Math. Proc. Camb. Phil. Soc.}, 158:155--191, 2015.

\bibitem{Voronov}
Theodore Voronov.
\newblock Higher derived brackets and homotopy algebras.
\newblock {\em J. Pure Appl. Algebra}, 202(1-3):133--153, 2005.

\bibitem{Voronov2}
Theodore Voronov.
\newblock {$Q$}-manifolds and higher analogs of {L}ie algebroids.
\newblock In {\em X{XIX} {W}orkshop on {G}eometric {M}ethods in {P}hysics},
  volume 1307 of {\em AIP Conf. Proc.}, pages 191--202. Amer. Inst. Phys.,
  Melville, NY, 2010.

\bibitem{Severa}
Pavol \v{S}evera and Michal \v{S}ira\v{n}.
\newblock Integration of differential graded manifolds.
\newblock {\em Int. Math. Res. Not. IMRN}, (20):6769--6814, 2020.

\bibitem{ZambonMarco3}
Marco Zambon.
\newblock Holonomy transformations for {L}ie subalgebroids, arxiv/2103.10409.
\newblock {\em arXiv}, 2018.

\bibitem{ZambonMarco}
Marco Zambon and Iakovos Androulidakis.
\newblock Singular subalgebroids, arxiv/1805.02480.pdf.
\newblock {\em arXiv}, 2018.

\bibitem{SeveraTitle}
Pavol Ševera.
\newblock Some title containing the words “homotopy” and “symplectic”,
  e.g. this one, arxiv:0105080.
\newblock {\em arXiv}, 2001.

\end{thebibliography}

\end{document}